\documentclass[10pt, a4paper]{article} 
\usepackage{stmaryrd}
\usepackage[a4paper, left=2cm, right=2cm, top=3.5cm, bottom=3.5cm]{geometry}
\usepackage{amsmath}
\usepackage{amssymb}
\usepackage{mathrsfs}
\usepackage{amsthm}
\usepackage[english]{babel}
\usepackage{xcolor}
\usepackage{verbatim}
\usepackage{hyperref}
\usepackage[nameinlink,capitalize]{cleveref}
\usepackage{cancel}
\usepackage{enumitem}
\usepackage{csquotes}
\usepackage{dsfont}
\usepackage{comment}
\usepackage[textsize=tiny]{todonotes}

\usepackage[
backend=biber,
style=alphabetic,
maxnames=99,
sorting=nyt,
doi=true,
isbn=false,
url=false,
]{biblatex}
\def\ud{\mathrm d}

\newcommand{\tnorm}[1]{\left\vert\kern-0.25ex\left\vert\kern-0.25ex\left\vert #1 
    \right\vert\kern-0.25ex\right\vert\kern-0.25ex\right\vert}
\newcommand{\R}{\mathbb{R}}
\newcommand{\E}{\mathbb{E}}

\newcommand{\MF}{{\mathcal{F}}}
\newcommand{\BBeta}{\boldsymbol{\eta}}
\newcommand{\BD}{\mathbf{D}}
\newcommand{\LL}{\mathbb{L}}
\newcommand{\HH}{\mathbb{H}}
\newcommand{\by}{{\bar{y}}}
\newcommand{\bz}{{\bar{z}}}
\newcommand{\be}{\begin{equation}}
\newcommand{\ee}{\end{equation}}
\newcommand{\esssup}{\mathop{\mathrm{ess\,sup}}}
\newcommand{\bk}[1]{\llbracket#1\rrbracket}

\theoremstyle{definition}
\newtheorem{defn}{Definition}[section]

\newtheorem{rmk}[defn]{Remark}

\newtheorem{assumptions}[defn]{Assumptions}
\theoremstyle{plain}
\newtheorem{thm}[defn]{Theorem}
\newtheorem*{thm*}{Theorem}
\newtheorem{lemma}[defn]{Lemma}
\newtheorem{coroll}[defn]{Corollary}
\newtheorem{prop}[defn]{Proposition}

\newtheorem*{assum*}{Assumptions}

\title{On the Pontryagin Maximum Principle for \\
Rough Stochastic Differential Equations}
\author{Peter K. Friz\thanks{Technische Universit\"at Berlin and Weierstraß-Institut (friz@math.tu-berlin.de)}, Ulrich Horst\thanks{Humboldt Universit\"at zu Berlin (horst@math.hu-berlin.de)}, Stefanos Theodorakopoulos\thanks{Technische Universit\"at Berlin (stefanos.theodorakopoulos@tu-berlin.de)},  Huilin Zhang\thanks{{Shandong University (huilinzhang@sdu.edu.cn)}}    
}
\allowdisplaybreaks

\begin{document}

\maketitle

\begin{abstract}
We establish first- and second-order Pontryagin (a.k.a. stochastic) maximum principles for rough stochastic control problems with general nonlinear $\mathrm{Lip}^\gamma$-rough vector fields. This complements the dynamic programming approach of Friz–Lê–Zhang \cite{10.1214/26-AOP1852},
and extends the maximum principle of Horst–Zhang \cite{horst2025pontryagin} 
beyond affine rough vector fields. The paper concludes with applications to pathwise stochastic optimal control.
\end{abstract}

\smallskip
\noindent \textbf{Keywords.} Stochastic optimal control, Rough paths

\smallskip
\noindent \textbf{MSC (2020).} 60H20, 93E20.

\tableofcontents

\section{Introduction}

Classical stochastic control concerns the choice of an adapted control that minimizes an expected cost associated with a stochastic differential equation. Its two principal approaches are dynamic programming and Pontryagin's maximum principle; see \cite{yong1999stochastic,pham2009continuous,carmona2018probabilistic}. Dynamic programming describes the value function through a Hamilton--Jacobi--Bellman equation, whereas the maximum principle expresses optimality through the state and backward adjoint processes. The theory of backward stochastic differential equations (BSDEs), initiated by Pardoux and Peng \cite{pardoux1990adapted}, provides the natural language for these adjoints; see also \cite{zhang2017backward}. When the diffusion depends on the control and the control space is nonconvex, the second-order maximum principle of Peng \cite{peng1990general} shows that a second adjoint is needed to account for the case of spike kindly perturbed diffusion.
An extension of this setting arises when the state is also affected by an exogenous signal whose realization is fixed throughout the optimization. The remaining randomness is still described by Brownian motions, but the fixed signal may be too irregular for ordinary differential calculus. 
Rough stochastic differential equations (RSDEs) provide a framework in which the two sources of noise can be treated simultaneously: one retains It\^o integration for the Brownian motion and uses rough integration for the fixed signal. This separates the probabilistic information available to the controller from the temporal irregularity of the exogenous environment. 
Additionally, separation between fixed rough noise and stochastic dynamics is useful beyond control. It underlies robust filtering with correlated noise and multidimensional observations \cite{CDFO13}, rough filtering equations and rough Kalman--Bucy theory \cite{bugini2025rough}, and conditional pricing in local stochastic volatility models \cite{bank2025rough}. It also appears in McKean--Vlasov equations with rough common noise \cite{friz2025mckeanvlasovequationsroughcommon}, linear-quadratic mean-field games with rough common noise \cite{friz2026meanfieldgamesroughcommon}, and the compactification approach to rough common-noise games \cite{bayraktar2026meanfieldgamesroughcommon}. These developments motivate optimality conditions for the underlying controlled RSDEs beyond the affine setting.

For mixed Brownian and rough dynamics, Diehl, Oberhauser and Riedel \cite{DOR15} constructed a joint lift of Brownian motion and a deterministic rough path, with applications to filtering and rough PDEs. In recent years, L\^e's stochastic sewing lemma \cite{le2020stochastic} made it possible to exploit adaptedness and conditional cancellations, leading Friz, Hocquet and L\^e \cite{FHL21} to rough stochastic integration and further to an intrinsically formulated RSDE theory with stochastic controlled vector fields. The linear RSDE theory was developed by Bugini, Coghi and Nilssen \cite{bugini2024malliavin}, Horst and Zhang \cite{horst2025pontryagin}. Related $p$-variation estimates and integration theories were established by Friz and Zorin-Kranich \cite{friz2023rough} and, in a setting allowing jumps, by Allan and Pieper \cite{allan2026rough}. 
Within the RSDE framework, Friz, L\^e and Zhang \cite{10.1214/26-AOP1852} established a rough dynamic programming principle and investigated the associated rough HJB equations. Their work on randomization \cite{FLZ25} clarifies how a fixed rough signal can be replaced by a random one and how the resulting dynamics relate to conditional laws. Subsequently, Horst and Zhang \cite{horst2025pontryagin} initiated the maximum-principle approach for systems with an affine rough coefficient. By employing a flow transformation they derived necessary and sufficient conditions which accommodate unbounded coefficients, including linear-quadratic models. 
In the present paper, we extend the maximum-principle approach by establishing necessary and sufficient conditions for such systems with a nonlinear rough coefficient. The drift and Brownian diffusion may both depend on the control, and the coefficients may be random. We develop a direct variational approach that combines estimates for forward rough SDEs, well-posedness of backward adjoint equations, and rough stochastic integration by parts. Our results cover both linear perturbations of Euclidean-valued controls and spike perturbations in a general separable metric control space. The essential point is to carry out the first- and second-order duality arguments when the rough coefficients of the variational and adjoint equations depend on the stochastic state.

\subsection{Controlled stochastic systems from a pathwise viewpoint}
The pathwise interpretation on stochastic control has a long history. Davis and Burstein \cite{DB92} studied anticipative control through deterministic control problems parametrized by Wiener paths and suitable Lagrange multipliers. Lions and Souganidis \cite{LS98,LS98b} developed a theory of fully nonlinear stochastic PDEs with applications to pathwise stochastic control. Buckdahn and Ma \cite{BM07} established a dynamic programming principle and a stochastic viscosity characterization of the value field, using a Doss--Sussmann transformation to relate the pathwise problem to an auxiliary stochastic control problem. Further connections between pathwise control and possibly non-adapted stochastic PDEs were investigated in \cite{BCO23}. These works explain why conditioning on an exogenous noise leads to questions that are not covered directly by the usual Markovian control theory.
Rough path theory, introduced by Lyons \cite{lyons1998differential}, gives a deterministic meaning to differential equations driven by irregular signals with extra algebraic (Lie) group structure. Gubinelli's controlled-path formulation \cite{gubinelli2004controlling} is particularly convenient for the expansions and integration arguments used here; we refer to \cite{FV10,friz2020course} for systematic description on both formulations of rough paths. A related application to fully nonlinear stochastic PDEs was developed by Caruana, Friz and Oberhauser \cite{caruana2011}. 
For the application of rough paths in optimal control, Diehl, Friz and Gassiat \cite{DiehlFrizGassiat2017} established a maximum principle, a rough HJB characterization, and a stochastic control duality formula for controlled rough differential equations. Allan and Cohen \cite{AC20} studied pathwise control in connection with robust filtering, including the degeneracy that can arise when the rough coefficient is directly controlled. Relaxed controls and rough HJB equations are considered by Chakraborty, Honnappa and Tindel \cite{CHT24}; see also \cite{ashkarian2026pontryaginmaximumprincipleqfunctions}.
Zheng and Hu \cite{zhenghu2024global} studied partially observed systems driven by Brownian and fractional Brownian motions, using transformations for systems whose fractional-noise coefficients are linear in the state. Recently, Lew \cite{lew2026rough} derived a first-order maximum principle for deterministic controls and Gaussian rough drivers, with a pathwise adjoint construction and an application to indirect shooting. 

We here derive maximum principles for a pathwise stochastic control problem where the system is driven by an internal Brownian noise and another external Brownian noise. In this setting, controls remain adapted to the internal Brownian motion driving the It\^o integral and may or may not depend (see Remark \ref{rmk 5.3}) on the entire trajectory of the external noise, thereby allowing anticipation of its future evolution. The objective is to minimize the conditional expected cost given this trajectory. By lifting the external Brownian motion to its geometric rough path and freezing its realization, we reduce the problem to the rough stochastic control framework developed in earlier sections. The main results then 
yield Hamiltonian necessary conditions for conditional optimality, expressed through rough adjoint BSDEs, together with sufficient conditions under the stated convexity and rough-remainder assumptions.
These expand the maximum-principle approach to pathwise control of \cite{horst2025pontryagin} to nonlinear rough vector fields, and complement the dynamic programming approach of \cite{10.1214/26-AOP1852} through a forward–backward characterization of optimal controls.

\subsection{Problem formulation and main results}\label{subsec:theProblem}
Let $W$ be a $w$-dimensional Brownian motion and let
$\boldsymbol{\eta} \in \mathscr{C}^{0,\alpha}_g([0,T];\mathbb{R}^\ell)$ be a (deterministic) geometric rough path with
$\alpha \in (\frac{1}{3},\frac{1}{2})$. For an admissible control $u \in \mathcal{A}$, we consider the $\mathbb{R}^d$-valued state process governed by the rough stochastic differential equation (RSDE)
\begin{equation}\label{eq:rough_stochastic_dynamics}
\begin{cases}
\ud X^{\boldsymbol{\eta},u}_s
=
b(s,X^{\boldsymbol{\eta},u}_s,u_s)\,\ud s
+
\sigma(s,X^{\boldsymbol{\eta},u}_s,u_s)\,\ud W_s
+
(f,f')(s,X^{\boldsymbol{\eta},u}_s)\,\ud\boldsymbol{\eta}_s,
\qquad s \in [t,T], \\
X^{\boldsymbol{\eta},u}_t
= x.
\end{cases}
\end{equation}
Here the Brownian integral is understood in the It\^o sense, and $(f,f')$ is a stochastic controlled vector field relative to $\eta$, with the prime representing its Gubinelli derivative (see e.g. \cite{friz2020course}). This formulation allows us to admit progressively measurable controls with respect to the Brownian filtration, while retaining control dependence in the Brownian diffusion. Throughout, we suppress the dependence of the coefficients on $\omega$ when no confusion can arise.

The objective is to minimize the cost functional
\begin{equation}\label{eq: cost_function_intro}
\mathcal{J}^{\boldsymbol{\eta}}(u;t,x)
=
\mathbb{E}\bigg[
\int_t^T h\bigl(s,X^{\boldsymbol{\eta},u}_s,u_s\bigr)\,\ud s
+
g\bigl(X^{\boldsymbol{\eta},u}_T\bigr)
\bigg]
\end{equation}
over a class $\mathcal{A}$ of progressively measurable controls on $[t,T]$. The corresponding value function is
\begin{equation}\label{eq:rough-vf}
\mathbb{V}^{\boldsymbol{\eta}}(t,x)
:=
\inf_{u \in \mathcal{A}} \mathcal{J}^{\boldsymbol{\eta}}(u;t,x).
\end{equation}
The analytic results we obtained lead to two forms of Pontryagin's maximum principle. The first gives a derivative formula and a Hamiltonian minimization condition for Euclidean-valued controls and coefficients differentiable in the state and control variables. The second is a rough stochastic counterpart of the second-order principle in \cite{peng1990general}, allowing a nonconvex control space and a controlled Brownian diffusion, differentiability is required only in the state variable, with stronger spatial regularity to justify the second-order expansion.

For the first maximum principle, we consider linear perturbations $u^\epsilon=u+\epsilon v$. Under Assumptions~\ref{A1}--\ref{A3} and \ref{assu: lin PMP}, we prove Gateux differentiability of the cost functional and represent its derivative through a first adjoint equation. More precisely, let
\[
H(\omega,s,x,y,z,u)
:=y^\top b(\omega,s,x,u)+\operatorname{Tr}\bigl(z^\top\sigma(\omega,s,x,u)\bigr)+h(\omega,s,x,u).
\]
Writing $X=X^{\boldsymbol{\eta},u}$, the adjoint pair $((Y,Y'),Z)$, solves
\[
\begin{split}
Y_s={}&\partial_x g(X_T)^\top
+\int_s^T \partial_x H(r,X_r,Y_r,Z_r,u_r)^\top\,\ud r\\
&+\int_s^T \partial_x f(r,X_r)^\top Y_r\,\ud\boldsymbol{\eta}_r
-\int_s^T Z_r\,\ud W_r.
\end{split}
\]
The rough integral is understood with the associated controlled derivative (computable from $Y'_r= \partial_x f(r,X_r)^\top Y_r$). 
The following is a loose formulation of our first findings, with full details left to  Proposition~\ref{Der_formula_lin} and Theorem~\ref{thm:roughpmp_lin}.

\begin{thm}[Linear perturbation]
The functional $\mathcal{J}^{\boldsymbol{\eta}}$ is Gateux differentiable, and for every $v\in\mathcal{A}$ its derivative at the point $u\in\mathcal{A}$ is given by 
\[
\frac{\ud}{\ud\epsilon}\mathcal{J}^{\boldsymbol{\eta}}(u+\epsilon v;t,x)\Big|_{\epsilon=0}
=\mathbb{E}\bigg[\int_t^T
\partial_u H(s,X_s,Y_s,Z_s,u_s)\cdot v_s\,\ud s\bigg].
\]
Thus, an optimal control $(u_t)_{t\in[0,T]}\in \mathcal{A}$ of $\mathcal{J}^{\boldsymbol{\eta}}$ satisfies the usual stationarity condition
$$
 \partial_u H(s, X_s, Y_s, Z_s, u_s) = 0,\hspace{.2cm} \mathbb{P}\otimes\text{Leb}(\ud s)-a.s.
 $$
 If additionally $H$ is convex in the control variable, this yields the standard pointwise minimization of the Hamiltonian, where for every $u\in\mathbb{R}^\kappa$ we have
 $$
H(s,X_s,Y_s,Z_s,u_s) \leq H(s,X_s,Y_s,Z_s,u),\hspace{.2cm} \mathbb{P}\otimes\text{Leb}(\ud s)-a.s.
 $$
 Notice that although the rough coefficient does not enter $H$ explicitly, it affects both the state and the adjoint processes.
\end{thm}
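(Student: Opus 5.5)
We will follow the classical variational route: linearize the state equation, differentiate the cost, and convert the result into Hamiltonian form by a duality (integration by parts) argument between the first variation and the adjoint $((Y,Y'),Z)$.

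\emph{Step 1: first variation of the state.} Fix $u,v\in\mathcal{A}$, set $u^\epsilon=u+\epsilon v$ and $X^\epsilon=X^{\boldsymbol{\eta},u^\epsilon}$. Define the candidate derivative $\xi$ as the solution of the linear RSDE
\[
\ud\xi_s=\bigl(\partial_x b\,\xi_s+\partial_u b\,v_s\bigr)\ud s+\bigl(\partial_x\sigma\,\xi_s+\partial_u\sigma\,v_s\bigr)\ud W_s+\bigl(\partial_x f(s,X_s)\xi_s,\ (\partial_x f)'\xi_s+\partial_x f\,\partial_x f\,\xi_s\bigr)\ud\boldsymbol{\eta}_s,\qquad \xi_t=0.
\]
Its Gubinelli derivative is $\xi'_s=\partial_x f(s,X_s)\xi_s$. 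Well-posedness follows from the linear RSDE theory cited in the introduction; the rough coefficient is random and unbounded-in-$\omega$ only through $X$, which is controlled.

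We then show that $\epsilon^{-1}(X^\epsilon-X)\to\xi$ in $L^p$, uniformly in time. To do so, write $\delta^\epsilon=\epsilon^{-1}(X^\epsilon-X)-\xi$. This process solves a linear RSDE whose coefficients are mean-value averages of the derivatives along $X+\theta(X^\epsilon-X)$, with forcing terms built from differences of derivatives. The forward RSDE stability estimates (Lipschitz dependence of solutions on the control in $L^p$, and the linear RSDE a priori bound) then give $\|\delta^\epsilon\|\to0$. Here $\mathrm{Lip}^\gamma$ regularity of $f$ is used: it makes the mean-value rough vector field again a stochastic controlled vector field with uniformly controlled norms.

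\emph{Step 2: derivative of the cost.} By dominated convergence, using polynomial growth of $\partial h$ and $\partial g$ and the moment bounds on $X^\epsilon$, we obtain
\[
\frac{\ud}{\ud\epsilon}\mathcal{J}^{\boldsymbol{\eta}}(u^\epsilon)\Big|_{\epsilon=0}=\E\Big[\int_t^T(\partial_xh\,\xi_s+\partial_uh\,v_s)\ud s+\partial_xg(X_T)\xi_T\Big].
\]

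\emph{Step 3: duality.} Apply the rough Itô product formula to $Y_s^\top\xi_s$. The rough terms cancel. The integrand of $\xi$ against $\ud\boldsymbol{\eta}$ is $\partial_xf\,\xi$ and that of $Y$ is $-\partial_xf^\top Y$, so the first-level contributions cancel exactly. The second-level (bracket) contributions also cancel, by geometricity of $\boldsymbol{\eta}$ and by the choice of the controlled derivatives, which makes the compensated rough integral of the product have zero Gubinelli remainder. The Itô bracket produces $\operatorname{Tr}(Z^\top(\partial_x\sigma\,\xi+\partial_u\sigma\,v))$. After taking expectations (the stochastic integrals are true martingales by the BSDE integrability of $Z$ and the moments of $\xi$), we obtain
\[
\E[\partial_xg(X_T)\xi_T]=\E\int_t^T\bigl(-\partial_xh\,\xi+Y^\top\partial_ub\,v+\operatorname{Tr}(Z^\top\partial_u\sigma\,v)\bigr)\ud s .
\]
Combining this with Step 2 gives the formula $\E\int\partial_uH\cdot v\,\ud s$.

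\emph{Step 4: consequences.} If $u$ is optimal, the derivative vanishes for all $v\in\mathcal{A}$. Taking $v=\mathbf 1_A\,e_i$ for progressive sets $A$ yields $\partial_uH=0$ $\mathbb P\otimes\mathrm{Leb}$-a.e. Convexity of $H$ in $u$ then gives the global minimization pointwise.

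\emph{Main obstacle.} The expected difficulty is Step 3 together with the well-posedness and integrability of the adjoint rough BSDE. The product rule must be justified for a forward rough process and a backward one whose rough coefficient depends on the random state $X$. The rough-remainder terms of $Y^\top\xi$ must be shown to be of order $o(|t-s|)$ in a conditional $L^p$ sense, so that stochastic sewing identifies the sum of the rough integrals as zero. We would first try the rough stochastic integration-by-parts formula for stochastic controlled rough paths $(Y,Y')$ and $(\xi,\xi')$. The Gubinelli derivatives are chosen precisely so that $(Y^\top\xi)'=Y'^\top\xi+Y^\top\xi'=0$, which reduces the verification to checking the second-order terms against $\mathbb{\eta}$.
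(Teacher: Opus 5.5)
Your proposal follows the paper's route: the variational equation \eqref{eq: var_process}, convergence of the difference quotients via the mean-value ($\lambda$-extended) affine RSDE and the a priori bound of Theorem~\ref{thm 3.11}, the Gateaux derivative \eqref{eq 32}, duality through the rough product formula applied to $Y^\top V$ (whose rough integrand and its Gubinelli derivative vanish identically), and then the standard stationarity and convexity argument. The obstacles you flag are exactly what the paper supplies through two tools, under the moment compatibility conditions of Assumptions~\ref{assu: lin PMP}: Theorem~\ref{ex_uni_lin_bsde}, which gives affine multidimensional rough BSDEs in $\mathbf{D}^{(p,p)\text{-var}}_{\eta}\mathbb{L}^m$ with $m\le p$ and admits the random coefficient $\partial_x f(\cdot,X)^\top$; and the mixed H\"older--$p$-variation product rule of Theorem~\ref{rough_product_formula}, which needs only $\mathbb{H}^m$-type integrability of the drift of $Y$ (this drift contains $Z$).
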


Our second maximum principle applies to controls with values in a general separable metric space $U$.
We replace an optimal control $\bar u$ by another admissible control on an interval of length $\epsilon$. The resulting spike perturbation of the Brownian integral has size $\epsilon^{1/2}$, so its quadratic contribution must be retained when expanding the cost to order $\epsilon$.  Under Assumptions~\ref{P-control}--\ref{D-cost} and \ref{assu: per PMP} we construct first and second variations $V^\epsilon$ and $\zeta^\epsilon$ and prove
\[
V^\epsilon=O(\epsilon^{1/2}),\quad
\zeta^\epsilon=O(\epsilon), \qquad X^{\boldsymbol{\eta},u^\epsilon}
=\bar X+V^\epsilon+\zeta^\epsilon+o(\epsilon),
\]
in the moment and stochastic controlled $p$-variation norms specified in Lemma~\ref{prop:diffusion-spike-estimates}. Using again the same Hamiltonian, but now denoted with $\bar{H}$ due to the domain difference at the control argument, we consequently denote the corresponding first adjoint by $((\bar Y,\bar{Y}'),\bar Z)$. Furthermore, the second-order duality argument introduces a matrix-valued second adjoint pair $((\bar{P},\bar{P}'),\bar{Q})$, which in addition to terms linear in $\bar{P}$, its rough driver contains the contraction of the first adjoint with $\partial_{xx}^2 f(\cdot,\bar X)$; see \eqref{eq 88}. This is precisely the additional term caused by the nonlinearity of the rough coefficient. 
Once again, we summarize our main findings as stated in Theorem~\ref{thm:roughpmp} in the following informal Theorem.
\begin{thm}[Spike perturbation]
 Let $((\bar{X},\bar{X}'),(\bar u_t)_{t\in[0,T]})$ be an optimal pair of $\mathcal{J}^{\boldsymbol{\eta}}$. Then, for every $u\in U$ we have
\[
\begin{split}
\bar H(s,\bar X_s,\bar Y_s,\bar Z_s,\bar u_s)
-\bar H(s,\bar X_s,\bar Y_s,\bar Z_s,u) -\frac12\operatorname{Tr}
\bigl(\delta\sigma_s(u)^\top \bar{P}_s\delta\sigma_s(u)\bigr)\leq0, \hspace{.2cm} \mathbb{P}\otimes\text{Leb}(\ud s)-a.s.,
\end{split}
\]
with
$\delta\sigma_s(u):=\sigma(s,\bar X_s,u)-\sigma(s,\bar X_s,\bar u_s)$. The quadratic correction accounts for the controlled Brownian diffusion. When $\sigma$ is independent of the control, it vanishes and the condition reduces to pointwise minimization of $\bar H$.
\end{thm}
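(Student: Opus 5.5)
The plan follows Peng's second-order argument, adapted to the rough stochastic setting. Fix $\tau\in[t,T)$, a control value $u\in U$, and $\epsilon>0$. Define the spike perturbation
\[
u^\epsilon_s:=u\,\mathds{1}_{E_\epsilon}(s)+\bar u_s\,\mathds{1}_{E_\epsilon^c}(s),\qquad E_\epsilon=[\tau,\tau+\epsilon].
\]
The steps are:
\begin{enumerate}[label=(\roman*)]
\item expand the state to order $\epsilon$;
\item expand the cost to order $\epsilon$;
\item eliminate the state variations by first- and second-order duality;
\item divide by $\epsilon$ and localise.
\end{enumerate}

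\textbf{Step (i): expansion of the state.} Let $V^\epsilon$ solve the linear RSDE with drift $\partial_x b\,V^\epsilon$ and Brownian coefficient $\partial_x\sigma\,V^\epsilon+\delta\sigma(u)\mathds{1}_{E_\epsilon}$. Its rough coefficient is $\partial_x f(\cdot,\bar X)V^\epsilon$, with the Gubinelli derivative obtained by differentiating $(f,f')$ along $\bar X$.

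The second variation $\zeta^\epsilon$ is defined as follows. Its drift collects $\partial_x b\,\zeta^\epsilon+\tfrac12\partial^2_{xx}b(V^\epsilon,V^\epsilon)+\delta b(u)\mathds{1}_{E_\epsilon}$. Its Brownian coefficient collects $\partial_x\sigma\,\zeta^\epsilon+\tfrac12\partial^2_{xx}\sigma(V^\epsilon,V^\epsilon)+\partial_x\delta\sigma(u)V^\epsilon\mathds{1}_{E_\epsilon}$. Its rough coefficient is $\partial_x f\,\zeta^\epsilon+\tfrac12\partial^2_{xx}f(V^\epsilon,V^\epsilon)$.

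I take the estimates $V^\epsilon=O(\epsilon^{1/2})$, $\zeta^\epsilon=O(\epsilon)$ and $X^{u^\epsilon}-\bar X-V^\epsilon-\zeta^\epsilon=o(\epsilon)$ from Lemma~\ref{prop:diffusion-spike-estimates}. These hold in $L^m$ and in stochastic controlled $p$-variation norms.

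\textbf{Step (ii): expansion of the cost.} Taylor expansion of $g$ and $h$, together with these estimates, gives
\[
0\le \mathcal J^{\boldsymbol\eta}(u^\epsilon)-\mathcal J^{\boldsymbol\eta}(\bar u)
=\E\Big[\partial_xg\,(V^\epsilon+\zeta^\epsilon)+\tfrac12\partial^2_{xx}g(V^\epsilon,V^\epsilon)
+\int_t^T\big(\partial_xh\,(V^\epsilon+\zeta^\epsilon)+\tfrac12\partial^2_{xx}h(V^\epsilon,V^\epsilon)+\delta h\,\mathds{1}_{E_\epsilon}\big)\ud s\Big]+o(\epsilon).
\]

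\textbf{Step (iii): duality.} Apply the rough stochastic integration by parts (rough It\^o product formula) to $\bar Y^\top(V^\epsilon+\zeta^\epsilon)$. The rough integrals cancel exactly because the adjoint's rough driver is $\partial_xf^\top\bar Y$. The first-order terms therefore reduce to
\[
\E\int\big(\bar Y^\top\delta b+\operatorname{Tr}(\bar Z^\top\delta\sigma)\big)\mathds{1}_{E_\epsilon}\,\ud s
\]
plus quadratic terms in $V^\epsilon$. These quadratic terms include the new contribution $\tfrac12\bar Y^\top\partial^2_{xx}f(V^\epsilon,V^\epsilon)\,\ud\boldsymbol\eta$. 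The $\partial_x\delta\sigma\,V^\epsilon\mathds 1_{E_\epsilon}$ term is $O(\epsilon^{3/2})$ and can be dropped.

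Next apply the product formula to $(V^\epsilon)^\top\bar P V^\epsilon$. Here $\bar P$ solves the second adjoint \eqref{eq 88}. Its rough driver contains $\partial_xf^\top\bar P+\bar P\partial_xf+\bar Y\cdot\partial^2_{xx}f$, and this is arranged precisely so that all rough integrals of quadratic forms in $V^\epsilon$ cancel, including the $\partial^2_{xx}f$ term. The drift and terminal data of $\bar P$ absorb $\partial^2_{xx}H$ and $\partial^2_{xx}g$, leaving only the Itô correction $\tfrac12\E\int\operatorname{Tr}(\delta\sigma^\top\bar P\delta\sigma)\mathds 1_{E_\epsilon}\ud s$.

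Combining these gives
\[
0\le\E\int_\tau^{\tau+\epsilon}\Big(\bar H(u)-\bar H(\bar u_s)+\tfrac12\operatorname{Tr}(\delta\sigma^\top\bar P\delta\sigma)\Big)\ud s+o(\epsilon).
\]

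\textbf{Step (iv): localisation.} Divide by $\epsilon$ and apply the Lebesgue differentiation theorem in $\tau$. To pass from this averaged inequality to the a.s.\ statement, I use spikes of the form $u\mathds 1_A$ with $A\in\mathcal F_\tau$, which are still admissible. Separability of $U$ then gives the inequality simultaneously for all $u$ on a countable dense set, and continuity of $\bar H$ and $\sigma$ in $u$ extends it to every $u\in U$.

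\textbf{Main obstacle.} I expect the main difficulty to be step (iii). It requires justifying the rough product formula for $(V^\epsilon)^\top\bar PV^\epsilon$ when both factors are only stochastic controlled rough paths with random, state-dependent Gubinelli derivatives. One must verify that the bracket/second-order rough terms cancel (using geometricity of $\boldsymbol\eta$). One must also verify that the remainder terms are $o(\epsilon)$, which needs the integrability of $\bar P$, $\bar P'$ (from well-posedness of the linear rough BSDE) combined with the $p$-variation bounds on $V^\epsilon$. I would first prove the product formula for smooth approximations of $\boldsymbol\eta$ and pass to the limit using rough stochastic sewing stability.
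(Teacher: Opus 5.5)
Your outline coincides with the paper's proof. Lemma~\ref{prop:diffusion-spike-estimates} supplies the estimates and the cost expansion \eqref{eq 76}. First-order duality with $\bar Y$ against $V^\epsilon$ and $\zeta^\epsilon$ leaves the extra term $\frac12\E\int_0^T\bar Y_s^\top\partial^2_{xx}f(s,\bar X_s)(V^\epsilon_s)^{\otimes 2}\,\ud\boldsymbol\eta_s$ in \eqref{eq 87}. That term is cancelled by the rough driver $\bar Y^\top\partial^2_{xx}f(\cdot,\bar X)$ of the second adjoint \eqref{eq 88}, which gives \eqref{eq 89}, and localisation finishes.

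You diverge at the step you flag as the main obstacle, and your plan there is the weakest part. The paper does not smooth $\boldsymbol\eta$. Theorem~\ref{rough_product_formula} proves the product rule directly: it shows that $\delta\mathcal I-\delta(XY)$ satisfies the stochastic sewing bounds (unconditional exponent $>\frac12$, conditional exponent $>1$, with geometricity entering through $\mathrm{Sym}(\eta^{(2)}_{s,t})=\frac12\delta\eta_{s,t}\otimes\delta\eta_{s,t}$) and then invokes uniqueness in the stochastic sewing lemma. It only requires the $\ud s$ and $\ud W$ integrands to be of $\mathbb H$-type, which is what $\bar Z,\bar Q\in\mathbb H^m$ need. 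Conditions (i)--(ii) of Assumptions~\ref{assu: per PMP} (in the worst case $k_1=k/2$, $k_2=m$), together with (ii) of Remark~\ref{rmk 3.21}, are exactly its hypotheses. It is applied first to $V^\epsilon(V^\epsilon)^\top$ and then to the pairing with $\bar P$.

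A smooth-approximation proof would be much harder. A path controlled by $\eta$ is not controlled by $\eta^n$, and $\int X'\,\ud\eta^n$ need not converge to $\int(X',X'')\,\ud\boldsymbol\eta$. You would therefore have to re-solve the variational equations and both rough BSDEs for every $\boldsymbol\eta^n$ and prove stability of their solution maps in $\boldsymbol\eta$. Theorem~\ref{ex_uni_lin_bsde} gives no such stability, and it is delicate with $\mathbb H^m$ integrands and unbounded rough remainders. You should simply invoke Theorem~\ref{rough_product_formula}.

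There are two smaller corrections.

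\textbf{Rate of the dropped terms.} The contribution $\mathds 1_{E_\epsilon}\operatorname{Tr}(\bar Z^\top\partial_x\delta\sigma\,V^\epsilon)$ is $o(\epsilon)$, not $O(\epsilon^{3/2})$. So are the bracket terms between $\mathds 1_{E_\epsilon}\delta\sigma\,(V^\epsilon)^\top$ and $\bar Q$ in the $\bar P$-duality, which you pass over. Since $\bar Z,\bar Q$ are only in $\mathbb H^m$, the bound is $\|V^\epsilon\|_{\mathbb S^k}\,\epsilon^{1/2}\,\|(\int_{E_\epsilon}|\bar Z_s|^2\ud s)^{1/2}\|_m$. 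This is $o(\epsilon)$ by absolute continuity, so your conclusion is unaffected.

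\textbf{Localisation.} With $A\in\mathcal F_\tau$, the exceptional set of $\tau$ from Lebesgue differentiation depends on $A$. Removing that dependence needs a countable generator of $\mathcal F_\tau$, which is not automatic here because $\mathcal F_0$ contains an arbitrary independent $\sigma$-algebra $\mathcal F_0^0$. Take instead a progressive set $A\subset\Omega\times[0,T]$, for instance $A=\{\Phi(\cdot,u)<0\}$ with $\Phi(s,u):=\bar H(s,\bar X_s,\bar Y_s,\bar Z_s,u)-\bar H(s,\bar X_s,\bar Y_s,\bar Z_s,\bar u_s)+\frac12\operatorname{Tr}(\delta\sigma_s(u)^\top\bar P_s\delta\sigma_s(u))$. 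The control $u\mathds 1_A+\bar u\mathds 1_{A^c}$ is admissible, and \eqref{eq 89} gives $\E\int_\tau^{\tau+\epsilon}\mathds 1_A\Phi(s,u)\,\ud s\ge -o(\epsilon)$. Lebesgue differentiation then forces $\mathbb P\otimes\mathrm{Leb}(A)=0$.
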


For an application to pathwise stochastic control problems, let a product space $\Omega=\Omega'\times\Omega''$, with $W=W(\omega')$ an idiosyncratic Brownian motion and $W^\perp=W^\perp(\omega'')$ an independent Brownian motion, lifted to a Brownian rough path $\textbf{W}^\perp$. Evaluating the Euclidean-valued pathwise controls, with a suitably modified admissibility conditions, at
\[
    \boldsymbol{\eta}=\textbf{W}^\perp(\omega'')
\]
produces a conditional expected cost functional as a {\sl random} objective, in the {\sl essinf}-sense, of the form
\begin{equation*}
    \overline{\mathcal{J}}(\overline u;t,x;\omega):= \mathbb E \bigg[  \int_t^T h(s, \overline X^{\overline u}_s, \overline u_s) \ud s + g(\overline X^{\overline u}_T) \bigg| \mathcal{F}^{\perp}_T \bigg],
\end{equation*}
where $\overline{X}^{\overline{u}}$ satisfies the equation
\begin{equation*}
\begin{cases}
\ud \overline{X}^{\overline{u}}_s
=
b(s,\overline{X}^{\overline{u}}_s,\overline{u}_s)\,\ud s
+
\sigma(s,\overline{X}^{\overline{u}}_s,\overline{u}_s)\,\ud W_s(\omega')
+
f(s,\overline{X}^{\overline{u}}_s)\ud W^{\perp}_s (\omega''),
\quad s \in [t,T], \\
\overline{X}^{\overline{u}}_t
= x.
\end{cases}
\end{equation*}
Furthermore, under Assumptions \ref{(A1-R)}--\ref{(A3-R)}, \ref{assu: lin PMP}, for every $\omega''\in\Omega''$ let $((Y,Y'),Z)(\omega'')$ be the per $\boldsymbol{\eta} = \textbf{W}^{\perp} (\omega'')$ adjoint pair. Once again, the following informal Theorem summarizes our main findings in the case of linear perturbations as stated in Proposition~\ref{thm-PMP-for-psc}.

\begin{thm}[Pathwise SMP]
An optimal control $(\overline{u}_t)_{t\in[0,T]}\in \overline{\mathcal{A}}$ of $\overline{\mathcal{J}}$ satisfies $\mathbb{P}''-\text{a.s.}$ the usual stationarity condition
 $$
        \partial_u H(s,\overline{X}_s^{\overline{u}}, Y_s, Z_s, \overline{u}_s) = 0,\hspace{.2cm} \mathbb{P}'\otimes\text{Leb}(\ud s)-a.s.
 $$
If additionally $H$ is convex in the control variable, this yields the standard pointwise minimization of the Hamiltonian, where for every $u\in\mathbb{R}^\kappa$ we have
\begin{equation*}
        H(s,\overline{X}_s^{\overline{u}},Y_s,Z_s,\overline{u}_s) \leq H(s,\overline{X}_s^{\overline{u}},Y_s,Z_s,u),\hspace{.2cm} \mathbb{P}'\otimes\text{Leb}(\ud s)-a.s.
        \end{equation*}
\end{thm}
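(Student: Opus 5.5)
The plan is to reduce the pathwise statement to the deterministic-rough-path result, Theorem~\ref{thm:roughpmp_lin} (with the derivative formula of Proposition~\ref{Der_formula_lin}), applied $\omega''$ by $\omega''$.

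\emph{Step 1: identification of the state.} Fix $\omega''$ outside a $\mathbb{P}''$-null set on which the Brownian rough path $\textbf{W}^\perp(\omega'')$ is a geometric $\alpha$-rough path. I will show that the product-space solution $\overline X^{\overline u}(\cdot,\omega'')$ coincides $\mathbb{P}'$-a.s.\ with the RSDE solution $X^{\boldsymbol{\eta},\overline u(\cdot,\omega'')}$ of \eqref{eq:rough_stochastic_dynamics} with $\boldsymbol{\eta}=\textbf{W}^\perp(\omega'')$. The $\ud W^\perp$ integral is understood in the Stratonovich sense, consistent with the geometric lift; an Itô version only adds a correction drift. Here $\overline{\mathcal A}$ is defined so that $\overline u(\cdot,\omega'')\in\mathcal A$ for a.e.\ $\omega''$. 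The identification uses the standard consistency of rough stochastic integrals with Itô/Stratonovich integrals under independence (as in \cite{FHL21,FLZ25}). By Fubini on $\Omega'\times\Omega''$,
\[
\overline{\mathcal J}(\overline u;t,x;\omega'')=\mathcal J^{\textbf{W}^\perp(\omega'')}(\overline u(\cdot,\omega'');t,x)\quad \mathbb{P}''\text{-a.s.}
\]
Assumptions \ref{(A1-R)}--\ref{(A3-R)} are chosen so that, for a.e.\ $\omega''$, the frozen coefficients satisfy Assumptions~\ref{A1}--\ref{A3} with $\boldsymbol{\eta}=\textbf{W}^\perp(\omega'')$.

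\emph{Step 2: pathwise optimality.} I show that optimality of $\overline u$ in the essinf sense implies that, for $\mathbb{P}''$-a.e.\ $\omega''$, the section $\overline u(\cdot,\omega'')$ is optimal for $\mathcal J^{\textbf{W}^\perp(\omega'')}$. The argument is by contradiction. Suppose the set $B$ of $\omega''$ admitting a strictly better control has positive measure. Take a countable dense family $(v_n)$ of controls in $\overline{\mathcal A}$ (or a countable family along which the infimum is attained). Select measurably the first $n$ that improves on $\overline u$, and paste $\overline u 1_{B^c}+\sum_n v_n 1_{B_n}$. The resulting control lies in $\overline{\mathcal A}$ because the indicators depend only on $\omega''$, and the enlarged filtration and admissibility allow this. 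The pasted control strictly lowers $\overline{\mathcal J}$ on $B$, contradicting the essinf optimality.

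Actually, for the stationarity condition only linear perturbations are needed. So it suffices to use $\overline u+\epsilon v 1_{A}$ for $\mathcal F^\perp_T$-measurable sets $A$, and then take a countable family of directions $v$. This avoids measurable selection.

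\emph{Step 3: conclude.} Apply Proposition~\ref{Der_formula_lin} at fixed $\omega''$ with the adjoint $((Y,Y'),Z)(\omega'')$. Together with $\mathcal F^\perp_T$-measurability of the derivative, this gives, for each direction $v$ in a countable dense subset of bounded progressive controls,
\[
\mathbb E'\int_t^T\partial_uH(s,\overline X_s,Y_s,Z_s,\overline u_s)\cdot v_s\,\ud s=0\quad \mathbb{P}''\text{-a.s.}
\]
Intersecting the countably many null sets and using density, I obtain $\partial_uH=0$ $\mathbb P'\otimes\mathrm{Leb}$-a.e.\ for $\mathbb{P}''$-a.e.\ $\omega''$. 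Convexity of $H$ in $u$ then gives the minimization.

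\emph{Main obstacle.} I expect the main obstacle to be the measurability in $\omega''$ of the frozen objects: the RSDE solution, the adjoint BSDE and the derivative. These are needed so that the exceptional sets are measurable and Fubini applies. I would first try to get this from continuity of the RSDE/BSDE solution maps in the rough path metric (stability estimates), composed with the measurable map $\omega''\mapsto\textbf{W}^\perp(\omega'')$.
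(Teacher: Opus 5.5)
Your final linear-perturbation argument is correct, but it reaches pathwise optimality by a different route than the paper. The paper never perturbs inside the pathwise problem. It combines two cited facts:
\begin{itemize}
\item the conditioning identity $\overline{\mathcal J}(\overline u)=\mathcal J^{\boldsymbol\eta}(u(\boldsymbol\eta))|_{\boldsymbol\eta=\mathbf W^\perp(\omega'')}$ of \cite[Remark 7.3]{10.1214/26-AOP1852}, which is your Step 1;
\item the value-function representation $\overline{\mathbb V}=\mathbb V^{\boldsymbol\eta}|_{\boldsymbol\eta=\mathbf W^\perp(\omega'')}$ of \cite[Proposition 7.2]{10.1214/26-AOP1852}.
\end{itemize}
In the paper $\overline X^{\overline u}$ is \emph{defined} as the randomized RSDE solution, so no It\^o/Stratonovich identification is needed. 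The set where the frozen cost differs from the frozen value is therefore $\mathbb P''$-null. Off that set, $u(\mathbf W^\perp(\omega''))$ is optimal over all of $\mathcal A$, and Theorem~\ref{thm:roughpmp_lin} applies verbatim.

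Your pasting variant of Step 2 essentially reproves that representation. Its key input, a countable family attaining the infimum for every $\boldsymbol\eta$, is left unjustified. Your fallback avoids it:
\begin{itemize}
\item Essinf-optimality gives $\overline{\mathcal J}(\overline u)\le\overline{\mathcal J}(\overline u+\epsilon v)$ a.s.\ for each fixed competitor.
\item Taking $v$ in a countable family $D\subset\mathcal A$ and $\epsilon\in\{\pm 1/n\}$, the frozen inequalities hold off a single null set.
\item You then differentiate pathwise with Proposition~\ref{Der_formula_lin}.
\item At fixed $\omega''$, $\partial_uH\in L^2(\mathbb P'\otimes\mathrm{Leb})$, using $\sup_t\|Y_t\|_m<\infty$, $Z\in\mathbb H^m$ and linear growth of $\partial_u h$. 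Taking $D$ dense in that space forces $\partial_uH=0$.
\end{itemize}

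What your route buys is economy of input. It uses only the conditioning identity and the jointly measurable solution of \cite[Theorem 4.5]{10.1214/26-AOP1852}, not the value-function representation. What the paper's route buys is full pathwise optimality of the sections. That is what lets the spike-perturbation analogue of Remark~\ref{rmk 5.5} follow verbatim from Theorem~\ref{thm:roughpmp}. Yours would have to be redone with a countable family of spike perturbations (countable dense subset of $U$, rational intervals).

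Two clean-ups. First, the localization $\overline u+\epsilon v\mathbf 1_A$ and the $\mathcal F^\perp_T$-measurability of the derivative are superfluous. They must not be used by differentiating under $\mathbb E''$: that needs bounds on the difference quotients uniform in $\omega''$, and the estimates do not give these because their constants grow with $\tnorm{\mathbf W^\perp(\omega'')}$. Second, your anticipated main obstacle does not arise. The adjoint and the derivative are only used at a fixed good $\omega''$. The only measurability in $\omega''$ you need is that of $\omega''\mapsto\mathcal J^{\mathbf W^\perp(\omega'')}\bigl(u(\mathbf W^\perp(\omega''))+\epsilon v\bigr)$. This is the same measurability the paper uses to see that its exceptional set is null.
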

Analog results can be obtained in the case of spike perturbations.

\subsection{Main ideas}
A formal differentiation of the state equation can suggest the appropriate variational equations, but of course it does not justify the maximum principles by itself. One must show that the variational and adjoint equations are well-defined, that the produced remainders are small in seminorms compatible with rough integration, and that the forward--backward products admit a valid integration-by-parts formula. Overall, four main features of the nonlinear problem determine the analysis.

$\bullet$ \emph{Global estimates for affine rough SDEs.}
As in the classical setting, our analysis repeatedly requires us to compare states corresponding to different controls. This involves estimating differences between functions evaluated along distinct state trajectories. The natural approach is to linearize these differences, 
with exact Taylor remainders expressed in terms of averages of derivatives along the line segments joining the two trajectories. This leads to affine RSDEs with additional parameters accounting for these averaged coefficients, see Subsection \ref{sec 3.1} for details.
Unlike in the classical setting, however, no suitable form of Grönwall's inequality is readily available in our framework due to the presence of rough integrals. To overcome this difficulty, in Theorems~\ref{thm 3.9} and \ref{thm 3.11}, we establish global estimates for affine rough SDEs in both Hölder and $p$-variation settings. These estimates rely on an alternative iteration procedure based on rough path estimates. They provide uniform control of the linearized equations relative to the nonlinear remainders as the perturbation parameter tends to zero, and constitute our main tool for deriving the SMPs.

$\bullet $ \emph{Preserving the scale of spike perturbations in rough stochastic systems.}
One can immediately observe that the second-order cost expansion must have an error $o(\epsilon)$. This, among other things, dictates the rates of our approximations. For example, a spike in the Brownian diffusion should produce a state variation of order $\epsilon^{1/2}$. However, a direct use of H\"older estimates can lose the required powers of $\epsilon$. To see the issue already for just a Brownian forcing term, let $I$ be an interval of length $\epsilon$ and set
$M^\epsilon_t :=\int_0^t\mathds{1}_I(r)\ud W_r$.
For $k\geq2$, $\alpha<1/2$ and $p>2$, its increments satisfy (see Section \ref{sec:prelim} for definition of related seminorms)
\[
\sup_{s<t}\frac{\|\delta M^\epsilon_{s,t}\|_k}{|t-s|^\alpha}
\asymp\epsilon^{1/2-\alpha}
\quad\text{and}\quad
\|\delta M^\epsilon\|_{p\text{-var},k}
\asymp\epsilon^{1/2}.
\]
Hence, smallness in a global H\"older seminorm does not retain the natural spike scale. Our estimates under $p$-variation seminorms for affine RSDEs provide the sharper bounds designed to solve such loss of regularity. Combined with second-order expansions of the stochastic controlled vector fields, it yields the $O(\epsilon^{1/2})$ for the first variation,  $O(\epsilon)$ for the second variation, and  $o(\epsilon)$ remainder error under the related stochastic rough seminorms, see Lemma \ref{prop:diffusion-spike-estimates} for details.

$\bullet$ \emph{Adjoints multidimensional rough BSDEs with stochastic coefficients.}
The first adjoint (similar for the second adjoint) equation contains the rough integral of $\partial_x f(\cdot,X)^\top Y$ (and its Gubinelli derivative) while its martingale term involves an unknown integrand $Z$ controlled in an $\mathbb H^m$ norm, which does not by itself give a fixed H\"older modulus in time. 
The backward equations required by this approach are related to the rough BSDEs originated by Diehl and Friz \cite{Diehl2012}, who used a flow transformation and quadratic BSDE theory in the scalar case. Liang and Tang \cite{LiangTang2025} developed a multidimensional theory, including a general square-integrable solution theory for affine rough drifts with random, time-dependent coefficients. More recently, Friz et al~\cite{FrizSongZhangZhang2026} developed an intrinsic theory of nonlinear rough BSDEs using rough semimartingales and conditional $p$-variation norms with BMO-type properties. 
However, \cite{LiangTang2025} only applies to the case where coefficients of the rough integrands have uniformly bounded rough remainders (see the beginning of Subsection \ref{sec:a-rbsde} for details). In contrast to our adjoint equations, the rough coefficients of our rough BSDEs have unbounded remainders since they nonlinearly depend on the forward rough SDE and the martingale part of rough SDE is unbounded. On the other hand, rough BSDEs considered in \cite{FrizSongZhangZhang2026} only work for the one dimensional case (for the multidimensional case they need an extra a priori condition). Thus, we develop an affine rough BSDE theory in a general affine and multidimensional case in Subsection \ref{sec:a-rbsde} for our dual argument (see Remark \ref{rmk:multidimensional-rough-bsde} for more details).  

$\bullet$ \emph{Duality with compatible regularity and moments.}
The integrability of the aforementioned solutions to rough BSDEs is restricted by the variational parameter, see Theorem~\ref{ex_uni_lin_bsde}. The main reason for this is that we would like the drift and diffusion parts of the solution to not contribute to its Gubinelli derivative, see Lemmas \ref{lem:maring} and \ref{lem:smoothint}. Moreover, the previously established product rules (or Ito's formulas) for rough stochastic processes require strong integrability conditions on the drift and diffusion coefficients, see \cite{FHL21}, \cite{bugini2024malliavin}. To conduct the dual argument, we introduce  Theorem~\ref{rough_product_formula}, with substantially weaker integrability conditions on the coefficients, to give the required product rule in a mixed H\"older--p-variation setting, see also Remark \ref{rmk 3.18}. The moment restrictions in Assumptions~\ref{assu: lin PMP} and \ref{assu: per PMP} ensure that the products and stochastic integrals appearing in the duality argument are integrable in a compatible way. Geometric rough integration supplies the chain-rule cancellations, while the Brownian quadratic covariation produces the diffusion correction in the maximum principle. Together, these tools justify the variational argument directly for the rough stochastic system.

\subsection{Structure of the paper}
The remainder of the paper is organized as follows. Section~\ref{sec:prelim} recalls the required notation and results on stochastic controlled rough paths in H\"older and $p$-variation seminorms. Section~\ref{sec:tool_box} develops the estimates for affine rough SDEs, the well-posedness theory for rough BSDEs, and the rough stochastic product rule. Section~\ref{sec:max_principles} applies these tools to stochastic optimal control: Subsection~\ref{subsec: 4.1} treats linear perturbations and the first-order maximum principle, while Subsection~\ref{subsec: 4.2} establishes the second-order expansion for spike perturbations and the maximum principle for a general control space. In Section \ref{sec:application} we apply our maximum principles to the pathwise stochastic control problem.

\section{Preliminaries}\label{sec:prelim}
\subsection{Frequently used notation}\label{subsec:FUN} 
For an $n-$tensor $A,$ we use $A^\top$ to denote matrix transposes when $n=2$ and (one of the) well-defined pairwise tensor transposes for $n>2.$  By slight abuse of notation, different pairwise tensor transposes will not be distinguished when they can be inferred from the context.

For every modulus of continuity $\overline{\omega}:[0,\infty) \rightarrow [0,\infty)$ that appears below, besides the standard properties of being nondecreasing, continuous at $0$ with $\overline{\omega}(0)=0$, we will always also assume that is subadditive.

For every pair $n,n'\in\mathbb{N}$ we shall denote by $\mathbb{R}^{n\times n'}$ the space of $n\times n'$-matrices with real entries endowed with its Euclidean norm, \emph{i.e.}, for $A\in\mathbb{R}^{n\times n'}$ $|A|^2:=\textrm{Tr}[AA^\top]$, where $A^{\top}$ is the transpose of $A$. We shall frequently identify $\mathbb{R}^n$ with $\mathbb{R}^{n \times 1}$.

Throughout this work we fix a complete filtered probability space $(\Omega,\mathcal F,(\mathcal F_t)_{t\in[0,T]},\mathbb{P})$ which satisfies the usual conditions, $k\in(0,\infty),T\in\mathbb{R}_+, d,w \in\mathbb{N}$, and we shall use $(V,|\cdot|_{V})$, $(Q,|\cdot|_{Q})$, $(E,|\cdot|_{E})$ to denote finite dimensional Banach spaces. Moreover, we will use ${L}(V,Q)$ to denote the space of linear maps between $V,Q$, and $B(V\times Q, E)$ to denote the space of bilinear maps between $V\times Q,E$. The Borel $\sigma-$algebra on a metric space $\mathcal{X}$ is denoted by $\mathcal{B}(\mathcal{X}).$ For our analysis we are going to need the following spaces:

    \begin{align*}
\mathbb{L}^k(\mathcal{F}_t;V)&:=\Big\{Z:\Omega\rightarrow V:\mathcal{F}_t-\text{measurable and}\hspace{.2cm}\|Z\|_k:=\mathbb{E}\big[|Z|^k\big]^{\frac{1}{k}}<\infty\Big\},\\
\mathbb{L}^\infty(\mathcal{F}_t;V)&:=\Big\{Z:\Omega\rightarrow V:\mathcal{F}_t-\text{measurable and}\hspace{.2cm}\|Z\|_\infty:=\esssup_\omega |Z(\omega) |<\infty\Big\},\\
    \mathbb{S}^k([0,T];V)&:=\Big\{Z: [0,T]\times\Omega\to V: \text{ prog. measurable and}\hspace{.2cm} \|Z\|_{\mathbb{S}^k}:=\mathbb{E}\Big[\sup_{t\in[0,T]}|Z_t|^k\Big]^{\frac{1}{k}}<\infty\Big\},
\end{align*}
and
\begin{align*}\label{eq:bbHspace} \mathbb{H}^k([0,T];\mathbb{R}^{d\times w}):=\bigg\{Z&: [0,T]\times\Omega\to\R^{d\times w}:\text{ prog. measurable and} \hspace{.2cm}
\|Z\|_{\mathbb{H}^k}:=\mathbb{E}\bigg[\Big(\int_0^T |Z_t|^2\ud t\Big)^{\frac{k}{2}}\bigg]^{\frac{1}{k}}<\infty\bigg\}.
  \end{align*}

Moreover, we denote by $(\mathcal{C}_b(V;Q),|\cdot|_{\infty}) $ the Banach space of continuous and bounded maps, namely
	\begin{equation*}
    \mathcal{C}_b(V;Q)=\left \{f: V \rightarrow Q :\text{ continuous and  }
	 |f|_\infty<\infty
	 \right \},\quad |f|_\infty:=\sup_{x\in V}|f(x)|\,.
	\end{equation*}
	For every $\alpha\in(0,1]$ and every function $g:V\to Q$, we denote by $[g]_\alpha$ its H\"older seminorm, \emph{i.e.}
	\begin{equation*}
		[g]_\alpha=\sup_{x,y\in V:x\neq y}\frac{|g(x)-g(y)|_Q}{|x-y|_V^\alpha}.
	\end{equation*}
	For $\gamma=N+\alpha$ where $N$ is a non-negative integer and $0<\alpha\le 1$, $\mathcal{C}^\gamma_b(V;Q)$ denotes the Lipschitz space of bounded functions $f: V\rightarrow Q$ such that $f$ has Fr\'echet derivatives up to order $N$, $D^i f$, $i = 1,\dots,N$ are bounded functions  and $D^Nf$ is globally H\"older continuous with exponent $\alpha$.
	Recall that for each $v\in V$, $Df(v)\in L(V,Q)$, $D^2f(v)\in L(V\otimes V,Q)$\footnote{Note that $
{B}(V \times V,Q)
\cong
{L}(V \otimes V,Q).
$} and so on.
Whenever clear from the context, we simply write  $\mathcal{C}^\gamma_b$ (or $\text{Lip}^\gamma$) for $\mathcal{C}_b^\gamma(V;Q)$.
For each $f$ in $\mathcal{C}_b^\gamma(V;Q)$, we have seminorm and norm, respectively, given by
	\begin{equation*}
			[f]_\gamma :=\sum\nolimits_{i=1}^N|D^i f|_\infty+[D^Nf]_\alpha
			\hspace{.2cm}\text{and}\hspace{.2cm} |f|_\gamma :=|f|_\infty+[f]_\gamma.
	\end{equation*}

Finally, we define the mixed moment norms, see \cite[Subsection 2.2]{FHL21}. 
Let $1< m\leq n \le \infty$, and  
fix a sub-sigma-field $\mathcal{G} \subset \mathcal{F}$.
For a $V$-valued r.v. $\xi(\omega)$, we have $|\xi |_{V}  \in \mathbb{L}^{0,+}(\mathcal{F};\R)$ so that  
\begin{equation*}\label{Lmn_norm}
		 \|\xi |\mathcal{G} \|_m := \E[| \xi |_{V}^m | \mathcal{G}]^{\frac{1}{m}} \in \mathbb{L}^{0,+}(\mathcal{G};\R), \quad \| \xi \|_{m, n} :=
   \left\{\begin{array}{l}
     \|\| \xi |\mathcal{G} \|_m \|_n \qquad \text{if}\hspace{.1cm} \| \xi | \mathcal{G} \|_m \in \mathbb{L}^n,\\
     + \infty \text{\qquad \qquad  \quad  otherwise}
   \end{array}\right. 
\end{equation*}
is well-defined (the cases $m$ or $n=\infty$ are understood in $\esssup$-sense).  This yields a scale of  mixed moment spaces, 
\begin{equation*}\label{Lmn_space}
\mathbb{L}^{m,n} (V) = \mathbb{L}^{m,n}_{\mathcal{G}, \mathcal{F}} (V) := \{ \xi \in \mathbb{L}^0(V) :  \|\xi\|_{m,n} < \infty \}.
\end{equation*}
Note that $(\mathbb{L}^{m,n}(V),\|\cdot\|_{m,n})$ is a Banach space which coincides with $(\mathbb{L}^{m}(V),\|\cdot\|_{m})$ when $m=n$.

\begin{rmk}
We remind that throughout the present work, to ease the heavy notation, we shall omit the $\omega$-dependence from the notation whenever its possible.
\end{rmk}
    
\subsection{Rough path theory} \label{subseec:RPpreliminaries}
\subsubsection{H\"older seminorms}
Let us recall some basic results on rough paths and rough stochastic integrals. We refer to the textbook \cite{friz2020course} for more details on rough analysis. 

In what follows, $\alpha \in (\frac13, \frac12)$ is a fixed H\"older exponent corresponding to the underlying rough path and $(V,|\cdot|_V)$ a finite dimensional Banach space.  
For a one-parameter function/path $\eta:[0,T] \rightarrow V, s,t\in[0,T]$ we set $\delta \eta_{s,t}:= \eta_t - \eta_s.$ The space of $V$-valued $\alpha$-H\"older continuous functions is denoted $C^\alpha([0,T];V)$. For any two-parameter function $A: \Delta_{[s,t]} \rightarrow V,$ where
\[
    \Delta_{[s,t]} := \{(u,v) \in [s, t]^2 ~ | ~ s \leq u < v \leq t\},
\]    
the difference operator $\delta A_{s,u,t}$ is defined by
$$
    \delta A_{s,u,t}:= A_{s,t}-A_{s,u}- A_{u,t},\ \ \ s \leq u \leq t,
$$
and the $\alpha$-H\"older seminorm is denoted
$$
\lvert A \rvert_{\alpha,[s,t]} := \sup_{(u, v) \in \Delta_{[s, t]}} \frac{\lvert A_{u,v} \rvert_V}{|v - u|^\alpha}.
$$
The set of $\alpha$-H\"older continuous two-parameter functions is denoted $C^\alpha_2([0,T];V)$. More generally, for a random two-parameter $V$-valued function $A: \Omega \times \Delta_{[s,t]} \rightarrow V$ we set 
$$
\lVert A \rVert_{\alpha, k,[s,t]} := \sup_{(u,v) \in \Delta_{[s, t]}} \frac{\lVert A_{u, v} \rVert_k}{\lvert v - u \rvert^\alpha}.
$$ 

The space of all functions for which the above seminorm is finite is denoted $C^\alpha_2([0,T];V;\mathbb{L}^k)$ and $C^\alpha_2 \mathbb{L}^k$ denotes the subspace of all $\mathcal{F} \otimes \mathcal{B}(\Delta_{[0,T]})/\mathcal{B}(V)$-measurable elements of $C^\alpha_2([0,T];V;\mathbb{L}^k )$. 

In the rest of the work, we usually omit the time interval in our notations and norms if $[s,t]=[0,T]$ and write `$\lesssim_{\theta}$' to indicate that an inequality holds up to a generic constant that depends on a parameter $\theta$, or if $\theta$ is independent to the argument we will simply write `$\lesssim$'.

From now on, for simplicity reasons, and due to the natural isomorphisms between finite dimensional Banach spaces, $\eta$ will take values on $\mathbb{R}^\ell$ for some $\ell >0$.

\begin{defn}\label{defn 2.1}

\begin{itemize}
\item[(i)] A pair of (deterministic) continuous functions $\boldsymbol{\eta}:=(\eta, \eta^{(2)}):[0,T] \times\Delta([0,T]) \rightarrow \R^\ell \times \R^{\ell \times \ell}$ is called a $\alpha$-H\"older continuous {\it rough path}, if the following two conditions are satisfied.
\[
  |\delta \eta |_{\alpha;[0,T]} + |\eta^{(2)} |_{2\alpha;[0,T]} < \infty \ \ \text{and}\ \ \eta^{(2)}_{s,t}- \eta^{(2)}_{s,u}- \eta^{(2)}_{u,t}= \delta \eta_{s,u} \otimes \delta \eta_{u,t},\ \text{ for any $s\leq u \leq t$.}
\]  
The space of $\alpha$-H\"older continuous rough paths is denoted  $\mathscr{C}^{\alpha}([0,T];\R^\ell)$ and is equipped with the following rough path norm and rough path metric: for any $\boldsymbol{\eta}=(\eta,\eta^{(2)}), \bar{\boldsymbol{\eta}}=(\bar \eta, {\bar \eta}^{(2)} ) \in \mathscr{C}^\alpha([0,T];\R^\ell),$ 
$$
\tnorm{\boldsymbol{\eta}}_\alpha :=|\eta_0|+ \lvert \delta \eta \rvert_{\alpha;[0,T]} +\lvert \eta^{(2)}\rvert_{2 \alpha;[0,T]}^{\frac{1}{2}} ,\hspace{.1cm}  
\rho_{\alpha}(\boldsymbol{\eta}, \bar{\boldsymbol{ \eta}}) := |\eta_0-\bar{\eta}_0|+\lvert \delta \eta - \delta \bar \eta \rvert_{\alpha;[0,T]} + \lvert \eta^{(2)} - \bar \eta^{(2)} \rvert_{2\alpha;[0,T]}.
$$

\item[(ii)] With $\mathscr{C}^{0,\alpha}_g([0,T];\R^\ell)$ we shall denote the space of $\alpha$-H\"older geometric rough paths, defined as the completion of canonically lifted smooth paths (\emph{i.e.} if $\eta$ is smooth, $\boldsymbol{\eta}:=(\eta, \int \eta  \ud\eta)$ is the canonical lift of $\eta$) under the rough path metric $\rho_\alpha$. Note that $\mathscr{C}^{0,\alpha}_g([0,T];\R^\ell)$ is Polish and $\mathscr{C}^{0,\alpha}_g([0,T];\R^\ell) \subset \mathscr{C}^{\alpha}([0,T];\R^\ell)$ (see e.g. \cite{friz2020course} for further details).

\end{itemize}
\end{defn}

\begin{rmk}
We shall always endow $\mathscr{C}^{0,\alpha}_g([0,T];\R^\ell)$ with the rough path metric $\rho_\alpha$. Moreover, we 
may write $\rho_\alpha(\boldsymbol{\eta})$ instead of $\rho_\alpha(\boldsymbol{\eta},0)$.
\end{rmk}

To introduce stochastic controlled rough paths and to define rough integrals, we introduce for any integrable random two-parameter process $A$ the two-parameter conditional expectations process $\E_\bullet A$ by
$$
(s,t;\omega)\mapsto \E[A_{s,t}|\mathcal{F}_s](\omega) =: \E_s[A_{s,t}](\omega).
$$
At this point we set $\bar{V} := L(\mathbb{R}^\ell,V)$ and $\bar{Q} := L(\mathbb{R}^\ell,Q)$, another (finite-dimensional) Banach spaces.

\begin{defn}[{\cite[Definition 3.1]{FHL21}}](Stochastic controlled rough paths)\label{def:contro-rp} 
Let $\boldsymbol{\eta} \in \mathscr{C}^{\alpha}([0,T];\R^\ell)$ be a rough path and $\beta, \beta' \in (0,1]$, $m\in[2,\infty)$ and $n\in[m,\infty]$.
A pair $(X,X')$ is called a \textit{stochastic controlled rough path} with values in $V$ of $(m,n)$-integrability and $(\beta,\beta')$-H\"older regularity with respect to $(\mathcal{F}_t)_{t\in[0,T]}$ if the following hold: 

\begin{enumerate}  
\item[(i)]  The pair
	   \[
			(X,X') : \Omega \times [0,T] \rightarrow  V \times 
            \bar{V}
	   \]
    is progressively measurable and 
     \[
        \sup_{t\in[0,T]}\|X'_t\|_{n}< \infty\,.
	   \]

\item[(ii)] 
Letting
		\begin{align*}\label{def.bk}
				\|Z\|_{\mu;m,n}:=\sup_{(s,t)\in \Delta([0,T]):s\neq t}\frac{\|\|Z_{s,t}|\mathcal{F}_s\|_m\|_n}{(t-s)^\mu}\,,
		\end{align*}
			the quantities
			$\|\delta X\|_{\beta;m,n}$ and $\|\delta X'\|_{\beta';m,n}$ are finite.%

\item[(iii)] For the map $(s,t)\mapsto R^X_{s,t} := X_t-X_s-X'_s \delta \eta_{s,t}$ we have
			\[
			\|\mathbb{E}_\bullet R^X\|_{\beta+\beta';n}
			:=\sup_{(s,t)\in \Delta([0,T]):s\neq t}\frac{\|\mathbb{E}_sR^{X}_{s,t}\|_n}{(t-s)^{\beta+\beta'}}
			<\infty.
			\]
\end{enumerate}
The class of such pairs is denoted by $(X,X') \in \mathbf{D}_{\eta}^{\beta,\beta'} \mathbb{L}^{m,n}([0,T];V)$. Moreover, we define the quantity (seminorm)
\begin{align*}
    \|(X,X')\|_{\mathbf{D}_{\eta}^{\beta,\beta'} \mathbb{L}^{m,n}} : =  \sup_{t\in[0,T]}\|X'_t\|_n + \|\delta X\|_{\beta;m,n} + \|\delta X'\|_{\beta';m,n} + \|R^X\|_{\beta+\beta';n}.
\end{align*} 
\end{defn} 

\begin{rmk}
In case $m =n$ we might drop the two index notation from the above quantities and work with the following equivalent definition
\begin{align*}
    \|(X,X')\|_{\mathbf{D}_{\eta}^{\beta,\beta'} \mathbb{L}^{n}} : =  \|X'_0\|_n + \|\delta X\|_{\beta;n} + \|\delta X'\|_{\beta';n} + \|R^X\|_{\beta+\beta';n}.
\end{align*} 
\end{rmk}

Having introduced rough paths, we proceed to introduce rough stochastic integrals. 

\begin{prop}[{\cite[Theorem 3.4]{FHL21}}] \label{prop:roughinteg}
Let $p\in\mathbb{N},$ $\boldsymbol{\eta} \in \mathscr{C}^{\alpha}([0,T];\R^\ell)$ a rough path, with $\alpha\in(\frac{1}{3},\frac{1}{2})$ and
 $0<\beta'\leq \beta\leq\alpha$ such that $\alpha+\beta+\beta'>1$. For any $k \in [2,\infty )$ and any $\boldsymbol{\eta}$-stochastic controlled rough path $(Z,Z') \in \mathbf{D}_{\eta}^{\beta,\beta'}\mathbb{L}^k$, the following rough stochastic integral is well-defined:
\begin{equation*}
  \int_0^t Z_s \ud\boldsymbol{\eta}_s :=  \int_0^t (Z_s,Z'_s) \ud\boldsymbol{\eta}_s :=  \lim_{|\Pi| \rightarrow 0} \sum_{[u,v] \in \Pi} ( Z_u \delta \eta_{u,v} + Z'_u \eta^{(2)}_{u,v}), \quad t\in[0,T].
\end{equation*}
Here, $\Pi$ is any partition of $[0,t]$ with mesh $|\Pi|$ and the limit is taken in the sense of convergence in probability. Moreover, the process $\int_0^\cdot Z_s\, \ud \boldsymbol{\eta}_s$ is continuous adapted $\mathbb{L}^k$-integrable with null initial value such that for any $(s,t) \in \Delta,$ we have
\begin{align*} 
    &\bigg\| \int_s^t Z_u \ud\boldsymbol{\eta}_u - Z_s \delta \eta_{s,t} \bigg\|_k \lesssim    (\tnorm{\boldsymbol{\eta}}_\alpha \| \delta Z \|_{\beta,k} + \tnorm{\boldsymbol{\eta}}_\alpha^2 \sup_{r\in [s,t]} \|Z'\|_k )|t-s|^{\alpha+\beta} \\  
    & \qquad \qquad \qquad \qquad \qquad \ \ \  +   ( \tnorm{\boldsymbol{\eta}}_\alpha \|\E_{\bullet} R^Z\|_{\beta+\beta',k} + \tnorm{\boldsymbol{\eta}}_\alpha^2 \| \E_{\bullet} \delta Z'\|_{\beta',k}) |t-s|^{\alpha+\beta+\beta'},\\  
   & \bigg\| \mathbb{E}_{s} \bigg[ \int_{s}^{t} Z_{u} \ud\boldsymbol{\eta}_u - Z_{s} \delta \eta_{s,t} - Z'_s \eta^{(2)}_{s,t} \bigg] \bigg\|_{k} 
    \lesssim   \left(  \tnorm{\boldsymbol{\eta}}_\alpha \|\E_{\bullet} R^{Z} \|_{ {\beta}+ \beta', k} + \tnorm{\boldsymbol{\eta}}_\alpha^2 \| \E_{\bullet} \delta Z' \|_{\beta',k} \right) |t-s|^{\alpha +   \beta + \beta'}.
\end{align*}
In particular, 
$$
    (X,X'):=\Big(\int_0^. Z_s \ud\boldsymbol{\eta}_s, Z\Big) \in \mathbf{D}^{\alpha, \beta}_{\eta} \mathbb{L}^k.
$$
\end{prop}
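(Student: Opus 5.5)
This result is quoted from \cite[Theorem 3.4]{FHL21}. The plan is to build the integral from the local germ with Lê's stochastic sewing lemma \cite{le2020stochastic}, in the form that uses conditional expectations. We set
\[
A_{u,v}:=Z_u\,\delta\eta_{u,v}+Z'_u\,\eta^{(2)}_{u,v},\qquad (u,v)\in\Delta_{[0,T]},
\]
which is $\mathcal F_v$-measurable with $\mathcal F_u$-measurable coefficients. Stochastic sewing asks for two bounds on $\delta A_{s,u,t}$:
\begin{itemize}
\item an $\mathbb L^k$ bound with exponent strictly greater than $1/2$;
\item a bound on $\|\E_s\delta A_{s,u,t}\|_k$ with exponent strictly greater than $1$.
\end{itemize}
Once both hold, the Riemann sums converge in $\mathbb L^k$ (hence in probability), independently of the partition. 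The limit $\mathcal A$ satisfies $\|\mathcal A_{s,t}-A_{s,t}\|_k\lesssim |t-s|^{\epsilon_1+1/2}+|t-s|^{1+\epsilon_2}$ and $\|\E_s(\mathcal A_{s,t}-A_{s,t})\|_k\lesssim|t-s|^{1+\epsilon_2}$.

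\textbf{Step 1 (identity for $\delta A$).} Chen's relation $\eta^{(2)}_{s,t}-\eta^{(2)}_{s,u}-\eta^{(2)}_{u,t}=\delta\eta_{s,u}\otimes\delta\eta_{u,t}$ gives
\[
\delta A_{s,u,t}=-R^Z_{s,u}\,\delta\eta_{u,t}-\delta Z'_{s,u}\,\eta^{(2)}_{u,t}.
\]
\textbf{Step 2 (the two bounds).} In $\mathbb L^k$ we write $R^Z_{s,u}=\delta Z_{s,u}-Z'_s\delta\eta_{s,u}$. This yields
\[
\|\delta A_{s,u,t}\|_k\lesssim\big(\tnorm{\boldsymbol\eta}_\alpha\|\delta Z\|_{\beta,k}+\tnorm{\boldsymbol\eta}_\alpha^2\sup\|Z'\|_k\big)|t-s|^{\alpha+\beta},
\]
and $\alpha+\beta>1/2$ because $\beta>1-\alpha-\beta'\ge 1-\alpha-\beta$. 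For the conditional bound, the tower property gives $\E_s[R^Z_{s,u}\delta\eta_{u,t}]=\E_s[R^Z_{s,u}]\delta\eta_{u,t}$, since $\eta$ is deterministic. This yields
\[
\|\E_s\delta A_{s,u,t}\|_k\lesssim\big(\tnorm{\boldsymbol\eta}_\alpha\|\E_\bullet R^Z\|_{\beta+\beta',k}+\tnorm{\boldsymbol\eta}_\alpha^2\|\E_\bullet\delta Z'\|_{\beta',k}\big)|t-s|^{\alpha+\beta+\beta'},
\]
and $\alpha+\beta+\beta'>1$ by hypothesis. Here I use that $\|\E_\bullet\delta Z'\|_{\beta',k}$ is controlled by $\|\delta Z'\|_{\beta',k}$, and $\|\E_\bullet R^Z\|$ is finite by assumption.

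\textbf{Step 3 (sewing and the stated estimates).} Applying stochastic sewing gives the limit together with the two displayed inequalities. For the first one, I bound $\|\mathcal A_{s,t}-Z_s\delta\eta_{s,t}\|_k$ by $\|\mathcal A_{s,t}-A_{s,t}\|_k$ plus $\|Z'_s\eta^{(2)}_{s,t}\|_k\le\tnorm{\boldsymbol\eta}^2_\alpha\sup\|Z'\|_k|t-s|^{2\alpha}$. Since $\beta\le\alpha$, this term is absorbed into the $|t-s|^{\alpha+\beta}$ term. The second inequality is exactly the conditional sewing bound.

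Adaptedness, continuity (via Kolmogorov from the $\mathbb L^k$ increment bounds), and zero initial value follow from the construction. For the final claim, set $X=\int Z\,\ud\boldsymbol\eta$ and $X'=Z$. Then:
\begin{itemize}
\item $\|\delta X\|_{\alpha,k}<\infty$ comes from the first estimate together with $\|Z_s\delta\eta_{s,t}\|_k$;
\item $\delta X'=\delta Z$ is $\beta$-Hölder;
\item $\E_sR^X_{s,t}=\E_s[\mathcal A_{s,t}-Z_s\delta\eta_{s,t}]$ is of order $|t-s|^{2\alpha}\wedge$ (second estimate plus $Z'_s\eta^{(2)}$), which is $O(|t-s|^{\alpha+\beta})$.
\end{itemize}
Hence $(X,X')\in\mathbf D^{\alpha,\beta}_\eta\mathbb L^k$.

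\textbf{Main obstacle.} The delicate point is Step 2's conditional bound. It needs only conditional (not pathwise) smallness of $R^Z$ and of $\delta Z'$, and exploits the determinism of $\eta$ to pull $\delta\eta_{u,t}$ and $\eta^{(2)}_{u,t}$ out of $\E_s$. The other delicate point is checking the exponent conditions $\alpha+\beta>1/2$ and $\alpha+\beta+\beta'>1$ needed by Lê's lemma.
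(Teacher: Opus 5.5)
Your sewing argument is the one behind \cite[Theorem 3.4]{FHL21}. It consists of the germ $A_{u,v}=Z_u\delta\eta_{u,v}+Z'_u\eta^{(2)}_{u,v}$, the identity $\delta A_{s,u,t}=-R^Z_{s,u}\delta\eta_{u,t}-\delta Z'_{s,u}\eta^{(2)}_{u,t}$, the unconditional bound of order $|t-s|^{\alpha+\beta}$ with $\alpha+\beta>\frac12$, and the conditional bound of order $|t-s|^{\alpha+\beta+\beta'}$ obtained by pulling the deterministic increments of $\boldsymbol{\eta}$ out of $\E_s$. With L\^e's stochastic sewing lemma this correctly gives the limit of the Riemann sums, adaptedness, both displayed estimates, and $(X,X')\in\mathbf{D}^{\alpha,\beta}_\eta\mathbb{L}^k$. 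The last point uses $Z_0\in\mathbb{L}^k$, which is implicit in the claim that the integral is $\mathbb{L}^k$-integrable.

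The step that fails is continuity ``via Kolmogorov from the $\mathbb{L}^k$ increment bounds''. The only increment bound you have is $\|\delta X_{s,t}\|_k\lesssim|t-s|^{\alpha}$. It cannot be improved, since $Z_s\delta\eta_{s,t}$ is genuinely of that size. Kolmogorov requires $k\alpha>1$, which fails for every $k\in[2,1/\alpha]$, in particular for $k=2$, which the statement allows. Nor can you borrow pathwise regularity from $Z$, which is only progressively measurable with $\mathbb{L}^k$-H\"older increments of order $\beta<\frac12$.

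What is needed is a maximal form of stochastic sewing. Take $\Pi_n$ dyadic, let $u_t$ be the last point of $\Pi_n$ below $t$, and set $X^n_t:=\sum_{[u,v]\in\Pi_n,\,v\le t}A_{u,v}+A_{u_t,t}$. These processes are pathwise continuous, because $A_{u,\cdot}$ is, and they converge to $X_t$ for each fixed $t$. One then shows $\sup_{t}|X^m_t-X^n_t|\to0$ in probability by telescoping through the intermediate dyadic levels:
\begin{itemize}
\item the martingale parts of the defects $\delta A$ are controlled by Doob/BDG, which uses $2(\alpha+\beta)>1$;
\item their conditional means are controlled by the $|t-s|^{\alpha+\beta+\beta'}$ bound;
\item the suprema over blocks and over the partial last intervals (where $\sup_t|\delta\eta_{c,t}|$ is deterministic) are handled by union bounds costing $2^{l/k}$ at level $l$.
\end{itemize}
These union bounds are affordable precisely because the remainder exponent satisfies $\alpha+\beta>\frac12\ge\frac1k$, whereas $\alpha>\frac1k$ may fail. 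This extra maximal argument is why \cite{FHL21} need more than the two sewing bounds. The paper itself uses the resulting maximal estimate, citing \cite[Lemma 2.12]{FHL21} and the proof of \cite[Theorem 3.4]{FHL21}, in the proof of Theorem~\ref{rough_product_formula}.
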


\begin{defn}[{\cite[Definition 3.7]{FHL21}}](Stochastic controlled vector fields)\label[definition]{def.scvec}
Let $\boldsymbol{\eta} \in \mathscr{C}^{\alpha}([0,T];\R^\ell)$ be a rough path, $\beta, \beta' \in (0,1]$ and $\gamma\in(1,\infty)$. We call $(f,f')$ {\em stochastic controlled vector field on $V$}
of $(m,n)$-integrability and 
$(\gamma,\beta,\beta')$-H\"older regularity with respect to $(\mathcal{F}_t)_{t\in[0,T]}$
if the following hold:
		\begin{enumerate}
			\item[$(\text{i})$] 
			The pair
			\[
			(f,f') : \Omega \times  [0,T] \times V  \rightarrow  \bar{V} \times L(\mathbb{R}^\ell,\bar{V})
			\]
			is $ \text{Prog}(\Omega \times [0,T]) \otimes \mathcal{B}(V)/ \mathcal{B}(\bar{V}) \otimes \mathcal{B}( L(\mathbb{R}^\ell,\bar{V}))$-measurable,  and for every $t \in [0,T]$ we have $(f(t,\cdot),f'(t,\cdot)) \in \mathcal{C}^\gamma_{b}(V;\bar{V}) \times \mathcal{C}^{\gamma-1}_{b}(V;L(\mathbb{R}^\ell,\bar{V}))$. Moreover, $(f,f')$ is bounded in the sense
			\[
			\sup_{t\in [0,T]}\||f_t|_{\gamma}\|_n +  \sup_{t\in [0,T]}\||f'_t|_{\gamma-1}\|_n< \infty.
			\]

			\item[$(\text{ii})$] 
			Letting
			\begin{align*}\label{def.bk}
				\bk{Z}_{\mu;m,n}:=\sup_{(s,t)\in \Delta([0,T]):s\neq t}\frac{\|\|\sup_{v\in V}|Z_{s,t}(v)|\big|\mathcal{F}_s\|_m\|_n}{(t-s)^\mu},
			\end{align*}
			the quantities
			$\bk{\delta f}_{\beta;m,n}$, $\bk{\delta f'}_{\beta';m,n}$ and $ \bk{\delta \partial_{v} f}_{\beta';m,n}$ are finite.%

			  \item[$(\text{iii})$] 
            For the map $(s,t)\mapsto R^f_{s,t} := f_t-f_s-f'_s \delta \eta_{s,t}$ we have
			\[
			\bk{ R^f}_{\beta+\beta';n}
			:=\sup_{(s,t)\in \Delta([0,T]):s\neq t}\frac{\|\sup_{v\in V}\mathbb{E}_s|R^{f}_{s,t}(v)|\|_n}{(t-s)^{\beta+\beta'}}
			<\infty.
			\]
		\end{enumerate}
		The class of such vector fields is denoted by $\mathbf{D}_\eta^{\beta,\beta'}\mathbb{L}^{m,n}\mathcal{C}^\gamma_b( [0,T];V)$, or simply
	 $\mathbf{D}_\eta^{\beta,\beta'}\mathbb{L}^{m,n}\mathcal{C}^\gamma_b$ whenever the tuple $([0,T];V)$ is clear from the context. 
	\end{defn}
\begin{rmk}
  In case $f,f'$ are deterministic we will use the notation $\mathbf{D}_\eta^{\beta,\beta'}\mathcal{C}^\gamma_b( [0,T];V)$, or simply
	 $\mathbf{D}_\eta^{\beta,\beta'}\mathcal{C}^\gamma_b$.
\end{rmk}

Finally, we also need the following definition.
\begin{defn}[{\cite[Definition 3.2]{bugini2024malliavin}}](Stochastic controlled linear vector fields)\label{lin-vec} 
Let $\boldsymbol{\eta} \in \mathscr{C}^{\alpha}([0,T];\R^\ell)$ be a rough path, $\beta, \beta' \in (0,1]$, $m\in[2,\infty)$ and $n\in[m,\infty]$. 
We call $(f,f')$ \textit{stochastic controlled linear vector field} from $V$ to $\bar{Q}$ of $(m,n)$-integrability and $(\beta,\beta')$-H\"older regularity with respect to $(\mathcal{F}_t)_{t\in[0,T]}$ if the following hold: 

\begin{enumerate}  
\item[$(\text{i})$]  The pair
	   \[
			(f,f') : \Omega \times [0,T] \rightarrow  \bar{Q} \times L(\mathbb{R}^\ell,\bar{Q})
	   \]
    is progressively measurable and bounded in the sense
     \[
			\sup_{t\in [0,T]} \|f_t\|_{\infty} +  \sup_{t\in [0,T]}\|f'_t\|_{\infty}< \infty\,.
	   \]
    
\item[$(\text{ii})$] 
Letting
		\begin{align*}\label{def.bk}
				\|Z\|_{\mu;m,n}:=\sup_{(s,t)\in \Delta([0,T]):s\neq t}\frac{\|\|Z_{s,t}|\mathcal{F}_s\|_m\|_n}{(t-s)^\mu}\,,
		\end{align*}
			the quantities
			$\|\delta f\|_{\beta;m,n}$ and $\|\delta f'\|_{\beta';m,n}$ are finite.
            
\item[$(\text{iii})$] For the map $(s,t)\mapsto R^f_{s,t} := f_t-f_s-f'_s \delta \eta_{s,t}$ we have
			\[
			\|\mathbb{E}_\bullet R^f\|_{\beta+\beta';n}
			:=\sup_{(s,t)\in \Delta([0,T]):s\neq t}\frac{\|\mathbb{E}_sR^{f}_{s,t}\|_n}{(t-s)^{\beta+\beta'}}
			<\infty.
			\]
\end{enumerate}
The class of such linear vector fields is denoted by $\mathbf{D}_{\eta}^{\beta,\beta'} \mathbb{L}^{m,n}_{lin}\big([0,T];L(V,\bar{Q})\big)$, or simply $\mathbf{D}_{\eta}^{\beta,\beta'} \mathbb{L}^{m,n}_{lin}$ whenever the tuple $([0,T];V,\bar{Q})$ is clear from the context. Moreover, we define the quantity  
\begin{align*}
    \|(f,f')\|_{\mathbf{D}_{\eta}^{\beta,\beta'} \mathbb{L}^{m,n}_{lin}} : = \bk{\delta f}_{\beta;m,n} + \bk{\delta f'}_{\beta';m,n} + \bk{ R^f}_{\beta+\beta';n}.
\end{align*}
\end{defn} 

\subsubsection{\texorpdfstring{$p$-Variation seminorms}{p-Variation seminorms}}

\begin{defn}\label{defn: V^pL^q and V^pL^q,r spaces}
Let $p,m \in [1,\infty)$ and $n \in [m,\infty]$. Given a two-parameter stochastic process $F = (F_{s,t})_{(s,t) \in\Delta([0,T])}$ with values in $V$, we define
\begin{equation*}
\|F\|_{p\text{-var},m;[s,t]} := \bigg(\sup_{\mathcal{P} \subset [s,t]} \sum_{[u,v] \in \mathcal{P}} \|F_{u,v}\|_{m}^p\bigg)^{\frac{1}{p}} = \bigg(\sup_{\mathcal{P} \subset [s,t]} \sum_{[u,v] \in \mathcal{P}} \mathbb{E}[|F_{u,v}|^m]^\frac{p}{m}\bigg)^{\frac{1}{p}}
\end{equation*}
and
\begin{equation*}
\|F\|_{p\text{-var},m,n;[s,t]} := \bigg(\sup_{\mathcal{P} \subset [s,t]} \sum_{[u,v] \in \mathcal{P}} \|\|F_{u,v}|\mathcal{F}_u\|_m\|_n^p\bigg)^{\hspace{-2pt}\frac{1}{p}} = \bigg(\sup_{\mathcal{P} \subset [s,t]} \sum_{[u,v] \in \mathcal{P}} \|\E_u[|F_{u,v}|^m]^{\frac{1}{m}}\|_{n}^{p}\bigg)^{\frac{1}{p}}
\end{equation*}
for $(s,t) \in\Delta([0,T])$. For a stochastic process $X = (X_t)_{t \in [0,T]}$, we then let
\begin{equation*}
\|X\|_{p\text{-var},m;[s,t]}:= \|\delta X\|_{p\text{-var},m;[s,t]} \quad \text{and} \quad \|X\|_{p\text{-var},m,n;[s,t]} := \|\delta X\|_{p\text{-var},m,n;[s,t]}.
\end{equation*}
\end{defn}

The following are the variation analogs of the H\"older setting.

\begin{defn} \label{defn 2.1-r}

\begin{itemize}
\item[$(1)$] A pair of (deterministic) continuous functions $\boldsymbol{\eta}:=(\eta, \eta^{(2)}):[0,T] \times\Delta([0,T]) \rightarrow \R^\ell \times \R^{\ell \times \ell}$ is called a $p$-variation continuous {\it rough path}, if the following two conditions are satisfied.
\[
(\text{i})\  |\delta \eta |_{p\text{-var};[0,T]},\ |\eta^{(2)} |_{\frac{p}{2}\text{-var};[0,T]} < \infty; \ \ (\text{ii})\  \eta^{(2)}_{s,t}- \eta^{(2)}_{s,u}- \eta^{(2)}_{u,t}= \delta \eta_{s,u} \otimes \delta \eta_{u,t},\ \text{ for any $s\leq u \leq t$.}
\]  
The space of $p$-variation continuous rough paths is denoted  $\mathscr{C}^{p\text{-var}}([0,T];\R^\ell)$ and is equipped with the following rough path norm and rough path metric: for any $\boldsymbol{\eta}=(\eta,\eta^{(2)}), \bar{\boldsymbol{\eta}}=(\bar \eta, {\bar \eta}^{(2)} ) \in \mathscr{C}^{p\text{-var}}([0,T];\R^\ell),$ 
\begin{align*}
\tnorm{\boldsymbol{\eta}}_{p\text{-var}} &:=|\eta_0|+ \lvert \delta \eta \rvert_{p\text{-var};[0,T]} +\lvert \eta^{(2)}\rvert_{\frac{p}{2}\text{-var};[0,T]}^{\frac{1}{2}},\\ 
\rho_{p\text{-var}}(\boldsymbol{\eta}, \bar{\boldsymbol{ \eta}}) &:= |\eta_0-\bar{\eta}_0|+\lvert \delta \eta - \delta \bar \eta \rvert_{p\text{-var};[0,T]} + \lvert \eta^{(2)} - \bar \eta^{(2)} \rvert_{\frac{p}{2}\text{-var};[0,T]}.
\end{align*}

\item[$(2)$] With $\mathscr{C}^{0,p\text{-var}}_g([0,T];\R^\ell)$ we shall denote the space of $p$-variation geometric rough paths, defined as the completion of canonically lifted smooth paths (\emph{i.e.} if $\eta$ is smooth, $\boldsymbol{\eta}:=(\eta, \int \eta  \ud\eta)$ is the canonical lift of $\eta$) under the rough path metric $\rho_{p\text{-var}}$. Note that $\mathscr{C}^{0,p\text{-var}}_g([0,T];\R^\ell)$ is Polish and $\mathscr{C}^{0,p\text{-var}}_g([0,T];\R^\ell) \subset \mathscr{C}^{p\text{-var}}([0,T];\R^\ell)$ (see e.g. \cite{allan2026rough} for further details).
\end{itemize}
\end{defn}

\begin{rmk}
Trivially $\mathscr{C}^{\alpha}([0,T];\R^\ell) \subseteq \mathscr{C}^{\frac{1}{\alpha}\text{-var}}([0,T];\R^\ell)$ and $\mathscr{C}^{0,\alpha}_g([0,T];\R^\ell)\subseteq \mathscr{C}^{0,\frac{1}{\alpha}\text{-var}}_g([0,T];\R^\ell) $.
\end{rmk}

\begin{defn}[{\cite[Definition 4.1]{LiangTang2025}}](Stochastic controlled rough paths)\label{def:contro-rp-r} 
Let $\boldsymbol{\eta} \in \mathscr{C}^{p\text{-var}}([0,T];\R^\ell)$ be a rough path and $\beta, \beta' \in [1,\infty)$, $p,m\in[2,\infty)$ and $n\in[m,\infty]$.
A pair $(X,X')$ is called a \textit{stochastic controlled rough path} with values in $V$ of $(m,n)$-integrability and $(\beta,\beta')$-variation with respect to $(\mathcal{F}_t)_{t\in[0,T]}$ if the following hold: 

\begin{enumerate}  
\item[$(\text{i})$]  The pair
	   \[
			(X,X') : \Omega \times [0,T] \rightarrow  V \times 
            \bar{V}
	   \]
    is progressively measurable and 
     \[
             \sup_{t\in[0,T]}\|X'_t\|_{n}< \infty\,.
	   \]

\item[$(\text{ii})$] 
Letting $(s,t)\mapsto R^X_{s,t} := X_t-X_s-X'_s \delta \eta_{s,t}$ we have
		\begin{align*}\label{def.bk}
				\|\delta X\|_{\beta\text{-var},m,n;[0,T]} + \|\delta X'\|_{\beta'\text{-var},m,n;[0,T]} + \|\mathbb{E}_\bullet R^X\|_{\frac{\beta \beta'}{\beta + \beta'}\text{-var},n;[0,T]} < \infty.
		\end{align*}
\end{enumerate}
The class of such pairs is denoted by $(X,X') \in \mathbf{D}_{\eta}^{(\beta,\beta')\text{-var}} \mathbb{L}^{m,n}([0,T];V)$. Moreover, we define the quantity (seminorm)
\begin{multline*}
    \|(X,X')\|_{\mathbf{D}_{\eta}^{(\beta,\beta')\text{-var}} \mathbb{L}^{m,n}} : =  \sup_{t\in[0,T]}\|X'_t\|_n + \|\delta X\|_{\beta\text{-var},m,n;[0,T]} + \|\delta X'\|_{\beta'\text{-var},m,n;[0,T]} + \|\mathbb{E}_\bullet R^X\|_{\frac{\beta \beta'}{\beta + \beta'}\text{-var},n;[0,T]}.
\end{multline*} 
\end{defn}

\begin{rmk}
In case $m =n$ we might drop the two index notation from the above quantities and work with the following equivalent definition
\begin{align*}
    \|(X,X')\|_{\mathbf{D}_{\eta}^{(\beta,\beta')\text{-var}} \mathbb{L}^{n}} : =  \|X'_0\|_n + \|\delta X\|_{\beta\text{-var},n;[0,T]} + \|\delta X'\|_{\beta'\text{-var},n;[0,T]} + \|\mathbb{E}_\bullet R^X\|_{\frac{\beta \beta'}{\beta + \beta'}\text{-var},n;[0,T]}.
\end{align*} 
\end{rmk}

\begin{lemma}[{\cite[Theorem 4.10]{LiangTang2025}}]\label{lem:roughstochint}
Let $\boldsymbol{\eta} \in \mathscr{C}^{p\text{-var}}([0,T];\R^\ell)$ a rough path, with $p\in(2,3)$ and $p\leq\beta\leq \beta'$
such that
$\frac{1}{p}+\frac{1}{\beta}+\frac{1}{\beta'}>1$.
For any $k \in [2,\infty )$ and any $\boldsymbol{\eta}$-stochastic controlled rough path $(Z,Z') \in \mathbf{D}_{\eta}^{(\beta,\beta')\text{-var}}\mathbb{L}^k$, the following rough stochastic integral is well-defined:
\begin{equation*}
  \int_0^t Z_s \ud\boldsymbol{\eta}_s :=  \int_0^t (Z_s,Z'_s) \ud\boldsymbol{\eta}_s :=  \lim_{|\Pi| \rightarrow 0} \sum_{[u,v] \in \Pi} ( Z_u \delta \eta_{u,v} + Z'_u \eta^{(2)}_{u,v}), \quad t\in[0,T].
\end{equation*}
Here, $\Pi$ is any partition of $[0,t]$ with mesh $|\Pi|$ and the limit is taken in the sense of convergence in probability. Moreover, the process $\int_0^\cdot Z_s\, \ud \boldsymbol{\eta}_s$ is continuous adapted $\mathbb{L}^k$-integrable with null initial value such that for any $(s,t) \in \Delta,$ we have
\begin{equation}\label{eq:rough-int-estimate}
\begin{aligned}
&\Big\|
\int_s^t Z_u\,\ud\boldsymbol{\eta}_u
-
Z_s\,\delta \eta_{s,t}
-
Z'_s\,\eta^{(2)}_{s,t}
\Big\|_{k}\\
&\ \ \ \lesssim
\Bigl(
\|\delta Z\|_{\beta\text{-var},k;[s,t]}
+  \|Z'\|_{C^{\beta'\text{-var} }\LL^k} |\delta \eta|_{p\text{-var};[s,t]} +
\|\E_\bullet R^Z\|_{ \frac{\beta \beta'}{\beta+\beta'}\text{-var}, k;[s,t] }
\Bigr)
|\delta \eta|_{p\text{-var};[s,t]}
\\
& \ \  \ \ \ \ \ +
\|\delta Z'\|_{\beta'\text{-var},k;[s,t]}
|\eta^{(2)}|_{\frac{p}{2}\text{-var};[s,t]},
\end{aligned}
\end{equation}
where $\| Z' \|_{C^{\beta'\text{-var} }\LL^k}:=\sup_{u\in [s,t]}\|Z'_u\|_{k} + \|\delta Z' \|_{\beta'\text{-var}, k;[s,t] }$.

In particular, 
$$
    (X,X'):=\Big(\int_0^. Z_s \ud\boldsymbol{\eta}_s, Z\Big) \in \mathbf{D}^{(p,\beta)\text{-var}}_{\eta} \mathbb{L}^k.
$$
\end{lemma}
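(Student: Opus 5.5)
The plan is a stochastic sewing argument in the $p$-variation scale, following the structure of the Hölder case (Proposition~\ref{prop:roughinteg}) with Hölder moduli replaced by controls. Set the germ
\[
A_{s,t}:=Z_s\,\delta\eta_{s,t}+Z'_s\,\eta^{(2)}_{s,t},
\]
which is adapted at time $s$, and apply a $p$-variation (control-function) version of L\^e's stochastic sewing lemma. This version requires bounds on $\|\delta A_{s,u,t}\|_k$ and $\|\mathbb{E}_s\delta A_{s,u,t}\|_k$ by powers of superadditive controls. By Chen's relation, for $s\le u\le t$,
\[
\delta A_{s,u,t}=-\big(R^Z_{s,u}\,\delta\eta_{u,t}+\delta Z'_{s,u}\,\eta^{(2)}_{u,t}\big),
\]
where I use $\delta Z_{s,u}=Z'_s\delta\eta_{s,u}+R^Z_{s,u}$.

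First I introduce the controls
\[
w_\eta(s,t):=|\delta\eta|^p_{p\text{-var};[s,t]}+|\eta^{(2)}|^{p/2}_{\frac p2\text{-var};[s,t]},\qquad
w_1(s,t):=\|\delta Z\|^\beta_{\beta\text{-var},k;[s,t]},
\]
\[
w_2(s,t):=\|\delta Z'\|^{\beta'}_{\beta'\text{-var},k;[s,t]},\qquad
w_3(s,t):=\|\mathbb{E}_\bullet R^Z\|^{r}_{r\text{-var},k;[s,t]},\quad r=\tfrac{\beta\beta'}{\beta+\beta'}.
\]
Each of these is superadditive. Taking conditional expectations and using that $\delta\eta,\eta^{(2)}$ are deterministic gives
\[
\|\mathbb{E}_s\delta A_{s,u,t}\|_k\le w_3^{1/r}w_\eta^{1/p}+w_2^{1/\beta'}w_\eta^{2/p}.
\]
The total exponent here exceeds $1$, because $\frac1\beta+\frac1{\beta'}+\frac1p>1$ and $\frac1{\beta'}+\frac2p>1$ (the latter since $\beta'\ge p$ and $p<3$). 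For the unconditional bound, write $R^Z_{s,u}=\delta Z_{s,u}-Z'_s\delta\eta_{s,u}$. This gives
\[
\|\delta A_{s,u,t}\|_k\lesssim\big(w_1^{1/\beta}+\sup\|Z'\|_k\,w_\eta^{1/p}\big)w_\eta^{1/p}+w_2^{1/\beta'}w_\eta^{2/p}.
\]
The total exponent of this bound exceeds $\frac12$, because $\frac1\beta+\frac1p\ge\frac2p>\frac12$ for $p<3$, as stochastic sewing requires.

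Second, stochastic sewing yields the limit $\mathcal{A}_t$ of the Riemann sums in $\mathbb{L}^k$, hence in probability, independently of the partition. It also gives the local estimate
\[
\|\mathcal{A}_{s,t}-A_{s,t}\|_k\lesssim \|\delta A\|\text{-terms}+\|\mathbb{E}\delta A\|\text{-terms}
\]
on $[s,t]$. Regrouping these terms produces exactly \eqref{eq:rough-int-estimate}: the $\mathbb{E}\delta A$ terms contribute $\|\mathbb{E}_\bullet R^Z\|\,|\delta\eta|+\|\delta Z'\|\,|\eta^{(2)}|$, and the $\delta A$ terms contribute $(\|\delta Z\|+\|Z'\|_{C^{\beta'\text{-var}}\mathbb{L}^k}|\delta\eta|)\,|\delta\eta|$. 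Adaptedness and continuity follow from the construction, and the $\mathbb{L}^k$ bound uses $\|A_{s,t}\|_k$.

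Third, the ``in particular'' statement. From the estimate, the remainder $\int_s^tZ\,\ud\boldsymbol\eta-Z_s\delta\eta_{s,t}$ equals $Z'_s\eta^{(2)}_{s,t}$ plus a quantity bounded by the right-hand side. Its $p/2$-variation in $\mathbb{L}^k$ is finite, which is at least as good as the required $\frac{p\beta}{p+\beta}$-variation since $\frac2p\ge\frac1p+\frac1\beta$. The increment itself has finite $p$-variation, and $Z$ has finite $\beta$-variation. This gives membership in $\mathbf{D}^{(p,\beta)\text{-var}}_\eta\mathbb{L}^k$.

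The main obstacle is the sewing lemma with general controls rather than $|t-s|$ powers. The dyadic-partition argument must be replaced by point-removal or control-adapted partitions, and the martingale (BDG) part must be handled through a sum of the form $\sum w^{2\theta}\le w^{2\theta}(0,T)$ for $2\theta>1$. I would cite the $p$-variation stochastic sewing of Friz--Zorin-Kranich \cite{friz2023rough}, which states exactly this, rather than reprove it.
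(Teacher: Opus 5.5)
Your argument is correct in substance, but it takes a different route from the paper. The paper runs no sewing argument. It imports the estimate of \cite[Theorem 4.10]{LiangTang2025}, whose bound is expressed through $\|\delta Z^M\|_{\beta\text{-var},k}$ for a part $Z^M=Z-Z^J$ of the integrand. It then adds only the triangle inequality $\|\delta Z^M\|_{\beta\text{-var},k}\le\|\delta Z\|_{\beta\text{-var},k}+\sup_t\|Z'_t\|_k|\delta\eta|_{\beta\text{-var}}+\|R^{Z^J}\|_{\frac{\beta\beta'}{\beta+\beta'}\text{-var},k}$ to restate it in terms of $\delta Z$. Existence, adaptedness and continuity are inherited from Liang--Tang.

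You instead rebuild the result from a control-based stochastic sewing lemma applied to the germ $Z_s\delta\eta_{s,t}+Z'_s\eta^{(2)}_{s,t}$, and your Chen computation $\delta A_{s,u,t}=-R^Z_{s,u}\delta\eta_{u,t}-\delta Z'_{s,u}\eta^{(2)}_{u,t}$ is right. A suitable sewing lemma, stated for products of pairs of controls (exactly the shape of your bounds), is \cite[Theorem 3.2]{allan2026rough}, which the paper itself uses in Theorem~\ref{rough_product_formula}. Your route is self-contained modulo that lemma and needs no splitting of $Z$. It lands directly on the form of \eqref{eq:rough-int-estimate}, and as a by-product gives the conditional bound on $\mathbb{E}_s\bigl(\int_s^tZ\,\mathrm{d}\boldsymbol{\eta}-Z_s\delta\eta_{s,t}-Z'_s\eta^{(2)}_{s,t}\bigr)$ with total exponent $>1$. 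The paper's route buys brevity.

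Some slips need repair.

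\textbf{Exponent checks.} These use the hypotheses backwards. Since $p\le\beta\le\beta'$ gives $\frac1{\beta'}\le\frac1\beta\le\frac1p$, the inequality $\frac1\beta+\frac1p\ge\frac2p$ fails for $\beta>p$. Likewise, $\beta'\ge p$ and $p<3$ do not imply $\frac1{\beta'}+\frac2p>1$. The correct reasons are:
\begin{itemize}
\item $\frac1{\beta'}+\frac2p\ge\frac1p+\frac1\beta+\frac1{\beta'}>1$;
\item $\frac1p+\frac2\beta>1$ (using $\beta'\ge\beta$), which gives $\frac1\beta+\frac1p>\frac12+\frac1{2p}$.
\end{itemize}

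\textbf{Separate controls.} To get \eqref{eq:rough-int-estimate} exactly, do not bundle $\delta\eta$ and $\eta^{(2)}$ into one $w_\eta$; that puts $|\eta^{(2)}|^{1/2}_{\frac p2\text{-var}}$ into the first group. Use $|\delta\eta|^p_{p\text{-var}}$ for the $R^Z\delta\eta$ term and $|\eta^{(2)}|^{p/2}_{\frac p2\text{-var}}$ for the $\delta Z'\eta^{(2)}$ term.

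\textbf{The \emph{in particular} step.} Only $Z'_s\eta^{(2)}_{s,t}$ has finite $\frac p2$-variation. The sewing remainder in general does not when $\beta>p$, because its term $\|\delta Z\|_{\beta\text{-var}}|\delta\eta|_{p\text{-var}}$ has exponent $\frac1\beta+\frac1p<\frac2p$. Every term, however, has exponent at least $\frac1p+\frac1\beta$, which is all the $\frac{p\beta}{p+\beta}$-variation requirement needs. Alternatively, use your conditional estimate.

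\textbf{Continuity.} This does not follow from the construction alone. From $\|\delta X_{s,t}\|_k\lesssim w(s,t)^{1/p}$, Kolmogorov needs $k>p$, whereas $k=2$ is allowed. A maximal inequality accompanying the sewing lemma is required.
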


\begin{proof}
The proof of the last estimate follows by \cite[Theorem 4.10]{LiangTang2025}, and the fact that 
\[
\begin{split}
\| \delta Z^M \|_{\beta\text{-var},k} &\le \|\delta Z\|_{\beta\text{-var},k} +\|\delta Z^J\|_{\beta\text{-var},k} \\
&\le  \|\delta Z\|_{\beta\text{-var},k} + \sup_t\|Z'_t\| |\delta \eta|_{\beta\text{-var}} + \|R^{Z^J}\|_{\frac{\beta \beta'}{\beta+\beta'}\text{-var},k}.
\end{split}
\]

\end{proof}

\begin{defn}(Stochastic controlled linear vector fields)\label{lin-vec-r} 
Let $\boldsymbol{\eta} \in \mathscr{C}^{p\text{-var}}([0,T];\R^\ell)$ be a rough path, $\beta, \beta' \in [1,\infty)$, $p,m\in[2,\infty)$ and $n\in[m,\infty]$. 
We call $(f,f')$ \textit{stochastic controlled linear vector field} from $V$ to $\bar{Q}$ of $(m,n)$-integrability and $(\beta,\beta')$-variation with respect to $(\mathcal{F}_t)_{t\in[0,T]}$ if the following hold: 

\begin{enumerate}  
\item[$(\text{i})$]  The pair
	   \[
			(f,f') : \Omega \times [0,T] \rightarrow  \bar{Q} \times L(\mathbb{R}^\ell,\bar{Q})
	   \]
    is progressively measurable and bounded in the sense
     \[
			\sup_{t\in [0,T]} \|f_t\|_{\infty} +  \sup_{t\in [0,T]}\|f'_t\|_{\infty}< \infty\,.
	   \]
    
\item[$(\text{ii})$] 
Letting $(s,t)\mapsto R^f_{s,t} := f_t-f_s-f'_s \delta \eta_{s,t}$ we have
		\begin{align*}\label{def.bk}
				\|\delta f\|_{\beta\text{-var},m,n;[0,T]} + \|\delta f'\|_{\beta'\text{-var},m,n;[0,T]} + \|\mathbb{E}_\bullet R^f\|_{\frac{\beta \beta'}{\beta + \beta'}\text{-var},n;[0,T]} < \infty.
		\end{align*}
\end{enumerate}
The class of such linear vector fields is denoted by $\mathbf{D}_{\eta}^{(\beta,\beta')\text{-var}} \mathbb{L}^{m,n}_{lin}\big([0,T];L(V,\bar{Q})\big)$, or simply $\mathbf{D}_{\eta}^{(\beta,\beta')\text{-var}} \mathbb{L}^{m,n}_{lin}$ whenever the tuple $([0,T];V,\bar{Q})$ is clear from the context. Moreover, we define the quantity  
\begin{align*}
    \|(f,f')\|_{\mathbf{D}_{\eta}^{(\beta,\beta')\text{-var}} \mathbb{L}^{m,n}_{lin}} : = \|\delta f\|_{\beta\text{-var},m,n;[0,T]} + \|\delta f'\|_{\beta'\text{-var},m,n;[0,T]} + \|\mathbb{E}_\bullet R^f\|_{\frac{\beta \beta'}{\beta + \beta'}\text{-var},n;[0,T]}.
\end{align*}
\end{defn} 

\begin{rmk}\label{rmk 2.16}
Let $\boldsymbol{\eta} \in \mathscr{C}^{p\text{-var}}([0,T];\R^\ell)$ be a rough path, $\beta, \beta' \in [1,\infty)$, $p,m\in[2,\infty)$, $n\in[m,\infty]$, $(X,X')$ stochastic controlled rough path and $(f,f')$ stochastic controlled linear vector field. Then, from \cite[Lemma 2.11 and Remark 4.2]{allan2026rough} (adapted to our continuous setting) we see that
\begin{align*}
    &\|\delta X\|_{\beta\text{-var},m,n;[0,T]}^\beta,  \|\delta X'\|_{\beta'\text{-var},m,n;[0,T]}^{\beta'}, \|\delta f\|_{\beta\text{-var},m,n;[0,T]}^\beta,  \|\delta f'\|_{\beta'\text{-var},m,n;[0,T]}^{\beta'},\\
    & \|\mathbb{E}_\bullet R^X\|_{\frac{\beta \beta'}{\beta + \beta'}\text{-var},n;[0,T]}^{\frac{\beta \beta'}{\beta + \beta'}}, \|\mathbb{E}_\bullet R^f\|_{\frac{\beta \beta'}{\beta + \beta'}\text{-var},n;[0,T]}^{\frac{\beta \beta'}{\beta + \beta'}}, \| R^X\|_{\beta\text{-var},m,n;[0,T]}^{\beta}, \| R^f\|_{\beta\text{-var},m,n;[0,T]}^{\beta}
\end{align*}
are deterministic (continuous) controls.
\end{rmk}

\section{The main tool box}\label{sec:tool_box}

\subsection{Parameterized affine rough stochastic differential equations}\label{sec 3.1}
From this point on let $W$ be a $w-$dimensional Brownian motion, we shall need to study equations of the form 
\begin{equation}\label{eq ex_lin}
\begin{cases}
    X_t = \xi + F_t + \int_0^t b_s X_s \ud s + \int_0^t \sigma_s X_s\ud W_s +\int_0^t \big(\int_0^1\Phi_s(\lambda) \ud\lambda, \int_0^1\Phi'_s(\lambda)\ud\lambda \Big) X_s\ud \boldsymbol{\eta}_s,\\
     F_t = \big(F,\int_0^1 F_\cdot'(\lambda)\ud\lambda\big)\in \mathbf{D}_{\eta}^{\alpha,\beta}\mathbb{L}^k([0,T]; \mathbb{R}^{d}).
    \end{cases}
\end{equation}
Our approach and definitions naturally extend those founded in \cite[Section 3]{bugini2024malliavin}.

\begin{defn}[Extended stochastic controlled linear vector fields]
Let $\boldsymbol{\eta} \in \mathscr{C}^{\alpha}([0,T];\R^\ell)$ be a rough path, $\beta, \beta' \in (0,1]$, $m\in[2,\infty)$ and $n\in[m,\infty]$. We call $\big(\Phi(\lambda),\Phi'(\lambda)\big)$ \textit{extended stochastic controlled linear vector field} from $V$ to $\bar{Q}$ of $(m,n)$-integrability and $(\beta,\beta')$-H\"older regularity with respect to $(\mathcal{F}_t)_{t\in[0,T]}$ if the following hold:
\begin{enumerate}
    \item[(i)] The pair
	   \[
			\big(\Phi(\lambda),\Phi'(\lambda)\big) : \Omega \times [0,T] \times [0,1] \rightarrow  L(V,\bar{Q}) \times L\big(\mathbb{R}^\ell,L(V,\bar{Q})\big)
	   \]
    is $ \text{Prog}(\Omega\times [0,T]) \otimes \mathcal{B}([0,1])/ \mathcal{B}(L(V,\bar{Q})) \otimes \mathcal{B}( L(\mathbb{R}^\ell,L(V,\bar{Q})))$-measurable.

    \item[(ii)] For every $\lambda\in[0,1]$ we have $\big(\Phi_\cdot(\lambda),\Phi_\cdot'(\lambda)\big)\in \mathbf{D}_{\eta}^{\beta,\beta'} \mathbb{L}^{k,\infty}_{lin}\big([0,T];L(V,\bar{Q})\big)$ and 
    $$
     \sup_{\lambda\in[0,1]} \sup_{t\in[0,T]}\| \Phi_t(\lambda)\|_{\infty} + \sup_{\lambda\in[0,1]} \sup_{t\in[0,T]}\| \Phi'_t(\lambda)\|_{\infty} + \sup_{\lambda\in[0,1]}\big\|\big(\Phi_\cdot(\lambda),\Phi'_\cdot(\lambda)\big)\big\|_{\mathbf{D}_{\eta}^{\beta,\beta'} \mathbb{L}^{k,\infty}_{lin}} <\infty.
     $$
\end{enumerate}
The class of such linear vector fields is denoted by $\mathbf{D}_{\eta}^{\beta,\beta'} \mathbb{L}^{m,n}_{\lambda-ex,lin}\big([0,T];L(V,\bar{Q})\big)$, or simply $\mathbf{D}_{\eta}^{\beta,\beta'} \mathbb{L}^{m,n}_{\lambda-ex,lin}$ whenever the tuple $([0,T];V,\bar{Q})$ is clear from the context.
\end{defn}

\begin{prop}\label{Prop 3.3}
Let $\boldsymbol{\eta} \in \mathscr{C}^{\alpha}([0,T];\R^\ell)$ be a rough path, $\beta, \beta' \in (0,1]$, $m\in[2,\infty)$, $n\in[m,\infty]$ and $\big(\Phi_\cdot(\lambda),\Phi_\cdot'(\lambda)\big)\in \mathbf{D}_{\eta}^{\beta,\beta'} \mathbb{L}^{m,n}_{\lambda-ex,lin}\big([0,T];L(V,\bar{Q})\big)$. Then, $(\int_0^1\Phi_\cdot(\lambda) \ud\lambda, \int_0^1\Phi'_\cdot(\lambda) \ud\lambda) \in \mathbf{D}_{\eta}^{\beta,\beta'} \mathbb{L}^{m,n}_{lin}\big([0,T];L(V,\bar{Q})\big)$. Furthermore,
 \begin{align*}
    \sup_{t\in[0,T]} \bigg\|\int_0^1\Phi_t(\lambda) \ud\lambda\bigg\|_\infty +  \sup_{t\in[0,T]} \bigg\|\int_0^1\Phi'_t(\lambda) \ud\lambda \bigg\|_\infty &\leq  \sup_{\lambda\in[0,1]} \sup_{t\in[0,T]} \|\Phi_t(\lambda)\|_{\infty} + \sup_{\lambda\in[0,1]}\sup_{t\in[0,T]} \|\Phi'_t(\lambda)\|_{\infty},\\
    \bigg\|\bigg(\int_0^1\Phi_\cdot(\lambda) \ud\lambda, \int_0^1\Phi'_\cdot(\lambda) \ud\lambda \bigg)\bigg\|_{\mathbf{D}_{\eta}^{\beta,\beta'} \mathbb{L}^{m,n}_{lin}} &\leq \sup_{\lambda\in[0,1]}\big\|\big(\Phi_\cdot(\lambda),\Phi'_\cdot(\lambda)\big)\big\|_{\mathbf{D}_{\eta}^{\beta,\beta'} \mathbb{L}^{m,n}_{lin}}.
 \end{align*}
\end{prop}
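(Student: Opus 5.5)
The argument is linearity of $\lambda\mapsto(\Phi(\lambda),\Phi'(\lambda))$-averaging combined with Minkowski-type inequalities for each seminorm. Set $\Psi_t:=\int_0^1\Phi_t(\lambda)\ud\lambda$ and $\Psi'_t:=\int_0^1\Phi'_t(\lambda)\ud\lambda$.

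\emph{Measurability and boundedness.} By the joint $\text{Prog}\otimes\mathcal{B}([0,1])$-measurability in (i) and the uniform bound in (ii), the integrals are defined pointwise as Bochner integrals in the finite-dimensional space. Fubini's theorem for jointly measurable integrands then shows that $(\Psi,\Psi')$ is progressively measurable. For the sup bound, for a.e.\ $\omega$ we have $|\Psi_t|\le\int_0^1|\Phi_t(\lambda)|\ud\lambda\le\sup_\lambda\|\Phi_t(\lambda)\|_\infty$. One caveat: the null sets depend on $\lambda$, so we cannot simply take the essential supremum inside the integral. Instead I use $\|\int_0^1|\Phi_t(\lambda)|\ud\lambda\|_{\infty}\le\int_0^1\|\Phi_t(\lambda)\|_\infty\ud\lambda$, which is the Minkowski inequality for $L^\infty(\Omega)$. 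The same argument applies to $\Psi'$, and summing the two gives the first displayed inequality.

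\emph{Seminorms.} The key observation is that all three objects are linear in $\lambda$:
\[
\delta\Psi_{s,t}=\int_0^1\delta\Phi_{s,t}(\lambda)\ud\lambda,\qquad \delta\Psi'_{s,t}=\int_0^1\delta\Phi'_{s,t}(\lambda)\ud\lambda,\qquad R^\Psi_{s,t}=\int_0^1R^{\Phi(\lambda)}_{s,t}\ud\lambda .
\]
The last identity uses that $\delta\eta_{s,t}$ is deterministic and does not depend on $\lambda$.

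For the mixed norm I apply Minkowski's integral inequality twice. First, conditionally on $\mathcal{F}_s$, using a regular conditional distribution or the usual conditional Minkowski inequality,
\[
\|\delta\Psi_{s,t}|\mathcal{F}_s\|_m\le\int_0^1\|\delta\Phi_{s,t}(\lambda)|\mathcal{F}_s\|_m\ud\lambda \quad\text{a.s.}
\]
Second, Minkowski in $L^n$ gives $\|\|\delta\Psi_{s,t}|\mathcal{F}_s\|_m\|_n\le\int_0^1\|\delta\Phi_{s,t}(\lambda)\|_{m,n}\ud\lambda\le\sup_\lambda(\cdot)$. Dividing by $(t-s)^\beta$ and taking the supremum over $(s,t)$ yields the bound $\|\delta\Psi\|_{\beta;m,n}\le\sup_\lambda\|\delta\Phi(\lambda)\|_{\beta;m,n}$; the term $\delta\Psi'$ is handled identically.

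For the remainder, conditional Fubini gives $\mathbb{E}_sR^\Psi_{s,t}=\int_0^1\mathbb{E}_sR^{\Phi(\lambda)}_{s,t}\ud\lambda$. This is justified because the integrands are bounded uniformly in $\lambda$, since $\Phi$ and $\Phi'$ are uniformly essentially bounded and $\eta$ is deterministic. Minkowski in $L^n$ then gives the bound by $\sup_\lambda\|\mathbb{E}_\bullet R^{\Phi(\lambda)}\|_{\beta+\beta';n}(t-s)^{\beta+\beta'}$. Summing the three seminorm bounds, and using $\sup_\lambda(a+b+c)\ge$ the sum of the individual bounds after bounding each by $\sup_\lambda$ of the full norm, proves the second inequality and in particular the membership $(\Psi,\Psi')\in\mathbf{D}^{\beta,\beta'}_\eta\mathbb{L}^{m,n}_{lin}$.

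\emph{Main obstacle.} The only delicate point is the conditional Minkowski/Fubini step, i.e.\ exchanging $\int_0^1\ud\lambda$ with $\mathbb{E}[\cdot|\mathcal{F}_s]$ and $\|\cdot|\mathcal{F}_s\|_m$. I handle it by choosing a jointly measurable version in $(\omega,\lambda)$, which the measurability assumption (i) provides, and then either applying a regular conditional probability on the finite-dimensional-valued random field or proving the conditional inequality by testing against indicators $\mathds{1}_A$ with $A\in\mathcal{F}_s$ via duality. The uniform bounds make every integral absolutely convergent, so no further integrability issue arises.
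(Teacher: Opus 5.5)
Your approach is the paper's: its proof is the one line ``definitions, Jensen's inequality and conditional Fubini's Theorem''. Your treatment of measurability, the first displayed inequality, and the membership claim is fine. The gap is in the last step for the second inequality. You pass to $\sup_\lambda$ separately in each of the three seminorms. With $a(\lambda)=\|\delta\Phi(\lambda)\|_{\beta;m,n}$, $b(\lambda)=\|\delta\Phi'(\lambda)\|_{\beta';m,n}$ and $c(\lambda)=\|\mathbb{E}_\bullet R^{\Phi(\lambda)}\|_{\beta+\beta';n}$, this gives only
\[
\Big\|\Big(\int_0^1\Phi_\cdot(\lambda)\ud\lambda,\int_0^1\Phi'_\cdot(\lambda)\ud\lambda\Big)\Big\|_{\mathbf{D}^{\beta,\beta'}_\eta\mathbb{L}^{m,n}_{lin}}\le\sup_\lambda a(\lambda)+\sup_\lambda b(\lambda)+\sup_\lambda c(\lambda).
\]
The claimed right-hand side is $\sup_\lambda(a+b+c)(\lambda)$, and in general $\sup_\lambda a+\sup_\lambda b+\sup_\lambda c\ge\sup_\lambda(a+b+c)$. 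This can be strict, for instance if $a$ and $b$ peak at different values of $\lambda$. The inequality you invoke (``$\sup_\lambda(a+b+c)\ge$ the sum of the individual bounds'') runs the wrong way. As written you obtain the estimate only up to a factor $3$.

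The fix is to keep the $\lambda$-integral until after summing. Write $\Psi=\int_0^1\Phi(\lambda)\ud\lambda$ and $\Psi'=\int_0^1\Phi'(\lambda)\ud\lambda$, and fix three pairs $(s,t),(s',t'),(s'',t'')\in\Delta([0,T])$. Your Minkowski and conditional Fubini estimates give
\[
\frac{\|\delta\Psi_{s,t}\|_{m,n}}{(t-s)^{\beta}}+\frac{\|\delta\Psi'_{s',t'}\|_{m,n}}{(t'-s')^{\beta'}}+\frac{\|\mathbb{E}_{s''}R^{\Psi}_{s'',t''}\|_{n}}{(t''-s'')^{\beta+\beta'}}\le\int_0^1\Big(\frac{\|\delta\Phi_{s,t}(\lambda)\|_{m,n}}{(t-s)^{\beta}}+\frac{\|\delta\Phi'_{s',t'}(\lambda)\|_{m,n}}{(t'-s')^{\beta'}}+\frac{\|\mathbb{E}_{s''}R^{\Phi(\lambda)}_{s'',t''}\|_{n}}{(t''-s'')^{\beta+\beta'}}\Big)\ud\lambda .
\]
For every $\lambda$ the integrand is at most $\|(\Phi(\lambda),\Phi'(\lambda))\|_{\mathbf{D}^{\beta,\beta'}_\eta\mathbb{L}^{m,n}_{lin}}\le\sup_{\lambda}\|(\Phi(\lambda),\Phi'(\lambda))\|_{\mathbf{D}^{\beta,\beta'}_\eta\mathbb{L}^{m,n}_{lin}}$. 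Taking the suprema over the three pairs independently then yields the stated inequality with constant $1$. This ordering has a second benefit: only fixed-time quantities are integrated in $\lambda$. You therefore never need measurability of $\lambda\mapsto a(\lambda)$, which involves an uncountable supremum over times.
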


\begin{proof}
Immediate from the definitions, Jensen's inequality and conditional Fubini's Theorem.
\end{proof}

\begin{rmk}\label{rmk 3.3}
For the remainder of Section \ref{sec:tool_box}, for notational simplicity, we will refrain from using general Banach-space notation. However, it should be kept in mind that all the results remain valid, with the appropriate modifications, when considering arbitrary finite-dimensional Banach spaces.
\end{rmk}

\begin{defn}[Solution in the sense of Davie]\label{dfn ex_lin_Davie}
Let $\boldsymbol{\eta} \in \mathscr{C}^{\alpha}([0,T];\R^\ell)$, with $\alpha\in(\frac{1}{3},\frac{1}{2})$ and 
 $0<\beta'\leq \beta\leq\alpha$ such that $\alpha+\beta+\beta'>1$. Let $b:\Omega \times [0,T]\rightarrow \mathbb{R}^{d\times d}$, $\sigma :\Omega\times [0,T] \rightarrow L(\mathbb{R}^d,\mathbb{R}^{d\times w})$ progressive measurable and such that $\sup_{t\in[0,T]}\{\|\| b_t |\mathcal{F}_t \|_k \|_\infty + \|\| \sigma_t |\mathcal{F}_t \|_k \|_\infty \} <\infty$. Moreover, let $k\in[2,\infty)$, a pair $\big(\Phi(\lambda),\Phi'(\lambda)\big)\in \mathbf{D}_{\eta}^{\beta,\beta'} \mathbb{L}^{k,\infty}_{\lambda-ex,lin}\big([0,T];L(\mathbb{R}^d,\mathbb{R}^{d\times \ell})\big)$, and $\big(F_\cdot,\int_0^1 F_\cdot'(\lambda)\ud\lambda\big)\in \mathbf{D}_{\eta}^{\alpha,\beta}\mathbb{L}^k([0,T]; \mathbb{R}^{d})$. Then, an $\mathbb{L}^k$-integrable solution of \eqref{eq ex_lin} in the sense of Davie is a continuous adapted process $X=(X_t)_{t\in[0,T]}$ with $\sup_{t\in[0,T]}\|X_t\|_k <\infty$ such that
 \begin{enumerate}
     \item[(i)] $X_0 = \xi$.

     \item[(ii)] There exist $C_1, C_2 \geq 0$
     satisfying
\begin{equation*}
    \|X^\natural_{s,t}\|_k \leq C_1 |t-s|^{\alpha +\beta}\hspace{.2cm}\text{and}\hspace{.2cm}  \|\mathbb{E}_s[X^\natural_{s,t}]\|_k \leq C_2 |t-s|^{\alpha + \beta + \beta'},
\end{equation*}
for every $(s,t) \in\Delta([0,T])$, where 
\begin{gather*}
    X^\natural_{s,t} := \delta X_{s,t} - \delta F_{s,t} - \int_s^t b_r X_r \ud r - \int_s^t \sigma_r X_r \ud W_r \\
    - \Big(\int_0^1\Phi_s(\lambda)X_s \ud\lambda\Big)\delta \eta_{s,t} - \Big(\int_0^1\Big(\Phi'_s(\lambda)X_s +\Phi_s(\lambda)\Big[\int_0^1(\Phi_s(\widetilde{\lambda})X_s + F'_s(\widetilde{\lambda})\big)\ud \widetilde{\lambda}\Big]\Big)\ud\lambda\Big) \eta^{(2)}_{s,t}.
\end{gather*}
 \end{enumerate}
\end{defn}

From Proposition \ref{Prop 3.3}, following \cite[Section 3]{bugini2024malliavin} we easily get the following string of results.

\begin{prop}\label{Prop 3.2}
Let $\boldsymbol{\eta} \in \mathscr{C}^{\alpha}([0,T];\R^\ell)$, with $\alpha\in(\frac{1}{3},\frac{1}{2})$ and 
 $0<\beta'\leq \beta\leq\alpha$ such that $\alpha+\beta+\beta'>1$. Let $k\in[2,\infty)$, $(X,X')\in \mathbf{D}_{\eta}^{\beta,\beta'}\mathbb{L}^k([0,T]; \mathbb{R}^{d})$ with $\sup_{t\in[0,T]}\|X_t\|_k <\infty$ and $\big(\Phi_\cdot(\lambda),\Phi_\cdot'(\lambda)\big)\in \mathbf{D}_{\eta}^{\beta,\beta'} \mathbb{L}^{k,\infty}_{\lambda-ex,lin}\big([0,T];L(\mathbb{R}^d,\mathbb{R}^{d\times \ell})\big)$. 
 Then,
 \begin{equation*}
     \bigg(\int_0^1 \Phi_\cdot(\lambda)X_\cdot\ud\lambda, \int_0^1 \Big(\Phi'_\cdot(\lambda)X_\cdot+\Phi_\cdot(\lambda)X'_\cdot\Big)\ud\lambda\bigg) \in \mathbf{D}_{\eta}^{\beta,\beta'}\mathbb{L}^k([0,T]; \mathbb{R}^{d\times \ell}).
 \end{equation*}
\end{prop}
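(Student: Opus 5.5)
By Proposition~\ref{Prop 3.3}, the averaged pair $(\Psi,\Psi'):=\big(\int_0^1\Phi_\cdot(\lambda)\ud\lambda,\int_0^1\Phi'_\cdot(\lambda)\ud\lambda\big)$ is a stochastic controlled linear vector field in $\mathbf{D}_{\eta}^{\beta,\beta'}\mathbb{L}^{k,\infty}_{lin}$. Its norms are bounded by the corresponding suprema over $\lambda$. By linearity of the integral in $\lambda$, the pair in the statement equals $(\Psi X,\Psi'X+\Psi X')$. The claim therefore reduces to the non-parametrized composition result for a linear vector field applied to a controlled path, as in \cite[Section 3]{bugini2024malliavin}. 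I would still verify the three conditions of Definition~\ref{def:contro-rp} directly, so that the role of the $(k,\infty)$-integrability is explicit.

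\textbf{Step 1: the Gubinelli derivative.} Progressive measurability is immediate. Since $\Psi,\Psi'$ are bounded in $\mathbb{L}^\infty$ uniformly in $t$, and $\sup_t\|X_t\|_k,\sup_t\|X'_t\|_k<\infty$, we get
\[
\sup_t\|\Psi'_tX_t+\Psi_tX'_t\|_k<\infty .
\]

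\textbf{Step 2: H\"older increments.} I would write
\[
\delta(\Psi X)_{s,t}=\delta\Psi_{s,t}X_t+\Psi_s\delta X_{s,t}.
\]
The second term is handled by $\|\Psi_s\|_\infty\|\delta X\|_{\beta;k}$. For the first term the order matters: I would rather use $X_s\,\delta\Psi_{s,t}+\delta\Psi_{s,t}\,\delta X_{s,t}$. Then, conditioning on $\mathcal{F}_s$,
\[
\|X_s\delta\Psi_{s,t}\|_k\le\|X_s\|_k\,\big\|\|\delta\Psi_{s,t}|\mathcal{F}_s\|_k\big\|_\infty\lesssim|t-s|^\beta,
\]
and $\delta\Psi_{s,t}\delta X_{s,t}$ is bounded by $2\sup\|\Psi\|_\infty\|\delta X_{s,t}\|_k$. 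The derivative $\Psi'X+\Psi X'$ is treated the same way with exponent $\beta'$, using $\beta'\le\beta$ and the boundedness of the time interval.

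\textbf{Step 3: the conditional remainder.} This is the main obstacle. Expanding gives
\[
R^{\Psi X}_{s,t}=\Psi_sR^X_{s,t}+R^\Psi_{s,t}X_s+\delta\Psi_{s,t}\,\delta X_{s,t}+\Psi'_s\delta\eta_{s,t}\,\delta X_{s,t}-\Psi'_sX'_s\ldots
\]
It is cleaner to write
\[
R^{\Psi X}_{s,t}=\Psi_sR^X_{s,t}+R^\Psi_{s,t}X_s+\delta\Psi_{s,t}\,\delta X_{s,t},
\]
which holds modulo the convention that $\delta\eta$ acts on both factors. After applying $\mathbb{E}_s$:
\begin{itemize}
\item The first two terms give $\Psi_s\mathbb{E}_sR^X_{s,t}$ and $\mathbb{E}_s[R^\Psi_{s,t}]X_s$. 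They are of order $|t-s|^{\beta+\beta'}$ in $\mathbb{L}^k$, by the $\mathbb{L}^\infty$ bounds and the bound $\|\mathbb{E}_\bullet R^\Psi\|_{\beta+\beta';\infty}$.
\item The cross term cannot be improved by conditioning. I would bound it by conditional H\"older:
\[
\|\mathbb{E}_s[\delta\Psi_{s,t}\delta X_{s,t}]\|_k\le\Big\|\,\|\delta\Psi_{s,t}|\mathcal{F}_s\|_{k'}\,\|\delta X_{s,t}|\mathcal{F}_s\|_k\Big\|_k .
\]
Here the $(k,\infty)$-integrability of $\Psi$ is essential. It yields only order $|t-s|^{2\beta}$, and $2\beta\ge\beta+\beta'$ because $\beta'\le\beta$. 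If the exponents $k'$ do not match, I would instead bound this term by Cauchy–Schwarz with $\mathbb{L}^{2k}$-type moments, or absorb it using boundedness.
\end{itemize}

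This bookkeeping of conditional moments is the only genuinely delicate point. Summing the estimates from the three steps concludes the proof.
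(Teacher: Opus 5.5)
Your proposal is correct and follows the paper's route: the paper also reduces via Proposition~\ref{Prop 3.3} to the averaged linear field $(\Psi,\Psi')$ and then invokes the product computation of \cite[Section 3]{bugini2024malliavin}, and the same bounds appear in \eqref{ineq 13}--\eqref{ineq 15}. Three small corrections. The identity $R^{\Psi X}_{s,t}=\Psi_sR^X_{s,t}+R^\Psi_{s,t}X_s+\delta\Psi_{s,t}\,\delta X_{s,t}$ holds exactly, so drop your first, incorrect expansion. The cross term closes directly by conditional H\"older with $k'=k/(k-1)\le k$ followed by conditional Jensen. Your fallbacks for that term are therefore unnecessary, and neither would work: $X$ has no $\mathbb{L}^{2k}$ moments, and crude boundedness of $\delta\Psi$ only gives order $|t-s|^{\beta}$.
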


\begin{lemma}\label{lemma 3.4}
  Let $X = (X_t)_{t\in[0,T]}$ be a continuous and adapted $\mathbb{R}^d$-valued process with $\sup_{t\in[0,T]}\|X_t\|_k$ $<\infty$ such that it satisfies condition (ii) of Definition \ref{dfn ex_lin_Davie}. Then, 
  \begin{equation*}
      \bigg(X_\cdot,\int_0^1 \big(\Phi_\cdot(\lambda)X_\cdot + F'_\cdot(\lambda)\big)  \ud\lambda\bigg) \in \mathbf{D}_{\eta}^{\alpha,\beta}\mathbb{L}^k([0,T]; \mathbb{R}^{d}).
  \end{equation*}
\end{lemma}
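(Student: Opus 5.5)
Set $X' := \int_0^1 (\Phi_\cdot(\lambda)X_\cdot + F'_\cdot(\lambda))\ud\lambda$. Membership in $\mathbf{D}_{\eta}^{\alpha,\beta}\mathbb{L}^k$ requires four things:
\begin{itemize}
\item[(a)] $\sup_t\|X'_t\|_k<\infty$;
\item[(b)] $\|\delta X\|_{\alpha;k}<\infty$;
\item[(c)] $\|\delta X'\|_{\beta;k}<\infty$;
\item[(d)] $\|\E_\bullet R^X\|_{\alpha+\beta;k}<\infty$, where $R^X_{s,t}=\delta X_{s,t}-X'_s\delta\eta_{s,t}$.
\end{itemize}
I would read all four off the Davie expansion in Definition~\ref{dfn ex_lin_Davie}(ii). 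This expansion writes $\delta X_{s,t}$ as
\[
\delta X_{s,t} = \delta F_{s,t} + \int_s^t b_rX_r\ud r + \int_s^t\sigma_rX_r\ud W_r + \Big(\int_0^1\Phi_s(\lambda)\ud\lambda\Big) X_s\,\delta\eta_{s,t} + (\cdots)_s\,\eta^{(2)}_{s,t} + X^\natural_{s,t}.
\]

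For (a), use $\sup_{\lambda,t}\|\Phi_t(\lambda)\|_\infty<\infty$ together with $\sup_t\|X_t\|_k<\infty$. The bound on $\int_0^1F'(\lambda)\ud\lambda$ is part of the hypothesis that $(F,\int_0^1F'\ud\lambda)\in\mathbf{D}^{\alpha,\beta}_\eta\mathbb{L}^k$.

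For (b), bound each term of the expansion separately.
\begin{itemize}
\item $\|\delta F_{s,t}\|_k\lesssim|t-s|^\alpha$ by hypothesis.
\item The drift term is $O(|t-s|)$ by Minkowski's inequality and the conditional moment bound on $b$; note that $\|\|b_r|\mathcal{F}_r\|_k\|_\infty$ controls $\|b_rX_r\|_k$ after conditioning.
\item The Itô term is $O(|t-s|^{1/2})$ by BDG, using $\E[(\int_s^t|\sigma_rX_r|^2\ud r)^{k/2}]\le |t-s|^{k/2-1}\int_s^t\E|\sigma_rX_r|^k\ud r$ and the same conditioning trick.
\item The $\delta\eta$ term is $O(|t-s|^\alpha)$ because $\Phi$ is bounded and $X\in\mathbb{L}^k$.
\item The $\eta^{(2)}$ term is $O(|t-s|^{2\alpha})$.
\item The remainder satisfies $\|X^\natural_{s,t}\|_k\le C_1|t-s|^{\alpha+\beta}$.
\end{itemize}
Since $\alpha<1/2$, every term is $\lesssim|t-s|^\alpha$.

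For (c), decompose the increment as
\[
\delta X'_{s,t}=\int_0^1\big(\delta\Phi_{s,t}(\lambda)X_t+\Phi_s(\lambda)\delta X_{s,t}+\delta F'_{s,t}(\lambda)\big)\ud\lambda .
\]
\begin{itemize}
\item For the first term, since $\|\delta\Phi(\lambda)\|_{\beta;k,\infty}$ is bounded uniformly in $\lambda$, condition on $\mathcal{F}_s$. Because $X_t$ is not $\mathcal{F}_s$-measurable, I would instead write $\delta\Phi_{s,t}X_t=\delta\Phi_{s,t}X_s+\delta\Phi_{s,t}\delta X_{s,t}$. The piece $\delta\Phi_{s,t}X_s$ satisfies $\E[|\delta\Phi_{s,t}X_s|^k]=\E[|X_s|^k\E_s|\delta\Phi_{s,t}|^k]\lesssim|t-s|^{k\beta}$. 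The piece $\delta\Phi_{s,t}\delta X_{s,t}$ is bounded by $2\sup\|\Phi\|_\infty\|\delta X_{s,t}\|_k\lesssim|t-s|^\alpha$, which suffices since $\beta\le\alpha$.
\item The second term is $\lesssim|t-s|^\alpha$ by (b).
\item The third term is controlled because $\int_0^1F'\ud\lambda$ has finite $\beta$-Hölder norm by hypothesis.
\end{itemize}

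For (d), the coefficient of $\delta\eta_{s,t}$ in the expansion is $\int_0^1\Phi_s(\lambda)X_s\ud\lambda$, whereas $X'_s$ also contains $\int_0^1F'_s\ud\lambda$. I therefore rewrite $\delta F_{s,t}=F'_s\delta\eta_{s,t}+R^F_{s,t}$, which gives
\[
R^X_{s,t}=R^F_{s,t}+\int_s^tb_rX_r\ud r+\int_s^t\sigma_rX_r\ud W_r+(\cdots)_s\eta^{(2)}_{s,t}+X^\natural_{s,t}.
\]
Now take $\E_s$ and bound each term.
\begin{itemize}
\item The Itô integral has $\E_s[\cdot]=0$, provided it is a true martingale; this holds by the $\mathbb{L}^k$ bound from BDG.
\item $\|\E_sR^F_{s,t}\|_k\lesssim|t-s|^{\alpha+\beta}$ by hypothesis.
\item The drift term is $O(|t-s|)$.
\item The $\eta^{(2)}$ term is $O(|t-s|^{2\alpha})$.
\item $\|\E_sX^\natural_{s,t}\|_k\le\|X^\natural_{s,t}\|_k\le C_1|t-s|^{\alpha+\beta}$.
\end{itemize}
Since $\alpha+\beta\le2\alpha<1$, every term is $\lesssim|t-s|^{\alpha+\beta}$.

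The main obstacle, mild here, is the mixed-integrability bookkeeping in (c) and in the Itô and drift terms. The coefficients $b,\sigma,\Phi$ carry only conditional or essential-sup moment bounds, while $X$ carries only unconditional $\mathbb{L}^k$ bounds. In each product I must condition on the earlier time so that the $\mathbb{L}^{k,\infty}$ norms pair with the $\mathbb{L}^k$ norm of $X$. The splitting $X_t=X_s+\delta X_{s,t}$ handles the one adaptedness mismatch, which occurs in (c).
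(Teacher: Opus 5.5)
Your proposal is correct. You read $\sup_t\|X'_t\|_k$, $\|\delta X\|_{\alpha;k}$, $\|\delta X'\|_{\beta;k}$ and $\|\mathbb{E}_\bullet R^X\|_{\alpha+\beta;k}$ off the Davie expansion term by term, using the decomposition $\delta F_{s,t}=(\int_0^1F'_s(\lambda)\,\mathrm{d}\lambda)\,\delta\eta_{s,t}+R^F_{s,t}$ and the split $X_t=X_s+\delta X_{s,t}$ in the $\delta\Phi$ term. This is the direct verification behind \cite[Section 3]{bugini2024malliavin}, to which the paper defers without giving a proof. The only difference is cosmetic: the paper first collapses the $\lambda$-integrals into averaged coefficients via Proposition~\ref{Prop 3.3}, whereas you carry them through with Minkowski's inequality.
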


\begin{prop}\label{Prop 3.5}
Let $F_0 =0$ and $X = (X_t)_{t\in[0,T]}$ be a continuous and adapted $\mathbb{R}^d$-valued process with $\sup_{t\in[0,T]}\|X_t\|_k <\infty$ and $\xi\in \mathbb{L}^k(\mathcal{F}_0;\mathbb{R}^d)$. Then, the following are equivalent:
\begin{enumerate}
    \item[(i)] $X$ is an $\mathbb{L}^k$-integrable solution of \eqref{eq ex_lin} in the sense of Davie.
    
    \item[(ii)] 
    $\Big(\int_0^1\Phi_\cdot(\lambda)X_\cdot \ud\lambda,  \int_0^1\big(\Phi'_\cdot(\lambda)X_\cdot +\Phi_\cdot(\lambda)\big[\int_0^1(\Phi_\cdot(\widetilde{\lambda})X_\cdot + F'_\cdot(\widetilde{\lambda})\big)\ud \widetilde{\lambda}\big]\big)\ud\lambda\Big) \in \mathbf{D}_{\eta}^{\beta,\beta'}\mathbb{L}^k([0,T]; \mathbb{R}^{d\times \ell})$, and for every $t\in[0,T]$ we have $\mathbb{P}$-a.s. that
    \begin{gather*}
        X_t = \xi + F_t + \int_0^t b_s X_s \ud s + \int_0^t \sigma_s X_s\ud W_s\\+\int_0^t \Big(\int_0^1\Phi_s(\lambda)X_s \ud\lambda,  \int_0^1\Big(\Phi'_s(\lambda)X_s +\Phi_s(\lambda)\Big[\int_0^1(\Phi_s(\widetilde{\lambda})X_s + F'_s(\widetilde{\lambda})\big)\ud \widetilde{\lambda}\Big]\Big)\ud\lambda\Big) \ud \boldsymbol{\eta}_s.
    \end{gather*}
\end{enumerate}
In particular we have $\|X^\natural_{s,t}\|_k \lesssim  |t-s|^{\alpha+\beta}\hspace{.2cm}\text{and}\hspace{.2cm}  \|\mathbb{E}_s[X^\natural_{s,t}]\|_k \lesssim |t-s|^{\alpha+\beta+\beta'}$, for every $(s,t)\in\Delta_{[0,T]}$.
\end{prop}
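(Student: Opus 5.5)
We prove the equivalence by comparing, on each interval $[s,t]$, the Davie remainder $X^\natural_{s,t}$ with the local expansion of the rough stochastic integral, using Proposition~\ref{prop:roughinteg} in one direction and the stochastic sewing lemma in the other. Throughout we write
\[
\Psi_\cdot:=\int_0^1\Phi_\cdot(\lambda)X_\cdot\,\ud\lambda,\qquad
\Psi'_\cdot:=\int_0^1\Big(\Phi'_\cdot(\lambda)X_\cdot+\Phi_\cdot(\lambda)\Big[\int_0^1\big(\Phi_\cdot(\widetilde\lambda)X_\cdot+F'_\cdot(\widetilde\lambda)\big)\ud\widetilde\lambda\Big]\Big)\ud\lambda .
\]
With this notation $X^\natural_{s,t}=\delta X_{s,t}-\delta F_{s,t}-\int_s^tb_rX_r\,\ud r-\int_s^t\sigma_rX_r\,\ud W_r-\Psi_s\delta\eta_{s,t}-\Psi'_s\eta^{(2)}_{s,t}$.

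\emph{(i)$\Rightarrow$(ii).} Lemma~\ref{lemma 3.4} gives
\[
\Big(X,\int_0^1\big(\Phi(\lambda)X+F'(\lambda)\big)\ud\lambda\Big)\in\mathbf{D}^{\alpha,\beta}_\eta\mathbb{L}^k\subset\mathbf{D}^{\beta,\beta'}_\eta\mathbb{L}^k .
\]
Proposition~\ref{Prop 3.2} then gives $(\Psi,\Psi')\in\mathbf{D}^{\beta,\beta'}_\eta\mathbb{L}^k$. Indeed, the Gubinelli derivative produced by Proposition~\ref{Prop 3.2} is $\int_0^1(\Phi'X+\Phi X')\,\ud\lambda$, and with the $X'$ above this is exactly $\Psi'$. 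So the rough integral $I_t:=\int_0^t(\Psi,\Psi')\,\ud\boldsymbol\eta$ exists by Proposition~\ref{prop:roughinteg}, and the same proposition gives
\[
\|I_{s,t}-\Psi_s\delta\eta_{s,t}-\Psi'_s\eta^{(2)}_{s,t}\|_k\lesssim|t-s|^{\alpha+\beta},\qquad
\|\mathbb E_s[\,\cdot\,]\|_k\lesssim|t-s|^{\alpha+\beta+\beta'} .
\]
Set $D_t:=X_t-\xi-F_t-\int_0^tb_rX_r\,\ud r-\int_0^t\sigma_rX_r\,\ud W_r-I_t$. Then $\delta D_{s,t}=X^\natural_{s,t}-(I_{s,t}-\Psi_s\delta\eta_{s,t}-\Psi'_s\eta^{(2)}_{s,t})$, so
\[
\|\delta D_{s,t}\|_k\lesssim|t-s|^{\alpha+\beta},\qquad \|\mathbb E_s\delta D_{s,t}\|_k\lesssim|t-s|^{\alpha+\beta+\beta'} .
\]
Since $\alpha+\beta>1/2$ and $\alpha+\beta+\beta'>1$, the uniqueness part of the stochastic sewing lemma (or directly: sum over a partition, split each increment into its conditional mean and a martingale difference, and use BDG on the latter) gives $\delta D\equiv0$. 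As $D_0=0$ (using $F_0=0$ and $X_0=\xi$), we get $D\equiv0$, which is the integral equation in (ii).

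\emph{(ii)$\Rightarrow$(i).} Assume $(\Psi,\Psi')\in\mathbf{D}^{\beta,\beta'}_\eta\mathbb{L}^k$ and the integral identity. Taking $t=0$ gives $X_0=\xi$. Subtracting the identity at $s$ from the one at $t$ yields
\[
X^\natural_{s,t}=I_{s,t}-\Psi_s\delta\eta_{s,t}-\Psi'_s\eta^{(2)}_{s,t} .
\]
The two bounds of Proposition~\ref{prop:roughinteg} are then exactly condition (ii) of Definition~\ref{dfn ex_lin_Davie}. The final ``in particular'' claim is this same identity, combined with the bounds already obtained in (i)$\Rightarrow$(ii).

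\emph{Main obstacle.} The delicate point is the uniqueness step in (i)$\Rightarrow$(ii). The drift and It\^o terms sit inside $D$ but are not controlled in the rough sense, so one must make sure they cancel exactly rather than merely being estimated. They do, since both are additive in $(s,t)$ and hence contribute nothing to $\delta D-X^\natural$. The remaining issue is to confirm that the stochastic sewing uniqueness argument only needs $\alpha+\beta>1/2$ for the non-conditioned part, which holds in our parameter range. A secondary check is the Gubinelli-derivative bookkeeping, i.e.\ that Proposition~\ref{Prop 3.2} applied with the $X'$ supplied by Lemma~\ref{lemma 3.4} reproduces $\Psi'$; expanding the definitions, the $\Phi\int(\Phi X+F')$ term matches.
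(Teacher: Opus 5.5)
Your proof is correct and follows essentially the route the paper intends. The paper gives no separate proof here; it defers to the argument of \cite[Section 3]{bugini2024malliavin}, after reducing the $\lambda$-averaged coefficients via Proposition~\ref{Prop 3.3}. In that argument, Lemma~\ref{lemma 3.4} and Proposition~\ref{Prop 3.2} make the integrand controlled, and the local expansion of Proposition~\ref{prop:roughinteg} gives (ii)$\Rightarrow$(i). For (i)$\Rightarrow$(ii), stochastic-sewing uniqueness with $\alpha+\beta>\frac12$ and $\alpha+\beta+\beta'>1$ forces the adapted process $D$ to vanish, exactly as you do.
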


\subsubsection{Estimate of affine RSDEs using the \texorpdfstring{$p$-variation}{p-variation} seminorms}

\begin{lemma}\label{lem:uni-p-var}
Let $ 1\leq p < k$, and $X$ an $\mathbb{R}^d$-valued continuous process\footnote{$X$ could take values on any Banach space and the proof would remain the same.}. 
Then, for every $(s,t)\in \Delta([0,T])$ we have
\begin{equation}
 \|X\|_{\mathbb S^k;[s,t]}
 \leq \|X_s\|_{k}
      +C_{p,k}\|\delta X\|_{p\text{-var},k;[s,t]}.
\end{equation}
\end{lemma}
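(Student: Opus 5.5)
The plan is a chaining (Kolmogorov/Garsia--Rodemich--Rumsey type) argument in which time is reparametrised by the $p$-variation control of $X$ in $\mathbb{L}^k$. The condition $p<k$ is exactly what makes the chaining series converge. Fix $(s,t)$ and set
\[
\omega(u,v):=\|\delta X\|_{p\text{-var},k;[u,v]}^p,\qquad s\le u\le v\le t .
\]
This is superadditive, so $\omega(u,v)+\omega(v,r)\le\omega(u,r)$, and it satisfies $\|\delta X_{u,v}\|_k\le\omega(u,v)^{1/p}$. By the triangle inequality $\|X\|_{\mathbb{S}^k;[s,t]}\le\|X_s\|_k+\|\sup_{r\in[s,t]}|X_r-X_s|\|_k$, so it suffices to prove
\[
\Big\|\sup_{r\in[s,t]}|X_r-X_s|\Big\|_k\le C_{p,k}\,\omega(s,t)^{1/p}.
\]
To avoid any continuity issues for $\omega$, I first prove this bound with the supremum taken over a finite set $D\subset[s,t]$ containing $s$, with a constant independent of $D$. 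I then let $D$ increase to a countable dense set. Pathwise continuity of $X$ and monotone convergence then give the statement.

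For finite $D=\{s=r_0<\dots<r_N\}$, I build nested dyadic-in-$\omega$ subsets $D_0\subset D_1\subset\dots\subset D$. Let $D_0=\{s,r_N\}$, and let $D_n$ consist of the points $\tau^n_j$, where $\tau^n_j$ is the last point of $D$ with $\omega(s,\tau^n_j)\le j2^{-n}\omega(s,t)$, together with the next point of $D$ after it. With this choice $D_n$ has at most $2^{n+1}+1$ points. Consecutive points of $D_n$ either are adjacent in $D$ or have $\omega$-increment at most $2^{-n}\omega(s,t)$. Each $r\in D$ is reached from $s$ by a chain that uses at most a fixed number of consecutive $D_n$-steps at each level $n$. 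This gives the pathwise bound
\[
\max_{r\in D}|X_r-X_s|\le c\sum_{n\ge0}\max_{j}|\delta X_{\tau^n_j,\tau^n_{j+1}}|.
\]
The steps between points adjacent in $D$ need separate care: there are at most $2^{n+1}$ of them at level $n$, and the sum of their $\omega$-values is at most $\omega(s,t)$, by superadditivity.

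Now bound each level in $\mathbb{L}^k$, replacing the maximum by the $\ell^k$ sum:
\[
\Big\|\max_j|\delta X_{\tau^n_j,\tau^n_{j+1}}|\Big\|_k\le\Big(\sum_j\omega(\tau^n_j,\tau^n_{j+1})^{k/p}\Big)^{1/k}.
\]
The right-hand side is at most $\big(\max_j\omega(\tau^n_j,\tau^n_{j+1})\big)^{\frac1p-\frac1k}\big(\sum_j\omega(\tau^n_j,\tau^n_{j+1})\big)^{1/k}$, which I expect to bound by $2^{-n(\frac1p-\frac1k)}\omega(s,t)^{1/p}$. Since $p<k$, summing over $n$ yields $C_{p,k}=c\sum_n2^{-n(1/p-1/k)}<\infty$.

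The delicate point is the treatment of $\omega$-jumps. These are steps between points adjacent in $D$ whose $\omega$-increment exceeds $2^{-n}\omega(s,t)$, so the per-step bound above fails for them. At most $2^n$ such steps can exist at each level, and superadditivity gives $\sum\omega^{k/p}\le\omega(s,t)^{k/p}$ for them. To keep the series geometric, I would place each such large step only once, at the first level where it appears, and bound it there. An alternative is to note that, because $\delta X$ is pathwise continuous, $\omega(s,\cdot)$ is in fact continuous whenever $X\in C\mathbb{L}^k$. In that case the generalised-inverse partition $\tau^n_j=\inf\{r:\omega(s,r)\ge j2^{-n}\omega(s,t)\}$ works directly, and on intervals where $\omega$ is constant, $X$ is a.s. constant.
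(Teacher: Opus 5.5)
\paragraph{Gap in the finite-set route.} The problem sits in your main, finite-set route, at exactly the point you flag as delicate, and your proposed remedy does not resolve it. Your level-$n$ maximum runs over all consecutive pairs of $D_n$. This includes $D$-adjacent pairs whose $\omega$-value may be comparable to $\omega(s,t)$. Once such a pair is consecutive in $D_n$, it stays consecutive at every later level. So $\|\max_j|\delta X_{\tau^n_j,\tau^n_{j+1}}|\|_k$ need not decay in $n$, while the number of levels needed grows with $D$. Charging each large step only once and using $\sum\omega^{k/p}\le\omega(s,t)^{k/p}$ still leaves a contribution of order $\omega(s,t)^{1/p}$ per level. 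That is not summable uniformly in $D$, because nothing in your sketch links the level at which a large step is charged to its size.

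\paragraph{The missing observation.} The natural chain never uses a large step. Let $\pi_n(r):=\max\{x\in D_n:x\le r\}$ and let $\pi_n^+(r)$ be its successor in $D_n$.
\begin{itemize}
\item If $\pi_n(r)<r$, then $r\in D$ lies strictly between $\pi_n(r)$ and $\pi_n^+(r)$. So this $D_n$-step is not $D$-adjacent, and your dichotomy gives $\omega(\pi_n(r),\pi_n^+(r))\le 2^{-n}\omega(s,t)$.
\item If $\pi_n(r)=r$, the chain has already arrived.
\end{itemize}
The boundedly many $D_{n+1}$-steps between $\pi_n(r)$ and $\pi_{n+1}(r)$ lie in $[\pi_n(r),\pi_n^+(r)]$, so by superadditivity each has $\omega\le 2^{-n}\omega(s,t)$. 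Hence you may restrict the level-$(n+1)$ maximum to consecutive pairs $(a,b)$ of $D_{n+1}$ with $\omega(a,b)\le 2^{-n}\omega(s,t)$. Your own interpolation then gives
\[
\Big\|\max_{(a,b)}|\delta X_{a,b}|\Big\|_k\le\big(2^{-n}\omega(s,t)\big)^{\frac1p-\frac1k}\Big(\sum_{(a,b)}\omega(a,b)\Big)^{\frac1k}\le 2^{-n(\frac1p-\frac1k)}\,\omega(s,t)^{\frac1p},
\]
which is summable because $p<k$. No separate treatment of jumps is needed. Two small points: keep $s$ in every $D_n$ so the sets are nested. Also, a point $r_i\in D$ that never enters any $D_n$ has $\omega(s,r_i)=\omega(s,r_{i+1})$, hence $\omega(r_i,r_{i+1})=0$ and $X_{r_i}=X_{r_{i+1}}$ a.s.

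\paragraph{Your fallback and the paper.} Your fallback is exactly the paper's proof. The paper uses the generalised-inverse dyadic partition of the continuous control $w(s,\cdot)$. It chains only along increments from a left parent $\tau^n_i$ to a new point $\tau^{n+1}_{2i+1}$, each of $\mathbb{L}^k$-size at most $2^{-(n+1)/p}\mathcal{V}$. It also uses that $X$ is a.s. constant where $w(s,\cdot)$ is flat.

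In your version, however, you assume $X\in C\mathbb{L}^k$, whereas the lemma only gives pathwise continuity. The paper derives $\mathbb{L}^k$-continuity from Lemma 2.9 of Allan--Pieper together with pathwise continuity. Concretely, finite $p$-variation in the Banach space $\mathbb{L}^k$ gives one-sided $\mathbb{L}^k$-limits, and pathwise continuity forces these limits to equal $X_u$.

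Once repaired, your discrete route is a genuine alternative. It needs neither $\mathbb{L}^k$-continuity of $X$, nor continuity of the control, nor the flat-piece argument. Pathwise continuity enters only in the final passage from a countable dense set to $[s,t]$.
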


\begin{proof}
To ease the notation, let 
$
 \mathcal{V}:=\|\delta X\|_{p\text{-var},k;[s,t]},
 \ 
 {w}_X(u,v):=
 \|\delta X\|_{p\text{-var},k;[u,v]}^p.
$
Without loss of generality, we assume $0<\mathcal{V}<\infty$.
Then, by \cite{allan2026rough}[Lemma 2.9] and the fact that $X$ is pathwise continuous we get that $(X_v)_{v\in [s,t]}$ is continuous with respect to the $\mathbb{L}^k$ norm. Hence, the map $w_X$ is a deterministic (continuous) control. Next, let
\begin{equation*}
 a_X(v):=w_X(s,v).
\end{equation*}
Thus, by superadditivity we get
\begin{equation}\label{ineq 58}
 0\leq w_X(u,v)\leq a_X(v)-a_X(u),
 \quad s\leq u\leq v\leq t,
\end{equation}
and that $a_X$ is continuous non-decreasing with $a_X(s) = 0, a_X(t) = \mathcal{V}^p$. Now, let $\tau^0_0 =s$ and $\tau^0_1 =t$, and for every $n\in\mathbb{N}$ we define
\begin{equation*}
 \tau_i^n
 :=\inf\Big\{v\in[s,t]:a_X(v)\geq \frac{i\mathcal{V}^p}{2^n}\Big\},
 \qquad 0 < i <2^n,
\end{equation*}
with \(\tau_0^n=s\) and \(\tau_{2^n}^n=t\). We shall denote with $\mathcal{D}^n_{[s,t]}:=\{\tau^n_i:0\leq i \leq 2^n\}$ and $\mathcal{D}_{[s,t]}:=\bigcup_{n=1}^\infty \mathcal{D}^n_{[s,t]}$. Every new point
$\tau_{2i+1}^{n+1}$ has left parent
$\tau_i^n=\tau_{2i}^{n+1}$, and hence by \eqref{ineq 58} in conjunction with the previous definition we get
\begin{equation*}
 \big\|\delta X_{\tau_i^n,\tau_{2i+1}^{n+1}}\big\|_{k}
 \leq 2^{-\frac{n+1}{p}} \mathcal{V}.
\end{equation*}
Next, for $n\in \mathbb{N}\cup\{0\}$ we set $Z_n:=\max_{0\leq i<2^n-1}
 \big|\delta X_{\tau_i^n,\tau_{2i+1}^{n+1}}\big|,$ trivially $|Z_n|^k \leq \sum_{i=0}^{2^{n}-1}|\delta X_{\tau_i^n,\tau_{2i+1}^{n+1}}|^k$, so we have
\begin{equation}\label{est:dyadic-X-est}
 \|Z_n\|_{k}
 \leq\left(\sum_{i=0}^{2^n-1}
 \big\|\delta X_{\tau_i^n,\tau_{2i+1}^{n+1}}\big\|_{k}^k
 \right)^{1/k}
 \leq 2^{(\frac{n}{k}-\frac{n+1}{p})} \mathcal{V}.
\end{equation}
On the other hand, because for $0\leq u \leq v \leq t$ with $a_X(u) = a_X(v)$ from \eqref{ineq 58} we have $X_u(\omega) = X_v(\omega)$ $\mathbb{P}-a.s.$, by the pathwise continuity of $X$ we get
\begin{equation}\label{est:deltaX}
 \sup_{r\in[s,t]}|\delta X_{s,r}(\omega)| = \sup_{r\in \mathcal{D}_{[s,t]}}|\delta X_{s,r}(\omega)|
 \leq |\delta X_{s,t}(\omega)|+\sum_{n = 0}^\infty Z_n,\quad\mathbb{P}-a.s.
\end{equation}
Then, by \eqref{est:dyadic-X-est}, \eqref{est:deltaX}, the triangle inequality and the monotone convergence Theorem we have
\begin{align*}
 \big\|\sup_{r\in[s,t]}|\delta X_{s,r}|\big\|_{k}
 &\leq  \| \delta X_{s,t}\|_k + \Big\|\sum_{n = 1}^\infty Z_n\Big\|_k\\
 &\leq \mathcal{V} + \sum_{n = 0}^\infty \|Z_n\|_k\\
 &\leq\Big(1+2^{-\frac{1}{p}}\sum_{n = 0}^{\infty}
 2^{-n (\frac{1}{p} - \frac{1}{k})}\Big) \mathcal{V} = C_{p,k}\mathcal{V},
\end{align*}
with $C_{p,k} := 1+2^{-\frac{1}{p}}\sum_{n = 0}^{\infty}
 2^{-n (\frac{1}{p} - \frac{1}{k})}$,
where we used the fact that the geometric series converges exactly when $k > p$. So, our conclusion follows from the fact that
\begin{equation*}
 \|X\|_{\mathbb S^k;[s,t]}
 \leq\|X_s\|_{k}
 +\big\|\sup_{r\in[s,t]}|\delta X_{s,r}|\big\|_{k}.
\end{equation*}
\end{proof}
The following estimate result for affine RSDEs under the $p$-variation seminorms is a key ingredient in the spike argument.
 
\begin{thm}\label{thm 3.9}
Let $\frac{1}{3}< \frac{1}{p} \leq \beta''\leq \beta'\leq\alpha < \frac{1}{2}$, $p < k$, $\xi\in \mathbb{L}^k(\mathcal{F}_0;\mathbb{R}^d)$ and $X$ an $\mathbb{L}^k$-integrable solution of \eqref{eq ex_lin} in the sense of Davie\footnote{Where $F', \Phi, \Phi'$ were assumed constant with respect to $\lambda$ without loss of generality due to Proposition \ref{Prop 3.3}.} 
with $F_0 = 0$. 
Then, for every $\mathscr{M} > 0 $ satisfying
\begin{align}\label{eq:BV-coefficient-bound}
   \sup_{t\in [0,T]}\big\{
   \|\| b_t |\mathcal{F}_t \|_k \|_\infty + \|\| \sigma_t |\mathcal{F}_t \|_k \|_\infty +
   \|\Phi_t\|_\infty + \|\Phi'_t\|_\infty \big\} 
   + \|(\Phi,\Phi')\|_{\mathbf{D}_{\eta}^{(p,p)\text{-var}} \mathbb{L}^{k,\infty}_{lin}}
   + \tnorm{\boldsymbol{\eta}}_{{p}\text{-var}} \leq \mathscr{M}
\end{align}
there exists $ C =  C(\mathscr{M},p,k,T)\geq 0$ such that
\begin{equation}\label{eq:BV-primary-estimate}
 \|X\|_{\mathbb S^k}
 +\|(X,X')\|
 _{\mathbf D_\eta^{(p,p)\text{-var}}\mathbb L^k}
 \leq C \big(
 \|\xi\|_{k}
 + \|(F,F')\|_{
    \mathbf D_\eta^{(p,p)\text{-var}}\mathbb L^k}
 \big).
\end{equation}
\end{thm}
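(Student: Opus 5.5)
We would prove \eqref{eq:BV-primary-estimate} by a local a priori estimate on short intervals, followed by a greedy concatenation, with all smallness measured by deterministic controls. Set $X' := \Phi X + F'$ and, on $[s,t]$, consider the seminorm
\[
\mathcal{N}_{[s,t]} := \|\delta X\|_{p\text{-var},k;[s,t]} + \|\delta X'\|_{p\text{-var},k;[s,t]} + \|\mathbb{E}_\bullet R^X\|_{\frac p2\text{-var},k;[s,t]} .
\]
Along the lines of Remark~\ref{rmk 2.16}, we introduce the deterministic control
\[
w(u,v) := |\delta\eta|^p_{p\text{-var};[u,v]} + |\eta^{(2)}|^{p/2}_{\frac p2\text{-var};[u,v]} + \|(\Phi,\Phi')\|^p_{p\text{-var};[u,v]} + (v-u),
\]
where the $\Phi$-term means the control built from the three $p$-variation seminorms of $(\Phi,\Phi')$. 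We also introduce a control $w_F$ for the seminorms of $(F,F')$.

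\textbf{Local estimate.} The main step is to show that there exist $c_0,C$ depending only on $(\mathscr M,p,k,T)$ such that, whenever $w(s,t)\le c_0$,
\[
\mathcal N_{[s,t]} \le C\bigl(\|X_s\|_k + \|(F,F')\|_{[s,t]}\bigr) + \tfrac12\mathcal N_{[s,t]}.
\]
This requires a priori finiteness of $\mathcal N$, which holds since Proposition~\ref{Prop 3.5} gives Hölder bounds, and Hölder bounds dominate $p$-variation bounds. The estimate itself comes from decomposing $\delta X_{u,v}$ using the integral identity in Proposition~\ref{Prop 3.5}, term by term.

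The drift and Itô terms are handled by conditional BDG. They give at most $\mathscr M\,\|X\|_{\mathbb S^k;[u,v]}\,(v-u)^{1/2}$, which lies in the $p$-variation scale since $p>2$.

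The rough integral is estimated with Lemma~\ref{lem:roughstochint} applied to the integrand $Z=\Phi X$, $Z'=\Phi'X+\Phi X'$. Its seminorms are bounded by $\mathscr M(\sup\|X\|_k + \mathcal N)$ through product estimates, with the $\infty$-bounds on $\Phi,\Phi'$ absorbing the moments. This yields a factor $w(s,t)^{1/p}$ in front of $\mathcal N$.

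For the remainder $\mathbb{E}_u R^X_{u,v}$, the bounded-variation part (the drift integral) contributes directly, the martingale part vanishes under $\mathbb E_u$, and the rough part is handled by the second estimate in Lemma~\ref{lem:roughstochint}. Finally, $\sup\|X\|_k$ on $[s,t]$ is controlled through Lemma~\ref{lem:uni-p-var}, which needs $p<k$, by $\|X_s\|_k + C\|\delta X\|_{p\text{-var},k;[s,t]}$. Choosing $c_0$ small then gives the absorption.

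\textbf{Concatenation.} We partition $[0,T]$ greedily into intervals $[t_i,t_{i+1}]$ with $w(t_i,t_{i+1}) = c_0$. Superadditivity of $w$, together with $w(0,T) \lesssim 1 + \mathscr M^p + T$, bounds their number $N$ by a function of $(\mathscr M,p,k,T)$.

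On each interval the local estimate gives
\[
\|X_{t_{i+1}}\|_k \le \|X_{t_i}\|_k + C_{p,k}\,\mathcal N_{[t_i,t_{i+1}]} \le C'\bigl(\|X_{t_i}\|_k + \|(F,F')\|\bigr).
\]
Iterating this bound $N$ times yields a discrete Grönwall bound for $\max_i\|X_{t_i}\|_k$ in terms of $\|\xi\|_k + \|(F,F')\|$. The $p$-variation seminorms over $[0,T]$ are then obtained from the local ones by the standard subadditivity estimate $\|\cdot\|_{p\text{-var};[0,T]} \le N^{1-1/p}\bigl(\sum_i\|\cdot\|^p_{p\text{-var};[t_i,t_{i+1}]}\bigr)^{1/p}$. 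For $R^X$ we use $R^X_{u,v} = \sum R^X + \sum \delta X'\,\delta\eta$ across the partition points. Lemma~\ref{lem:uni-p-var} applied on each piece bounds $\|X\|_{\mathbb S^k}$, and $\sup_t\|X'_t\|_k \le \mathscr M\sup_t\|X_t\|_k + \sup_t\|F'_t\|_k$.

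\textbf{Expected obstacle.} The delicate point is the product estimate for $\mathbb E_\bullet R^{\Phi X}$ inside the rough-integral bound. It contains the cross term $\delta\Phi_{u,v}\,\delta X_{u,v}$, whose conditional expectation must be bounded by $\|\delta\Phi\|_{p\text{-var},k,\infty}\,\|\delta X\|_{p\text{-var},k}$ in the $\frac p2$-variation scale. This is where the $\mathbb L^{k,\infty}$ integrability of $(\Phi,\Phi')$ is essential: we would condition on $\mathcal F_u$ and use $\|\,\|\delta\Phi_{u,v}|\mathcal F_u\|_k\|_\infty$, then apply Hölder's inequality for controls, $w_1^{1/p}w_2^{1/p} \le (w_1+w_2)^{2/p}$, which itself defines a control.
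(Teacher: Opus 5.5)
Your proposal is correct and follows essentially the same route as the paper. Both arguments use a local estimate on intervals where a deterministic control (built from the rough path, $(\Phi,\Phi')$ and time) is small, closed by absorption using Lemma~\ref{lem:uni-p-var} and the $\mathbb{L}^{k,\infty}$ product bounds. This is followed by a greedy partition into boundedly many pieces, iteration of $\|X_{t_i}\|_k$, and concatenation of the seminorms with the Chen-type correction $\delta X'\,\delta\eta$ for $R^X$. The differences are cosmetic: you quote Lemma~\ref{lem:roughstochint} instead of the Allan--Pieper estimate, obtain the sharper concatenation constant $N^{1-1/p}$ instead of $2^{N_*}$, and check explicitly the a priori finiteness of $\mathcal N$ that the absorption step needs. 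One step you leave implicit: $\|\delta X'\|_{p\text{-var},k}$ carries the non-small coefficient $\mathscr M$ in front of $\|\delta X\|_{p\text{-var},k}$. As in \eqref{eq. 24}--\eqref{eq. 25}, it can only be absorbed after substituting the already-small bound for $\|\delta X\|_{p\text{-var},k}$.
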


\begin{proof}
To ease the notation we clarify that during the proof the value of the constant $C$ is allowed to change from line to line up to \eqref{eq. 25}. Moreover, in the same spirit, for every $(s,t)\in \Delta([0,T])$, we also make the following definitions\footnote{Although we have previously defined the rough seminorms only over $[0,T]$, it is trivial to see that the same definitions can extend over any interval $I$.} 
\begin{align*}
 \mathcal N_{k;[s,t]}(X)
 &:=
 \|X\|_{\mathbb S^k;[s,t]}
 + \|(X,X')\|_{
    \mathbf D_\eta^{(p,p)\text{-var}}\mathbb L^k;[s,t]},\\
 \mathcal{F}_{k;[s,t]}(F) &:= \|(F,F')\|_{
    \mathbf D_\eta^{(p,p)\text{-var}}\mathbb L^k;[s,t]}\\
 &\hspace{.1cm}= 
 \|F'_s \|_k +
 \|\delta F\|_{p\text{-var},k;[s,t]} + \|\delta F'\|_{p\text{-var},k;[s,t]}
 +\|\mathbb E_\bullet R^F\|_{
       \frac{p}{2}\text{-var},k;[s,t]}.
\end{align*}
At this point we introduce the deterministic controls (see Remark \ref{rmk 2.16})
\begin{align*}
 w_{\Phi,k}(s,t)
 &:=
 \|\delta \Phi\|_{p\text{-var},k,\infty;[s,t]}^p
 + \|\delta \Phi'\|_{p\text{-var},k,\infty;[s,t]}^p
 +\big\|\mathbb E_\bullet R^{\Phi}\big\|_{
       \frac{p}{2}\text{-var},k,\infty;[s,t]}^{\frac{p}{2}},\\
 w_\eta(s,t)
 &:=
 \|\delta\eta\|_{p\text{-var};[s,t]}^p
 + \|\eta^{(2)}\|_{\frac{p}{2}\text{-var};[s,t]}^{\frac{p}{2}},\\
 w_{\mathcal L,k}(s,t)
 &:=
 \bigl(\sup_{u\in[0,T]} \|\| b_t |\mathcal{F}_t \|_k \|_\infty + \sup_{u\in[0,T]}\|\| \sigma_t |\mathcal{F}_t \|_k \|_\infty^2\bigr)(t-s)
 + w_{\Phi,k}(s,t)+ w_\eta(s,t).
\end{align*}
First we derive a local estimate. For $u\in [s,t]$, we define
\begin{equation*}
 A^X_u:= \int_s^u b_v X_v\,\ud v,\quad
 M^X_u:= \int_s^u \sigma _v X_v\,\ud W_v,\quad
 \mathcal{I}^X_u:= \int_s^u(H_v,H'_v)\,\ud\boldsymbol{\eta}_v,
\end{equation*}
where
\begin{align*}
H_v &:= \Phi_v X_v,\\ 
H'_v&:= \Phi'_v X_v + \Phi_v X'_v =\Phi'_v X_v + \Phi^2_v X_v + \Phi_v F'_v.
\end{align*}
For $s\leq u_1\leq u_2\leq t$ we have by Minkowski's integral inequality
\begin{align}
    \|\delta A^X_{u_1,u_2}\|_k &\leq  \int_{u_1}^{u_2} \|b_v X_v\|_k \ud v \leq \|X\|_{\mathbb S^k;[s,t]} \int_{u_1}^{u_2}\|\| b_v |\mathcal{F}_v \|_k \|_\infty \ud v,\label{eq. 012}\\
    \bigg\| \bigg(\int_{u_1}^{u_2} |\sigma_v X_v|^2 \ud v \bigg)^{\frac{1}{2}}\bigg\|_k &= \bigg\|\int_{u_1}^{u_2} |\sigma_v X_v|^2 \ud v \bigg\|_{\frac{k}{2}}^{\frac{1}{2}} \leq \bigg( \int_{u_1}^{u_2} \|\sigma_v X_v\|_k^2 \ud v\bigg)^{\frac{1}{2}}\nonumber\\ 
    &\leq \|X\|_{\mathbb S^k;[s,t]} \left(\int_{u_1}^{u_2} \|\| \sigma_v |\mathcal{F}_v \|_k \|_\infty^2 \ud v\right)^{\frac{1}{2}}.\label{eq. 013}
\end{align}
Furthermore, by \cite[Exercise 1.9.]{FV10} we have for $p \geq 2$ that $(u_1,u_2) \mapsto \left(\int_{u_1}^{u_2} \|\| \sigma_v |\mathcal{F}_v \|_k \|_\infty^2\ud v\right)^{\frac{p}{2}}$ is a control.
Next, by \cite[Lemma 4.8]{allan2026rough} we get\footnote{Note that we work in a continuous setting, hence we can take the norms in closed intervals.}
\begin{multline}\label{ineq 12}
    \|\delta \mathcal{I}^X\|_{p\text{-var},k;[s,t]} +  \|\mathbb E_\bullet(
       \delta \mathcal{I}^X-H_\bullet\delta\eta)\|_{
       \frac{p}{2}\text{-var},k;[s,t]} \leq C \Big(\|\delta H\|_{p\text{-var},k;[s,t]} + \|\delta H'\|_{p\text{-var},k;[s,t]}\\
       + \|\mathbb{E}_\bullet R^H\|_{\frac{p}{2}\text{-var},k;[s,t]} + \sup_{u\in [s,t]}\|H_u\|_k \\+ (1 + \|\delta \eta\|_{p\text{-var};[s,t]}) \sup_{u\in [s,t]}\|H'_u\|_k\Big) ( \|\delta\eta\|_{p\text{-var};[s,t]}
 + \|\eta^{(2)}\|_{\frac{p}{2}\text{-var};[s,t]}).
\end{multline}
For $s\leq u \leq v \leq t$ we have
\begin{align}
    &\|\delta H_{u,v}\|_k \leq \| \delta \Phi_{u,v}\|_{k,\infty} \|X_u\|_k + \| \Phi_{v}\|_{\infty} \|\delta X_{u,v}\|_k,\label{ineq 13}
    \\
    &\|\delta H'_{u,v}\|_k \leq \| \delta \Phi'_{u,v}\|_{k,\infty} \|X_u\|_k + \| \Phi'_{v}\|_{\infty} \|\delta X_{u,v}\|_k \nonumber\\
    &\qquad+ \| \delta \Phi_{u,v}\|_{k,\infty} \| \Phi_{u}\|_{\infty} \|X_u\|_k + \| \Phi_{v}\|_{\infty} \big(\| \delta \Phi_{u,v}\|_{k,\infty} \|X_u\|_k + \| \Phi_{v}\|_{\infty} \|\delta X_{u,v}\|_k \big)\nonumber\\
    &\qquad+ \| \delta \Phi_{u,v}\|_{k,\infty} \|F'_u\|_k + \| \Phi_{v}\|_{\infty} \|\delta F'_{u,v}\|_k \label{ineq 14}
\end{align}
and from standard algebraic manipulations in conjunction with conditional H\"older's inequality, see for example the proof of \cite[Proposition 3.3]{bugini2024malliavin}, we see that
\begin{align}
    &\|\mathbb{E}_u R^H_{u,v}\|_k = \|\mathbb{E}_u[\delta \Phi_{u,v} \delta X_{u,v} + \Phi_u R^{X}_{u,v} + R^\Phi_{u,v} X_u]\|_k \nonumber\\
    &\qquad\leq \|\mathbb{E}_u [\delta \Phi_{u,v} \delta X_{u,v}]\|_k + \|\mathbb{E}_u [\Phi_u R^{X}_{u,v}] \|_k + \|\mathbb{E}_u [R^\Phi_{u,v} X_u] \|_k \nonumber\\
    &\qquad\leq \|\delta \Phi_{u,v}\|_{k,\infty} \|\delta X_{u,v}\|_k + \|\Phi_u\|_\infty \|\mathbb{E}_u  R^{X}_{u,v} \|_k + \|\mathbb{E}_u  R^{\Phi}_{u,v} \|_\infty \|X_u\|_k. \label{ineq 15}
\end{align}
Thus, from Cauchy-Schwarz inequality and because $p > 2$ we have
\begin{align}\label{eq. 018}
  &\|\delta H\|_{p\text{-var},k;[s,t]} +  \|\delta H'\|_{p\text{-var},k;[s,t]} +  \|\mathbb{E}_\bullet R^H\|_{\frac{p}{2}\text{-var},k;[s,t]}+ \sup_{u\in [s,t]}\|H_u\|_k + \sup_{u\in [s,t]}\|H'_u\|_k\nonumber\\
  & \leq \mathscr{M}\Big( (1 + \mathscr{M}) \mathcal{N}_{k;[s,t]}(X) + \|F'_s\|_k +  \|\delta F'\|_{p\text{-var},k;[s,t]} \Big).
\end{align}

So, from the contraction property of conditional expectation, BDG inequality 
and the above we get
\begin{align}
 &\|\delta A^X\|_{p\text{-var},k;[s,t]}
 +\|\mathbb E_\bullet\delta A^X\|_{
       \frac{p}{2}\text{-var},k;[s,t]}
 \leq 2 \mathcal{N}_{k;[s,t]}(X) \int_{s}^{t}\|\|b_v |\mathcal{F}_v \|_k \|_\infty \ud v ,\label{eq. 19}\\
 &\|\delta M^X\|_{p\text{-var},k;[s,t]}
 \leq \mathcal{N}_{k;[s,t]}(X)
 \left(\int_s^t\|\| \sigma_v |\mathcal{F}_v \|_k \|_\infty^2\,\ud v\right)^{\frac{1}{2}},
 \qquad \mathbb E_u\delta M^X_{u,v}=0,
 \label{eq. 20}\\
 &\|\delta \mathcal{I}^X\|_{p\text{-var},k;[s,t]} 
 +\|\mathbb E_\bullet(
       \delta \mathcal{I}^X-H_\bullet\delta\eta)\|_{
       \frac{p}{2}\text{-var},k;[s,t]}\nonumber\\
 &\hspace{28mm}\leq C\mathcal{F}_{k;[s,t]}(F) +
 C \bigl(w_\eta(s,t)^{\frac{1}{p}}
          + w_\eta(s,t)^{\frac{2}{p}}\bigr)
 \mathcal N_{k;[s,t]}(X).
 \label{eq. 21}
\end{align}
Hence, because
\begin{align*}
 \delta X_{u,v} &= \delta F_{u,v} + \delta A^X_{u,v} + \delta M^X_{u,v} + \delta \mathcal{I}^X_{u,v},\\
 R^X_{u,v} &= (\delta F_{u,v} -F'_u \delta\eta_{u,v}) +\delta A^X_{u,v}+\delta M^X_{u,v}
  + (\delta \mathcal{I}^X_{u,v}-H_u\delta\eta_{u,v}),
\end{align*}
and since $p > 2$, the estimates \eqref{eq. 19}--\eqref{eq. 21} imply
\begin{align}
 \|\delta X\|_{p\text{-var},k;[s,t]}
 &\leq
 C\mathcal{F}_{k;[s,t]}(F)
 + C w_{\mathcal{L},k}(s,t)^{\frac{1}{p}}
      \mathcal{N}_{k;[s,t]}(X),\label{eq. 22}\\
 \|\mathbb{E}_\bullet R^X\|_{
       \frac{p}{2}\text{-var},k;[s,t]}
 &\leq
 \|\mathbb E_\bullet R^F\|_{
       \frac{p}{2}\text{-var},k;[s,t]}
+ C\mathcal{F}_{k;[s,t]}(F) + C w_{\mathcal{L},k}(s,t)^{\frac{1}{p}}
      \mathcal{N}_{k;[s,t]}(X),
 \label{eq. 23}
\end{align}
whenever $$w_{\mathcal{L},k}(s,t)\leq 1.$$ 
As we already mentioned, the martingale vanishes after conditional expectation. Moreover,
\begin{equation*}
 \delta X'_{u,v} = \delta F'_{u,v} + \delta H_{u,v}= \delta F'_{u,v} + \Phi_u\delta X_{u,v} + \delta \Phi_{u,v} X_v,
\end{equation*}
so that
\begin{multline}\label{eq. 24}
 \|X'_s\|_{k}
 + \|\delta X'\|_{p\text{-var},k;[s,t]} \leq \|F'_s\|_k + \|H_s\|_k + \|\delta F'\|_{p\text{-var},k;[s,t]} + \|\delta H\|_{p\text{-var},k;[s,t]}
 \\
 \leq \|F'_s\|_k +  \|\delta F'\|_{p\text{-var},k;[s,t]} + C\big( \|X_s\|_{k}
 + \|\delta X\|_{p\text{-var},k;[s,t]}
 + w_{\Phi,k}(s,t)^{\frac{1}{p}}
       \|X\|_{\mathbb S^k;[s,t]}\big).
\end{multline}
Additionally, because $k > p$, Lemma~\ref{lem:uni-p-var} gives
\begin{equation*}
 \|X\|_{\mathbb S^k;[s,t]}
 \leq\|X_s\|_{k}
 +C \|\delta X\|_{p\text{-var},k;[s,t]}.
\end{equation*}
Combining this with
\eqref{eq. 22}--\eqref{eq. 24} we get for every $(s,t)\in \Delta([0,T])$ the direct local estimate
\begin{equation}\label{eq. 25}
 \mathcal N_{k;[s,t]}(X)
 \leq C \left(
       \|X_s\|_{k}
       +\mathcal{F}_{k;[s,t]}(F)\right)
 + C w_{\mathcal{L},k}(s,t)^{\frac{1}{p}}
       \mathcal{N}_{k;[s,t]}(X).
\end{equation}

To finish the proof, we define an appropriate partition and derive our global estimate. From the previous calculations immediately follows that by now $C\geq1$. So, let
$\delta_{C}:=
\frac{1}{(2C)^{p}}$, set $\tau_0 :=0$, and define
\begin{equation*}
 \tau_{i+1}
 :=\inf\{t\in(\tau_i,T]:
       w_{\mathcal{L},k}(\tau_i,t)\geq \delta_{C}\}\wedge T.
\end{equation*}
The number $N_*:=\min\{i\in\mathbb{N}:\tau_i=T\}$ satisfies
\begin{equation*}
 N_*\leq 1 + \delta_{C}^{-1} w_{\mathcal{L},k}(0,T),
\end{equation*}
and is therefore bounded under
\eqref{eq:BV-coefficient-bound}.  Moving the last term in
\eqref{eq. 25}, evaluated on
$I_i=[\tau_i,\tau_{i+1}]$, to the left hand side and then dividing both sides by the resulting coefficient gives\footnote{Note that $x\mapsto \frac{1}{1-x}$ is bounded over $[0,\frac{1}{2}]$ by $2$.} for every $i\in \{0,\dots,N_*-1 \}$
\begin{equation}\label{eq. 26}
 \mathcal N_{k;I_i}(X)
 \leq 2 C\left(
 \|X_{\tau_i}\|_{k}
 +\mathcal{F}_{k;I_i}(F)\right).
\end{equation}
Next, endpoint iteration and H\"older's inequality over the finite index set $\{1,...N_*\}$
gives
\begin{align*}
 \max_{0\leq i\leq N_*-1}\|X_{\tau_i}\|_{k}
 &\leq (2C)^{N_*-2}\bigg(
 \|\xi\|_{k}
 +\sum_{i = 0}^{N_* - 2}\mathcal{F}_{k;I_i}(F)\bigg),\\
 \sum_{i = 0}^{N_* - 1}\mathcal{F}_{k;I_i}(F)
 &\leq
 N_*\big(\|\delta F\|_{p\text{-var},k;[0,T]} + \|\delta F'\|_{p\text{-var},k;[0,T]}
 +\|\mathbb{E}_\bullet R^F\|_{
       \frac{p}{2}\text{-var},k;[0,T]}\big)+ \sum_{i=0}^{N_*-1}\|F'_{\tau_i}\|_k,
\end{align*}
where obviously
\begin{equation*}
    \sum_{i=0}^{N_*-1}\|F'_{\tau_i}\|_k \leq N_*(\|F'_0\|_k + \|\delta F'\|_{p\text{-var},k;[0,T]}).
\end{equation*}
Hence, we get
\begin{align*}
    \sum_{i = 0}^{N_* - 1}\mathcal{F}_{k;I_i}(F) &\leq 2 N_{*}\|(F,F')\|_{
    \mathbf D_\eta^{(p,p)\text{-var}}\mathbb L^k},\\ \max_{0\leq i\leq N_*-1}\|X_{\tau_i}\|_{k}
 &\leq 2N_* (2C)^{N_*-2} \big(
 \|\xi\|_{k}
 +|(F,F')\|_{
    \mathbf D_\eta^{(p,p)\text{-var}}\mathbb L^k}\big),
\end{align*}
and thus from \eqref{eq. 26} we have
\begin{align}\label{eq. 27}
  \sum_{i=0}^{N_*-1} \mathcal N_{k;I_i}(X) 
\leq  4N_* (2C)^{N_*-1} \big(
 \|\xi\|_{k}
 + \|(F,F')\|_{
    \mathbf D_\eta^{(p,p)\text{-var}}\mathbb L^k}
 \big).
\end{align}
Now, for $Y \in \{X,X'\}$ because $\delta Y$ is additive and $k, p\geq 1$, for every $0\leq a\leq b\leq c\leq T$ we have
\begin{equation*}
\|\delta Y\|_{p\text{-var},k;[a,c]}
\leq 2\big(
\|\delta Y\|_{p\text{-var},k;[a,b]}
+
\|\delta Y\|_{p\text{-var},k;[b,c]}\big).
\end{equation*}
Consequently, we get
\begin{equation*}
\|\delta Y\|_{p\text{-var},k;[0,T]}
\leq 2^{N_*}
\sum_{i=0}^{N_*-1}
\|\delta Y\|_{p\text{-var},k;[\tau_i,\tau_{i+1}]} = 2^{N_*} \sum_{i=0}^{N_*-1}
\|\delta Y\|_{p\text{-var},k;I_i}.
\end{equation*}
Furthermore, because $\sup_{t\in[0,T]}|X_t| \leq \sum_{i=0}^{N_*-1} \sup_{t\in I_i}|X_t|$, we immediately see that
\begin{equation*}
  \|X\|_{\mathbb S^k;[0,T]}  \leq \sum_{i=0}^{N_*-1} \|X\|_{\mathbb S^k;I_i}.
\end{equation*}
Finally, we need to deal with $R^X$ which is not additive as we have
\begin{equation*}
 R^X_{u,z}
 =R^X_{u,v}+R^X_{v,z}
  +\delta X'_{u,v}\delta\eta_{v,z},
 \qquad 0\leq u\leq v\leq z \leq T.
\end{equation*}
To do this we remind that from the previous relation one can use the same method of proof as in the additive case above to show that for $p > 2$ and $0\leq a\leq b\leq c \leq T$ we have
\begin{align*}
\|\mathbb E_\bullet R^X\|_{\frac{p}{2}\text{-var},k;[a,c]}
&\leq 3\big(
\|\mathbb E_\bullet R^X\|_{\frac{p}{2}\text{-var},k;[a,b]}
+
\|\mathbb E_\bullet R^X\|_{\frac{p}{2}\text{-var},k;[b,c]}
+
\|\delta X'\|_{p\text{-var},k;[a,b]}
\|\delta\eta\|_{p\text{-var};[b,c]}\big)\nonumber\\
&\leq 3\big(\|\mathbb E_\bullet R^X\|_{\frac{p}{2}\text{-var},k;[a,b]}
+
\|\mathbb E_\bullet R^X\|_{\frac{p}{2}\text{-var},k;[b,c]} + \|\delta X'\|_{p\text{-var},k;[a,b]}
\|\delta\eta\|_{p\text{-var};[0,T]}\big).
\label{eq:remainder-concatenation}
\end{align*}
Hence, by refining $[0,T]$ with the finitely many partition endpoints we get
\begin{align*}
  \|\mathbb E_\bullet R^X\|_{\frac{p}{2}\text{-var},k;[0,T]} &\leq 3^{N_*}\bigg( \sum_{i=0}^{N_*-1} \|\mathbb E_\bullet R^X\|_{\frac{p}{2}\text{-var},k;I_i} + \|\delta\eta\|_{p\text{-var};[0,T]} \sum_{i=0}^{N_*-1} \|\delta X'\|_{p\text{-var},k;I_i}\bigg)\\
  &\leq 3^{N_*}(1\vee\|\delta\eta\|_{p\text{-var};[0,T]}) \bigg( \sum_{i=0}^{N_*-1} \|\mathbb E_\bullet R^X\|_{\frac{p}{2}\text{-var},k;I_i} +  \sum_{i=0}^{N_*-1} \|\delta X'\|_{p\text{-var},k;I_i}\bigg).
\end{align*}
Combining all the above and plugging them to the left hand side of \eqref{eq. 27} yields \eqref{eq:BV-primary-estimate}.
\end{proof}

\begin{rmk}
\begin{enumerate}
    \item[(i)] Although Theorem \ref{thm 3.9} assumes that $X$ is a solution to an RSDE in a Hölder setting, since this is the version needed in the remainder of the text, the proof also applies for a solution notion defined in the more general $p$-variation setting, as it does not rely on this assumption.

    \item[(ii)]Inspection of the proof immediately yields that we can weaken our bound for $\mathscr{M}$ to 
    \begin{equation*}
        \int_0^T\big(\|\| b_t |\mathcal{F}_t \|_k \|_\infty +\|\| \sigma_t|\mathcal{F}_t \|_k \|_\infty ^2\big)\ud t + \sup_{t\in [0,T]}\{ 
   \|\Phi_t\|_\infty + \|\Phi'_t\|_\infty \} 
   + \|(\Phi,\Phi')\|_{\mathbf{D}_{\eta}^{(p,p)\text{-var}} \mathbb{L}^{k,\infty}_{lin}}
   + \tnorm{\boldsymbol{\eta}}_{{p}\text{-var}}.
    \end{equation*}
\end{enumerate}
\end{rmk}

\subsubsection{Estimate of affine RSDEs using the H\"older seminorms}
By replacing $\|X\|_{\mathbb S^k}$ with $\sup_{t\in[0,T]}\|X_t\|_k$ we can avoid using Lemma \ref{lem:uni-p-var} in the proof of Theorem \ref{thm 3.9}, thus removing the $p<k$ condition. Hence, taking $p=\frac{1}{\beta'}$, we immediately obtain the following slightly weakened but sufficient for our purposes\footnote{Alternatively, one can use \cite[Theorem 3.10]{bugini2024malliavin}.} H\"older version of Theorem \ref{thm 3.9}, which will be our main tool for deriving the rough SMP via linear perturbation.

\begin{thm}\label{thm 3.11}
Let $\frac{1}{3}<  \beta'\leq \beta\leq\alpha < \frac{1}{2}$, $k\in[2,\infty)$, $\xi\in \mathbb{L}^k(\mathcal{F}_0;\mathbb{R}^d)$ and $X$ an $\mathbb{L}^k$-integrable solution of \eqref{eq ex_lin} in the sense of Davie\footnote{Where $F', \Phi, \Phi'$ were assumed constant with respect to $\lambda$ without loss of generallity due to Proposition \ref{Prop 3.3}.} 
with $F_0 = 0$. 
Then, for every $\mathscr{M} > 0 $ satisfying
\begin{align*}
   \sup_{t\in [0,T]}\Big\{\|\| b_t |\mathcal{F}_t \|_k \|_\infty + \|\| \sigma_t |\mathcal{F}_t \|_k \|_\infty + 
   \|\Phi_t\|_\infty + \|\Phi'_t\|_\infty \Big\} 
   + \|(\Phi,\Phi')\|_{\mathbf{D}_{\eta}^{(\beta',\beta')} \mathbb{L}^{k,\infty}_{lin}}
   + \tnorm{\boldsymbol{\eta}}_{\beta'} \leq \mathscr{M}
\end{align*}
there exists $ C =  C(\mathscr{M},\beta',k,T)\geq 0$ such that
\begin{equation*}
 \sup_{t\in[0,T]}\|X_t\|_k
 \leq C \big(
 \|\xi\|_{k}
 + \|(F,F')\|_{
    \mathbf D_\eta^{\beta',\beta'}\mathbb L^k}
 \big).
\end{equation*}
\end{thm}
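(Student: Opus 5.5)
The plan is to rerun the proof of Theorem~\ref{thm 3.9} with $p:=\frac{1}{\beta'}\in(2,3)$ and with $\sup_{u\in[s,t]}\|X_u\|_k$ in place of $\|X\|_{\mathbb S^k;[s,t]}$. First we reduce the H\"older data to $p$-variation data. For a two-parameter quantity with $\|A_{u,v}\|\le K|v-u|^{\beta'}$, superadditivity of $(u,v)\mapsto v-u$ gives $\|A\|_{p\text{-var},k;[s,t]}\le K|t-s|^{\beta'}$, since $p\beta'=1$. Similarly, a $2\beta'$-H\"older bound (or an $\alpha+\beta$, $\beta+\beta'$ bound, which dominate it on $[0,T]$ up to powers of $T$) yields a $\frac p2$-variation bound. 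Hence $\|(\Phi,\Phi')\|_{\mathbf D^{(p,p)\text{-var}}_\eta\mathbb L^{k,\infty}_{lin}}$, $\tnorm{\boldsymbol\eta}_{p\text{-var}}$ and $\|(F,F')\|_{\mathbf D^{(p,p)\text{-var}}_\eta\mathbb L^k}$ are controlled by their H\"older analogues times constants depending on $T$. In particular, the $\mathscr M$ of the present statement dominates, up to $C(T)$, an admissible $\mathscr M$ for \eqref{eq:BV-coefficient-bound}. Moreover, by Lemma~\ref{lemma 3.4} and Proposition~\ref{Prop 3.5}, the Davie solution $X$ has $(X,X')\in\mathbf D^{\alpha,\beta}_\eta\mathbb L^k$, so all the $p$-variation quantities of $X$ entering the argument are finite. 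This finiteness is what permits absorbing terms into the left-hand side.

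Next we define the local functional $\mathcal N_{k;[s,t]}(X):=\sup_{u\in[s,t]}\|X_u\|_k+\|(X,X')\|_{\mathbf D^{(p,p)\text{-var}}_\eta\mathbb L^k;[s,t]}$ and repeat the estimates \eqref{eq. 012}--\eqref{eq. 24} verbatim. None of them uses $\mathbb S^k$, only $\sup_u\|X_u\|_k$. The only place $\mathbb S^k$ was needed was the passage from \eqref{eq. 22}--\eqref{eq. 24} to \eqref{eq. 25} through Lemma~\ref{lem:uni-p-var}, which required $k>p$. We replace that step by the elementary bound
\[
\sup_{u\in[s,t]}\|X_u\|_k\le\|X_s\|_k+\|\delta X\|_{p\text{-var},k;[s,t]},
\]
which follows from the triangle inequality and holds for every $k\ge 2$. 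This gives the local inequality \eqref{eq. 25} with constants depending only on $(\mathscr M,p,k,T)$.

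The globalization then proceeds exactly as before. We partition $[0,T]$ by the stopping-free sequence $\tau_i$ defined through the deterministic control $w_{\mathcal L,k}$ with threshold $\delta_C=(2C)^{-p}$. The number $N_*\le 1+\delta_C^{-1}w_{\mathcal L,k}(0,T)$ is bounded in terms of $\mathscr M$ and $T$. On each $I_i$ we absorb the term to obtain \eqref{eq. 26}, and endpoint iteration then bounds $\max_i\|X_{\tau_i}\|_k$. Finally $\sup_t\|X_t\|_k\le\max_i\sup_{u\in I_i}\|X_u\|_k\le\max_i\mathcal N_{k;I_i}(X)$, which already gives the claim. We do not need the concatenation of the remainder seminorms, since only $\sup_t\|X_t\|_k$ is asserted, and $\|(F,F')\|$ in $p$-variation is bounded by its H\"older norm as in the first step.

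The main obstacle is checking that no hidden use of pathwise suprema remains in \eqref{eq. 012}--\eqref{eq. 21}. The BDG step \eqref{eq. 013}/\eqref{eq. 20} and the rough-integral bound \eqref{ineq 12} from \cite[Lemma 4.8]{allan2026rough} must hold with $\sup_u\|\cdot\|_k$ rather than $\|\cdot\|_{\mathbb S^k}$. This is true because Minkowski's integral inequality is applied pointwise in time, and the rough-integral lemma is stated in terms of $\sup_u\|H_u\|_k$. A secondary point is the compatibility $\beta'>\frac13$, which ensures $p=\frac{1}{\beta'}<3$ so that \eqref{ineq 12} applies.
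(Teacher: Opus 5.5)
Your proposal is correct and is essentially the paper's own argument: the paper obtains Theorem~\ref{thm 3.11} by rerunning the proof of Theorem~\ref{thm 3.9} with $p=1/\beta'$ and with $\|X\|_{\mathbb S^k}$ replaced by $\sup_{t}\|X_t\|_k$, which removes the need for Lemma~\ref{lem:uni-p-var} and hence for the restriction $k>p$. Your extra details fill in what the paper leaves implicit. These are converting the H\"older data to $p$-variation data via $p\beta'=1$, using the finiteness from Lemma~\ref{lemma 3.4} to justify the absorption step, replacing Lemma~\ref{lem:uni-p-var} by the triangle-inequality bound, and skipping the remainder concatenation because only $\sup_t\|X_t\|_k$ is claimed.
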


\subsection{General affine rough BSDEs}\label{sec:a-rbsde}
In this Subsection and in Section \ref{sec:max_principles} we assume that $(\Omega,\mathcal F,(\mathcal F_t)_{t\in\mathbb{R}_+},\mathbb{P})$ is the usual augmentation of $W$ and a $\sigma$-algebra $\mathcal{F}_0^0$ independent of $W$.

To allow a variational calculus argument, in this subsection we provide a strong solution to the following class of rough BSDEs

\begin{equation}\label{eq:l-bsde}
    Y_t= \xi + \int_t^T f(s,Y_s, Z_s) \ud s - \int_t^T Z_s \ud W_s + \int_t^T (g_s Y_s + h_s,g_sY_s'+g'_sY_s+h'_s) \ud\boldsymbol{\eta}_s.
\end{equation}
In contrast to the linear case considered in \cite{LiangTang2025}, here we allow $(g,g')$ to be in $\mathbf{D}^{(p,p')\text{-var}}_\eta {\LL}^{m,\infty}_{ {lin}}\big([0,T];$ $L(\mathbb{R}^{d},\mathbb{R}^{d\times \ell})\big)$, and moreover, build the solution in $\mathbb{L}^m$ for $2\leq m\leq p<3$. The above two points are  crucial for the adjoint equation considered in the next section, note also Remark \ref{rmk 3.3}. 

Only for this subsection, assuming $\boldsymbol{\eta} \in \mathscr{C}^{p\text{-var}}([0,T];\R^\ell)$, to ease the calculations, we shall use the following norm on stochastic controlled  linear vector fields: \newline For every 
$ \ (g,g')\in
  \mathbf D_\eta^{(q,q')\text{-var}}\mathbb L^{m,\infty}_{lin}\big([0,T];L(\mathbb{R}^{d},\mathbb{R}^{d\times \ell})\big)$ we define
\begin{align*}
  & \|(g,g')\|_{\mathbf D_\eta^{(q,q')\text{-var}}\mathbb L^{m,\infty}_{lin} }:= \|(g,g')\|_{\mathbf D_\eta^{(q,q')\text{-var}}\mathbb L^{m,\infty}_{lin} } 
  +\sup_{t\in [0,T]} \|g_t\|_{\infty} +  \sup_{t\in [0,T]}\|g'_t\|_{\infty},
\end{align*}
where by abusing notation for brevity, on the right hand side is the standard definition of the seminorm.
Again for simplicity of calculations, we shall always assume $p' \geq p \geq 2$ and $q' \ge q$. 

Now, for the well-posedness of \eqref{eq:l-bsde} we need several lemmas. 

\begin{lemma}[Product of stochastic controlled rough paths]\label{lem:product}
Let $\boldsymbol{\eta} \in \mathscr{C}^{p\text{-var}}([0,T];\R^\ell)$. Suppose
$ (g,g')\in
  \mathbf D_\eta^{(q,q')\text{-var}}\mathbb L^{m,\infty}_{lin}\big([0,T];L(\mathbb{R}^{d},\mathbb{R}^{d\times \ell})\big)$ and $(y,y') \in \mathbf D_\eta^{(q,q')\text{-var}}\mathbb L^{m}([0,T];\mathbb{R}^d)$. Define
$
  z_t:=g_t y_t,\ 
  z'_t:=g'_t y_t+g_t y'_t .
$ Then,
$
  (z,z')\in
  \mathbf D_\eta^{(q,q')\text{-var}}\mathbb L^{m}([0,T];\mathbb{R}^{d\times \ell}).
$
Moreover,
\be\label{est:cr-product}
  \|(z,z')\|_{\mathbf D_\eta^{(q,q')\text{-var}}\mathbb L^{m }}
  \lesssim
  \|(g,g')\|_{\mathbf D_\eta^{(q,q')\text{-var}}\mathbb L^{m,\infty}_{lin}}
  \bigl(\|y_0\|_{m}+\|(y,y')\|_{\mathbf D_\eta^{(q,q')\text{-var}}\mathbb L^{m }}\bigr) .
\ee
\end{lemma}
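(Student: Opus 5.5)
The plan is to estimate each piece of the seminorm of $(z,z')$ in Definition \ref{def:contro-rp-r} separately, using only the algebraic identities for increments and remainders of the product, conditional H\"older's inequality, and the fact that the coefficients $g,g'$ are essentially bounded. The argument follows the pointwise computations \eqref{ineq 13}--\eqref{ineq 15} from the proof of Theorem \ref{thm 3.9}, now summed over partitions. Throughout, write $\mathcal{Y}:=\|y_0\|_m+\|(y,y')\|_{\mathbf D_\eta^{(q,q')\text{-var}}\mathbb L^{m}}$ and $\mathcal G:=\|(g,g')\|_{\mathbf D_\eta^{(q,q')\text{-var}}\mathbb L^{m,\infty}_{lin}}$; recall that in this subsection $\mathcal G$ includes $\sup_t\|g_t\|_\infty+\sup_t\|g'_t\|_\infty$.

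First I bound $\sup_t\|y_t\|_m$ and $\sup_t\|y'_t\|_m$. Trivially $\|y_t\|_m\le\|y_0\|_m+\|\delta y\|_{q\text{-var},m}$ and $\sup_t\|y'_t\|_m$ is part of the seminorm, so both are $\lesssim\mathcal Y$. This immediately gives $\sup_t\|z'_t\|_m\le\mathcal G\,\mathcal Y$.

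Next come the increments. I use
\[
\delta z_{u,v}=\delta g_{u,v}\,y_v+g_u\,\delta y_{u,v},
\]
and conditioning on $\mathcal F_u$ gives
\[
\|\delta z_{u,v}\|_m\le\|\delta g_{u,v}\|_{m,\infty}\|y_v\|_m+\|g_u\|_\infty\|\delta y_{u,v}\|_m .
\]
Summing $p$-th powers over a partition and applying Minkowski in $\ell^q$ yields $\|\delta z\|_{q\text{-var},m}\lesssim\mathcal G\mathcal Y$. Here the $\|\cdot\|_{m,\infty}$ seminorm of $g$ dominates the plain $\|\cdot\|_m$ one. One point needs care: in $\|\delta g_{u,v}y_v\|_m$ the variable $y_v$ is not $\mathcal F_u$-measurable. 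I avoid this by writing the increment instead as $\delta g_{u,v}y_u+g_v\delta y_{u,v}$, so that $y_u$ is $\mathcal F_u$-measurable and $\|\mathbb E_u|\delta g_{u,v}|^m\|_\infty$ can be pulled out. The same decomposition handles $\delta z'=\delta(g'y)+\delta(gy')$ in $q'$-variation, using $\sup\|g'\|_\infty$ and $\sup\|g\|_\infty$.

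Finally, the remainder. The identity
\[
R^z_{u,v}=R^g_{u,v}y_u+g_uR^y_{u,v}+\delta g_{u,v}\,\delta y_{u,v}
\]
(with the $\eta$-contraction conventions) holds exactly as in \eqref{ineq 15}. Taking $\mathbb E_u$, the first two terms are bounded by $\|\mathbb E_uR^g_{u,v}\|_\infty\|y_u\|_m$ and $\|g_u\|_\infty\|\mathbb E_uR^y_{u,v}\|_m$. For the cross term I use conditional H\"older:
\[
\|\mathbb E_u[\delta g\,\delta y]\|_m\le\|\delta g_{u,v}\|_{m,\infty}\|\delta y_{u,v}\|_m ,
\]
which is where the $(m,\infty)$ integrability of $g$ is essential. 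Summing the $r$-th powers with $r=\frac{qq'}{q+q'}$ and applying H\"older in the sum with exponents $q/r$, $q'/r$ (or simply $q,q\ge r$ after noting $\|\delta g\|_{q\text{-var}}$, $q'\ge q$) bounds the cross term by $\|\delta g\|_{q\text{-var},m,\infty}\|\delta y\|_{q\text{-var},m}$, since $1/r\le 2/q$. I expect this remainder step to be the main obstacle. The exponent bookkeeping must be checked carefully: the cross term is naturally controlled in $\frac q2$-variation, and $\frac q2\le\frac{qq'}{q+q'}$ because $q'\ge q$, so the $\ell^{q/2}\subset\ell^{r}$ embedding closes the estimate. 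Collecting all pieces gives \eqref{est:cr-product}.
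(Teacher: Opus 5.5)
Your proposal is correct and follows essentially the same route as the paper. It uses the same decompositions $\delta z_{u,v}=\delta g_{u,v}\,y_u+g_v\,\delta y_{u,v}$ and $R^z_{u,v}=R^g_{u,v}y_u+g_uR^y_{u,v}+\delta g_{u,v}\,\delta y_{u,v}$, conditional H\"older for the cross term, and the observation that $q'\ge q$ gives $\frac{q}{2}\le\frac{qq'}{q+q'}$, so the natural $\frac{q}{2}$-variation bound on $\mathbb{E}_\bullet[\delta g\,\delta y]$ suffices (one minor slip: in the increment step you should sum $q$-th, not $p$-th, powers).
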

\begin{proof}
For \(s<t\), set
$
  R^g_{s,t}:=\delta g_{s,t}-g'_s \delta \eta_{s,t},
\ 
  R^y_{s,t}:=\delta y_{s,t}-y'_s \delta\eta_{s,t}.
$
Then
\begin{equation}\label{eq:decom-z}
\begin{aligned}
  \delta z_{s,t}
  &= g_ty_t-g_sy_s                                           \\
  &= (\delta g_{s,t})y_s+g_s(\delta y_{s,t})
     +(\delta g_{s,t})(\delta y_{s,t})                       \\
  &= (g'_s y_s+g_s y'_s) \delta\eta_{s,t}+R^z_{s,t}
   = z'_s \delta\eta_{s,t}+R^z_{s,t},
\end{aligned}
\end{equation}
where
$
  R^z_{s,t}
  =
  R^g_{s,t}y_s
  +g_sR^y_{s,t}
  +(\delta g_{s,t})(\delta y_{s,t}).
$
We now estimate each term. Since \(g \in \mathbf D_\eta^{(q,q')\text{-var}}\mathbb L^{m,\infty}_{lin} \),
\begin{equation*}
  \|\delta z_{s,t}\|_{{m}} \le
  \|\delta g_{s,t}\|_{m,\infty}\|y_s\|_{m}
  +\|g_t\|_{\infty}\|\delta y_{s,t}\|_{m}.
\end{equation*}
Hence
\begin{equation}\label{est:dz}
  \|\delta z\|_{q\text{-var};m}
  \lesssim
  \|g\|_{\infty}\|y\|_{q\text{-var},m}
  +\sup_{r\in[0,T]}\|y_r\|_{m}\|g\|_{q\text{-var},m,\infty}.
\end{equation}
For the Gubinelli derivative $
  z'=g'y+gy',
$
we have
\[
  \delta z'_{s,t}
  = (\delta g'_{s,t})y_s+g'_t(\delta y_{s,t})
     +(\delta g_{s,t})y'_s+g_t(\delta y'_{s,t}),   
\]
which implies
\begin{equation}\label{est:dz'}
\begin{split}
  \|\delta z'\|_{q'\text{-var},m}
  \lesssim & \sup_{r\in[0,T]} \|y_r\|_{m}\|\delta g'\|_{q'\text{-var},m,\infty}
  + \|\delta y\|_{q'\text{-var},m} \|g'\|_{\infty}  \\         
  & +\sup_{r\in[0,T]} \|y_r'\|_{m}\|\delta g\|_{q'\text{-var},m,\infty}
  +\| \delta y'\|_{q'\text{-var},m }\|g\|_{\infty}.  
\end{split} 
\end{equation}
Finally, for the controlled remainder, note that
\[
  \mathbb E_s R^z_{s,t}=
  y_s \mathbb E_sR^g_{s,t}
  +g_s \mathbb E_sR^y_{s,t}
  +\mathbb E_s \left[(\delta g_{s,t})(\delta y_{s,t})\right].
\]
Thus
\be\label{est:ERZ}
\begin{aligned}
  \|\mathbb E_\bullet R^z\|_{\frac{qq'}{q+q'}\text{-var},m}
  &\lesssim
  \sup_{r\in[0,T]} \|y_r\|_{m}
  \|\mathbb E_\bullet R^g\|_{\frac{qq'}{q+q'}\text{-var},m,\infty}
  +\|g\|_{\infty}
  \|\mathbb E_\bullet  R^y\|_{\frac{qq'}{q+q'}\text{-var},m}        \\
  &\quad
  +\| \E_\bullet[\delta g\,\delta y]\|_{\frac{qq'}{q+q'}\text{-var},m} .
\end{aligned}
\ee
For the last term on the right hand side of the above inequality, note that
$$
 \E_s[(\delta g_{s,t})(\delta y_{s,t})] \le \| \delta g_{s,t} | \MF_s\|_{2} \| \delta y_{s,t} | \MF_s\|_{2} \le \|\delta g\|_{q\text{-var},m,\infty;[s,t]} \| \delta y_{s,t} | \MF_s\|_{m} .
$$
It follows that 
$$
\| \E_\bullet[\delta g\,\delta y]\|_{\frac{q}{2}\text{-var},m} \lesssim \|\delta g\|_{q\text{-var},m,\infty} \|\delta y\|_{q\text{-var},m }.
$$
Combine the above estimate with \eqref{est:ERZ}, we obtain
\be\label{est:ERZ-2}
 \|\mathbb E_\bullet R^z\|_{\frac{qq'}{q+q'}\text{-var},m} \lesssim  \|(g,g')\|_{\mathbf D_\eta^{(q,q')\text{-var}}\mathbb L^{m,\infty}_{lin}}
  \bigl(\|y_0\|_{m}+\|(y,y')\|_{\mathbf D_\eta^{(q,q')\text{-var}}\mathbb L^{m }}\bigr),
\ee
where we apply the assumption $q'\ge q.$ Note that $$
\|z'_0\|_{m}\le\|g'_0\|_\infty\|y_0\|_{m}+\|g_0\|_\infty\|y'_0\|_{m}.
$$
Therefore, in view of \eqref{est:dz}, \eqref{est:dz'} and \eqref{est:ERZ-2}, our claim follows.
\end{proof}

We need more lemmas as shown in the following, which are generalizations of those in \cite{LiangTang2025} from $\LL^2$ to $\LL^m$, so their proofs are omitted.

\begin{lemma}\label{lem:maring}
Suppose $M$ is a martingale with finite $m$-th moment. Then, for any $\boldsymbol{\eta} \in \mathscr{C}^{p\text{-var}}([0,T];\mathbb{R}^\ell)$, and any $q' \ge q \ge m  \ge 2,$ we have 
$$
\| (M,0)\|_{\mathbf D_\eta^{(q,q')\text{-var}} \LL^m} \lesssim \|\delta M_{0,T}\|_{m}.
$$
\end{lemma}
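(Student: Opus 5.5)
We argue directly from the definition of the seminorm in Definition~\ref{def:contro-rp-r}, with Gubinelli derivative $M'\equiv0$. Then $\sup_t\|M'_t\|_m=0$ and $\|\delta M'\|_{q'\text{-var},m}=0$. Moreover $R^M_{s,t}=\delta M_{s,t}$, so by the martingale property $\mathbb{E}_s R^M_{s,t}=0$ and the remainder term vanishes. The only quantity left to control is $\|\delta M\|_{q\text{-var},m;[0,T]}$, and the claim reduces to
\[
\|\delta M\|_{q\text{-var},m;[0,T]}\lesssim \|\delta M_{0,T}\|_m \qquad\text{for } q\ge m\ge 2 .
\]

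For this I use the fact that in $\mathbb{L}^m$ with $m\ge2$ martingale increments are orthogonal in a $2$-uniformly-smooth-type sense. Concretely, fix a partition $\{t_i\}$ of $[0,T]$. By BDG,
\[
\mathbb{E}\big[|\delta M_{0,T}|^m\big]\gtrsim \mathbb{E}\big[[M]_{0,T}^{m/2}\big]\ge \mathbb{E}\Big[\Big(\sum_i [M]_{t_i,t_{i+1}}\Big)^{m/2}\Big].
\]
Since $m/2\ge1$, superadditivity of $x\mapsto x^{m/2}$ gives $\big(\sum_i a_i\big)^{m/2}\ge\sum_i a_i^{m/2}$. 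Applying BDG again on each interval,
\[
\|\delta M_{0,T}\|_m^m \gtrsim \sum_i \mathbb{E}\big[[M]_{t_i,t_{i+1}}^{m/2}\big]\gtrsim \sum_i \|\delta M_{t_i,t_{i+1}}\|_m^m .
\]
This bounds the $m$-variation. Since $q\ge m$, the $\ell^q$ norm is dominated by the $\ell^m$ norm, so
\[
\Big(\sum_i\|\delta M_{t_i,t_{i+1}}\|_m^q\Big)^{1/q}\le\Big(\sum_i\|\delta M_{t_i,t_{i+1}}\|_m^m\Big)^{1/m}\lesssim \|\delta M_{0,T}\|_m .
\]
Taking the supremum over partitions concludes.

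The BDG step needs $M$ continuous, or at least càdlàg with the quadratic variation bracket. In the present Brownian filtration every martingale is continuous, so this is unproblematic. Alternatively, one can avoid BDG entirely by iterating Burkholder's inequality for discrete martingale difference sequences.

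The main obstacle is keeping the constant independent of the partition. The superadditivity argument handles exactly this, and it is also where $m\ge2$ is needed: for $m<2$ the inequality fails. The condition $q\ge m$ is used only in the final monotonicity step of the $\ell^q$ norms.
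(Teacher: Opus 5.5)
Your proof is correct. With $M'\equiv 0$ the derivative terms vanish and $\mathbb{E}_s R^M_{s,t}=\mathbb{E}_s[\delta M_{s,t}]=0$; BDG (with Doob's inequality to pass to the terminal-value form) together with superadditivity of $x\mapsto x^{m/2}$ then gives the partition-independent bound $\sum_i\|\delta M_{t_i,t_{i+1}}\|_m^m\lesssim\|\delta M_{0,T}\|_m^m$, and $q\ge m$ finishes. The paper omits the proof, describing it as the $\mathbb{L}^m$-extension of the $\mathbb{L}^2$ lemma of Liang--Tang, where orthogonality of martingale increments gives the $2$-variation bound directly. Your BDG/Burkholder-plus-superadditivity step is the natural replacement for that orthogonality, so this is the intended argument.
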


\begin{lemma}\label{lem:smoothint}
   For any $\phi \in \HH^m([0,T];\mathbb{R}^{d\times w})$ we have 
   $\int_0^. \phi_r \ud r \in C^{\frac{2m}{m+2}\text{-var}} \LL^m([0,T];\mathbb{R}^{d})$, and moreover, for any $(\boldsymbol{\eta},q,q') $ as in the above lemma,
\be
\bigg\| \bigg(\int_0^. \phi_r \ud r, 0\bigg) \bigg\|_{\mathbf D_\eta^{(q,q')\text{-var}} \LL^m} \lesssim T^{\frac{1}{2}} \|\phi\|_{\HH^m}.
\ee
\end{lemma}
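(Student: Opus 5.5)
The plan is to work directly with $A_t:=\int_0^t\phi_r\,\ud r$ and the pair $(A,0)$. Since the Gubinelli derivative vanishes, $\sup_t\|A'_t\|_m=0$ and $\|\delta A'\|_{q'\text{-var},m}=0$. We also have $R^A_{s,t}=\delta A_{s,t}$. So the whole claim reduces to two estimates: a $\frac{2m}{m+2}$-variation bound for $\delta A$ in $\mathbb{L}^m$, and a bound for $\mathbb E_\bullet \delta A$ in $\frac{qq'}{q+q'}$-variation.

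\emph{Step 1 (increment bound and a control).} By Cauchy--Schwarz in time,
\[
|\delta A_{s,t}|\le (t-s)^{\frac12}\Big(\int_s^t|\phi_r|^2\ud r\Big)^{\frac12},
\qquad\text{hence}\qquad
\|\delta A_{s,t}\|_m\le (t-s)^{\frac12}\,\omega(s,t)^{\frac1m},
\]
where $\omega(s,t):=\mathbb E\big[(\int_s^t|\phi_r|^2\ud r)^{m/2}\big]$. Because $m/2\ge1$, the map $x\mapsto x^{m/2}$ is superadditive on $\mathbb{R}_+$. Hence $\omega$ is superadditive, with $\omega(0,T)=\|\phi\|_{\HH^m}^m$.

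\emph{Step 2 (variation bound via H\"older).} Set $r:=\frac{2m}{m+2}$. For any partition $\mathcal P$ of $[0,T]$, Step 1 gives
\[
\sum_{[u,v]\in\mathcal P}\|\delta A_{u,v}\|_m^{r}\le\sum (v-u)^{\frac{m}{m+2}}\,\omega(u,v)^{\frac{2}{m+2}}.
\]
Apply the discrete H\"older inequality with exponents $\frac{m+2}{m}$ and $\frac{m+2}{2}$, then additivity of Lebesgue measure and superadditivity of $\omega$. This bounds the sum by $T^{\frac{m}{m+2}}\omega(0,T)^{\frac{2}{m+2}}$. Taking the $r$-th root yields $\|\delta A\|_{r\text{-var},m}\le T^{1/2}\|\phi\|_{\HH^m}$, which is the first claim (continuity of $A$ is clear). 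Since $r\le 2\le m\le q$ and $\ell^{r}\subset\ell^{q}$ with norm at most one, we get $\|\delta A\|_{q\text{-var},m}\le T^{1/2}\|\phi\|_{\HH^m}$.

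\emph{Step 3 (remainder term).} Conditional expectation is a contraction on $\mathbb{L}^m$, so $\|\mathbb E_u\delta A_{u,v}\|_m\le\|\delta A_{u,v}\|_m$. It remains to check that the remainder exponent dominates $r$. Since $q'\ge q$, we have $\frac{qq'}{q+q'}\ge \frac q2\ge\frac m2$, and $\frac m2\ge\frac{2m}{m+2}$ exactly when $m\ge2$. The same $\ell^{r}\subset\ell^{qq'/(q+q')}$ embedding then gives $\|\mathbb E_\bullet R^A\|_{\frac{qq'}{q+q'}\text{-var},m}\le T^{1/2}\|\phi\|_{\HH^m}$. Summing the three contributions gives the asserted estimate.

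The only point needing care is Step 2: the choice of the control $\omega$ and the H\"older split that produces exactly the exponent $\frac{2m}{m+2}$ and the factor $T^{1/2}$. I expect this to be the main, though modest, obstacle. The remaining steps are monotonicity of $p$-variation norms in $p$ and checking the exponent inequality.
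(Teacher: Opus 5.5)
Your proof is correct. The pathwise Cauchy--Schwarz bound, the superadditive control $\omega(s,t)=\mathbb{E}[(\int_s^t|\phi_r|^2\,\ud r)^{m/2}]$, the H\"older split that yields exactly the exponent $\frac{2m}{m+2}$ and the factor $T^{1/2}$, and the monotonicity of variation norms together with $\frac{qq'}{q+q'}\ge\frac m2\ge\frac{2m}{m+2}$ give the claim. The paper omits this proof and refers to it as the $\LL^m$-generalization of the corresponding $\LL^2$ lemma in \cite{LiangTang2025}; your argument is that generalization, since for $m=2$ it reduces to the Cauchy--Schwarz argument with the additive control $\mathbb{E}\int_s^t|\phi_r|^2\,\ud r$.
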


Now we are ready to prove the well-posedness of \eqref{eq:l-bsde}. 

\begin{assumptions}\label{assu:bsde}
    \begin{itemize}
    \item[(i)] It holds $\xi \in \LL^m(\MF_T)$.
    
    \item[(ii)] Let $f: \Omega \times [0,T]\times \R^d \times \R^{d\times w} \rightarrow \R^d$ be progressively measurable 
    for each $(y,z)\in \mathbb{R}^d\times\mathbb{R}^{d\times w}$, and such that $f(\cdot,\cdot,0,0) \in \HH^{m}([0,T];\mathbb{R}^{d\times w})$ while 
$$
|f(t,y,z)-f(t,\bar y, \bar z)| \lesssim |y-\bar y|+|z- \bar z|.
$$

\item[(iii)] Let $(g,g') \in \mathbf D_\eta^{(p,p')\text{-var}}\mathbb L^{m,\infty }_{lin}\big([0,T];L(\mathbb{R}^{d},\mathbb{R}^{d\times \ell})\big),$ $(h,h')\in \mathbf D_\eta^{(p,p')\text{-var}}\mathbb L^m([0,T];\mathbb{R}^{d\times \ell})$, $h_0\in\mathbb L^m(\mathcal{F}_0)$, with $2\le m\le p<3$, $p\le p'<\infty$, and $2/p+1/p'>1$.
    \end{itemize}
\end{assumptions}

\begin{thm}\label{ex_uni_lin_bsde}
Suppose Assumptions \ref{assu:bsde} hold. Then, there exists a unique solution $((Y,Y'),Z)$ to \eqref{eq:l-bsde}, with $Y'=-(gY+h)$, such that $(Y,Y') \in \BD^{(p,p)-\text{var}}_{\eta}\LL^m([0,T];\mathbb{R}^{d})$ and $Z \in \HH^m([0,T];\mathbb{R}^{d\times w})$.
\end{thm}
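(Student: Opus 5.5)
I would prove Theorem \ref{ex_uni_lin_bsde} by a Picard/contraction argument on short time intervals, patched backward, in the spirit of \cite{LiangTang2025} but carried out in $\LL^m$ and with the linear vector field $(g,g')$ entering only through Lemma \ref{lem:product}.

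\emph{Solution map.} Fix an interval $[S,T]$ and the space $\mathcal{X}_{[S,T]}$ of pairs $((y,y'),z)$ with $(y,y')\in\BD^{(p,p)\text{-var}}_\eta\LL^m([S,T];\R^d)$ and $z\in\HH^m$, normed by $\|y_S\|_m+\|(y,y')\|_{\BD^{(p,p)\text{-var}}_\eta\LL^m}+\|z\|_{\HH^m}$. Given an element of $\mathcal{X}_{[S,T]}$:
\begin{itemize}
\item form $(gy+h,\,g'y+gy'+h')$;
\item by Lemma \ref{lem:product} it is controlled in $\BD^{(p,p')\text{-var}}_\eta\LL^m$ (note $p'\ge p$), so by Lemma \ref{lem:roughstochint}, using $2/p+1/p'>1$, the rough integral $J_t:=\int_t^T(\cdot)\,\ud\boldsymbol\eta$ is well defined, with Gubinelli derivative $-(gy+h)$ when viewed as a forward path;
\item define $\Gamma(y,z)=(Y,Z)$ through the martingale representation of
\[
\Xi:=\xi+\int_S^Tf(s,y_s,z_s)\,\ud s+J_S,
\]
namely
\[
Y_t=\E_t\Bigl[\xi+\int_t^Tf(s,y_s,z_s)\,\ud s+J_t\Bigr],
\]
with $Z$ obtained from the representation of the martingale $\E_t[\Xi]$ (Brownian filtration, augmented by $\mathcal F^0_0$).
\end{itemize}
Then $Y_t=M_t-\int_S^tf\,\ud s-\int_S^t(gy+h)\,\ud\boldsymbol\eta$ with $M$ a martingale. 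Lemma \ref{lem:maring} shows $M$ contributes nothing to the Gubinelli derivative and is bounded by $\|\delta M_{S,T}\|_m$, using $m\le p$. Lemma \ref{lem:smoothint} treats the drift, using $f(\cdot,0,0)\in\HH^m$ and the Lipschitz property. Hence $Y'=-(gy+h)$ and $(Y,Y')\in\BD^{(p,p)\text{-var}}_\eta\LL^m$. Doob/BDG in $\LL^m$ gives $\|Z\|_{\HH^m}\lesssim\|\delta M_{S,T}\|_m$, and this is bounded by the data plus the norms of the rough and drift terms. This shows $\Gamma$ maps $\mathcal X$ into itself.

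\emph{Contraction.} For two inputs, the difference of the images solves the same structure with $\xi=0$, $h=0$, and driver increment bounded by $|\delta y|+|\delta z|$. Lemmas \ref{lem:smoothint}, \ref{lem:maring}, \ref{lem:product} and estimate \eqref{eq:rough-int-estimate} bound the image difference by $C\bigl(|T-S|^{1/2}+w_\eta(S,T)^{1/p}\bigr)$ times the input difference. Here $w_\eta$ is the control of $\boldsymbol\eta$, and the constant $C$ depends on the full norm of $(g,g')$ but not on the interval.

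This is the step I expect to be the \emph{main obstacle}. The small factor must multiply everything, including the $\sup\|y\|$ and $\|y'\|$ terms produced by Lemma \ref{lem:product}. Those terms are not small in the interval, so I would include $\|y_S\|_m$ and $\sup\|y'\|$ in the norm. I would then use that on a short interval $y'=-(gy+h)$ is itself controlled by $y$, since we may restrict $\mathcal X$ to inputs satisfying that relation. That leaves only $\sup_t\|y_t\|_m$, which Lemma \ref{lem:uni-p-var}-type reasoning, or simply $\|y_S\|_m+\|\delta y\|_{p\text{-var}}$, bounds by the norm.

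One more subtlety is that the remainder seminorm of $g$ is only bounded in $\LL^{m,\infty}$. The cross term $\E_s[\delta g\,\delta y]$ therefore needs the conditional Cauchy--Schwarz argument of Lemma \ref{lem:product}, which forces $m$ to be at least $2$ and the $\infty$ outer integrability. This is where $(g,g')\in\LL^{m,\infty}_{lin}$ is used.

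\emph{Global solution and uniqueness.} Choose a partition $T=t_0>t_1>\dots>t_N=0$ with $C(|t_i-t_{i+1}|^{1/2}+w_\eta(t_{i+1},t_i)^{1/p})\le1/2$. Its size $N$ is finite, as in the proof of Theorem \ref{thm 3.9}. Solve on $[t_1,T]$ by Banach's fixed point theorem. Then use $Y_{t_1}\in\LL^m(\MF_{t_1})$ as terminal datum on $[t_2,t_1]$, and iterate. Concatenation preserves the $p$-variation and remainder seminorms, up to the factor-$3$ concatenation inequality for $R^Y$ used at the end of the proof of Theorem \ref{thm 3.9}. The stochastic integrals glue, giving a global solution with $Y'=-(gY+h)$. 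Uniqueness follows because any two solutions in the stated class coincide on $[t_1,T]$ by the contraction, and then on each subsequent interval by induction.
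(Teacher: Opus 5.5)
Your architecture matches the paper's: a short-time Picard map built from martingale representation, Lemmas \ref{lem:maring}, \ref{lem:smoothint}, \ref{lem:product} and \ref{lem:roughstochint}, backward patching with an interval length independent of the terminal datum, and uniqueness by induction. However, the contraction step, which you rightly flag as the main obstacle, does not close as proposed.

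The image of $\Gamma$ has Gubinelli derivative $Y'=-(gy+h)$, built from the \emph{input} $y$. So the derivative component of the image difference is $-g\,\Delta y$, of size $\|g\|_\infty\|\Delta y\|$ with no small factor. Concretely, take $\ell=1$ and $g\equiv c\,\mathrm{Id}$ constant, so $g'=0$ and $R^g=0$. Take two inputs differing only by an $\mathcal F_S$-measurable constant $\zeta$ in the $y$-slot. In your norm the input difference is $\|\zeta\|_m$. The image difference contains $\sup_t\|\Delta Y'_t\|_m=c\|\zeta\|_m$. Hence for $c>1$ your bound by $C(|T-S|^{1/2}+w_\eta(S,T)^{1/p})$ fails on every interval.

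Restricting $\mathcal X$ to inputs with $y'=-(gy+h)$ does not rescue this, because that set is not invariant under $\Gamma$. The image satisfies $Y'=-(gy+h)$, not $-(gY+h)$. You also cannot re-declare its derivative as $-(gY+h)$: the conditional remainder would then pick up $g_s(Y_s-y_s)\delta\eta_{s,t}$, which has finite $p$-variation but in general not finite $\frac p2$-variation.

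The missing idea is the weighted norm the paper uses:
$\|(y,y',z)\|_{(K)}=K\|y\|_{C^{p\text{-var}}\LL^m}+\|y'\|_{C^{p\text{-var}}\LL^m}+K\|\E_\bullet R^y\|_{\frac p2\text{-var},m}+\|z\|_{\HH^m}$.
With it, the derivative component costs only $\|g\Delta y\|_{C^{p\text{-var}}\LL^m}\lesssim K^{-1}\cdot K\|\Delta y\|_{C^{p\text{-var}}\LL^m}$, i.e.\ a factor $1/K$. The rough-integral, drift and martingale parts carry factors $K\tnorm{\BBeta}(1\vee\tnorm{\BBeta})$, $KT^{1/2}$ and $K^2\tnorm{\BBeta}(1\vee\tnorm{\BBeta})$ on the short interval; see \eqref{est:roughint}--\eqref{est:deltaZ}. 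Fixing $K$ large first and then shrinking the interval gives the contraction.

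Alternatively, you can keep your unweighted norm and show that $\Gamma^2$ is a contraction. The $y$-, remainder- and $z$-components of the image difference are small multiples of the full input difference (including its $y'$-part). The derivative component of the second image is $-g$ times the already small $y$-component of the first. With either repair, the rest of your argument (self-mapping, gluing, uniqueness) goes through.
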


\begin{proof}

Following the arguments in \cite{FrizZhang2018} and \cite{LiangTang2025}, we establish a fixed-point theorem on a small time interval, but with respect to a modified norm on the space of stochastic controlled rough paths. Consider the space of $\BD:= \BD^{(p,p)\text{-var}}_{\eta}\LL^m \times \HH^m$, equipped with the norm 
\[
\begin{split}
\|(y,y',z)\|_{(K)}:=& K(\|\delta y\|_{p-var, m} + \|y_T\|_{m})+ (\|\delta y'\|_{p-var, m} + \|y'_T\|_{m})+ K \|\E_\bullet R^y\|_{\frac{p}{2}-var, m} + \|z\|_{\HH^m}, \\
=& K \|y\|_{C^{p\text{-var}}\LL^m}+ \|y'\|_{C^{p\text{-var} } \LL^m} + K \|\E_\bullet R^y\|_{\frac{p}{2}-var, m} + \|z\|_{\HH^m}\\
=:& \|(y,y')\|_{\BD^{(p,p)\text{-var} }_{\eta}\LL^m}^{(K)} + \|z\|_{\HH^m}
\end{split}
\]
where $K>1$ is some constant to be determined, and $\|\cdot \|_{\BD^{(p,p)\text{-var}}_{\eta}\LL^m}^{(K)}$ is defined naturally by the above. It is obvious that the above metric is equivalent to the natural metric in the product space, and thus $\BD$ is complete under the above norm. Consider the following mapping $\psi$ on $\BD.$ For any $(y,y',z) \in \BD,$ let $Z$ be the unique $\HH^m$ element by the martingale decomposition theorem, such that 
\[
\begin{split}
\int_0^t Z_r \ud W_r &= \E_t \bigg[\xi + \int_0^T f(r,y_r, z_r) \ud r + \int_0^T (g_r y_r+h_r) \ud \BBeta_r \bigg]\\
&\ \ \ - \E\bigg[\xi + \int_0^T f(r,y_r, z_r) \ud r + \int_0^T (g_r y_r+h_r) \ud \BBeta_r \bigg]\\
&=: M_t-M_0. 
\end{split}
\]
It is easy to check $M$ is a martingale with finite $m$-the moment, and thus the above $Z$ is well-defined. Moreover, let 
\be\label{eq:defpsi}
\begin{split}
Y_t :=& \xi + \int_t^T f(r,y_r, z_r) \ud r + \int_t^T (g_r y_r+h_r) \ud \BBeta_r - \int_t^T Z_r \ud W_r,  \\
=& M_t - \int_0^t f(r,y_r, z_r) \ud r - \int_0^t (g_r y_r+h_r) \ud \BBeta_r,
\end{split}
\ee
and $Y':=-(gy+h).$ In view of lemmas \ref{lem:product}, \ref{lem:roughstochint}, we see that $(Y,Y') \in \BD^{(p,p)\text{-var}}_{\eta} \LL^m.$ Let 
$$
\psi(y,y',z) := (Y,Y',Z),
$$
and it defines a mapping on $\BD.$ 

Now we show that $\psi$ defines a contraction when $T$ small. For any $(y,z)$, $(\by, \bz) \in \BD,$ let $\Delta y:= y-\by$, and $\Delta z, \Delta f$ defined similarly. We take the difference of identities \eqref{eq:defpsi} with initial input $(y,y',z)$ and $(\by, \by', \bz)$, to obtain
\be\label{eq:Delta-y}
\Delta Y_t= \int_t^T \Delta f_r \ud r + \int_t^T g_r \Delta y_r \ud\BBeta_r - \int_t^T \Delta Z_r \ud W_r.
\ee
In the following we estimate every term on the right hand side of the above identity. By Lipschitzness of $f$, we have 
$$
\|\Delta f\|_{\HH^m} \lesssim \sup_{t \in [0,T]} \| \Delta y_t\|_{m} + \| \Delta z \|_{\HH^m},
$$
and thus by Lemma \ref{lem:smoothint}, we have 
\be\label{est:soothest}
\bigg\|\bigg(\int_0^. \Delta f_r \ud r, 0\bigg) \bigg\|_{\BD^{(p,p)\text{-var} }_\eta \LL^m}^{ (K)} \lesssim T^{\frac{1}{2}}K  \Big( \|\Delta y \|_{C^{p\text{-var}}\LL^m}+\|\Delta z\|_{\HH^m} \Big).
\ee
Moreover, by Lemma \ref{lem:product}, 
$$
\|(g\Delta y, g \Delta y'+ g' \Delta y)\|_{\BD^{(p,p')\text{-var}}_\eta \LL^m}^{(K)} \lesssim \|(g,g')\|_{\BD^{(p,p')\text{-var} }_\eta \LL^{m,\infty}_{lin}} \|(\Delta y,  \Delta y')\|_{\BD^{(p,p)\text{-var}}_\eta \LL^m}^{(K)},
$$
and thus, in view of Lemma \ref{lem:roughstochint},
\begin{align}\nonumber
&\bigg\|\bigg(\int_0^. g_r \Delta y_r \ud\BBeta_r, g \Delta y \bigg)\bigg\|_{\BD^{(p,p)\text{-var}}_\eta \LL^m}^{(K)} \\ \nonumber
&\ \ \  \lesssim K\|(g\Delta y, g \Delta y'+ g' \Delta y)\|_{\BD^{(p,p')\text{-var} }_\eta \LL^m}^{(K)}\tnorm{\BBeta}(1 \vee \tnorm{\BBeta}) + \|(g,g')\|_{\BD^{(p,p')\text{-var} }_\eta \LL^{m,\infty}_{lin}} \|\Delta y\|_{C^{p\text{-var} }\LL^m } \\ \label{est:roughint}
&\ \ \  \lesssim \|(g,g')\|_{\BD^{(p,p')\text{-var} }_\eta \LL^{m,\infty}_{lin}} (K \tnorm{\BBeta }(1 \vee \tnorm{\BBeta }) + \frac1K )\|(\Delta y,  \Delta y')\|_{\BD^{(p,p)\text{-var}}_\eta \LL^m}^{(K)}.
\end{align}
For the last term on the right hand side of \eqref{eq:Delta-y},
note that 
$$
\Delta M_t= \E_t\bigg[\int_0^T \Delta f_r \ud r + \int_0^T g_r \Delta y_r \ud\BBeta_r \bigg]=\Delta M_0 + \int_0^t \Delta Z_r \ud W_r.
$$
It follows by Doob's maximal inequality that 
\begin{align}\nonumber
 \|\Delta Z\|_{\HH^m}
&\lesssim  T^{\frac{1}{2}} \|\Delta f\|_{\HH^m} + \bigg\|\int_0^T g_r \Delta y_r \ud \BBeta_r \bigg\|_{m}\\  \nonumber
& \lesssim_{(g,g')}  T^{\frac{1}{2}} (\|\Delta y\|_{p\text{-var}, m} + \|\Delta z\|_{\HH^m}) + 
 K \tnorm{\BBeta }(1 \vee \tnorm{\BBeta }) \|(\Delta y,  \Delta y')\|_{\BD^{(p,p)\text{-var}}_\eta \LL^m}^{(K)}\\ \label{est:DeltaZ}
& \lesssim_{(g,g')} (T^{\frac{1}{2}} + K\tnorm{\BBeta }(1 \vee \tnorm{\BBeta })) \|\Delta y, \Delta y', \Delta z\|_{(K)}.
\end{align}
Then by Lemma \ref{lem:maring} and the above inequality, 
\be\label{est:deltaZ}
\bigg\|\bigg( \int_0^. \Delta Z_r dW_r, 0 \bigg) \bigg\|_{\BD^{(p,p)-\text{var}}_\eta \LL^m}^{(K)} \lesssim_{(g,g')}(KT^{\frac{1}{2}} + K^2\tnorm{\BBeta }(1 \vee \tnorm{\BBeta })) \|(\Delta y, \Delta y', \Delta z)\|_{(K)}.
\ee
In view of \eqref{eq:Delta-y}, \eqref{est:soothest}, \eqref{est:roughint} and \eqref{est:deltaZ},
\be
\|(\Delta Y, \Delta Y')\|^{(K)}_{\BD^{(p,p)-var}_\eta \LL^m} \lesssim_{(g,g')}(T^{\frac{1}{2}}K+K^2\tnorm{\BBeta}(1 \vee \tnorm{\BBeta})+ \frac1K)\| (\Delta y, \Delta y', \Delta z) \|_{(K)}.
\ee
Moreover, by \eqref{est:DeltaZ} and the above estimate, we can choose $K$ large and then $T$ small, such that 
$$
\|(\Delta Y, \Delta Y', \Delta Z) \|_{(K)} \le \frac12 \| (\Delta y, \Delta y', \Delta z) \|_{(K)},
$$
which justify a contraction of $\psi.$ Furthermore, note that the above choice of $K$ and $T$ is independent of the terminal $\xi.$ Thus, we can solve \eqref{eq:l-bsde} piece by piece from time $T$ backwardly. 
\end{proof}

\begin{rmk}[Scope of the multidimensional statement]
\label{rmk:multidimensional-rough-bsde}
Theorem~\ref{ex_uni_lin_bsde} is genuinely
multidimensional, while the well-posedness highly relies on the affine structure of the rough integrand.
For the general nonlinear and multidimensional case, \cite{LiangTang2025} imposes either a smallness condition or a structured condition, in view of the fact that transformed rough BSDEs are quadratic BSDEs. More recently, \cite{FrizSongZhangZhang2026}
develops an intrinsic theory of nonlinear rough BSDEs using rough
semimartingales. It turns out that the well-posedness of nonlinear multidimensional equations
requires an a priori uniform $\mathbb{L}^\infty$ bound on $Y$ (see \cite[Theorems~4.17 and~4.21]{FrizSongZhangZhang2026}).
\end{rmk}

\subsection{General rough stochastic product rule}

As before, we assume that $(\Omega,\mathcal F,(\mathcal F_t)_{t\in\mathbb{R}_+},\mathbb{P})$ is a complete filtered probability space which satisfies the usual conditions, and $W$ a $w-$dimensional Brownian motion with respect to $(\mathcal{F}_t)_{t\in[0,T]}$.

\begin{assumptions}\label{assu:rpf}
Let $\alpha, \beta', \beta'' \in (0,1), k_1, k_2, p \in (2,\infty)$ such that 
 $\frac{1}{3}< \frac{1}{p} \leq \beta''\leq \beta'\leq\alpha < \frac{1}{2}$.\footnote{Of course under this conditions we have $\alpha + \beta' + \beta'' >1$ and $\frac{2}{p} + \alpha > 1$.}
    \begin{itemize}
    \item[(i)] It holds 
    $$
\frac{1}{k_1} + \frac{1}{k_2} \leq \frac{1}{2}.
    $$
    \item[(ii)] It holds 
 $$\alpha+\beta'+ \beta'' \wedge \frac{1}{ k_2}>1.$$
    
    \item[(iii)] It holds
    $$
     \frac{2}{p} + \frac{1}{k_1} > 1.
    $$
    \end{itemize}
\end{assumptions}

\begin{rmk}\label{rmk 3.18}
The following Theorem is written in a mixed Hölder--$p$-variation setting as this is the version used in the rest of this work. However, one can easily extend it to purely $p$-variation setting, with the same proof under the obvious changes, with only altering (ii) of Assumptions \ref{assu:rpf} to $ \frac{2}{p} + \frac{1}{k_2} > 1$.
\end{rmk}

\begin{thm}\label{rough_product_formula}
 Let $e, d\in\mathbb{N}, \boldsymbol{\eta} \in \mathscr{C}^{0,\alpha}_g([0,T];\mathbb{R}^\ell)$ and Assumptions \ref{assu:rpf} hold true. Moreover, let $(X,X')\in \mathbf{D}^{\alpha,\beta'}_{\eta}\mathbb{L}^{k_1}([0,T];\mathbb{R}^{e\times d})$, $(Y,Y')\in \mathbf{D}^{(p,p)\text{-var}}_{\eta}\mathbb{L}^{k_2}([0,T];\mathbb{R}^{d})$,
 while
 $(X',X'')\in \mathbf{D}^{\beta',\beta''}_{\eta}\mathbb{L}^{k_1}\big([0,T];L(\mathbb{R}^\ell,\mathbb{R}^{e\times d})\big),$ $ (Y',Y'')\in \mathbf{D}^{(p,p)\text{-var}}_{\eta}\mathbb{L}^{k_2}\big([0,T];\mathbb{R}^{d\times \ell}\big)$, $X_0\in \mathbb{L}^{k_1}(\mathcal{F}_0;\mathbb{R}^{e\times d}), Y_0\in \mathbb{L}^{k_2}(\mathcal{F}_0;\mathbb{R}^{d})$, and $b^1, b^2, \sigma^1, \sigma^2$ progressively measurable processes with
 $$\mathbb{E}[|X'_0|^{k_1}]+\mathbb{E}[|Y'_0|^{k_2}] + \sum_{i\in\{1,2\}} \Bigg[\mathbb{E}\bigg[\bigg(\int_0^T |b^i_t|^2\ud t\bigg)^{\frac{k_i}{2}}\bigg] + \mathbb{E}\bigg[\bigg(\int_0^T |\sigma^i_t|^2\ud t\bigg)^{\frac{k_i}{2}}\bigg] \Bigg] < \infty,
 $$ 
 such that
 \begin{align*}
     X_t &= X_0 + \int_0^t b^1_s\ud s +\int_0^t \sigma^1_s\ud W_s + \int_0^t(X',X'')_s\ud \boldsymbol{\eta}_s,\\
     Y_t &= Y_0 + \int_0^t b^2_s\ud s + \int_0^t \sigma^2_s\ud W_s + \int_0^t(Y',Y'')_s\ud \boldsymbol{\eta}_s.
 \end{align*}
 Then, for every $t\in[0,T]$, $\mathbb{P}-$a.s., we have
 \begin{gather*}
      X_tY_t = X_0Y_0 + \int_0^t \Big(X_sb^2_s+b^1_sY_s+ \sum_{i=1}^q (\sigma^1_i)_s(\sigma^2_i)_s\Big)\ud s + \int_0^t \big( X_s \sigma^2_s + \sigma^1_s Y_s \big) \ud W_s + \int_0^t ({Z}',{Z}'')_s \ud \boldsymbol{\eta}_s,
 \end{gather*}
 where $({Z}',{Z}''):=\big(X Y' + X'Y,2X'Y'+  X Y'' + X'' Y\big) \in \mathbf{D}^{(p,p)\text{-var}}_{\eta}\mathbb{L}^{\frac{k_1k_2}{k_1+k_2}}\big([0,T];\mathbb{R}^{e\times \ell}\big)$.
\end{thm}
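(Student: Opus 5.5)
The plan is to prove the product formula by a telescoping-sum argument over partitions, identifying each piece through stochastic sewing and then passing to the limit in probability. Fix $t$ and a partition $\Pi$ of $[0,t]$. Writing $X_tY_t-X_0Y_0=\sum_{[u,v]\in\Pi}\big(X_u\delta Y_{u,v}+\delta X_{u,v}Y_u+\delta X_{u,v}\delta Y_{u,v}\big)$, I will expand each increment as
\[
\delta X_{u,v}=\int_u^v b^1\ud s+\int_u^v\sigma^1\ud W+X'_u\delta\eta_{u,v}+X''_u\eta^{(2)}_{u,v}+X^\natural_{u,v},
\]
and similarly for $Y$. I then sort the resulting terms into four groups.

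\emph{(a) Drift and It\^o parts.} These are $X_u\int_u^v(b^2\ud s+\sigma^2\ud W)$ and $\int_u^v(b^1\ud s+\sigma^1\ud W)\,Y_u$. Their sums converge to $\int X b^2+\int b^1Y+\int X\sigma^2\ud W+\int\sigma^1Y\ud W$, because $X,Y$ are continuous and adapted. The integrability assumptions (with $1/k_1+1/k_2\le 1/2$) are enough for the It\^o integrals and for convergence in probability.

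\emph{(b) Brownian cross term.} The product of the martingale increments, $\int_u^v\sigma^1\ud W\int_u^v\sigma^2\ud W$, sums to the bracket $\int\sum_i\sigma^1_i\sigma^2_i\ud s$ in probability, by the standard quadratic covariation argument.

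\emph{(c) Rough germ.} This collects $X_uY'_u\delta\eta+X'_uY_u\delta\eta+(X_uY''_u+X''_uY_u)\eta^{(2)}+X'_u\delta\eta\otimes Y'_u\delta\eta$. Geometricity gives $\mathrm{Sym}(\eta^{(2)}_{u,v})=\tfrac12\delta\eta_{u,v}\otimes\delta\eta_{u,v}$, which is where the factor $2X'Y'$ in $Z''$ comes from. Hence this germ equals $Z'_u\delta\eta_{u,v}+Z''_u\eta^{(2)}_{u,v}$ up to an antisymmetric correction, which I will check pairs correctly with the chosen tensor transpose convention.

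\emph{(d) Remaining mixed and remainder terms.} These are of the form $(\int_u^v\sigma^1\ud W)(Y'_u\delta\eta_{u,v})$, $X^\natural_{u,v}Y_u$, drift times rough, and so on.

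For group (c), I first verify that $(Z',Z'')\in\mathbf D^{(p,p)\text{-var}}_\eta\mathbb L^{k_1k_2/(k_1+k_2)}$. This follows by a product estimate in the spirit of Lemma~\ref{lem:product}, treating $X$ as controlled in $p$-variation via the embedding of $\alpha$-H\"older into $1/\alpha$-variation. Lemma~\ref{lem:roughstochint} then applies with the exponent $\tfrac{k_1k_2}{k_1+k_2}$, which is at least $2$ by Assumption (i), and the Riemann sums of (c) converge to $\int(Z',Z'')\ud\boldsymbol\eta$.

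The main obstacle is showing that the sums in group (d) vanish in probability. This will be done with the stochastic sewing / conditional-expectation splitting familiar from \cite{FHL21}: split each term $A_{u,v}$ into $\mathbb E_u A_{u,v}$ and $A_{u,v}-\mathbb E_uA_{u,v}$. The martingale-difference parts sum to something with $L^2$-type norm $\big(\sum\|A-\mathbb E_u A\|^2\big)^{1/2}$. The conditional-expectation parts need total exponent above $1$.

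Concretely, take $(\int_u^v\sigma^1\ud W)\,Y'_u\delta\eta_{u,v}$. It has zero conditional expectation, and its size is of order $|v-u|^{1/2+\alpha}$ in moments, so the square-summed martingale bound is fine. For $X^\natural_{u,v}Y_u$, Proposition~\ref{Prop 3.5}-type bounds give $\|X^\natural_{u,v}\|\lesssim|v-u|^{\alpha+\beta'}$ and $\|\mathbb E_uX^\natural_{u,v}\|\lesssim|v-u|^{\alpha+\beta'+\beta''}$; both suffice because $\alpha+\beta'>1/2$ and $\alpha+\beta'+\beta''>1$. For $Y^\natural$ in $p$-variation, the conditional parts have $\tfrac p3$-type controls and the unconditional parts $\tfrac p2$-type controls. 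Since only $\mathbb L^{k_2}$ and $\mathbb L^{k_1}$ moments are available, the pairing with $X$ must go through H\"older's inequality. Conditions (ii), $\alpha+\beta'+\beta''\wedge\tfrac1{k_2}>1$, and (iii), $\tfrac2p+\tfrac1{k_1}>1$, are exactly what is needed for superadditive sums such as $\sum w(u,v)^{2/p}|v-u|^{1/k_1}$ to vanish. So I will verify term by term that each exponent sum exceeds $1$, or exceeds $1/2$ for the martingale-difference parts.

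Drift cross terms such as $\int_u^v b^1\ud s\cdot\delta Y_{u,v}$ are the delicate cases, because $b^i$ is only in $\mathbb H^{k_i}$. For these I will use Cauchy--Schwarz in time, bounding them by $(\int_u^v|b^1|^2)^{1/2}|v-u|^{1/2}$ against the $p$-variation of $Y$, and apply a H\"older-sum argument. Finally, I pass to the limit in probability, using continuity in $t$ and a diagonal argument to obtain the identity a.s. for every $t$.
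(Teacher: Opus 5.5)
Your argument is correct and uses the same ingredients as the paper's proof: expanding $\delta(XY)$ into germs plus remainders, the geometric identity behind the $2X'Y'$ term in $Z''$, and splitting remainders into conditional expectations and martingale differences, with (ii) and (iii) entering through the pairings of $X^\natural$ with $\int\sigma^2\ud W$ and of $Y^\natural$ with $\int\sigma^1\ud W$. The difference is in how it is organized.

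\emph{How the paper proceeds.} It first writes $XY=\widetilde X\widetilde Y+\widetilde X\int(Y',Y'')\ud\boldsymbol\eta+\big(\int(X',X'')\ud\boldsymbol\eta\big)Y$ with $\widetilde X,\widetilde Y$ semimartingales. The classical It\^o product rule then absorbs all Brownian and drift cross terms, leaving only the one-sided cases $b^1=\sigma^1=0$ and $b^2=\sigma^2=0$. In each case it compares $\delta(XY)$ with the increments of the candidate right-hand side and invokes uniqueness in the Allan--Pieper stochastic sewing lemma. That requires moment bounds with controls for every term, including $\int_s^t\delta X_{s,u}b_u\,\ud u$ and $\int_s^t\delta X_{s,u}\sigma_u\,\ud W_u$. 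Hence it needs maximal inequalities for $\sup_{u\in[s,t]}|\delta X_{s,u}|$ and a version of the sewing lemma for products of several controls.

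\emph{What your route buys.} Your telescoping treats all cross terms at once. This costs more bookkeeping and loses the clean attribution of (ii) and (iii) to separate cases. In exchange, you identify the drift, It\^o and bracket sums only in probability, via pathwise continuity, so no maximal inequality is needed there. The elementary BDG/conditional-expectation splitting replaces the sewing uniqueness theorem, and moments are needed only for the genuinely rough remainders.

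Three details need fixing.

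First, $\sigma^1$ is only in $\mathbb H^{k_1}$, so $\|\int_u^v\sigma^1\ud W\|_{k_1}$ is not of order $|v-u|^{1/2}$. Use the control $\bar w_1(u,v):=\mathbb E[(\int_u^v|\sigma^1_r|^2\ud r)^{k_1/2}]$, which gives the bound $\bar w_1(u,v)^{1/k_1}$. Then $(\int_u^v\sigma^1\ud W)\,Y'_u\delta\eta_{u,v}$ is square-summable because $\alpha+\frac{1}{k_1}>\frac12$, which follows from (iii) together with $\alpha\ge\frac1p$ and $p>2$. Likewise, $|v-u|^{1/k_1}$ in your sample sum should read $\bar w_1(u,v)^{1/k_1}$.

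Second, the antisymmetric correction in (c) must vanish identically; it cannot be estimated away. The term $2X'Y'$ in $Z''$ has to be read as $X'Y'$ plus its pairwise transpose, which is exactly what makes $Z''$ the Gubinelli derivative of $Z'$. With that reading, $\eta^{(2)}_{u,v}+(\eta^{(2)}_{u,v})^\top=\delta\eta_{u,v}\otimes\delta\eta_{u,v}$ makes your germ equal $Z'_u\delta\eta_{u,v}+Z''_u\eta^{(2)}_{u,v}$ exactly. Under the literal reading, the leftover pairs $X'_uY'_u$ with the antisymmetric part of $\eta^{(2)}_{u,v}$, and its Riemann sums need not vanish for a general geometric rough path (for instance one carrying pure area).

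Third, Lemma~\ref{lem:product} assumes one factor lies in $\mathbb L^{m,\infty}$. For two $\mathbb L^{k_i}$ factors you need its H\"older-inequality variant, which is \cite[Proposition 4.13]{LiangTang2025} as cited in the paper.
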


\begin{proof}
 First, by \cite[Proposition 4.13]{LiangTang2025} we have that $(Z',Z'') \in \mathbf{D}^{(p,p)\text{-var}}_{\eta}\mathbb{L}^{\frac{k_1k_2}{k_1+k_2}}([0,T];\mathbb{R}^{e\times \ell})$. Thus, the process $(\int_0^t ({Z}',{Z}'')_s \ud \boldsymbol{\eta}_s)_{t\in [0,T]}$ is well defined.

Assume for a moment that the Theorem is true when $\big(b^1=\sigma^1=0\big)$ and when $\big( b^2=\sigma^2 =0\big) $\footnote{Above we abused notation for brevity, since each process takes values in spaces with different zero elements.}.
We define
\begin{align*}
    \widetilde{X}_t &:= X_t - \int_0^t(X',X'')_s\ud \boldsymbol{\eta}_s,\\
    \widetilde{Y}_t &:= Y_t - \int_0^t(Y',Y'')_s\ud \boldsymbol{\eta}_s.
\end{align*}
Note that $\widetilde{X}, \widetilde{Y}$ are semimartingales.
We have
\begin{align}
    X_t Y_t &= \Big(\widetilde{X}_t +  \int_0^t(X',X'')_s\ud \boldsymbol{\eta}_s\Big) \Big(\widetilde{Y}_t +  \int_0^t(Y',Y'')_s\ud \boldsymbol{\eta}_s\Big) \nonumber\\
    &= \widetilde{X}_t \widetilde{Y}_t + \widetilde{X}_t \int_0^t(Y',Y'')_s\ud \boldsymbol{\eta}_s + \int_0^t(X',X'')_s\ud \boldsymbol{\eta}_s Y_t. \label{eq 025}
\end{align}
From the standard integration by parts formula for semimartingales we have
\begin{align}
    \widetilde{X}_t \widetilde{Y}_t = X_0Y_0 + \int_0^t \Big(\widetilde{X}_sb^2_s+b^1_s \widetilde{Y}_s+ \sum_{i=1}^q (\sigma^1_i)_s(\sigma^2_i)_s\Big) \ud s + \int_0^t \Big(\sigma^1_s \widetilde{Y}_s + \widetilde{X}_s \sigma^2_s\Big) \ud W_s. \label{eq 026}
\end{align}
Furthermore, from our assumption we have
\begin{align}
    \widetilde{X}_t \int_0^t(Y',Y'')_s\ud \boldsymbol{\eta}_s &= \int_0^t b^1_s  \int_0^s(Y',Y'')_r \ud \boldsymbol{\eta}_r  \ud s + \int_0^t  \sigma^1_s  \int_0^s(Y',Y'')_r \ud \boldsymbol{\eta}_r  \ud W_s\nonumber\\
    &\qquad+ \int_0^t (\widetilde{X}Y',\widetilde{X}Y'')_s\ud \boldsymbol{\eta}_s,\label{eq 027}\\
    \int_0^t(X',X'')_s\ud \boldsymbol{\eta}_s Y_t &= \int_0^t  \int_0^s(X',X'')_r \ud \boldsymbol{\eta}_r  b^2_s \ud s + \int_0^t \int_0^s(X',X'')_r \ud \boldsymbol{\eta}_r  \sigma^2_s \ud W_s \nonumber\\
    &\qquad+ \int_0^t ({Z}' -\widetilde{X}Y',{Z}'' -\widetilde{X}Y'')_s \ud \boldsymbol{\eta}_s. \label{eq 028}
\end{align}
Combining \eqref{eq 025}, \eqref{eq 026}, \eqref{eq 027} and \eqref{eq 028}, under our assumption, the general case is proved.

Now, all left to prove are two special cases  $ (b^1=\sigma^1=0 )$ and $(b^2=\sigma^2 =0)$. 
There is, however, a minor difference in the estimates required for the latter case.
In the proof below, which addresses the first case, we use \cite[Lemma 2.12]{FHL21} to bound $\big\|\sup_{u\in[s,t]}|\delta X_{s,u}|\big\|_{k_1}$, where in the second case we have to use \cite[ineq. (A.5), p. 56]{allan2026rough}\footnote{Along with a trivial extension of $\sup$ argument.} to bound $\big\|\sup_{u\in[s,t]}|\delta Y_{s,u}|\big\|_{k_2}$ appropriately. Actually, from \cite[Exercise 1.9.]{FV10} in conjunction with \cite[ineq. (A.5), p. 56]{allan2026rough}, \cite[Theorem 3.4]{allan2026rough} and the line with the definitions of (using their notation) $\hat{w}_{1,1}, \hat{w}_{2,1}, \hat{w}_{1,2}, \hat{w}_{2,2}$ in the proof of \cite[Lemma 4.6]{allan2026rough}, because 
\begin{align*}
    \Big( \hat{w}_{1,1}^{\frac{q}{p}} \hat{w}_{2,1}^{\frac{q}{p}} + \hat{w}_{1,2}^{\frac{q}{p}} \hat{w}_{2,2}^{\frac{2q}{p}} \Big)^{\frac{1}{q}} \leq (\hat{w}_{1,1}^{\frac{1}{2}} \hat{w}_{2,1}^{\frac{1}{2}})^{\frac{2}{p}} + \hat{w}_{1,2}^{\frac{1}{p}} \hat{w}_{2,2}^{\frac{2}{p}} 
\end{align*}
with $p < 3$, we can deduce that there exist continuous controls $$\tilde{w}_1 := w_1 + \hat{w}_{1,2}^{\frac{1}{p}} \hat{w}_{2,2}^{\frac{2}{p}},\hspace{.1cm} w_2, \hspace{.1cm}\tilde{w}_3 := \hat{w}_{1,1} \hat{w}_{2,1}$$
such that
\begin{equation*}
    \big\|\sup_{u\in[s,t]}|\delta Y_{s,u}|\big\|_{k_2} \lesssim \tilde{w}_1(s,t) + w_2(s,t)^{\frac{1}{2}} + \tilde{w}_3(s,t)^{\frac{2}{p}}.
\end{equation*} 
Finally, below we use as main tool \cite[Theorem 3.2]{allan2026rough} which is stated for products of pairs of controls. For the second case we need to extend this for products of arbitrary (finite) number of controls with the appropriate exponents. This can be immediately done by using again \cite[Exercise 1.9.]{FV10}, for example, let controls $\{w_i\}_{i\in \{1,\dots,N\}}$ and $0\leq a_1\leq \dots\leq a_N$ such that $\sum_{i=1}^N a_i >0$. Then, we have
\begin{align*}
    \prod_{i=1}^N w_i^{a_i} = \bigg(\prod_{i=2}^{N} w_i^{\frac{a_i}{\sum_{j=2}^{N} a_j}}\bigg)^{\sum_{j=2}^{N} a_j} w_1^{a_1},
\end{align*}
where inductively one can easily prove that $\big(\prod_{i=2}^{N} w_i^{\frac{a_i}{\sum_{j=2}^{N} a_j}}\big)^{\sum_{j=2}^{N} a_j}$ is a control.

So, below we give the proof of case $\big(b^1=\sigma^1=0\big)$, where we only need (i) and (ii) of Assumptions \ref{assu:rpf}\footnote{(i) and (iii) of Assumptions \ref{assu:rpf} are needed for the case $\big(b^2=\sigma^2=0\big)$.}. To ease the notation, we shall denote $b^2, \sigma^2$ by $b, \sigma$.
Hence, for every $t\in [0,T]$, we define
\begin{equation*}
    \mathcal{I}_t:= X_0Y_0 + \int_0^t X_s b_s \ud s + \int_0^t X_s \sigma_s \ud W_s + \int_0^t ({Z}',{Z}'')_s \ud \boldsymbol{\eta}_s.
\end{equation*}
Next, for $s,t\in [0,T]$ with $s\leq t$ we have
\begin{align*}
    \delta (XY)_{s,t}:= X_t Y_t - X_s Y_s = X_s \delta Y_{s,t} + \delta X_{s,t} Y_s + \delta X_{s,t} \delta Y_{s,t}.
\end{align*}
By definition $(\delta(XY)_{s,t})_{(s,t)\in\Delta([0,T])}$ is a two index increment process of a continuous path, hence for every $0\leq s \leq u \leq t \leq T$ we have
\begin{align*}
    \delta(XY)_{s,u,t} = 0.
\end{align*}
By the uniqueness part of \cite[Theorem 3.2]{allan2026rough} applied to $(\delta(XY)_{s,t})_{(s,t)\in\Delta([0,T])}$, it remains to show necessary estimates of $\delta \mathcal{I}- \delta (XY)$ concluded from the stochastic sewing lemma:
an unconditional $\mathbb{L}^{\frac{k_1k_2}{k_1+k_2}}$-estimate with total control exponent strictly
greater than $\frac12$, and a conditional $\mathbb{L}^{\frac{k_1k_2}{k_1+k_2}}$-estimate with total control exponent strictly greater than $1$.

So, by using the definitions of $\mathcal{I}, \delta(XY)$, expanding $Z', Z''$ and adding-subtracting the relevant terms we get
\begin{align}
    &\delta \mathcal{I}_{s,t} - \delta (XY)_{s,t} \nonumber\\
    &= \int_s^t \delta X_{s,u} b_u \ud u + \int_s^t \delta X_{s,u} \sigma_u \ud W_u \label{eq 46}\\
    & \qquad+ \int_s^t (Z',Z'')_u \ud \boldsymbol{\eta}_u - Z'_s \delta \eta_{s,t} - Z''_s \eta^{(2)}_{s,t} \label{eq 47}\\
    &\qquad- X_s\Big( \int_s^t(Y',Y'')_u \ud\boldsymbol{\eta}_u - Y'_s \delta\eta_{s,t} - Y''_s \eta^{(2)}_{s,t}\Big) \label{eq 48}\\
    &\qquad -\Big( \int_s^t(X',X'')_u \ud\boldsymbol{\eta}_u - X'_s \delta\eta_{s,t} - X''_s \eta^{(2)}_{s,t}\Big) Y_s \label{eq 49}\\
    &\qquad - \Big(\delta X_{s,t} \delta Y_{s,t} - 2X'_s Y'_s \text{Symm}\big(\eta^{(2)}_{s,t}\big) \Big). \label{eq 50}
\end{align}

For \eqref{eq 46} we have by H\"older's inequality and BDG inequalities
\begin{align*}
     &\Big\|\int_s^t \delta X_{s,u} b_u \ud u\Big\|_{\frac{k_1k_2}{k_1+k_2}} \leq \big\|\sup_{u\in[s,t]}|\delta X_{s,u}|\big\|_{k_1} \Big\|\int_s^t | b_u| \ud u \Big\|_{k_2} \leq \big\|\sup_{u\in[s,t]}|\delta X_{s,u}|\big\|_{k_1} \Big\|\int_s^t |b_u|^2 \ud u \Big\|_{\frac{k_2}{2}}^{\frac{1}{2}} |t-s|^{\frac{1}{2}},\\
      &\Big\|\int_s^t \delta X_{s,u} \sigma_u \ud W_u\Big\|_{\frac{k_1k_2}{k_1+k_2}} \lesssim \big\|\sup_{u\in[s,t]}|\delta X_{s,u}|\big\|_{k_1} \Big\|\int_s^t | \sigma_u|^2 \ud u \Big\|_{\frac{k_2}{2}}^{\frac{1}{2}},
\end{align*}
where from \cite[Lemma 2.12]{FHL21}\footnote{Note that $(X_t)_{t\in [0,T]}, (\eta_t)_{t\in [0,T]}$ are continuous processes.} and the proof of \cite[Theorem 3.4]{FHL21}\footnote{Last line of p.20, definition of $\epsilon_3$.}
\begin{align*}
   \big\|\sup_{u\in[s,t]}|\delta X_{s,u}|\big\|_{k_1} &\lesssim |t-s|^{\frac{1}{k_1} + \epsilon} + \big\|\sup_{u\in[s,t]}|X'_s\delta\eta_{s,u} + X''_s \eta^{(2)}_{s,u}|\big\|_{k_1} \\
   &\lesssim  |t-s|^{\frac{1}{k_1} + \epsilon} + \|X'_s \|_{k_1} |t-s|^{\alpha} + \|X''_s\|_{k_1} |t-s|^{2\alpha}\\
   &\lesssim \big(1+\sup_{u\in[0,T]}\|X'_u \|_{k_1} + \sup_{u\in[0,T]}\|X''_u \|_{k_1} \big) \big(|t-s|^{(\alpha + \beta)\wedge \frac{1}{2}} + |t-s|^{\alpha} +  |t-s|^{2\alpha}\big).
\end{align*}
Furthermore, we obviously have 
\begin{align*}
    \Big\|\mathbb{E}_s\int_s^t \delta X_{s,u} b_u \ud u\Big\|_{\frac{k_1k_2}{k_1+k_2}} \leq   \Big\|\int_s^t \delta X_{s,u} b_u \ud u\Big\|_{\frac{k_1k_2}{k_1+k_2}}\hspace{.2cm}\text{and}\hspace{.2cm}  \Big\|\mathbb{E}_s\int_s^t \delta X_{s,u} \sigma_u \ud W_u\Big\|_\frac{k_1k_2}{k_1+k_2} = 0.
\end{align*}
Finally, note that $w(s,t):=\Big\|\int_s^t | b_u|^2 \ud u \Big\|_{\frac{k_2}{2}}^{\frac{k_2}{2}}$ and $\bar{w}(s,t):=\Big\|\int_s^t | \sigma_u|^2 \ud u \Big\|_{\frac{k_2}{2}}^{\frac{k_2}{2}}$ are continuous controls, and by (ii) of Assumptions \ref{assu:rpf} we have
\begin{equation*}
    \alpha +\frac{1}{k_2} > \frac{1}{2}.
\end{equation*}

Now, for \eqref{eq 47}, \eqref{eq 48} and \eqref{eq 49}, we can again apply H\"older's inequality along with the fact that we are dealing with terms which are rough stochastic integrals minus their local expansions, to get from \cite[Lemma 4.3]{allan2026rough}
\begin{align*}
    \|\text{\eqref{eq 47}}\|_{\frac{k_1k_2}{k_1+k_2}} &\lesssim \sum_{j=1}^{M_{1}} w_{1,1,j}(s,t)^{\beta_{1,1,j}} w_{1,2,j}(s,t)^{\beta_{1,2,j}},\\ 
    \|\mathbb{E}_s\text{\eqref{eq 47}}\|_{\frac{k_1k_2}{k_1+k_2}} &\lesssim  \sum_{j=1}^{M_{2}} w_{1,3,j}(s,t)^{\alpha_{1,3,j}} w_{1,4,j}(s,t)^{\alpha_{1,4,j}},\\
    \|\text{\eqref{eq 48}}\|_{\frac{k_1k_2}{k_1+k_2}} &\lesssim \sum_{j=1}^{M_{3}} w_{2,1,j}(s,t)^{\beta_{2,1,j}} w_{2,2,j}(s,t)^{\beta_{2,2,j}},\\
    \|\mathbb{E}_s\text{\eqref{eq 48}}\|_{\frac{k_1k_2}{k_1+k_2}} &\lesssim \sum_{j=1}^{M_{4}} w_{2,3,j}(s,t)^{\alpha_{2,3,j}} w_{2,4,j}(s,t)^{\alpha_{2,4,j}},\\
    \|\text{\eqref{eq 49}}\|_{\frac{k_1k_2}{k_1+k_2}} \lesssim |t-s|^{\alpha + \beta'}\hspace{.2cm}&\text{and}\hspace{.2cm} \|\mathbb{E}_s\text{\eqref{eq 49}}\|_{\frac{k_1k_2}{k_1+k_2}} \lesssim |t-s|^{\alpha + \beta'+\beta''},
\end{align*}
where $\{w_{1,1,j},w_{1,2,j}\}_{j\in M_1}, \{w_{1,3,j},w_{1,4,j}\}_{j\in M_2}, \{w_{2,1,j},w_{2,2,j}\}_{j\in M_3}, \{w_{2,3,j},w_{2,4,j}\}_{j\in M_4}$ are continuous controls, with suitable $M_1, M_2, M_3, M_4$, and $\beta_{1,1,j}, \beta_{1,2,j}, \beta_{2,1,j}, \beta_{2,2,j}, \alpha_{1,3,j}, \alpha_{1,4,j}, \alpha_{2,3,j}, \alpha_{2,4,j}$ strictly positive real numbers such that $\beta_{1,1,j} + \beta_{1,2,j} > \frac{1}{2} ,  \beta_{2,1,j} + \beta_{2,2,j} > \frac{1}{2}$ and $\alpha_{1,3,j}+ \alpha_{1,4,j} > 1, \alpha_{2,3,j} + \alpha_{2,4,j} > 1$, for every $j\in M_1, M_2, M_3, M_4$ respectively.

To finish the proof, we expand \eqref{eq 50} a little further using that $\boldsymbol{\eta}$ is a geometric rough path and we have
\begin{align*}
    &\delta X_{s,t} \delta Y_{s,t} - 2X'_s Y'_s \text{Symm}\big(\eta^{(2)}_{s,t}\big)\\ &= \delta X_{s,t} \delta Y_{s,t} - (X'_s \delta\eta_{s,t}) (Y'_s \delta\eta_{s,t})\\
    &= (X'_s \delta\eta_{s,t}) \delta Y_{s,t} + \Big( X''_s \eta^{(2)}_{s,t} + \bar{R}^X_{s,t}\Big) \delta Y_{s,t} - (X'_s \delta\eta_{s,t}) (Y'_s \delta\eta_{s,t})\\
    &= (X'_s \delta\eta_{s,t}) \Big( Y''_s \eta^{(2)}_{s,t} + \bar{R}^Y_{s,t}\Big) + \Big( X''_s \eta^{(2)}_{s,t} + \bar{R}^X_{s,t}\Big) \delta Y_{s,t}\\
    &\qquad +  \int_s^t (X'_s \delta\eta_{s,t}) b_u \ud u +  \int_s^t  (X'_s \delta\eta_{s,t}) \sigma_u \ud W_u\\
    &= (X'_s \delta\eta_{s,t} + X''_s \eta^{(2)}_{s,t}) \Big( Y''_s \eta^{(2)}_{s,t} + \bar{R}^Y_{s,t}\Big) +  \big(X''_s \eta^{(2)}_{s,t}\big) (Y'_s \delta\eta_{s,t}) + 
    \bar{R}^X_{s,t} \bar{R}^Y_{s,t}\\
    &\qquad + \bar{R}^{X}_{s,t}
  \bigl(Y'_s\,\delta\eta_{s,t}\bigr)
+ \bar{R}^{X}_{s,t}
  \bigl(Y''_s\,\eta^{(2)}_{s,t}\bigr) +  \int_s^t \bar{R}^X_{s,t} b_u \ud u +  \bar{R}^X_{s,t} \int_s^t \sigma_u \ud W_u\\
    &\qquad +  \int_s^t (X'_s \delta\eta_{s,t}) b_u \ud u +  \int_s^t  (X'_s \delta\eta_{s,t}) \sigma_u \ud W_u\\
    &\qquad +  \int_s^t \big(X''_s \eta^{(2)}_{s,t}\big) b_u \ud u +  \int_s^t  \big(X''_s \eta^{(2)}_{s,t} \big) \sigma_u \ud W_u,
\end{align*}
where we used the shorthand notation
\begin{align*}
    \bar{R}^X_{s,t} &:=  \int_s^t(X',X'')_u \ud\boldsymbol{\eta}_u - X'_s \delta\eta_{s,t} - X''_s \eta^{(2)}_{s,t}, \\
    \bar{R}^Y_{s,t} &:=  \int_s^t(Y',Y'')_u \ud\boldsymbol{\eta}_u - Y'_s \delta\eta_{s,t} - Y''_s \eta^{(2)}_{s,t}.
\end{align*}
By H\"older's and BDG inequalities we have similar to before that
\begin{align*}
    &\Big\|\int_s^t (X'_s \delta\eta_{s,t})b_u \ud u \Big\|_{\frac{k_1k_2}{k_1+k_2}} \lesssim |t-s|^{\alpha}  \Big\|\int_s^t | b_u|^2 \ud u \Big\|_{\frac{k_2}{2}}^{\frac{1}{2}} |t-s|^{\frac{1}{2}} , \\
     &\Big\|\int_s^t (X'_s \delta\eta_{s,t})\sigma_u \ud W_u \Big\|_{\frac{k_1k_2}{k_1+k_2}} \lesssim |t-s|^{\alpha}  \Big\|\int_s^t | \sigma_u|^2 \ud u \Big\|_{\frac{k_2}{2}}^{\frac{1}{2}},\\
     &\Big\|\int_s^t (X''_s \eta^{(2)}_{s,t})b_u \ud u \Big\|_{\frac{k_1k_2}{k_1+k_2}} \lesssim |t-s|^{2\alpha}  \Big\|\int_s^t | b_u|^2 \ud u \Big\|_{\frac{k_2}{2}}^{\frac{1}{2}} |t-s|^{\frac{1}{2}} , \\
     &\Big\|\int_s^t (X''_s \eta^{(2)}_{s,t}) \sigma_u \ud W_u \Big\|_{\frac{k_1k_2}{k_1+k_2}} \lesssim |t-s|^{2\alpha}  \Big\|\int_s^t | \sigma_u|^2 \ud u \Big\|_{\frac{k_2}{2}}^{\frac{1}{2}},
\end{align*}
together with
\begin{align*}
    &\Big\|\mathbb{E}_s\int_s^t  (X'_s \delta\eta_{s,t})b_u \ud u\Big\|_{\frac{k_1k_2}{k_1+k_2}} \leq   \Big\|\int_s^t(X'_s \delta\eta_{s,t})b_u \ud u\Big\|_{\frac{k_1k_2}{k_1+k_2}}\hspace{.2cm}\text{and}\hspace{.2cm}  \Big\|\mathbb{E}_s\int_s^t(X'_s \delta\eta_{s,t}) \sigma_u \ud W_u\Big\|_{\frac{k_1k_2}{k_1+k_2}} = 0,\\
    &\Big\|\mathbb{E}_s\int_s^t (X''_s \eta^{(2)}_{s,t})b_u \ud u\Big\|_{\frac{k_1k_2}{k_1+k_2}} \leq   \Big\|\int_s^t (X''_s \eta^{(2)}_{s,t}) b_u \ud u\Big\|_{\frac{k_1k_2}{k_1+k_2}}\hspace{.2cm}\text{and}\hspace{.2cm}  \Big\|\mathbb{E}_s\int_s^t (X''_s \eta^{(2)}_{s,t}) \sigma_u \ud W_u\Big\|_{\frac{k_1k_2}{k_1+k_2}} = 0.
\end{align*}
Next, we have
\begin{align*}
    &\big\|\mathbb{E}_s (X'_s \delta\eta_{s,t}) ( Y''_s \eta^{(2)}_{s,t})\big\|_{\frac{k_1k_2}{k_1+k_2}} \leq  \big\|(X'_s \delta\eta_{s,t}) ( Y''_s \eta^{(2)}_{s,t})\big\|_{\frac{k_1k_2}{k_1+k_2}} \lesssim \sup_{u\in [0,T]} \big\{\big\|X'_u\big\|_{k_1} \big\|Y''_u\big\|_{k_2}\big\} |t-s|^{3\alpha},\\
    &\big\|\mathbb{E}_s (X''_s \eta^{(2)}_{s,t}) ( Y''_s \eta^{(2)}_{s,t})\big\|_{\frac{k_1k_2}{k_1+k_2}} \leq  \big\|(X''_s \eta^{(2)}_{s,t}) ( Y''_s \eta^{(2)}_{s,t})\big\|_{\frac{k_1k_2}{k_1+k_2}} \lesssim \sup_{u\in [0,T]} \big\{\big\|X''_u\big\|_{k_1} \big\|Y''_u\big\|_{k_2}\big\} |t-s|^{4\alpha},\\
    &\big\|\mathbb{E}_s (X''_s \eta^{(2)}_{s,t}) ( Y'_s \delta\eta_{s,t})\big\|_\frac{k_1k_2}{k_1+k_2} \leq  \big\|(X''_s \eta^{(2)}_{s,t}) ( Y'_s \delta\eta_{s,t})\big\|_\frac{k_1k_2}{k_1+k_2} \lesssim \sup_{u\in [0,T]} \big\{\big\|X''_u\big\|_{k_1} \big\|Y'_u\big\|_{k_2}\big\} |t-s|^{3\alpha},\\
  &  \big\| \E_s (\bar{R}^{X}_{s,t})
   (Y'_s \delta \eta_{s,t} + Y''_s\,\eta^{(2)}_{s,t})  \big\|_\frac{k_1k_2}{k_1+k_2} \le \big\| ( \bar{R}^{X}_{s,t})
   (Y'_s \delta \eta_{s,t} +Y''_s\,\eta^{(2)}_{s,t} )\big\|_\frac{k_1k_2}{k_1+k_2} \\
   &\ \ \ \  \lesssim \big\|\bar{R}^X \big\|_{\alpha+\beta',k_1} \sup_{u\in [0,T]} \big\{ \big\|Y'_u\big\|_{k_2} \vee \big\|Y''_u\big\|_{k_2}\big\} |t-s|^{2 \alpha+ \beta'}.                
\end{align*}
Additionally,
\begin{align*}
     &\Big\|\int_s^t \bar{R}^X_{s,t} b_u \ud u\Big\|_\frac{k_1k_2}{k_1+k_2} \leq   \big\|\bar{R}^X_{s,t}\big\|_{k_1} \Big\|\int_s^t | b_u| \ud u \Big\|_{k_2} \leq \big\| \bar{R}^X_{s,t}\big\|_{k_1} \Big\|\int_s^t | b_u|^2 \ud u \Big\|_{\frac{k_2}{2}}^{\frac{1}{2}} |t-s|^{\frac{1}{2}}\\
     &\qquad \lesssim |t-s|^{\alpha + \beta' + \frac{1}{2}} \Big\|\int_s^t | b_u|^2 \ud u \Big\|_{\frac{k_2}{2}}^{\frac{1}{2}},\\
      &\Big\|\bar{R}^X_{s,t} \int_s^t \sigma_u \ud W_u\Big\|_{\frac{k_1k_2}{k_1+k_2}} \leq \big\|\bar{R}^X_{s,t}\big\|_{k_1} \Big\|\int_s^t \sigma_u \ud W_u\Big\|_{k_2} \lesssim \big\|\bar{R}^X_{s,t}\big\|_{k_1} \Big\|\int_s^t |\sigma_u|^2 \ud u \Big\|_{\frac{k_2}{2}}^{\frac{1}{2}}\\
      &\lesssim |t-s|^{\alpha + \beta'} \Big\|\int_s^t |\sigma_u|^2 \ud u \Big\|_{\frac{k_2}{2}}^{\frac{1}{2}},
\end{align*}
where in the last line above we use (ii) of Assumptions \ref{assu:rpf} to conclude. Then, we also get
\begin{align*}
    \Big\|\mathbb{E}_s\int_s^t  \bar{R}^X_{s,t} b_u \ud u\Big\|_{\frac{k_1k_2}{k_1+k_2}} &\leq   \Big\|\int_s^t \bar{R}^X_{s,t} b_u \ud u\Big\|_{\frac{k_1k_2}{k_1+k_2}},\\   \Big\|\mathbb{E}_s\Big[\bar{R}^X_{s,t}\int_s^t \sigma_u \ud W_u\Big]\Big\|_{\frac{k_1k_2}{k_1+k_2}} &\leq   \Big\|\bar{R}^X_{s,t}\int_s^t  \sigma_u \ud W_u\Big\|_{\frac{k_1k_2}{k_1+k_2}}.
\end{align*}
Lastly, we have by \cite[Lemma 4.3]{allan2026rough} that
\begin{align*}
    \big\|\mathbb{E}_s  \bar{R}^X_{s,t} \bar{R}^Y_{s,t} \big\|_{\frac{k_1k_2}{k_1+k_2}} \leq \big\| \bar{R}^X_{s,t} \bar{R}^Y_{s,t} \big\|_{\frac{k_1k_2}{k_1+k_2}} \leq  \big\| \bar{R}^X_{s,t}\big\|_{k_1} \big\| \bar{R}^Y_{s,t} \big\|_{k_2}\lesssim |t-s|^{\alpha + \beta'}  \|\boldsymbol{\eta\|}_{\frac{1}{\alpha}\text{-var},[s,t]},
\end{align*}
where $\|\boldsymbol{\eta\|}_{\frac{1}{\alpha}\text{-var},[s,t]} := \|\eta\|_{\frac{1}{\alpha}\text{-var},[s,t]} + \|\eta^{(2)}\|_{\frac{1}{2\alpha}\text{-var},[s,t]}$. Note that $\tilde{w}_1(s,t):=\|\eta\|_{\frac{1}{\alpha}\text{-var},[s,t]}^{\frac{1}{\alpha}}$ and $\tilde{w}_2(s,t):= \|\eta^{(2)}\|_{\frac{1}{2\alpha}\text{-var},[s,t]}^{\frac{1}{2\alpha}}$ are continuous controls, which completes the proof.
\end{proof}

\begin{rmk}\label{rmk 3.21}
Inspection of the proof immediately yields the following observations:
\begin{itemize}
    \item[(i)] By using the natural homeomorphisms between finite-dimensional Banach spaces, the above rough product formula immediately extends to the setting in which $X$ takes values in $L(V,Q)$ and $Y$ in $V$, for arbitrary finite-dimensional Banach spaces $V$ and $Q$. Moreover, one readily sees that the theorem also extends to the case where $X$ is controlled in a $p$-variation rough seminorm and $Y$ in a Hölder rough seminorm.

   \item[(ii)] In the case that 
   $$ \sup_{u\in[0,T]}\|b^1_u\|_{k_1} + \sup_{u\in[0,T]}\|\sigma^1_u\|_{k_1} < \infty,$$ 
   because by Minkowski's integral inequality we have
   \begin{align*}
        \Big\|\int_s^t |b^1_u|^2 \ud u \Big\|_{\frac{k_1}{2}}^{\frac{1}{2}} +  \Big\|\int_s^t |\sigma^1_u|^2 \ud u \Big\|_{\frac{k_1}{2}}^{\frac{1}{2}} &\leq \bigg(\int_s^t \|b^1_u\|_{k_1}^2\ud u\bigg)^{\frac{1}{2}} + \bigg(\int_s^t \|\sigma^1_u\|_{k_1}^2 \ud u\bigg)^{\frac{1}{2}}\\
        &\leq (\sup_{u\in[0,T]}\|b^1_u\|_{k_1}^2 + \sup_{u\in[0,T]}\|\sigma^1_u\|_{k_1}^2) |t-s|^{\frac{1}{2}},
   \end{align*}
   assumption (iii) can be dropped from Assumptions \ref{assu:rpf}. 
   
   For the same reason, in the case that
   $$ \sup_{u\in[0,T]}\|b^2_u\|_{k_2} + \sup_{u\in[0,T]}\|\sigma^2_u\|_{k_2} < \infty,$$ 
   assumption (ii) can be dropped from Assumptions \ref{assu:rpf}.

   \item[(iii)] In the case that $ (Y',Y'')\in \mathbf{D}^{\beta',\beta''}_{\eta}\mathbb{L}^{k_2}\big([0,T];\mathbb{R}^{d\times \ell}\big)$, then $(Z',Z'') \in \mathbf{D}^{\beta',\beta''}_{\eta}\mathbb{L}^{\frac{k_1k_2}{k_1+k_2}}\big([0,T];\mathbb{R}^{e\times \ell}\big)$ and assumptions (iii) of Assumptions \ref{assu:rpf} becomes
   $$
\alpha+\beta'+ \beta'' \wedge \frac{1}{k_1}>1.
   $$ 
\end{itemize}
\end{rmk}

\section{The rough stochastic maximum principles}\label{sec:max_principles}
This section is the heart of this work, its goal is to derive necessary and sufficient conditions for the control problem \eqref{eq:rough_stochastic_dynamics}--\eqref{eq: cost_function_intro}. Without loss of generality, to ease the notation, in what follows we are going to assume that $t=0$ and instead of $\mathcal{J}^{\boldsymbol{\eta}}(u;0,x)$ we are going to use the simpler notation $\mathcal{J}(u)$. In addition, we are going to allow the initial condition of the dynamics to be an appropriate random variable $\xi$.

\subsection{Variation via a linear perturbation and the rough SMP}\label{subsec: 4.1}
\subsubsection{General setting}
Let $\boldsymbol{\eta} \in \mathscr{C}^{0,\alpha}_g([0,T];\mathbb{R}^\ell), \ell\in\mathbb{N}$, with 
$\frac{1}{3}<  \beta'\leq \beta\leq\alpha < \frac{1}{2}$.
In this subsection the following assumptions hold true:
\begin{enumerate}[label=(A\arabic*), ref=(A\arabic*)]
\item\label{A1} The functions $b: \Omega \times [0,T]\times \mathbb{R}^d\times\mathbb{R}^\kappa\rightarrow \mathbb{R}^d,\sigma: \Omega\times [0,T]\times \mathbb{R}^d\times\mathbb{R}^\kappa\rightarrow \mathbb{R}^{d\times w}$ are $\text{Prog}(\Omega\times[0,T]) \otimes \mathcal{B}(\mathbb{R}^d)\otimes \mathcal{B}(\mathbb{R}^\kappa)/ \mathcal{B}(\mathbb{R}^{d}) \big(\text{resp.}\hspace{.1cm} \mathcal{B}(\mathbb{R}^{d\times w})\big)$-measurable, and differentiable in $(x,u)\in \mathbb{R}^d\times \mathbb{R}^\kappa$. Also, for every $\phi \in \{b,\sigma,\partial_x b, \partial_x\sigma, \partial_u b, \partial_u\sigma\}$ we have  
$$
\|\sup_{t\in[0,T]} \sup_{x\in\mathbb{R}^d} \sup_{u\in\mathbb{R}^\kappa} |\phi(t,x,u)|\|_\infty <\infty.
$$
 Furthermore, let $\theta\in(0,1)$, for every $\phi\in\{\partial_x b,\partial_x \sigma, \partial_u b, \partial_u\sigma\}$ we have
 \begin{equation*}
\|\sup_{t\in[0,T]}[\phi(t,\cdot,\cdot)]_{\theta}\|_{\infty} < \infty.\footnotemark
\end{equation*}
\footnotetext{Note that from the separability properties of the involved spaces 
$[\phi(t,\cdot,\cdot)]_{\theta}$
are progressively measurable, and by the projection Theorem 
$\sup_{t\in[0,T]}[\phi(t,\cdot,\cdot)]_{\theta}$ are $\mathcal{F}$-measurable.}

\item\label{A2} Let $\theta \in (0,1)$ and $k \in [2,\infty)$. The functions $f:\Omega \times [0,T]\times \mathbb{R}^d\rightarrow \mathbb{R}^{d\times \ell}, f': \Omega \times [0,T]\times\mathbb{R}^d \rightarrow L(\mathbb{R}^\ell,\mathbb{R}^{d\times \ell})$ form a pair $(f,f')\in \mathbf{D}^{\alpha,\alpha}_{\eta}\mathbb{L}^{k,\infty}\mathcal{C}_b^{3 + \theta}$. 
Also, $(\partial_x f,\partial_x f')\in \mathbf{D}^{\alpha, \beta'}_{\eta}\mathbb{L}^{k,\infty}\mathcal{C}_b^{2 + \theta}$ and
$$
\|\sup_{t\in[0,T]}|f(t,\cdot)|_{3+\theta}\|_\infty + \|\sup_{t\in[0,T]}|f'(t,\cdot)|_{2+\theta}\|_\infty <\infty.
$$
Furthermore, $\partial_{xt}^{2}f :=\partial_t(\partial_xf), \partial_{xxt}^{3}f := \partial_t(\partial^2_{xx}f), \partial_{xt}^{2}f' := \partial_t(\partial_xf')$ exist, are  
continuous in $t\in [0,T]$, and for every $\phi\in \{\partial_{xt}^{2}f, \partial_{xxt}^{3}f, \partial_{xt}^{2}f'\}$ we have
\begin{equation*}
\|\sup_{t\in[0,T]}|\phi(t,\cdot)|_{\theta}\|_{\infty} < \infty.
\end{equation*}

\item\label{A3} Let $m\in[2,\infty)$. The function $h : \Omega \times [0,T] \times \mathbb{R}^d \times \mathbb{R}^\kappa \rightarrow \mathbb{R}$ is
 $\text{Prog}(\Omega\times[0,T]) \otimes \mathcal{B}(\mathbb{R}^d)\otimes \mathcal{B}(\mathbb{R}^\kappa)/ \mathcal{B}(\mathbb{R})$-measurable, differentiable in $(x,u) \in \mathbb{R}^d \times \mathbb{R}^\kappa$ and $\partial_{x}h,\partial_uh$ are Lipschitz in $(x,u) \in \mathbb{R}^d \times \mathbb{R}^\kappa$ uniformly with respect to $(\omega,t)\in \Omega\times[0,T]$, furthermore, 
$h(\cdot,\cdot,0,0), \partial_xh(\cdot,\cdot,0,0), \partial_uh(\cdot,\cdot,0,0)\in \mathbb{H}^m([0,T];\mathbb{R})$. 
Finally, the function $g:\Omega\times \mathbb{R}^d \rightarrow \mathbb{R}$ is $\mathcal{F}_T \otimes \mathcal{B}(\mathbb{R}^d)/\mathcal{B}(\mathbb{R})$-measurable, differentiable in $x\in \mathbb{R}^d$, and $\partial_x g$ is Lipschitz in $x \in \mathbb{R}^d$ uniformly with respect to $\omega\in \Omega$, furthermore $g(\cdot,0),\partial_xg(\cdot,0)\in \mathbb{L}^m(\mathcal{F}_T;\mathbb{R})$.
\end{enumerate}

The space of admissible controls for this subsection is the following
\begin{align*}
   \mathcal{A}:= \bigg\{ u:\Omega \times [0,T] \rightarrow \mathbb{R}^\kappa : (u_t)_{t\in [0,T]}\hspace{.1cm}\text{progressively measurable and}\hspace{.1cm} \sup_{t\in[0,T]}\|u_t\|_k < \infty\bigg\}. 
\end{align*}

Fix $u, v \in \mathcal{A} $ and for every $\epsilon \geq 0$ define $ {u}^\epsilon = (u^\epsilon_t)_{t \in [0, T]}$ by 
$$u^\epsilon_t := u_t + \epsilon v_t,$$
for all $t\in[0,T]$. To ease the notation, we define $X^\epsilon := X^{\boldsymbol{\eta},u^\epsilon}$, \emph{i.e.} the process which solves the equation \eqref{eq:rough_stochastic_dynamics} given $\xi\in\mathbb{L}^k(\mathcal{F}_0;\mathbb{R}^d)$ and the control $u^\epsilon$
\begin{equation*}
\begin{cases}
\ud X^{\boldsymbol{\eta},u^\epsilon}_s
=
b\bigl(s,X^{\boldsymbol{\eta},u^\epsilon}_s,u^\epsilon_s\bigr)\,\ud s
+
\sigma\bigl(s,X^{\boldsymbol{\eta},u^\epsilon}_s,u^\epsilon_s\bigr)\,\ud W_s
+
(f,f')\bigl(s,X^{\boldsymbol{\eta},u^\epsilon}_s\bigr)\,\ud\boldsymbol{\eta}_s,
\qquad s \in [0,T], \\
X^{\boldsymbol{\eta},u^\epsilon}_0
= \xi,
\end{cases}
\end{equation*}
and $X:= X^0:= X^{\boldsymbol{\eta},u^0}$. Our target is to minimize the cost function \eqref{eq: cost_function_intro}, and provide description of the optimal control for \eqref{eq:rough-vf}.

\begin{rmk}
We allow the elements of $\mathcal{A}$ to take values in the whole space $\mathbb{R}^\kappa$ purely for notational simplicity. The arguments below remain valid, with the appropriate modifications, if the values are instead restricted to, for example, a closed convex subset of $\mathbb{R}^\kappa$.
\end{rmk}

\subsubsection{Derivation of the Gateux derivative}
Heuristically, we assume $\boldsymbol{\eta}$ to be smooth, and expand to a first order the coefficients to get that
\begin{equation*}
    \begin{aligned}
        b\bigl(s,X^{\epsilon}_s,u^\epsilon_s\bigr) &\approx b\bigl(s,X_s,u_s\bigr) + \partial_x b(s,X_s,u_s\bigr) \big(X^{\epsilon}_s-X_s\big) + \partial_u b(s,X_s,u_s\bigr)(u^\epsilon_s-u_s ),\\
        \sigma\bigl(s,X^{\epsilon}_s,u^\epsilon_s\bigr) &\approx \sigma\bigl(s,X_s,u_s\bigr) + \partial_x \sigma(s,X_s,u_s\bigr) \big(X^{\epsilon}_s-X_s\big) + \partial_u \sigma(s,X_s,u_s\bigr)(u^\epsilon_s-u_s ),\\
        f\bigl(s,X^{\epsilon}_s\bigr) \dot{\eta}_s &\approx f\bigl(s,X_s\bigr) \dot{\eta}_s + \partial_x f(s,X_s\bigr) \big(X^{\epsilon}_s-X_s\big) \dot{\eta}_s.
    \end{aligned}
\end{equation*}
Hence for the difference we get that
\begin{align*}
         \frac{X^{\epsilon}_t - X_t}{\epsilon}\approx &  \int_0^t \Big(\partial_x b(s,X_s,u_s\bigr)  \frac{X^{\epsilon}_s - X_s}{\epsilon} + \partial_u b(s,X_s,u_s\bigr)v_s \Big)\ud s \\
         &+ \int_0^t \Big(\partial_x \sigma(s,X_s,u_s\bigr)  \frac{X^{\epsilon}_s - X_s}{\epsilon} + \partial_u \sigma(s,X_s,u_s\bigr)v_s \Big)\ud W_s\\
         &+\int_0^t \partial_x f(s,X_s\bigr)  \frac{X^{\epsilon}_s - X_s}{\epsilon} \dot{\eta}_s \ud s.
\end{align*}
Thus, in order to rigorously identify the limit $\lim_{\epsilon\rightarrow 0^+}  \frac{X^{\epsilon}_t - X_t}{\epsilon}$ we focus our attention to the first order \emph{variational equation}
\begin{equation}\label{eq: var_process}
\begin{cases}
 \ud V_s = \left( \partial_x b(s,X_s, u_s)V_s +  \partial_u b(s,X_s, u_s) v_s \right)\ud s + (\partial_x \sigma(s, X_s, u_s) V_s + \partial_u \sigma(s, X_s, u_s)v_s )\ud W_s \\\qquad\quad+ \big((\partial_x f(s,{X}_s), \partial_{xx}^2f(s,{X}_s) f(s,{X}_s) + \partial_x f'(s,{X}_s))V_s\big)\ud  \boldsymbol{\eta}_s,  \\ 
 V_0 = 0.    
\end{cases}
\end{equation}
The following Lemma is an extension of \cite[Lemma 3.5]{bugini2024malliavin}.

\begin{lemma}\label{lemma 4.1}
Let $k\in [2,\infty), u \in\mathcal{A}$ and $(b^i)_{i\in \{1,2\}}$, $(\sigma^i)_{i\in \{1,2\}}$ satisfying \ref{A1}, $(f,f'), (\partial_x f,\partial_x f')$ satisfying \ref{A2}. Furthermore, let $\big(X^i,f(\cdot,X^i)\big)\in\mathbf{D}_{\eta}^{\alpha,\beta} \mathbb{L}^{k,\infty}\big([0,T];\mathbb{R}^d\big)$, $i\in \{1,2\}$, be the unique solution of the following RSDE\footnote{Provided by \cite[Theorem 4.6]{FHL21}.} 
\begin{equation*}
    \ud X^i_t = b^i(t,X^i_t,u_t) \ud t + \sigma^i(t,X^i_t,u_t)\ud W_t + \big(f(t,X^i_t),\partial_xf(t,X^i_t)f(t,X^i_t)+ f'(t,X^i_t)\big) \ud \boldsymbol{\eta}_t.
\end{equation*}
Then, for every $\lambda\in[0,1]$ we define $\widetilde{X}^{\{1,2\},\lambda}_t:= X^1_t +\lambda(X^2_t-X^1_t) = (1-\lambda)X^1_t+\lambda X^2_t$, and we have that 
\begin{gather*}\Big(\partial_x f(\cdot,\widetilde{X}^{\{1,2\},\lambda}_\cdot),\partial^2_{xx}f(\cdot,\widetilde{X}^{\{1,2\},\lambda}_\cdot) \big(f(\cdot,{X}^{1}_\cdot) + \lambda \big[f(\cdot,{X}^2_\cdot) -f(\cdot,X^1_\cdot)\big]\big) + \partial_x f'(\cdot,\widetilde{X}^{\{1,2\},\lambda}_\cdot)\Big) \\\in
\mathbf{D}_{\eta}^{\beta,\beta'} \mathbb{L}^{k,\infty}_{\lambda-ex,lin}\big([0,T];L(\mathbb{R}^d,\mathbb{R}^{d\times \ell})\big).
\end{gather*}
\end{lemma}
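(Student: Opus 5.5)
We have to check three things, uniformly in $\lambda\in[0,1]$: joint measurability, uniform $\mathbb{L}^\infty$ bounds, and that the pair
\[
\Phi(\lambda):=\partial_x f(\cdot,\widetilde X^\lambda),\qquad \Phi'(\lambda):=\partial^2_{xx}f(\cdot,\widetilde X^\lambda)\big(f(\cdot,X^1)+\lambda[f(\cdot,X^2)-f(\cdot,X^1)]\big)+\partial_x f'(\cdot,\widetilde X^\lambda)
\]
is a stochastic controlled linear vector field in $\mathbf{D}^{\beta,\beta'}_\eta\mathbb{L}^{k,\infty}_{lin}$ with seminorm bounded independently of $\lambda$. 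We write $\widetilde X^\lambda:=\widetilde X^{\{1,2\},\lambda}$.

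\emph{Measurability and boundedness.} Measurability follows from the Carathéodory-type measurability of $(f,f')$ and its derivatives in \ref{A2}, together with the continuity of $(\omega,t,\lambda)\mapsto\widetilde X^\lambda_t$. Boundedness of $\Phi,\Phi'$ in $\|\cdot\|_\infty$ is immediate from $\|\sup_t|f(t,\cdot)|_{3+\theta}\|_\infty+\|\sup_t|f'(t,\cdot)|_{2+\theta}\|_\infty<\infty$.

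\emph{Controlledness of the path.} First, $\widetilde X^\lambda$ is a convex combination of $X^1,X^2$. Hence $(\widetilde X^\lambda,\widetilde X^{\lambda\prime})$, with
\[
\widetilde X^{\lambda\prime}:=f(\cdot,X^1)+\lambda[f(\cdot,X^2)-f(\cdot,X^1)],
\]
belongs to $\mathbf{D}^{\alpha,\beta}_\eta\mathbb{L}^{k,\infty}$, and its norm is at most the maximum of the norms of $(X^i,f(\cdot,X^i))$, uniformly in $\lambda$. This is where the stated Gubinelli derivative in $\Phi'$ comes from. Then $\Phi(\lambda)=\partial_xf(\cdot,\widetilde X^\lambda)$ is the composition of a controlled path with the stochastic controlled vector field $(\partial_xf,\partial_xf')\in\mathbf{D}^{\alpha,\beta'}_\eta\mathbb{L}^{k,\infty}\mathcal C^{2+\theta}_b$. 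We invoke the composition lemma of \cite{FHL21} (their Lemma 3.13 type result, also used in \cite[Lemma 3.5]{bugini2024malliavin}). It gives that
\[
\big(\partial_xf(\cdot,\widetilde X^\lambda),\,\partial_{xx}^2f(\cdot,\widetilde X^\lambda)\widetilde X^{\lambda\prime}+\partial_xf'(\cdot,\widetilde X^\lambda)\big)
\]
is controlled with regularity $(\beta,\beta')$ in $\mathbb{L}^{k,\infty}$, with bounds polynomial in the controlled norms of $\widetilde X^\lambda$ and in the vector field norms. In particular the bounds are uniform in $\lambda$. The remainder is handled through Taylor's formula,
\[
\partial_xf(t,\widetilde X^\lambda_t)-\partial_xf(s,\widetilde X^\lambda_s)=[\partial_xf(t,\cdot)-\partial_xf(s,\cdot)](\widetilde X^\lambda_t)+\partial_xf(s,\widetilde X^\lambda_t)-\partial_xf(s,\widetilde X^\lambda_s),
\]
together with the time-regularity assumptions of \ref{A2}. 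Those assumptions are the continuity and $\theta$-Hölder bounds on $\partial^2_{xt}f$, $\partial^3_{xxt}f$ and $\partial^2_{xt}f'$, and they let us swap the evaluation points in the remainder $R^{\partial_xf}(\widetilde X^\lambda_t)$ versus $R^{\partial_xf}(\widetilde X^\lambda_s)$.

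\emph{Main obstacle.} The main difficulty is the $\mathbb{L}^{k,\infty}$-integrability, i.e. conditional moments bounded in $\mathrm{ess\,sup}$, for the increments $\delta\Phi'$ and for $\mathbb{E}_\bullet R^\Phi$. Here products such as $\partial^2_{xx}f\,\widetilde X^{\lambda\prime}$ involve $\delta\widetilde X^\lambda$ only in conditional $k$-moments. We would exploit that $\widetilde X^{\lambda\prime}$ and all derivatives of $f,f'$ are $\mathbb{L}^\infty$-bounded. The conditional Hölder inequality then gives, for instance,
\[
\|\|\delta\Phi'_{s,t}|\mathcal F_s\|_k\|_\infty\lesssim \|\|\delta\widetilde X^\lambda_{s,t}|\mathcal F_s\|_k\|_\infty+\bk{\delta \partial_x f'}+\dots,
\]
which is controlled by the $(k,\infty)$-norms of $X^1,X^2$. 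The conditional remainder estimate uses a second-order Taylor expansion with $\partial^3_{xxx}f$ bounded (hence the $\mathcal C^{3+\theta}_b$ requirement). The term $\tfrac12\partial^3 f(\delta\widetilde X)^{\otimes2}$ has conditional expectation $O(|t-s|^{2\beta})\subset O(|t-s|^{\beta+\beta'})$. Finally, the uniformity of all constants in $\lambda$ yields the supremum condition in the definition of $\mathbf{D}^{\beta,\beta'}_\eta\mathbb{L}^{k,\infty}_{\lambda-ex,lin}$.
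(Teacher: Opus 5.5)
Your proposal is correct and follows essentially the paper's route. The increment bounds are deferred to the composition argument of \cite[Lemma 3.5]{bugini2024malliavin}. The conditional remainder $\mathbb{E}_s R^{\Phi(\lambda)}_{s,t}$ is split exactly as in the paper into four pieces, all bounded uniformly in $\lambda$: the second-order Taylor term controlled by $\partial^3_{xxx}f$, the term $\partial^2_{xx}f\,\mathbb{E}_s R^{\widetilde X^\lambda}$, the vector-field remainder $R^{\partial_x f}$ evaluated at $\widetilde X^\lambda_s$, and the swap $\delta(\partial_x f)_{s,t}(\widetilde X^\lambda_t)-\delta(\partial_x f)_{s,t}(\widetilde X^\lambda_s)$, which is handled through the time regularity of $\partial^2_{xx}f$.
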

\begin{proof}
We define $\Phi_t(\lambda):= \partial_xf(t,\widetilde{X}^{\{1,2\},\lambda}_t)$ and $\Phi'_t(\lambda):= \partial^2_{xx}f(t,\widetilde{X}^{\{1,2\},\lambda}_t) \big(f(t,{X}^{1}_t) + \lambda \big[f(t,{X}^2_t) -f(t,X^1_t)\big]\big)+\partial_x f'(t,\widetilde{X}^{\{1,2\},\lambda}_t)$. To prove that 
$$\sup_{\lambda\in[0,1]}\|\delta \Phi_\cdot(\lambda)\|_{\alpha;k,\infty} + \sup_{\lambda\in[0,1]}\| \delta \Phi_\cdot'(\lambda)\|_{\beta';k,\infty} < \infty$$
one can follow exactly the proof of \cite[Lemma 3.5]{bugini2024malliavin}.

For $\mathbb{E}_s[R_{s,t}^{\Phi_\cdot(\lambda)}]$ we initially calculate
\begin{align*}
    R_{s,t}^{\Phi_\cdot(\lambda)} &= \partial_xf(t,\widetilde{X}^{\{1,2\},\lambda}_t) - \partial_xf(s,\widetilde{X}^{\{1,2\},\lambda}_s) -\\
    &\qquad\Big[\partial^2_{xx}f(s,\widetilde{X}^{\{1,2\},\lambda}_s)\Big(f(s,{X}^{1}_s) + \lambda \big[f(s,{X}^2_s) -f(s,X^1_s)\big]\Big) + \partial_x f'(s,\widetilde{X}^{\{1,2\},\lambda}_s)\Big] \eta_{s,t}\\
    &= \partial_xf(s,\widetilde{X}^{\{1,2\},\lambda}_t) - \partial_xf(s,\widetilde{X}^{\{1,2\},\lambda}_s) - \partial^2_{xx}f(s,\widetilde{X}^{\{1,2\},\lambda}_s)\Big(f(s,{X}^{1}_s) + \lambda \big[f(s,{X}^2_s) -f(s,X^1_s)\big]\Big) \eta_{s,t}\\
    &\qquad+ \partial_xf(t,\widetilde{X}^{\{1,2\},\lambda}_s) - \partial_xf(s,\widetilde{X}^{\{1,2\},\lambda}_s) - \partial_x f'(s,\widetilde{X}^{\{1,2\},\lambda}_s) \eta_{s,t}\\
    &\qquad+ \Big[\partial_xf(t,\widetilde{X}^{\{1,2\},\lambda}_t)-\partial_xf(s,\widetilde{X}^{\{1,2\},\lambda}_t)\Big] - \Big[\partial_xf(t,\widetilde{X}^{\{1,2\},\lambda}_s) - \partial_xf(s,\widetilde{X}^{\{1,2\},\lambda}_s)\Big]\\
     &= \partial_xf(s,\widetilde{X}^{\{1,2\},\lambda}_t) - \partial_xf(s,\widetilde{X}^{\{1,2\},\lambda}_s) - \partial^2_{xx}f(s,\widetilde{X}^{\{1,2\},\lambda}_s)\delta \widetilde{X}^{\{1,2\},\lambda}_{s,t}\\
     &\qquad+ \partial^2_{xx}f(s,\widetilde{X}^{\{1,2\},\lambda}_s) \Big[\delta \widetilde{X}^{\{1,2\},\lambda}_{s,t} - \Big(f(s,{X}^{1}_s) + \lambda \big[f(s,{X}^2_s) -f(s,X^1_s)\big]\Big)\eta_{s,t}\Big]\\
     &\qquad+ \partial_xf(t,\widetilde{X}^{\{1,2\},\lambda}_s) - \partial_xf(s,\widetilde{X}^{\{1,2\},\lambda}_s) - \partial_x f'(s,\widetilde{X}^{\{1,2\},\lambda}_s) \eta_{s,t}\\
     &\qquad+ \delta (\partial_xf)_{s,t}(\widetilde{X}^{\{1,2\},\lambda}_t) - \delta (\partial_xf)_{s,t}(\widetilde{X}^{\{1,2\},\lambda}_s)\\
     &= \frac{1}{2}\int_0^1 (1-\vartheta) \partial^3_{xxx}f(s,\widetilde{X}^{\{1,2\},\lambda}_{s} + \vartheta \delta\widetilde{X}^{\{1,2\},\lambda}_{s,t}) \ud \vartheta \big(\delta \widetilde{X}^{\{1,2\},\lambda}_{s,t}\big)^{\otimes 2}\\
     &\qquad+ \partial^2_{xx}f(s,\widetilde{X}^{\{1,2\},\lambda}_s) \Big[\delta \widetilde{X}^{\{1,2\},\lambda}_{s,t} -\Big(f_s({X}^{1}_s) + \lambda \big[f(s,{X}^2_s) -f(s,X^1_s)\big]\Big) \eta_{s,t}\Big]\\
     &\qquad+ \partial_xf(t,\widetilde{X}^{\{1,2\},\lambda}_s) - \partial_xf(s,\widetilde{X}^{\{1,2\},\lambda}_s) - \partial_x f'(s,\widetilde{X}^{\{1,2\},\lambda}_s) \eta_{s,t}\\
     &\qquad+ \delta (\partial_xf)_{s,t}(\widetilde{X}^{\{1,2\},\lambda}_t) - \delta (\partial_xf)_{s,t}(\widetilde{X}^{\{1,2\},\lambda}_s).
\end{align*}
Now, for the first term we have
\begin{align*}
    &\Big|\mathbb{E}_s\Big[\frac{1}{2}\int_0^1 (1-\vartheta) \partial^3_{xxx}f(s,\widetilde{X}^{\{1,2\},\lambda}_{s} + \vartheta \delta\widetilde{X}^{\{1,2\},\lambda}_{s,t}) \ud \vartheta \big(\delta \widetilde{X}^{\{1,2\},\lambda}_{s,t}\big)^{\otimes 2}\Big]\Big|\leq \|\partial^3_{xxx}f\|_\infty \mathbb{E}_s\big[|\delta\widetilde{X}^{\{1,2\},\lambda}_{s,t}|^2\big]\\
    &\qquad\qquad\leq 2 \|\partial^3_{xxx}f\|_\infty \Big( (1-\lambda) \mathbb{E}_s\big[|\delta{X}^{1}_{s,t}|^2\big] + \lambda \mathbb{E}_s\big[|\delta{X}^{2}_{s,t}|^2\big]\Big) \\
    &\qquad\qquad\leq 2 \|\partial^3_{xxx}f\|_\infty \Big(  \|\delta{X}^{1}\|^2_{\alpha; k,\infty} + \|\delta{X}^{2}\|^2_{\alpha; k,\infty}\big]\Big) |t-s|^{2\alpha}.
\end{align*}
For the second term we have
\begin{align*}
    &\bigg|\mathbb{E}_s\bigg[\partial^2_{xx}f(s,\widetilde{X}^{\{1,2\},\lambda}_s) \Big[\delta \widetilde{X}^{\{1,2\},\lambda}_{s,t} -\Big(f_s({X}^{1}_s) + \lambda \big[f(s,{X}^2_s) -f(s,X^1_s)\big]\Big) \eta_{s,t}\Big]\bigg]\bigg|\\
    &\qquad\qquad\leq  \|\partial^2_{xx}f\|_\infty \Big( (1-\lambda) \big|\mathbb{E}_s\big[R^{X^1}_{s,t}\big]\big| + \lambda \big|\mathbb{E}_s\big[R^{X^2}_{s,t}\big]\big|\Big)\\
    &\qquad\qquad\leq  \|\partial^2_{xx}f\|_\infty \Big(  \big\|\mathbb{E}_\bullet R^{X^1}\big\|_{\alpha+\beta;\infty} + \big\|\mathbb{E}_\bullet R^{X^2}\big\|_{\alpha + \beta;\infty}\Big) |t-s|^{\alpha+\beta}.
\end{align*}
For the third term we have
\begin{align*}
   \Big|\mathbb{E}_s\Big[\partial_xf(t,\widetilde{X}^{\{1,2\},\lambda}_s) - \partial_xf(s,\widetilde{X}^{\{1,2\},\lambda}_s) - \partial_x f'(s,\widetilde{X}^{\{1,2\},\lambda}_s) \eta_{s,t}\Big]\Big| \leq \llbracket \mathbb{E}_\bullet R^{\partial_x f}\rrbracket_{\beta + \beta';\infty} |t-s|^{\beta+\beta'}.
\end{align*}
And for the fourth term we have
\begin{align*}
    &\Big|\mathbb{E}_s\Big[\delta (\partial_xf)_{s,t}(\widetilde{X}^{\{1,2\},\lambda}_t) - \delta (\partial_xf)_{s,t}(\widetilde{X}^{\{1,2\},\lambda}_s)\Big]\Big| \leq  \|\delta(\partial^2_{xx}f)_{s,t}\|_\infty \mathbb{E}_s\big[|\delta\widetilde{X}^{\{1,2\},\lambda}_{s,t}|\big]\\
    &\qquad\qquad\leq \|\delta(\partial^2_{xx}f)_{s,t}\|_\infty\Big((1-\lambda) \mathbb{E}_s\big[|\delta{X}^{1}_{s,t}|\big] + \lambda \mathbb{E}_s\big[|\delta{X}^{2}_{s,t}|\big] \Big)\\
     &\qquad\qquad\leq \|\delta(\partial^2_{xx}f)_{s,t}\|_\infty\Big( \|\delta{X}^{1}\|_{\alpha; k,\infty} + \|\delta{X}^{2}\|_{\alpha; k,\infty} \Big) |t-s|^{\alpha}\\
     &\qquad\qquad\leq C_{\partial^2_{xx}f}\Big( \|\delta{X}^{1}\|_{\alpha; k,\infty} + \|\delta{X}^{2}\|_{\alpha; k,\infty} \Big) |t-s|^{\alpha + \beta'}.
\end{align*}
\end{proof}

Using Lemma \ref{lemma 4.1} with $\lambda = 0$, under Assumptions \ref{A1}--
\ref{A3}, by \cite[Theorem 3.11]{bugini2024malliavin}, for $k \in (4,\infty)$, we get that \eqref{eq: var_process} has a unique solution $(V,\partial_x f(\cdot,X)V)\in \mathbf{D}_{\eta}^{\alpha,\beta}\mathbb{L}^k([0,T]; \mathbb{R}^{d})$.

Let us compute the directional derivative $\lim_{\epsilon\rightarrow0^+}\frac{J(u^\epsilon)-J(u)}{\epsilon}$. By the fundamental Theorem of calculus we have that
\begin{multline}\label{eq. 053}
    \frac{\mathcal{J}(u^\epsilon)-\mathcal{J}(u)}{\epsilon}\\ = \mathbb{E}\bigg[\int_0^T \Big(\partial_x h(s,X_s,u_s)\frac{X^{\epsilon}_s - X_s}{\epsilon} + \partial_u h(s,X_s,u_s) v_s\Big) \ud s + \partial_x g(X_T)\frac{X^{\epsilon}_T - X_T}{\epsilon}\bigg] + \mathcal{R}_1(\epsilon)\\
    = \mathbb{E}\bigg[\int_0^T \Big(\partial_x h(s,X_s,u_s)V_s + \partial_u h(s,X_s,u_s) v_s\Big) \ud s + \partial_x g(X_T) V_T\bigg] + \mathcal{R}_1(\epsilon) + \mathcal{R}_2(\epsilon),
\end{multline}
    where
\begin{align*}
    \mathcal{R}_1(\epsilon) &:= \mathbb{E}\bigg[\int_0^1 \Big(\partial_x g({X}_T+\lambda (X^\epsilon_T-{X}_T)) -\partial_x g({X}_T) \Big)\ud\lambda \frac{X^\epsilon_T-{X}_T}{\epsilon}\bigg]\\
   &\qquad+\mathbb{E}\bigg[\int_0^T \int_0^1  \Big(\partial_x h(s,{X}_s + \lambda (X^\epsilon_s - {X}_s),u^\epsilon_s) -\partial_x h(s,{X}_s,u_s) \Big)\ud\lambda \frac{X^\epsilon_s - {X}_s}{\epsilon} \ud s \bigg]\\
   &\qquad+ \mathbb{E}\bigg[\int_0^T \int_0^1  \Big(\partial_u h(s,{X}_s,u_s +\lambda \epsilon v_s) -\partial_u h(s,{X}_s,u_s) \Big)\ud\lambda v_s \ud s \bigg],\\
   \mathcal{R}_2(\epsilon) &:= \mathbb{E}\bigg[\int_0^T  \partial_x h (s,{X}_s,{u}_s) \Big(\frac{X^\epsilon_s - {X}_s}{\epsilon} - V_s\Big) \ud s +  \partial_x g(\bar{X}_T) \Big(\frac{X^\epsilon_T - {X}_T}{\epsilon} - V_T\Big)\bigg].
\end{align*}
So, by the Lipschitz properties of Assumption \ref{A3} we get that    
    \begin{align*}
    |\mathcal{R}_1(\epsilon)|&\lesssim \frac{\sup_{t\in[0,T]}\mathbb{E}\big[|X^\epsilon_t - X_t|^2\big]}{\epsilon} + \epsilon  \int_0^T\|v_s\|_2 \left\| \frac{X^{\epsilon}_t - X_t}{\epsilon}\right\|_2 \ud s +\frac{\mathbb{E}\big[|X^\epsilon_T-X_T|^2\big]}{\epsilon}\\
   & \lesssim \epsilon \sup_{t \in [0,T]} \left\| \frac{X^{\epsilon}_t - X_t}{\epsilon} \right\|_2^2 
   + \epsilon \sup_{t \in [0,T]} \left\| v_t \right\|_2  \sup_{t \in [0,T]} \left\| \frac{X^{\epsilon}_t - X_t}{\epsilon} \right\|_2,\\
    |\mathcal{R}_2(\epsilon)|&\lesssim \sup_{t \in [0,T]}\left\| \frac{X^{\epsilon}_t - X_t}{\epsilon} - V_t \right\|_2 + \left\| \frac{X^{\epsilon}_T - X_T}{\epsilon} - V_T \right\|_2 \lesssim \sup_{t \in [0,T]}\left\| \frac{X^{\epsilon}_t - X_t}{\epsilon} - V_t \right\|_2. 
\end{align*}
Hence, to show that $\lim_{\epsilon\rightarrow0^+}[\mathcal{R}_1(\epsilon) + \mathcal{R}_2(\epsilon)] = 0$ it suffices to prove the following Lemma.

\begin{lemma}\label{lem:firstoderleftover1}
Let $k \in (4,\infty)$ and $X,X^{\epsilon}, V$ defined as above. Then, under Assumptions \ref{A1}--
\ref{A3} it holds that
    \begin{equation}
        \lim_{\epsilon \rightarrow 0^+} \sup_{t\in[0,T]} \bigg\| \frac{X^{\epsilon}_t - X_t}{\epsilon} - V_t \bigg\|_2 = 0.   
    \end{equation}
\end{lemma}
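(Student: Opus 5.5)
We argue in two stages: first show $\sup_t\|X^\epsilon_t-X_t\|_k=O(\epsilon)$, then show that the normalized remainder $\Gamma^\epsilon:=\frac{X^\epsilon-X}{\epsilon}-V$ solves an affine RSDE of the form \eqref{eq ex_lin} whose forcing tends to zero, and conclude with Theorem~\ref{thm 3.11}.

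\textbf{Step 1 (first-order bound).} Set $D^\epsilon:=X^\epsilon-X$ and linearize exactly via Taylor along the segment, with $\widetilde{X}^{\lambda}:=X+\lambda D^\epsilon$. The drift becomes $\int_0^1\partial_x b(s,\widetilde X^\lambda_s,u^\epsilon_s)\,\ud\lambda\, D^\epsilon_s+\big(b(s,X_s,u^\epsilon_s)-b(s,X_s,u_s)\big)$, and $\sigma$ is treated the same way. The rough term becomes $\int_0^1\Phi(\lambda)\,\ud\lambda\,D^\epsilon$, where $(\Phi(\lambda),\Phi'(\lambda))$ is exactly the extended linear vector field of Lemma~\ref{lemma 4.1} with $X^1=X$ and $X^2=X^\epsilon$. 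The $\Phi$-bounds are uniform in $\epsilon$, because the $\mathbf D^{\alpha,\beta}\mathbb L^{k,\infty}$ norms of $X^\epsilon$ are bounded uniformly in $\epsilon$ by the a priori estimates of \cite[Theorem 4.6]{FHL21} (the coefficients have uniformly bounded sup norms). The forcing $F^\epsilon:=\int_0^\cdot(b(s,X_s,u^\epsilon_s)-b(s,X_s,u_s))\,\ud s+\int_0^\cdot(\sigma(\cdot,X,u^\epsilon)-\sigma(\cdot,X,u))\,\ud W$ has $F^{\epsilon\prime}=0$. Its $\mathbf D^{\beta',\beta'}_\eta\mathbb L^k$ norm is $O(\epsilon\sup_t\|v_t\|_k)$, by the boundedness of $\partial_u b,\partial_u\sigma$ together with BDG, since Lebesgue and Itô integrals have trivial Gubinelli derivative and $\|\mathbb E_s R\|$ is controlled by the drift alone. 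Theorem~\ref{thm 3.11} then yields $\sup_t\|D^\epsilon_t\|_k\le C\epsilon$. Running the same argument in $\mathbf D$-seminorms gives $\|(D^\epsilon,D^{\epsilon\prime})\|_{\mathbf D}\lesssim\epsilon$ as well.

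\textbf{Step 2 (equation for $\Gamma^\epsilon$).} Subtracting \eqref{eq: var_process} gives
\begin{equation*}
\ud\Gamma^\epsilon=\partial_xb\,\Gamma^\epsilon\,\ud s+\partial_x\sigma\,\Gamma^\epsilon\,\ud W+(\partial_xf,\ \partial^2_{xx}f\,f+\partial_xf')(s,X_s)\,\Gamma^\epsilon\,\ud\boldsymbol\eta_s+\ud G^\epsilon .
\end{equation*}
The forcing $G^\epsilon$ collects three pieces. The Lebesgue and Itô pieces are $\big(\int_0^1\partial_x b(\widetilde X^\lambda,u^\epsilon)\,\ud\lambda-\partial_xb(X,u)\big)\frac{D^\epsilon}{\epsilon}$ together with the analogous $u$-differences. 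The rough piece is $\int_0^1(\Phi_s(\lambda)-\Phi_s(0))\,\ud\lambda\,\frac{D^\epsilon_s}{\epsilon}\,\ud\boldsymbol\eta_s$. Using the $\theta$-Hölder continuity in \ref{A1}, the Lebesgue and Itô pieces are bounded in $\mathbb L^k$ by $\big(\|D^\epsilon\|^\theta+\epsilon^\theta|v|^\theta\big)|D^\epsilon|/\epsilon$. By Step~1 and Hölder's inequality they are $O(\epsilon^{\theta})$ in the $\mathbf D$ norm (via BDG, as before). Theorem~\ref{thm 3.11} applied with $\lambda=0$ coefficients, which are admissible by Lemma~\ref{lemma 4.1}, then gives $\sup_t\|\Gamma^\epsilon_t\|_2\lesssim\|(G^\epsilon,G^{\epsilon\prime})\|_{\mathbf D^{\beta',\beta'}_\eta\mathbb L^k}$, once we show the right-hand side is $o(1)$. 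Here $k>4$ leaves room for the products of Hölder inequalities, and we may lower the integrability exponent where needed.

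\textbf{Main obstacle.} The crux is showing that the rough forcing $\big(\int_0^1(\Phi(\lambda)-\Phi(0))\,\ud\lambda\,\tfrac{D^\epsilon}{\epsilon},\ \ast\big)$ is small in the controlled-path norm, not merely in sup norm. We plan to invoke Proposition~\ref{Prop 3.2} and the product estimate of Lemma~\ref{lem:product} in its Hölder version: the pair $(D^\epsilon/\epsilon,\ D^{\epsilon\prime}/\epsilon)$ is bounded in $\mathbf D^{\alpha,\beta}_\eta\mathbb L^k$ by Step~1, so it suffices to prove
\begin{equation*}
\sup_\lambda\big\|(\Phi(\lambda)-\Phi(0),\ \Phi'(\lambda)-\Phi'(0))\big\|_{\mathbf D^{\beta',\beta'}_\eta\mathbb L^{k,\infty}_{lin}}\to0 .
\end{equation*}
To get this, we redo the four-term remainder decomposition in the proof of Lemma~\ref{lemma 4.1} for the differences. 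The extra derivative in \ref{A2} (namely $f\in\mathcal C^{3+\theta}_b$, and the $t$-derivatives $\partial^2_{xt}f$, $\partial^3_{xxt}f$, $\partial^2_{xt}f'$) makes each term Lipschitz or $\theta$-Hölder in the base point. This yields bounds of order $\|(D^\epsilon,D^{\epsilon\prime})\|_{\mathbf D}^{\theta}$, at the cost of reducing the Hölder exponents slightly, which is why the statement is made with $\beta'<\beta$ room. A subtlety is that $\|\cdot\|_{k,\infty}$ norms of the differences are not small, only $\|\cdot\|_k$ norms are. If this step fails we therefore fall back to placing the smallness on $D^\epsilon$ inside the product estimate instead, keeping the $\infty$-bounds on $\Phi$ and using the $\mathbb L^k$ smallness of $D^\epsilon$.
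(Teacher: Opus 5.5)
Your architecture matches the paper's in outline. Both of you write an affine RSDE for the normalized remainder (your $\Gamma^\epsilon$, the paper's $V^\epsilon$), apply Theorem~\ref{thm 3.11} with integrability $2$, and reduce the lemma to showing that the forcing vanishes in $\mathbf{D}^{\alpha,\beta}_\eta\mathbb{L}^2$. The split differs:
\begin{itemize}
\item the paper keeps the $\epsilon$-dependent averaged coefficients $\int_0^1\Phi^\epsilon(\lambda)\ud\lambda$ in front of the unknown (uniformly bounded by \eqref{eq. 4.7}) and puts (coefficient difference)$\,\times V$ into the forcing;
\item you freeze the coefficients at $\lambda=0$ and put (coefficient difference)$\,\times D^\epsilon/\epsilon$ into the forcing.
\end{itemize}
This choice is harmless for the Lebesgue and It\^o parts. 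The step you flag as the main obstacle, however, is a genuine gap. Your sufficient condition is that $(\Phi(\lambda)-\Phi(0),\Phi'(\lambda)-\Phi'(0))$ tends to zero in $\mathbf{D}^{\beta',\beta'}_\eta\mathbb{L}^{k,\infty}_{lin}$, together with the $\sup_t\|\cdot\|_\infty$ bounds entering Lemma~\ref{lem:product}. This is false in general. The difference $|\Phi_t(\lambda)-\Phi_t(0)|$ is only bounded by $\min\{2|\partial_xf|_\infty,\,|\partial^2_{xx}f|_\infty|D^\epsilon_t|\}$. Since $v$ and the It\^o integrals in $D^\epsilon$ are unbounded, $D^\epsilon$ is small in $\mathbb{L}^k$ but not in $\mathbb{L}^\infty$, nor in the conditional seminorms $\|\|\cdot|\mathcal{F}_s\|_k\|_\infty$. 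Lemma~\ref{lem:product} and Proposition~\ref{Prop 3.2} require the coefficient factor in $\mathbb{L}^{k,\infty}$, so they cannot transfer any smallness here.

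Your fallback does not repair this. $D^\epsilon/\epsilon$ is only $O(1)$, so all smallness must come from the coefficient difference. Suppose you write $\Phi(\lambda)-\Phi(0)=\lambda\int_0^1\partial^2_{xx}f(\cdot,X+\vartheta\lambda D^\epsilon)\ud\vartheta\,D^\epsilon$ and treat the $\partial^2_{xx}f$-factor as a bounded controlled linear field. Its controlled remainder is then governed by the Taylor remainder of $\partial^2_{xx}f$. Under \ref{A2}, $\partial^3_{xxx}f$ is only $\theta$-H\"older, so this remainder has order $\alpha(1+\theta)$, which need not exceed $\beta+\beta'>2/3$. This is why the paper uses that factorization only in the spike section, under the stronger assumption \ref{P-rough}.

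What the lemma actually needs, and what the paper's proof consists of, is a direct finite-moment estimate. Bound the rough forcing in $\mathbf{D}^{\alpha,\beta}_\eta\mathbb{L}^2$ using H\"older's inequality $\mathbb{L}^4\times\mathbb{L}^4\to\mathbb{L}^2$ (this is where $k>4$ is used), not through Lemma~\ref{lem:product}. In your split, with $A:=\int_0^1(\Phi(\lambda)-\Phi(0))\ud\lambda$, this means showing that each of the following tends to $0$: $\sup_t\|A_t\|_4$, $\sup_t\|A'_t\|_4$, $\|\delta A\|_{\beta;4}$, $\|\mathbb{E}_\bullet R^{A}\|_{\beta+\beta';4}$ and $\|\mathbb{E}_\bullet\delta A'\|_{\beta';4}$. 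To do so, expand $R^{A}$ into four kinds of terms:
\begin{itemize}
\item the $\partial^3_{xxx}f$ Taylor terms against $(\delta D^\epsilon)^{\otimes2}$;
\item the terms with coefficient $\partial^3_{xxx}f(\cdot+\lambda D^\epsilon)-\partial^3_{xxx}f(\cdot)$ against $(\delta X)^{\otimes 2}$;
\item a term $\partial^2_{xx}f\,R^{D^\epsilon}$;
\item the time-increment terms, handled with $\partial^2_{xt}f$, $\partial^3_{xxt}f$ and $\partial^2_{xt}f'$ from \ref{A2}.
\end{itemize}
Each must be shown to be $o(1)$ times the correct power of $|t-s|$, using Lemma~\ref{lem:diff1} and dominated convergence. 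This computation is the substance of the lemma, and your proposal does not contain it.

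Separately, your split needs a uniform bound on $(D^\epsilon/\epsilon,D^{\epsilon\prime}/\epsilon)$ in $\mathbf{D}^{\alpha,\beta}_\eta\mathbb{L}^4$, remainder included. Theorem~\ref{thm 3.11} controls only $\sup_t\|\cdot\|_k$, so ``running'' it again does not give this bound. It has to come from a stability estimate such as \cite[Theorem 4.9]{FHL21}.
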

In the proof of Lemma \ref{lem:firstoderleftover1} we shall need the next result.
\begin{lemma}\label{lem:diff1}
Let $k \in (4,\infty)$ and $X,X^\epsilon$ defined as above. Then, under Assumptions \ref{A1}--
\ref{A3} it holds that
\begin{equation}
    \big\|\sup_{t\in[0,T]}|X^\epsilon_t -X_t|\big\|_k +\|\delta X^{\epsilon}-\delta X\|_{\alpha;k} \lesssim \epsilon,
\end{equation}
where the implicit constant is independent of $\epsilon$.
\end{lemma}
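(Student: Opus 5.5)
The plan is to write $\Delta^\epsilon := X^\epsilon - X$ as the solution of a parameterized affine RSDE of the form \eqref{eq ex_lin} and then apply the global Hölder estimate, Theorem~\ref{thm 3.11}. Subtracting the two equations and using exact first-order Taylor expansions along the segment $\widetilde X^{\lambda} := X + \lambda \Delta^\epsilon$ gives
\begin{align*}
\Delta^\epsilon_t ={}& \int_0^t \Big(\int_0^1 \partial_x b(s,\widetilde X^{\lambda}_s,u^\epsilon_s)\ud\lambda\Big)\Delta^\epsilon_s \ud s + \int_0^t \Big(\int_0^1 \partial_x \sigma(s,\widetilde X^{\lambda}_s,u^\epsilon_s)\ud\lambda\Big)\Delta^\epsilon_s \ud W_s\\
&+ \int_0^t \Big(\int_0^1 \Phi_s(\lambda)\ud\lambda, \int_0^1 \Phi'_s(\lambda)\ud\lambda\Big)\Delta^\epsilon_s \ud\boldsymbol{\eta}_s + F^\epsilon_t .
\end{align*}
Here $(\Phi,\Phi')$ are exactly the pair from Lemma~\ref{lemma 4.1}, with $X^1=X$ and $X^2=X^\epsilon$. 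The forcing term is
\begin{equation*}
F^\epsilon_t := \int_0^t \big(b(s,X_s,u^\epsilon_s)-b(s,X_s,u_s)\big)\ud s + \int_0^t \big(\sigma(s,X_s,u^\epsilon_s)-\sigma(s,X_s,u_s)\big)\ud W_s,
\end{equation*}
with $F^{\epsilon\prime}=0$. To check that this rewriting is legitimate, i.e.\ that $\Delta^\epsilon$ is an $\mathbb{L}^k$ solution in the sense of Davie (Definition~\ref{dfn ex_lin_Davie}), I will verify that the Davie remainder of $\Delta^\epsilon$ coincides with the difference of the remainders of $X^\epsilon$ and $X$. This uses the identity $f(X^\epsilon)-f(X) = \int_0^1\Phi(\lambda)\ud\lambda\,\Delta^\epsilon$, together with the matching identity for the Gubinelli derivatives $\partial_xf\,f+f'$ evaluated at $X^\epsilon$ and $X$. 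The second-level term is where the nonlinearity enters, so this identity must be checked by hand.

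The next step is to verify the hypotheses of Theorem~\ref{thm 3.11}, uniformly in $\epsilon\in(0,1]$. The drift and diffusion coefficients are bounded by \ref{A1}. The bounds $\sup_t\|\Phi_t\|_\infty+\|\Phi'_t\|_\infty$ come from \ref{A2}. Lemma~\ref{lemma 4.1} gives $(\Phi,\Phi')\in\mathbf{D}^{\beta,\beta'}_\eta\mathbb{L}^{k,\infty}_{\lambda-ex,lin}$, and its proof bounds the relevant seminorms by $\|\delta X\|_{\alpha;k,\infty}$, $\|\delta X^\epsilon\|_{\alpha;k,\infty}$ and $\|\mathbb{E}_\bullet R^{X}\|$, $\|\mathbb{E}_\bullet R^{X^\epsilon}\|$. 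These are controlled by the a priori bounds for RSDE solutions from \cite[Theorem 4.6]{FHL21}, which depend only on the coefficient bounds and on $\tnorm{\boldsymbol{\eta}}_\alpha$, not on the control. This boundedness of the a priori estimates is what makes the constant $\mathscr{M}$, and hence $C$, independent of $\epsilon$. Proposition~\ref{Prop 3.3} then reduces everything to the $\lambda$-averaged pair.

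For the forcing term, \ref{A1} gives $|b(s,X_s,u^\epsilon_s)-b(s,X_s,u_s)|\le \|\partial_ub\|_\infty\,\epsilon|v_s|$, and similarly for $\sigma$. By Minkowski and BDG this yields $\|\delta F^\epsilon_{s,t}\|_k\lesssim \epsilon\sup_r\|v_r\|_k\,|t-s|^{1/2}$. Since $F^{\epsilon\prime}=0$, the remainder is $R^F=\delta F$, and its conditional expectation is only the drift part, which is $O(\epsilon|t-s|)$. Hence $\|(F^\epsilon,0)\|_{\mathbf{D}^{\beta',\beta'}_\eta\mathbb{L}^k}\lesssim\epsilon$, and Theorem~\ref{thm 3.11} gives $\sup_t\|\Delta^\epsilon_t\|_k\lesssim\epsilon$.

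To upgrade this to the $\mathbb{S}^k$ norm and the Hölder seminorm $\|\delta\Delta^\epsilon\|_{\alpha;k}$, I will feed the pointwise bound back into the local estimates from the proof of Theorem~\ref{thm 3.9}, namely \eqref{eq. 19}--\eqref{eq. 21} in their Hölder form, via Proposition~\ref{prop:roughinteg}. These bound $\|\delta\Delta^\epsilon\|_{\alpha;k}$ linearly by $\sup_t\|\Delta^\epsilon_t\|_k$ plus the forcing, iterated over a finite partition on which the coefficient controls are small. The supremum bound then follows from the Kolmogorov-type maximal inequality \cite[Lemma 2.12]{FHL21}, or alternatively from Lemma~\ref{lem:uni-p-var} with $p=1/\alpha<k$; this is allowed since $k>4>1/\alpha$.

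I expect the main obstacle to be the Davie-solution identification in the first step. One has to show that $f(X^\epsilon)-f(X)$, together with its Gubinelli derivative, really has the controlled structure $\big(\int\Phi\,\Delta^\epsilon,\ \int(\Phi'\Delta^\epsilon+\Phi\,\Delta^{\epsilon\prime})\big)$, where $\Delta^{\epsilon\prime}=\int_0^1\Phi\,\ud\lambda\,\Delta^\epsilon$. The remainders must satisfy the two-tier bounds with exponents $\alpha+\beta$ and $\alpha+\beta+\beta'$. Getting this requires a second-order Taylor expansion of $f$ and the $\mathcal{C}^{3+\theta}_b$ regularity in \ref{A2}.
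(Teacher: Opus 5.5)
Your argument is correct, but it is not the route the paper takes. The paper's proof is two lines. By \ref{A1}, the coefficient perturbation $\sup_t\|\sup_x|b(t,x,u^\epsilon_t)-b(t,x,u_t)|\|_k+\sup_t\|\sup_x|\sigma(t,x,u^\epsilon_t)-\sigma(t,x,u_t)|\|_k$ is $\lesssim\epsilon\sup_t\|v_t\|_k$. The paper then invokes the stability theorem \cite[Theorem 4.9]{FHL21} for RSDEs with respect to their coefficients, and takes it to deliver both the supremum bound and the $\alpha$-H\"older bound at once.

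You instead linearize $\Delta^\epsilon=X^\epsilon-X$ into the affine form \eqref{eq ex_lin}, with forcing $F^\epsilon$ and $F^{\epsilon\prime}=0$. You control $(\Phi,\Phi')$ through Lemma~\ref{lemma 4.1} and the control-independent a priori bounds, apply Theorem~\ref{thm 3.11}, and then bootstrap. This is exactly the scheme the paper itself uses later for \eqref{eq:state-diff}, there with Theorem~\ref{thm 3.9}.

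Your route is longer because of the bootstrap. Theorem~\ref{thm 3.11} as stated only gives $\sup_t\|\Delta^\epsilon_t\|_k\lesssim\epsilon$. Running its proof with $\sup_t\|\cdot\|_k$ in place of the $\mathbb{S}^k$ norm yields the full $\beta'$-controlled norm, and $\|\delta\Delta^\epsilon\|_{\alpha;k}\lesssim\epsilon$ then follows by reading off the equation via Proposition~\ref{prop:roughinteg}. Lemma~\ref{lem:uni-p-var} with $p=1/\alpha<k$ gives the supremum. In exchange, your route stays inside the paper's own toolbox and makes the $\epsilon$-independence of the constant explicit, whereas the paper outsources all of this to the external stability estimate.

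The step you flag as the main obstacle is easier than you expect. Once the first- and second-level linearization identities hold exactly, the Davie remainder of $\Delta^\epsilon$ is literally the difference of the Davie remainders of $X^\epsilon$ and $X$. Equivalently, the integral form in Proposition~\ref{Prop 3.5} holds by linearity of the rough stochastic integral. So no further Taylor expansion is needed there; the $\mathcal{C}^{3+\theta}_b$ regularity enters only inside Lemma~\ref{lemma 4.1}, to place $(\Phi,\Phi')$ in the controlled class.
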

\begin{proof}[Proof of Lemma \ref{lem:diff1}]
We have by the Lipschitz properties of $b$ and $\sigma$ that
\begin{equation*}
\begin{aligned}
    \sup_{t\in[0,T]}\big\|\sup_{x\in\mathbb{R}^d}|b(t,x,u^\epsilon_t)-b(t,x,u_t)|\big\|_k +  \sup_{t\in[0,T]}\big\|\sup_{x\in\mathbb{R}^d}|\sigma(t,x,u^\epsilon_t)-\sigma(t,x,u_t)|\big\|_k 
    \lesssim  \sup_{t\in[0,T]}\big\|v_t \big\|_k \epsilon.
    \end{aligned}
\end{equation*}
So, we define 
\begin{equation*}
\theta:= \sup_{t\in[0,T]}\big\|\sup_{x\in\mathbb{R}^d}|b(t,x,u^\epsilon_t)-b(t,x,u_t)|\big\|_k +  \sup_{t\in[0,T]}\big\|\sup_{x\in\mathbb{R}^d}|\sigma(t,x,u^\epsilon_t)-\sigma(t,x,u_t)|\big\|_k
\end{equation*}
and by \cite[Theorem 4.9]{FHL21} we conclude.
\end{proof}

\begin{proof}[Proof of Lemma \ref{lem:firstoderleftover1}]
To ease the notation, we define $V^\epsilon_t := \frac{X^{\epsilon}_t - X_t}{\epsilon}-V_t$, for every $t\in [0,T]$. Furthermore, we define
\begin{align*}
    \widetilde{\Phi}^\epsilon_t &:= \frac{f(t,X^\epsilon_t) - f(t,X_t)}{\epsilon}-\partial_x f(t,X_t) V_t,\\
    (\widetilde{\Phi}^\epsilon)'_t&:=  \frac{\Big[\partial_xf(t,X^\epsilon_t)f(t,X^\epsilon_t) +f'(t,X^\epsilon_t) \Big] - \Big[\partial_xf(t,X_t)f(t,X_t) +f'(t,X_t) \Big]}{\epsilon}\\
    &\qquad - \Big[\partial_{xx}^2f(t,X_t) f(t,X_t) + (\partial_xf)'(t,X_t)\Big]V_t - \big(\partial_xf(t,X_t)\big)^2 V_t
\end{align*}
and we have that $\big(\widetilde{\Phi}^\epsilon,(\widetilde{\Phi}^\epsilon)'\big)\in \mathbf{D}^{\beta,\beta'}_\eta\mathbb{L}^{k}\big([0,T];\mathbb{R}^{d\times \ell}\big)$. 

Next, $V^\epsilon$ satisfies the following equation
\begin{align*}
   V^\epsilon_t =& \int_0^t \bigg(\frac{b(s,X^\epsilon_s,u^\epsilon_s) - b(s,X_s,u_s)}{\epsilon}-\partial_x b(s,X_s,u_s) V_s - \partial_u b(s,X_s,u_s) v_s\bigg) \ud s \\& + \int_0^t \bigg(\frac{\sigma(s,X^\epsilon_s,u^\epsilon_s) - \sigma(s,X_s,u_s)}{\epsilon}-\partial_x \sigma(s,X_s,u_s) V_s - \partial_u \sigma(s,X_s,u_s) v_s\bigg) \ud W_s \\ &+ \int_0^t \bigg(\frac{f(s,X^\epsilon_s) - f(s,X_s)}{\epsilon}-\partial_x f(s,X_s) V_s \bigg) \ud \boldsymbol{\eta}_s \\
    = & \int_0^t \int_0^1 \bigg(\partial_x b(s,X_s + \lambda \epsilon (V^\epsilon_s+V_s),u_s + \lambda \epsilon v_s) (V^\epsilon_s + V_s) -\partial_x b(s,X_s,u_s) V_s \\ & + \partial_u b(s,X_s + \lambda \epsilon (V^\epsilon_s + V_s),u_s+\lambda\epsilon v_s) v_s -  \partial_u b(s,X_s,u_s) v_s\bigg) \ud \lambda \ud s \\ & + \int_0^t \int_0^1 \bigg(\partial_x \sigma(s,X_s + \lambda \epsilon (V^\epsilon_s+V_s),u_s + \lambda \epsilon v_s) (V^\epsilon_s + V_s) -\partial_x \sigma(s,X_s,u_s) V_s \\ &+ \partial_u \sigma(s,X_s + \lambda \epsilon (V^\epsilon_s + V_s),u_s+\lambda\epsilon v_s) v_s -  \partial_u \sigma(s,X_s,u_s) v_s\bigg) \ud \lambda \ud W_s\\
   & + \int_0^t \int_0^1 \bigg(\partial_x f(s,X_s + \lambda \epsilon (V^\epsilon_s + V_s)) (V^\epsilon_s + V_s) - \partial_x f(s,X_s) V_s \bigg) \ud \lambda \ud \boldsymbol{\eta}_s\\
    = & \int_0^t\Bigg( \bigg[\int_0^1 \partial_x b(s,X_s + \lambda \epsilon (V^\epsilon_s+V_s),u_s + \lambda \epsilon v_s) \ud\lambda\bigg] V^\epsilon_s \\ &+ \bigg[ \int_0^1\Big(\partial_x b(s,X_s + \lambda \epsilon (V^\epsilon_s+V_s),u_s + \lambda \epsilon v_s) - \partial_x b(s,X_s,u_s) \Big)\ud\lambda\bigg] V_s \\ & + \bigg[\int_0^1\Big(\partial_u b(s,X_s + \lambda \epsilon (V^\epsilon_s + V_s),u_s+\lambda\epsilon v_s)-  \partial_u b(s,X_s,u_s)\Big) \ud \lambda \bigg] v_s \Bigg)\ud s \\
  &+ \int_0^t\Bigg( \bigg[\int_0^1 \partial_x \sigma(s,X_s + \lambda \epsilon (V^\epsilon_s+V_s),u_s + \lambda \epsilon v_s) \ud\lambda\bigg] V^\epsilon_s \\ &+ \bigg[ \int_0^1\Big(\partial_x \sigma(s,X_s + \lambda \epsilon (V^\epsilon_s+V_s),u_s + \lambda \epsilon v_s) - \partial_x \sigma(s,X_s,u_s) \Big)\ud\lambda\bigg] V_s \\ &+ \bigg[\int_0^1\Big(\partial_u \sigma(s,X_s + \lambda \epsilon (V^\epsilon_s + V_s),u_s+\lambda\epsilon v_s)-  \partial_u \sigma(s,X_s,u_s)\Big) \ud \lambda \bigg] v_s \Bigg)\ud W_s\\
   & + \int_0^t\Bigg( \bigg[\int_0^1 \partial_x f(s,X_s + \lambda \epsilon (V^\epsilon_s + V_s)) V^\epsilon_s \ud\lambda\bigg]  \\ &+ \bigg[\int_0^1\Big(\partial_x f(s,X_s + \lambda \epsilon (V^\epsilon_s + V_s)) - \partial_x f(s,X_s) \Big) V_s \ud \lambda\bigg] \Bigg)\ud \boldsymbol{\eta}_s.
\end{align*}
Now, we define for every $\lambda\in[0,1]$
\begin{align*}
 \Phi^\epsilon_t(\lambda)&:=  \partial_x f(t,X_t + \lambda  (X^\epsilon_t - X_t)),\\ 
 (\Phi^\epsilon)'_t(\lambda)&:= \partial^2_{xx} f(t,X_t + \lambda  (X^\epsilon_t - X_t)) \Big(f_t({X}_t) + \lambda \big(f_t({X}^\epsilon_t) -f_t(X_t)\big)\Big) \\
 &\qquad+\partial_x f'(t,X_t + \lambda  (X^\epsilon_t - X_t)).
\end{align*}
From Lemma \ref{lemma 4.1} and its proof, along with the boundness properties of $f, \partial_x f, \partial^2_{xx} f, \partial_xf'$ and \cite[Proposition 4.5]{FHL21}, we see that $\big(\Phi^\epsilon(\lambda),(\Phi^\epsilon)'(\lambda)\big)\in \mathbf{D}_{\eta}^{\beta,\beta'} \mathbb{L}^{k,\infty}_{\lambda-ex,lin}\big([0,T];L(\mathbb{R}^d,\mathbb{R}^{d\times \ell})\big)$ and
\begin{align}\label{eq. 4.7}
\sup_{\epsilon > 0}\sup_{\lambda\in[0,1]}\Big\{\sup_{t\in[0,T]}\|\Phi^\epsilon_t(\lambda)\|_\infty +  \sup_{t\in[0,T]}\|(\Phi^\epsilon)'_t(\lambda)\|_\infty
+\big\|\big(\Phi^\epsilon_\cdot(\lambda),(\Phi^\epsilon)'_\cdot(\lambda)\big)\big\|_{\mathbf{D}_{\eta}^{\beta,\beta'} \mathbb{L}^{k,\infty}_{lin}} \Big\} < \infty.
\end{align}
Furthermore, we define
\begin{multline*}
    {F}^{\epsilon}_t: = \int_0^t\Bigg( \bigg[ \int_0^1\Big(\partial_x b(s,X_s + \lambda \epsilon (V^\epsilon_s+V_s),u_s + \lambda \epsilon v_s) - \partial_x b(s,X_s,u_s) \Big)\ud\lambda\bigg] V_s \\+ \bigg[\int_0^1\Big(\partial_u b(s,X_s + \lambda \epsilon (V^\epsilon_s + V_s),u_s+\lambda\epsilon v_s)-  \partial_u b(s,X_s,u_s)\Big) \ud \lambda \bigg] v_s \Bigg)\ud s\\
    + \int_0^t\Bigg(  \bigg[ \int_0^1\Big(\partial_x \sigma(s,X_s + \lambda \epsilon (V^\epsilon_s+V_s),u_s + \lambda \epsilon v_s) - \partial_x \sigma(s,X_s,u_s) \Big)\ud\lambda\bigg] V_s \\+ \bigg[\int_0^1\Big(\partial_u \sigma(s,X_s + \lambda \epsilon (V^\epsilon_s + V_s),u_s+\lambda\epsilon v_s)-  \partial_u \sigma(s,X_s,u_s)\Big) \ud \lambda \bigg] v_s \Bigg)\ud W_s\\
    + \int_0^t \bigg[\int_0^1\Big(\partial_x f(s,X_s + \lambda (X^\epsilon_s - X_s)) - \partial_x f(s,X_s) \Big) V_s  \ud \lambda\bigg] \ud \boldsymbol{\eta}_s.
\end{multline*}
and 
\begin{align*}
\hspace{.15cm}(F^\epsilon)'_t(\lambda)&:= \Big(\partial_x f(t,X_t + \lambda (X^\epsilon_t - X_t)) - \partial_x f(t,X_t) \Big) V_t,\\
    (F^\epsilon)''_t(\lambda)&:=  \bigg[\bigg(\partial^2_{xx} f(t,X_t + \lambda  (X^\epsilon_t - X_t))  \Big(f_t({X}_t) + \lambda \big[f_t({X}^\epsilon_t) -f_t(X_t)\big]\Big) \\&\qquad\qquad+\partial_x f'(t,X_t + \lambda  (X^\epsilon_t - X_t)) \bigg)
 -\partial_{xx}^2f(t,X_t) f(t,X_t) - \partial_xf'(t,X_t)\bigg] V_t\\&\qquad\qquad\qquad+ \bigg[\partial_x f(t,X_t + \lambda (X^\epsilon_t - X_t))\partial_x f(t,X_t) - \big(\partial_x f(t,X_t)\big)^2 \bigg] V_t.
\end{align*}
From Lemma \ref{lemma 4.1}, Proposition \ref{Prop 3.3} and Proposition \ref{Prop 3.2}, along with standard calculations, we can see that 
\begin{align*}
\bigg(\int_0^1 (F^\epsilon)'_\cdot(\lambda)\ud\lambda,\int_0^1 (F^\epsilon)''_\cdot(\lambda)\ud\lambda\bigg)&\in \mathbf{D}_{\eta}^{\beta,\beta'}\mathbb{L}^k([0,T]; \mathbb{R}^{d}),\\
    \bigg(F^\epsilon_\cdot,\int_0^1 (F^\epsilon)'_\cdot(\lambda)\ud\lambda\bigg)&\in \mathbf{D}_{\eta}^{\alpha,\beta}\mathbb{L}^k([0,T]; \mathbb{R}^{d}).
\end{align*}
Next, by the fundamental Theorem of calculus we have for every $t\in [0,T]$ that
\begin{multline*}
   (\widetilde{\Phi}^\epsilon)'_t = \int_0^1\bigg( \Big[\partial^2_{xx} f(t,X_t + \lambda  (X^\epsilon_t - X_t)) \Big(f_t({X}_t) + \lambda \big[f_t({X}^\epsilon_t) -f_t(X_t)\big]\Big) \\+ \partial_x f'(t,X_t + \lambda  (X^\epsilon_t - X_t)) \Big] (V^\epsilon_t+V_t) + f(t,X_t + \lambda  (X^\epsilon_t - X_t)) \big(f_t({X}^\epsilon_t) -f_t(X_t)\big) \\-\Big[\partial_{xx}^2f(t,X_t) f(t,X_t) + (\partial_xf)'(t,X_t)\Big]V_t - \big(\partial_xf(t,X_t)\big)^2 V_t\bigg) \ud \lambda\\
   = \int_0^1\Big((\Phi^\epsilon)'_t(\lambda)V^\epsilon_t + (\Phi^\epsilon)_t(\lambda)\Big[\int_0^1 \big((\Phi^\epsilon)_t(\widetilde{\lambda})V^\epsilon_t + (F^\epsilon)'_t(\widetilde{\lambda})\big) \ud\widetilde{\lambda}\Big] + (F^\epsilon)''_t(\lambda)\Big)\ud\lambda.
\end{multline*}
Hence, from Lemma \ref{lemma 3.4} and Proposition \ref{Prop 3.5} we see that $V^\epsilon$ satisfies the following equation
\begin{multline}
    V^\epsilon_t = F^\epsilon_t + \int_0^t \bigg[\int_0^1 \partial_x b(s,X_s + \lambda \epsilon (V^\epsilon_s+V_s),u_s + \lambda \epsilon v_s) \ud\lambda\bigg] V^\epsilon_s \ud s \\
  + \int_0^t \bigg[\int_0^1 \partial_x \sigma(s,X_s + \lambda \epsilon (V^\epsilon_s+V_s),u_s + \lambda \epsilon v_s) \ud\lambda\bigg] V^\epsilon_s \ud W_s\\
    + \int_0^t
    \bigg(\int_0^1 \Phi^\epsilon_s(\lambda)\ud\lambda,\int_0^1(\Phi^\epsilon)'_s(\lambda) \ud\lambda \bigg) V^\epsilon_s
    \ud \boldsymbol{\eta}_s.
\end{multline}
Thus, from Theorem \ref{thm 3.11} we have
\begin{align*}
    \sup_{t\in[0,T]} \left\| \frac{X^{\epsilon}_t - X_t}{\epsilon} - V_t \right\|_2 \lesssim \bigg\|\bigg(F^\epsilon_\cdot,\int_0^1 (F^\epsilon)'_\cdot(\lambda)\ud\lambda\bigg)\bigg\|_{\mathbf{D}_{\eta}^{\alpha,\beta} \mathbb{L}^{2}},
\end{align*}
where the implicit constant is independent of $\epsilon$, we remind the bound \eqref{eq. 4.7}.
So, to conclude we need to show that
\begin{equation}\label{eq 24}
    \lim_{\epsilon\rightarrow 0^+}\bigg\|\bigg(F^\epsilon_\cdot,\int_0^1 (F^\epsilon)'_\cdot(\lambda)\ud\lambda\bigg)\bigg\|_{\mathbf{D}_{\eta}^{\alpha,\beta} \mathbb{L}^{2}} = 0.
\end{equation}
We have from Jensen's inequality
\begin{align*}
    &\mathbb{E}\Bigg[\bigg|\int_s^t\Bigg( \bigg[ \int_0^1\Big(\partial_x b(r,X_r + \lambda \epsilon (V^\epsilon_r + V_r),u_r + \lambda \epsilon v_r) - \partial_x b(r,X_r,u_r) \Big)\ud\lambda\bigg] V_r \\
    &\qquad\qquad+ \bigg[\int_0^1\Big(\partial_u b(r,X_r + \lambda \epsilon (V^\epsilon_r + V_r),u_r + \lambda\epsilon v_r)-  \partial_u b(r,X_r,u_r)\Big) \ud \lambda \bigg] v_r \Bigg)\ud r\bigg|^2\Bigg]^\frac{1}{2}\\
    &\leq \sqrt{2} |t-s|\\& \mathbb{E}\Bigg[\int_s^t \int_0^1\sum_{i\in\{x,u\}}\Big|\partial_i b(r,X_r + \lambda (X^\epsilon_r - X_r),u_r + \lambda \epsilon v_r) - \partial_i b(r,X_r,u_r) \Big|^2 \Big(|V_r|^2 + |v_r|^2\Big)\ud\lambda \frac{\ud r}{|t-s|}\Bigg]^\frac{1}{2}\\
     &\leq \sqrt{2} |t-s|\Bigg(\int_s^t \int_0^1 \sum_{i\in\{x,u\}}\mathbb{E}\bigg[\Big|\partial_i b(r,X_r + \lambda (X^\epsilon_r - X_r),u_r + \lambda \epsilon v_r) - \partial_i b(r,X_r,u_r) \Big|^2 
     \\&\qquad\qquad
     \Big(|V_r|^2 + |v_r|^2\Big)\bigg]\ud\lambda \frac{\ud r}{|t-s|}\Bigg)^\frac{1}{2}\\
      &\leq \sqrt{2} |t-s|\Bigg(\int_s^t \int_0^1 \sum_{i\in\{x,u\}}\mathbb{E}\bigg[\Big|\partial_i b(r,X_r + \lambda (X^\epsilon_r - X_r),u_r + \lambda \epsilon v_r) - \partial_i b(r,X_r,u_r) \Big|^{4}\bigg]^{\frac{1}{2}} 
      \\&\qquad\qquad
      \mathbb{E}\bigg[\Big(|V_r|^2 + |v_r|^2\Big)^{2}\bigg]^\frac{1}{2}\ud\lambda \frac{\ud r}{|t-s|}\Bigg)^\frac{1}{2}\\
     &\leq \mathscr{C}_1(\epsilon) |t-s|,
\end{align*}
where
\begin{multline*}
    \mathscr{C}_1(\epsilon):= 2\sqrt{2} \hspace{.1cm} \Big(\sup_{r\in[0,T]}\mathbb{E}\big[|V_r|^{4}\big] + \sup_{r\in [0,T]}\mathbb{E}\big[|v_r|^{4}\big]\Big)^\frac{1}{4}\\
    \bigg(\sum_{i\in\{x,u\}}\sup_{\lambda\in[0,1]}\sup_{r\in[0,T]}\mathbb{E}\bigg[\Big|\partial_i b(r,X_r + \lambda (X^\epsilon_r - X_r),u_r + \lambda \epsilon v_r) - \partial_i b(r,X_r,u_r) \Big|^{4}\bigg]^{\frac{1}{4}}\bigg).
\end{multline*}
From the $\text{Lip}^\theta$ properties of \ref{A1} and Lemma \ref{lem:diff1} we have 
\begin{equation}\label{eq 12}
    \lim_{\epsilon\rightarrow 0^+}  \mathscr{C}_1(\epsilon) = 0.
\end{equation}

Next, from BDG inequality we have similarly to above
\begin{align*}
    &\mathbb{E}\Bigg[\bigg|\int_s^t\Bigg( \bigg[ \int_0^1\Big(\partial_x \sigma(r,X_r + \lambda \epsilon (V^\epsilon_r + V_r),u_r + \lambda \epsilon v_r) - \partial_x \sigma(r,X_r,u_r) \Big)\ud\lambda\bigg] V_r \\
    &\qquad\qquad+ \bigg[\int_0^1\Big(\partial_u \sigma(r,X_r + \lambda \epsilon (V^\epsilon_r + V_r),u_r + \lambda\epsilon v_r)-  \partial_u \sigma(r,X_r,u_r)\Big) \ud \lambda \bigg] v_r \Bigg)\ud W_r\bigg|^2\Bigg]^\frac{1}{2}\\
    &\leq \mathscr{C}_2(\epsilon) |t-s|^{\frac{1}{2}},
     \end{align*}
where
\begin{multline*}
    \mathscr{C}_2(\epsilon):= 2\hspace{.1cm}\Big(\sup_{r\in[0,T]}\mathbb{E}\big[|V_r|^{4}\big] + \sup_{r\in [0,T]}\mathbb{E}\big[|v_r|^{4}\big]\Big)^\frac{1}{4}\\
\bigg(\sum_{i\in\{x,u\}}\sup_{\lambda\in[0,1]}\sup_{r\in[0,T]}\mathbb{E}\bigg[\Big|\partial_i \sigma(r,X_r + \lambda (X^\epsilon_r - X_r),u_r + \lambda \epsilon v_r) - \partial_i \sigma(r,X_r,u_r) \Big|^{4}\bigg]^{\frac{1}{4}}\bigg).
\end{multline*}
Again, from the $\text{Lip}^\theta$ properties of \ref{A1} and Lemma \ref{lem:diff1} we have 
\begin{equation}\label{eq 13}
    \lim_{\epsilon\rightarrow 0^+}  \mathscr{C}_2(\epsilon) = 0.
\end{equation}
We now handle the rough component of $F^\epsilon$. First note that by the boundness properties of $f,\partial_xf,\partial^2_{xx}f$, their Lipschitz continuity properties, the integrability properties of $V$, H\"older inequality, the dominated convergence theorem, and the help of Lemma \ref{lem:diff1}, we get with the same arguments that concluded to \eqref{eq 12}, \eqref{eq 13} above that 
\begin{equation}\label{eq 27}
\lim_{\epsilon\rightarrow 0^+}\sup_{t\in[0,T]}\Big\|\int_0^1 (F^\epsilon)'_t(\lambda)\ud\lambda\Big\|_2 +\lim_{\epsilon\rightarrow 0^+}\sup_{t\in[0,T]}\Big\|\int_0^1 (F^\epsilon)''_t(\lambda)\ud\lambda\Big\|_2 = 0.
\end{equation}
Next, we need to estimate $\Big\|\delta\int_0^1 (F^\epsilon)'_\cdot(\lambda)\ud\lambda\Big\|_{\beta,2}, \Big\|\mathbb{E}_\bullet R^{\int_0^1 (F^\epsilon)'_\cdot(\lambda)\ud\lambda}\Big\|_{\beta + \beta',2}, \Big\|\mathbb{E}_\bullet \delta\int_0^1 (F^\epsilon)''_\cdot(\lambda)\ud\lambda\Big\|_{\beta',2}$.
So, by Tonelli's Theorem and Jensen's inequality we have
\begin{align*}
   \Big\| \int_0^1 (F^\epsilon)'_t(\lambda)\ud\lambda - \int_0^1 (F^\epsilon)'_s(\lambda)\ud\lambda \Big\|_2 \leq \bigg( \int_0^1 \mathbb{E}\Big[\Big|(F^\epsilon)'_t(\lambda) -(F^\epsilon)'_s(\lambda)\Big|^2\Big]\ud\lambda\bigg)^\frac{1}{2}.
\end{align*}
But for $s,t \in [0,T]$ with $s\leq t$ we have from H\"older's inequality and the fundamental Theorem of calculus
\begin{align*}
    &\mathbb{E}\Big[\Big|(F^\epsilon)'_t(\lambda) -(F^\epsilon)'_s(\lambda)\Big|^2\Big] \\&= \mathbb{E}\Big[ \Big|\Big(\partial_x f(t,X_t + \lambda (X^\epsilon_t - X_t)) - \partial_x f(t,X_t) \Big) V_t - \Big(\partial_x f(s,X_s + \lambda (X^\epsilon_s - X_s)) - \partial_x f(s,X_s) \Big) V_s \Big|^2\Big]\\
    &\leq 2 \hspace{.1cm}\mathbb{E} \Big[\Big|\Big(\partial_x f(t,X_t + \lambda (X^\epsilon_t - X_t)) - \partial_x f(t,X_t)\Big) \big(V_t - V_s\big)\Big|^2 \Big] \\
    &\quad + 2 \hspace{.1cm}\mathbb{E} \Big[\Big|\Big(\partial_x f(t,X_t + \lambda (X^\epsilon_t - X_t)) - \partial_x f(t,X_t) - \partial_x f(s,X_s + \lambda (X^\epsilon_s - X_s)) + \partial_x f(s,X_s)\Big) V_s\Big|^2 \Big]\\
     &\leq 2\hspace{.1cm} \mathbb{E}\bigg[\sup_{\lambda\in[0,1]}\sup_{r\in[0,T]}\Big|\partial_x f(r,X_r + \lambda (X^\epsilon_r - X_r)) - \partial_x f(r,X_r) \Big|^{4}\bigg]^{\frac{1}{2}} \mathbb{E}\big[|V_t-V_s|^{4}\big]^{\frac{1}{2}}\\
     & + 2 \hspace{.1cm}\mathbb{E} \Big[\Big|\partial_x f(t,X_t + \lambda (X^\epsilon_t - X_t)) - \partial_x f(t,X_t) - \partial_x f(s,X_s + \lambda (X^\epsilon_s - X_s)) + \partial_x f(s,X_s)\Big|^{4} \Big]^{\frac{1}{2}} \\
     &\hspace{1cm}\sup_{r\in [0,T]} \mathbb{E}\big[|V_r|^{4}\big]^{\frac{1}{2}}\\
      &\leq 2 \hspace{.1cm}\mathbb{E}\bigg[\sup_{\lambda\in[0,1]}\sup_{r\in[0,T]}\Big|\partial_x f(r,X_r + \lambda (X^\epsilon_r - X_r)) - \partial_x f(r,X_r) \Big|^{4}\bigg]^{\frac{1}{2}}  C \|\delta V\|^2_{\alpha;4} |t-s|^{2\alpha}\\
     & + 4 \hspace{.1cm}\mathbb{E} \Big[\Big|\partial_x f(s,X_t + \lambda (X^\epsilon_t - X_t)) - \partial_x f(s,X_t) - \partial_x f(s,X_s + \lambda (X^\epsilon_s - X_s)) + \partial_x f(s,X_s)\Big|^{4} \Big]^{\frac{1}{2}} \\
     &\hspace{1cm}\sup_{r\in [0,T]} \mathbb{E}\big[|V_r|^{4}\big]^{\frac{1}{2}} \\& + 4 \mathbb{E}\bigg[\sup_{\lambda\in[0,1]}\sup_{r_1\in[0,T]}\sup_{r_2\in [0,T]} \Big|\partial^2_{xt} f(r_1,X_{r_2} + \lambda (X^\epsilon_{r_2} - X_{r_2})) - \partial^2_{xt} f(r_1,X_{r_2})\Big|^{4}\bigg]^{\frac{1}{2}}  \mathbb{E}\big[|V_r|^{4}\big]^{\frac{1}{2}} |t-s|^2,
\end{align*}
where again from the fundamental Theorem of calculus we have
\begin{align*}
   &\mathbb{E} \bigg[\Big|\partial_x f(s,X_t + \lambda (X^\epsilon_t - X_t)) - \partial_x f(s,X_t) - \partial_x f(s,X_s + \lambda (X^\epsilon_s - X_s)) + \partial_x f(s,X_s)\Big|^{4} \bigg]^{\frac{1}{2}}\\
    &\leq \mathbb{E} \bigg[\Big|\partial_x f(s,X_t + \lambda (X^\epsilon_s - X_s)) - \partial_x f(s,X_t) - \partial_x f(s,X_s + \lambda (X^\epsilon_s - X_s)) + \partial_x f(s,X_s)\Big|^{4} \bigg]^{\frac{1}{2}}\\ 
    &\qquad+ C \|\delta X^{\epsilon}-\delta X\|^2_{\alpha;4} |t-s|^{2\alpha}\\
    &\leq \mathbb{E}\bigg[\sup_{\lambda\in[0,1]}\sup_{r_1\in[0,T]}\sup_{r_2\in [0,T]} |\partial^2_{xx} f(r_1,X_{r_2} + \lambda (X^\epsilon_{r_1} - X_{r_1})) - \partial^2_{xx} f(r_1,X_{r_2})|^{\frac{4+\varepsilon}{\varepsilon}}\bigg]^{\frac{2\varepsilon}{4+\varepsilon}}  C \|\delta X\|^2_{\alpha;4+\varepsilon} |t-s|^{2\alpha}\\
    &\qquad+ C \|\delta X^{\epsilon}-\delta X\|^2_{\alpha;4} |t-s|^{2\alpha},
\end{align*}
for every $\varepsilon >0$.
So, by gathering all the above we see that
\begin{equation}\label{eq 28}
    \lim_{\epsilon \rightarrow 0^+} \Big\|\delta \int_0^1 (F^\epsilon)'_\cdot(\lambda)\ud\lambda\Big\|_{\beta,2} = 0.
\end{equation}
We move on with the estimate of $\Big\|\mathbb{E}_\bullet R^{\int_0^1 (F^\epsilon)'_\cdot(\lambda)\ud\lambda}\Big\|_{\beta + \beta',2}$. Hence, by standard algebra, see for example \cite[eq. (23) p.19]{bugini2024malliavin}, we have
\begin{multline*}
    \mathbb{E}_s \Big[ R_{s,t}^{\int_0^1 (F^\epsilon)'_\cdot(\lambda)\ud\lambda}\Big] =  \mathbb{E}_s \bigg[  \int_0^1 \big((F^\epsilon)'_t(\lambda) - (F^\epsilon)'_s(\lambda) -  (F^\epsilon)''_s(\lambda) \delta\eta_{s,t}\big) \ud\lambda \bigg]\\
    = \mathbb{E}_s \bigg[\int_0^1\Big(\delta\big(\partial_x f(\cdot,X_\cdot + \lambda (X^\epsilon_\cdot - X_\cdot)) - \partial_x f(\cdot,X_\cdot)\big)_{s,t}\delta V_{s,t} \\+ \big(\partial_x f(s,X_s + \lambda (X^\epsilon_s - X_s)) - \partial_x f(s,X_s)\big) R^V_{s,t} + R^{\partial_x f(\cdot,X_\cdot + \lambda (X^\epsilon_\cdot - X_\cdot)) - \partial_x f(\cdot,X_\cdot)}_{s,t} V_s \Big)\ud \lambda \bigg].
\end{multline*}
For the first term we have by conditional Jensen's and H\"older inequalities, similar to the previous calculations, that
\begin{align*}
    &\mathbb{E}\Bigg[\bigg|\mathbb{E}_s \bigg[\int_0^1\delta\big(\partial_x f(\cdot,X_\cdot + \lambda (X^\epsilon_\cdot - X_\cdot)) - \partial_x f(\cdot,X_\cdot)\big)_{s,t}\delta V_{s,t} \ud\lambda\bigg]\bigg|^2\Bigg]^\frac{1}{2}\\
    &\leq \mathbb{E}\Bigg[\int_0^1 \Big|\partial_x f(t,X_t + \lambda (X^\epsilon_t - X_t)) - \partial_x f(t,X_t) - \partial_x f(s,X_s + \lambda (X^\epsilon_s - X_s)) + \partial_x f(s,X_s)\Big|^2  \ud\lambda\big|\delta V_{s,t}\big|^2 \Bigg]^\frac{1}{2}\\
    &\lesssim \mathbb{E}\bigg[\sup_{\lambda\in[0,1]}\sup_{r_1\in[0,T]}\sup_{r_2\in [0,T]} \Big|\partial^2_{xt} f(r_1,X_{r_2} + \lambda (X^\epsilon_{r_2} - X_{r_2})) - \partial^2_{xt} f(r_1,X_{r_2})\Big|^{4}\bigg]^{\frac{1}{4}} \|\delta V\|_{\alpha;4} |t-s|^{1+\alpha}\\
    &\quad+  \mathbb{E}\bigg[\sup_{\lambda\in[0,1]}\sup_{r_1\in[0,T]}\sup_{r_2\in [0,T]} |\partial^2_{xx} f(r_1,X_{r_2} + \lambda (X^\epsilon_{r_1} - X_{r_1})) - \partial^2_{xx} f(r_1,X_{r_2})\Big|^{\frac{4+\varepsilon}{\varepsilon}}\bigg]^{\frac{\varepsilon}{4+\varepsilon}}  \|\delta X\|_{\alpha;4+\varepsilon} \|\delta V\|_{\alpha;4} |t-s|^{2\alpha}\\
    &\quad+ \|\delta X^{\epsilon}-\delta X\|_{\alpha;4} \|\delta V\|_{\alpha;4} |t-s|^{2\alpha},
\end{align*}
for every $\varepsilon>0$.
For the second term
\begin{align*}
    &\mathbb{E}\Bigg[\bigg|\mathbb{E}_s \bigg[\int_0^1 \big(\partial_x f(s,X_s + \lambda (X^\epsilon_s - X_s)) - \partial_x f(s,X_s)\big) R^V_{s,t}\ud\lambda\bigg]\bigg|^2\Bigg]^\frac{1}{2}\\
    &\leq \mathbb{E}\Bigg[\bigg|\int_0^1 \big(\partial_x f(s,X_s + \lambda (X^\epsilon_s - X_s)) - \partial_x f(s,X_s)\big) \ud\lambda\bigg|^2 \Big|\mathbb{E}_s\big[R^V_{s,t}\big]\Big|^2\Bigg]^\frac{1}{2}\\
    &\leq \mathbb{E}\bigg[\sup_{\lambda\in[0,1]}\sup_{r\in [0,T]}\big|\partial_x f(r,X_r + \lambda (X^\epsilon_r - X_r)) - \partial_x f(r,X_r)\Big|^{4}\bigg]^{\frac{1}{4}} \big\| \mathbb{E}_\bullet R^V\big\|_{\alpha + \beta, 4} |t-s|^{\alpha + \beta}.
\end{align*}
For the third term we first expand and we have for every $\lambda\in [0,1]$ that
\begin{align*}
   &R^{\partial_x f(\cdot,X_\cdot + \lambda (X^\epsilon_\cdot - X_\cdot)) - \partial_x f(\cdot,X_\cdot)}_{s,t} \\&=\partial_x f(t,X_t + \lambda (X^\epsilon_t - X_t)) - \partial_x f(t,X_t)-\partial_x f(s,X_s + \lambda (X^\epsilon_s - X_s)) + \partial_x f(s,X_s)\\
    &\qquad-\Big(\partial^2_{xx} f(s,X_s + \lambda  (X^\epsilon_s - X_s)) \Big(f_s({X}_s) + \lambda \big(f_s({X}^\epsilon_s) -f_s(X_s)\big)\Big) +\partial_x f'(s,X_s + \lambda  (X^\epsilon_s - X_s))\\
    &\qquad\qquad -\partial_{xx}^2f(s,X_s) f(s,X_s) - \partial_xf'(s,X_s) \Big)\eta_{s,t}\\
    &= -\lambda\bigg[ \partial^2_{xx} f(s,X_s + \lambda  (X^\epsilon_s - X_s))  - \partial^2_{xx} f(t,X_t + \lambda  (X^\epsilon_s - X_s)) \bigg] \big(f(s,{X}^\epsilon_s) -f(s,X_s)\big) \eta_{s,t}\\  
    &+\bigg[\partial_x f(t,X_t + \lambda (X^\epsilon_t - X_t)) -  \partial_x f(t,X_t + \lambda (X^\epsilon_s - X_s)) - \lambda \partial^2_{xx} f(t,X_t + \lambda  (X^\epsilon_s - X_s)) \big(f_s({X}^\epsilon_s) -f_s(X_s)\big) \eta_{s,t} \bigg]\\
    &+\bigg[\partial_x f(t,X_t + \lambda (X^\epsilon_s - X_s)) - \partial_x f(t,X_t)-\partial_x f(s,X_s + \lambda (X^\epsilon_s - X_s)) + \partial_x f(s,X_s)\\
    &\qquad-\Big(\Big[\partial^2_{xx} f(s,X_s + \lambda  (X^\epsilon_s - X_s))-\partial_{xx}^2f(s,X_s)\Big] f(s,X_s)+\partial_x f'(s,X_s + \lambda  (X^\epsilon_s - X_s)) -\partial_x f'(s,X_s)\Big)\eta_{s,t}\bigg].
\end{align*}
Next, we have
\begin{align*}
    &\partial_x f(t,X_t + \lambda (X^\epsilon_t - X_t)) -  \partial_x f(t,X_t + \lambda (X^\epsilon_s - X_s)) - \lambda \partial^2_{xx} f(t,X_t + \lambda  (X^\epsilon_s - X_s)) \big(f(s,{X}^\epsilon_s) - f(s,X_s)\big) \eta_{s,t}\\
    &= \frac{\lambda^2}{2}\int_0^1 (1-\vartheta) \partial^3_{xxx}f\big(t,X_t  + \lambda (X^\epsilon_s - X_s) + \vartheta \lambda \delta(X^\epsilon_\cdot - X_\cdot)_{s,t}\big) \ud \vartheta \big(\delta(X^\epsilon_\cdot - X_\cdot)_{s,t}\big)^{\otimes 2} \\
    &\qquad+ \lambda \partial^2_{xx} f(t,X_t + \lambda  (X^\epsilon_s - X_s)) \Big[\delta(X^\epsilon_\cdot - X_\cdot)_{s,t} - \big(f(s,{X}^\epsilon_s) - f(s,X_s)\big) \eta_{s,t}\Big]\\
     &= \frac{\lambda^2}{2}\int_0^1 (1-\vartheta) \partial^3_{xxx}f\big(t,X_t + \lambda (X^\epsilon_s - X_s) + \vartheta \lambda \delta(X^\epsilon_\cdot - X_\cdot)_{s,t}\big) \ud \vartheta \big(\delta(X^\epsilon_\cdot - X_\cdot)_{s,t}\big)^{\otimes 2} \\
     &\qquad+ \lambda \partial^2_{xx} f(s,X_s + \lambda  (X^\epsilon_s - X_s)) \Big[\delta(X^\epsilon_\cdot - X_\cdot)_{s,t} - \big(f(s,{X}^\epsilon_s) - f(s,X_s)\big) \eta_{s,t}\Big]\\
     &\qquad+ \lambda \Big(\partial^2_{xx} f(t,X_t + \lambda  (X^\epsilon_s - X_s)) - \partial^2_{xx} f(s,X_s + \lambda  (X^\epsilon_s - X_s))\Big) \Big[\delta(X^\epsilon_\cdot - X_\cdot)_{s,t} - \big(f(s,{X}^\epsilon_s) - f(s,X_s)\big) \eta_{s,t}\Big],
\end{align*}
and
\begin{align*}
   &\partial_x f(t,X_t + \lambda (X^\epsilon_s - X_s)) - \partial_x f(t,X_t)-\partial_x f(s,X_s + \lambda (X^\epsilon_s - X_s)) + \partial_x f(s,X_s)\\
    &\qquad-\Big(\Big[\partial^2_{xx} f(s,X_s + \lambda  (X^\epsilon_s - X_s))-\partial_{xx}^2f(s,X_s)\Big] f(s,X_s) + \partial_x f'(s,X_s + \lambda  (X^\epsilon_s - X_s)) -\partial_x f'(s,X_s)\Big)\eta_{s,t}\\
    &=\frac{1}{2} \int_0^1 (1-\vartheta) \Big(\partial^3_{xxx}f(s,X_s +\vartheta \delta X_{s,t} + \lambda (X^\epsilon_s - X_s)) - \partial^3_{xxx}f(s,X_s +\vartheta \delta X_{s,t} )\Big)\ud \vartheta \big(\delta X_{s,t}\big)^{\otimes 2}\\
    &\qquad+ \Big[\partial^2_{xx} f(s,X_s + \lambda  (X^\epsilon_s - X_s))-\partial_{xx}^2f(s,X_s)\Big] \big(\delta X_{s,t} - f(s,X_s)\eta_{s,t}\big)\\
    &\qquad + \Big[\partial_{x} f(t,X_s + \lambda  (X^\epsilon_s - X_s))-\partial_{x} f(t,X_s)\Big] - \Big[\partial_{x} f(s,X_s + \lambda  (X^\epsilon_s - X_s))-\partial_{x}f(s,X_s)\Big]\\
    &\qquad\qquad - \Big(\partial_x f'(s,X_s + \lambda  (X^\epsilon_s - X_s)) -\partial_x f'(s,X_s) \Big) \eta_{s,t}\\
    &\qquad+ \Big[\delta(\partial_x f)_{s,t}(X_t + \lambda  (X^\epsilon_s - X_s)) - \delta(\partial_x f)_{s,t}(X_s + \lambda  (X^\epsilon_s - X_s))\Big] - \Big[\delta(\partial_x f)_{s,t}(X_t) - \delta(\partial_x f)_{s,t}(X_s)\Big]\\
    &=\frac{1}{2} \int_0^1 (1-\vartheta) \Big(\partial^3_{xxx}f(s,X_s +\vartheta \delta X_{s,t} + \lambda (X^\epsilon_s - X_s)) - \partial^3_{xxx}f(s,X_s +\vartheta \delta X_{s,t} )\Big)\ud \vartheta \big(\delta X_{s,t}\big)^{\otimes 2}\\
    &\qquad+ \Big[\partial^2_{xx} f(s,X_s + \lambda  (X^\epsilon_s - X_s))-\partial_{xx}^2f(s,X_s)\Big] \big(\delta X_{s,t} - f(s,X_s)\eta_{s,t}\big)\\
    &\qquad + \lambda \int_0^1 \Big[\partial^2_{xx} f(t,X_s + \vartheta \lambda  (X^\epsilon_s - X_s)) - \partial^2_{xx} f(s,X_s + \vartheta \lambda  (X^\epsilon_s - X_s)) \\
    &\qquad\qquad\qquad - \partial^2_{xx} f'(s,X_s + \vartheta \lambda  (X^\epsilon_s - X_s)) \eta_{s,t} \Big] \ud\vartheta (X^\epsilon_s - X_s)  \\
    &\qquad+ \int_0^1 \Big[ \delta(\partial^2_{xx} f)_{s,t}(X_s + \vartheta(X_t - X_s) + \lambda  (X^\epsilon_s - X_s)) - \delta(\partial^2_{xx} f)_{s,t}(X_s + \vartheta(X_t - X_s)) \Big] \ud\vartheta  (X_t - X_s).
\end{align*}
So, for example we have
\begin{align*}
    &\mathbb{E}\Bigg[\bigg|\mathbb{E}_s \bigg[  \int_0^1 \Big[ \delta(\partial^2_{xx} f)_{s,t}(X_s + \vartheta(X_t - X_s) + \lambda  (X^\epsilon_s - X_s)) - \delta(\partial^2_{xx} f)_{s,t}(X_s + \vartheta(X_t - X_s)) \Big] \ud\vartheta  (X_t - X_s) V_s\bigg]\bigg|^2\Bigg]^\frac{1}{2}\\
    &\leq \mathbb{E}\bigg[\sup_{\lambda\in[0,1]}\sup_{r_1\in[0,T]}\sup_{r_2\in [0,T]}\sup_{r_3\in[0,T]} \Big|\partial^3_{xxt} f(r_1,X_{r_2} + \vartheta (X_{r_3} - X_{r_2}) +\lambda (X^\epsilon_{r_2} - X_{r_2})) - \partial^3_{xxt} f(r_1,\vartheta (X_{r_3} - X_{r_2}))\Big|^{\frac{4+\varepsilon}{\varepsilon}}\bigg]^{\frac{\varepsilon}{4+\varepsilon}}\\ 
    &\qquad\|\delta X\|_{\alpha;4+\varepsilon}  \sup_{r\in[0,T]}\mathbb{E}\big[|V_r|^{4+\varepsilon}\big]^{\frac{1}{4+\varepsilon}} |t-s|^{1+\alpha},
\end{align*}
for every $\varepsilon >0$.

Thus, by similar calculations as above for every one of the previous terms, we get
\begin{equation}\label{eq 29}
    \lim_{\epsilon\rightarrow 0^+} \Big\|\mathbb{E}_\bullet R^{\int_0^1 (F^\epsilon)'_\cdot(\lambda)\ud\lambda}\Big\|_{\beta + \beta',2}= 0.
\end{equation}

Finally, we estimate the term $\Big\|\mathbb{E}_\bullet \delta\int_0^1 (F^\epsilon)''_\cdot(\lambda)\ud\lambda\Big\|_{\beta',2}$. We have
\begin{align*}
&\delta\bigg(\int_0^1 (F^\epsilon)''_\cdot(\lambda)\ud\lambda\bigg)_{s,t}=\int_0^1\bigg[\Big[\partial^2_{xx} f(t,X_t + \lambda (X^\epsilon_t - X_t))\big(f_t({X}_t) + \lambda [f_t({X}^\epsilon_t) -f_t(X_t)]\big) \\&\qquad\qquad+\partial_x f'(t,X_t + \lambda  (X^\epsilon_t - X_t)) 
 -\partial_{xx}^2f(t,X_t) f(t,X_t) - \partial_xf'(t,X_t)\Big] V_t\\&\qquad\qquad\qquad+ \Big[\partial_x f(t,X_t + \lambda (X^\epsilon_t - X_t))\partial_x f(t,X_t) - \big(\partial_x f(t,X_t)\big)^2 \Big] V_t\\
 &\qquad-\Big[\partial^2_{xx} f(s,X_s + \lambda  (X^\epsilon_s - X_s))  \Big(f(s,{X}_s) + \lambda \big(f(s,{X}^\epsilon_s) - f(s,X_s)\big)\Big) \\&\qquad\qquad+\partial_x f'(s,X_s + \lambda  (X^\epsilon_s - X_s)) 
 -\partial_{xx}^2f(s,X_s) f(s,X_s) - \partial_xf'(s,X_s)\Big] V_s\\&\qquad\qquad\qquad- \Big[\partial_x f(s,X_s + \lambda (X^\epsilon_s - X_s))\partial_x f(s,X_s) - \big(\partial_x f(s,X_s)\big)^2 \Big] V_s \bigg] \ud \lambda.
\end{align*}
Hence, it is enough to study the difference
\begin{align*}
&\int_0^1\bigg[\Big[\partial^2_{xx} f(t,X_t + \lambda  (X^\epsilon_t - X_t))  \big(f(t,{X}_t) + \lambda [f(t,{X}^\epsilon_t) - f(t,X_t)]\big) \\&\qquad\qquad + \partial_x f'(t,X_t + \lambda  (X^\epsilon_t - X_t)) 
 -\partial_{xx}^2f(t,X_t) f(t,X_t) - \partial_xf'(t,X_t)\Big] \\&\qquad\qquad\qquad+ \Big[\partial_x f(t,X_t + \lambda (X^\epsilon_t - X_t))\partial_x f(t,X_t) - \partial_x f(t,X_t)^2 \Big] \\
 &\qquad-\Big[\partial^2_{xx} f(s,X_s + \lambda  (X^\epsilon_s - X_s))  \big(f(s,{X}_s) + \lambda [f(s,{X}^\epsilon_s) - f(s,X_s)]\big) \\&\qquad\qquad+\partial_x f'(s,X_s + \lambda  (X^\epsilon_s - X_s)) 
 -\partial_{xx}^2f(s,X_s) f(s,X_s) - \partial_xf'(s,X_s)\Big] \\&\qquad\qquad\qquad- \Big[\partial_x f(s,X_s + \lambda (X^\epsilon_s - X_s))\partial_x f(s,X_s) - \partial_x f(s,X_s)^2 \Big] \bigg] V_s \ud \lambda.
\end{align*}
We have
\begin{align*}
  &\Big[ \partial^2_{xx} f(t,X_t + \lambda  (X^\epsilon_t - X_t))  \big(f(t,{X}_t) + \lambda [f(t,{X}^\epsilon_t) - f(t,X_t)]\big) - \partial_{xx}^2f(t,X_t) f(t,X_t) \Big]\\
  &\qquad - \Big[ \partial^2_{xx} f(s,X_s + \lambda  (X^\epsilon_s - X_s))  \big(f(s,{X}_s) + \lambda [f(s,{X}^\epsilon_s) - f(s,X_s)]\big) - \partial_{xx}^2f(s,X_s) f(s,X_s) \Big]\\
  &= \Big[ \partial^2_{xx} f(t,X_t + \lambda  (X^\epsilon_t - X_t))  - \partial_{xx}^2f(t,X_t) \Big] f(t,X_t) - \Big[ \partial^2_{xx} f(s,X_s + \lambda  (X^\epsilon_s - X_s))  - \partial_{xx}^2f(s,X_s) \Big]f(s,X_s)\\
  &\qquad + \lambda \Big[ \partial^2_{xx} f(t,X_t + \lambda  (X^\epsilon_t - X_t)) \big(f_t({X}^\epsilon_t) - f_t(X_t)\big) -\partial^2_{xx} f(s,X_s + \lambda  (X^\epsilon_s - X_s)) \big(f_s({X}^\epsilon_s) - f_s(X_s)\big) \Big]\\
   &= \Big[ \partial^2_{xx} f(s,X_s + \lambda  (X^\epsilon_s - X_s))  - \partial_{xx}^2f(s,X_s) \Big] \big(f(t,X_t) - f(s,X_s)\big)\\
   &\qquad+ \lambda \Big[ \int_0^1\Big(\partial^3_{xxx}f(t,X_t + \vartheta \lambda  (X^\epsilon_t - X_t)) - \partial^3_{xxx}f(s,X_s + \vartheta \lambda  (X^\epsilon_s - X_s)) \Big) \ud\vartheta \big( X^\epsilon_t - X_t\big)\Big] f(t,X_t)\\
   &\qquad+ \lambda \Big[ \int_0^1 \partial^3_{xxx} f(s,X_s + \vartheta \lambda  (X^\epsilon_s - X_s))\ud \vartheta (\delta X^\epsilon_{s,t} - \delta X_{s,t})  \Big] f(t,X_t)\\
   &\qquad+ \lambda \Big[\partial^2_{xx} f(t,X_t + \lambda  (X^\epsilon_t - X_t)) - \partial^2_{xx} f(s,X_s + \lambda  (X^\epsilon_s - X_s))\Big]  \big(f_t({X}^\epsilon_t) - f_t(X_t)\big)\\
  &\qquad+ \lambda \Big[\partial^2_{xx} f(s,X_s + \lambda  (X^\epsilon_s - X_s)) \int_0^1 \partial_x f(s,X_s+\vartheta(X_t-X_s))\ud \vartheta \big(\delta X^\epsilon_{s,t} - \delta X_{s,t}\big) \Big]\\
  &\qquad+ \lambda \Big[\partial^2_{xx} f(s,X_s + \lambda  (X^\epsilon_s - X_s))  \int_0^1 \Big[\partial_x f(s,X^\epsilon_s+\vartheta(X^\epsilon_t-X^\epsilon_s)) - \partial_x f_s(X_s+\vartheta(X_t-X_s))\Big]\ud \vartheta \hspace{.1cm}\delta X^\epsilon_{s,t} \Big]\\
  &\qquad+ \lambda \Big[\partial^2_{xx} f(s,X_s + \lambda  (X^\epsilon_s - X_s)) \int_s^t \Big( \partial_t f(\vartheta,X^\epsilon_t) - \partial_t f(\vartheta,X_t)\Big)\ud\vartheta \Big],
\end{align*}
and
\begin{align*}
    &\Big[\partial_x f'(t,X_t + \lambda  (X^\epsilon_t - X_t)) - \partial_x f'(t,X_t ) \Big] - \Big[\partial_x f'(s,X_s + \lambda  (X^\epsilon_s - X_s)) - \partial_x f'(s,X_s)\Big]\\
    &= \int_s^t \Big[\partial^2_{xt} f'(\vartheta,X_t + \lambda  (X^\epsilon_t - X_t)) - \partial^2_{xt} f'(\vartheta,X_t) \Big] \ud\vartheta\\
    &\qquad + \Big[\partial_x f'(s,X_t + \lambda  (X^\epsilon_t - X_t)) - \partial_x f'(s,X_s + \lambda  (X^\epsilon_s - X_s)) \Big] - \Big[\partial_x f'(s,X_t)- \partial_x f'(s,X_s)\Big]\\
    &= \int_s^t \Big[\partial^2_{xt} f'(\vartheta,X_t + \lambda  (X^\epsilon_t - X_t)) - \partial^2_{xt} f'(\vartheta,X_t) \Big] \ud\vartheta\\
    &\qquad + \int_0^1 \Big[ \partial^2_{xx} f'\big(s,X_s + \lambda  (X^\epsilon_s - X_s) + \vartheta\delta(X_\cdot + \lambda  (X^\epsilon_\cdot - X_\cdot))_{s,t}\big) - \partial^2_{xx}f'(s,X_s +\vartheta \delta X_{s,t})\Big]\ud\vartheta \hspace{.1cm} \delta X_{s,t}\\
&\qquad+ \lambda  \int_0^1 \partial^2_{xx} f'\big(s,X_s + \lambda  (X^\epsilon_s - X_s) + \vartheta\delta(X_\cdot + \lambda  (X^\epsilon_\cdot - X_\cdot))_{s,t}\big) \ud\vartheta \hspace{.1cm} \delta( (X^\epsilon_\cdot - X_\cdot))_{s,t}.
\end{align*}
Furthermore,
\begin{align*}
    &\Big[\partial_x f(t,X_t + \lambda (X^\epsilon_t - X_t))\partial_x f(t,X_t) - \big(\partial_x f(t,X_t)\big)^2 \Big] - \Big[\partial_x f(s,X_s + \lambda (X^\epsilon_s - X_s))\partial_x f(s,X_s) - \big(\partial_x f(s,X_s)\big)^2 \Big]\\
    &= \Big[\partial_x f(t,X_t + \lambda (X^\epsilon_t - X_t)) - \partial_x f(t,X_t) \Big] \big(\partial_x f(t,X_t) - \partial_x f(s,X_s) \big)\\
    &\qquad + \bigg[\Big[\partial_x f(t,X_t + \lambda (X^\epsilon_t - X_t)) - \partial_x f(t,X_t) \Big] - \Big[\partial_x f(s,X_s + \lambda (X^\epsilon_s - X_s)) - \partial_x f(s,X_s) \Big]\bigg] \partial_x f(s,X_s)\\
    &= \Big[\partial_x f(t,X_t + \lambda (X^\epsilon_t - X_t)) - \partial_x f(t,X_t) \Big] \big(\partial_x f(t,X_t) - \partial_x f(s,X_s) \big)\\
    &\qquad+ \int_s^t \Big[ \partial^2_{xt} f(\vartheta,X_t + \lambda (X^\epsilon_t - X_t)) - \partial^2_{xt} f(\vartheta,X_t)\Big]\ud\vartheta \hspace{.1cm}f(s,X_s) \\
    &\qquad + \lambda\bigg[\int_0^1 \Big[\partial^2_{xx} f(s,X_t + \vartheta \lambda (X^\epsilon_t - X_t)) - \partial^2_{xx} f(s,X_s + \vartheta \lambda (X^\epsilon_s - X_s)) \Big] \ud\vartheta \big(X^\epsilon_t - X_t\big) \bigg] \hspace{.1cm}\partial_x f(s,X_s)\\
    &\qquad + \lambda \int_0^1 \partial^2_{xx} f(s,X_s + \vartheta \lambda (X^\epsilon_s - X_s)) \ud\vartheta \big(\delta X^\epsilon_{s,t} - \delta X_{s,t}\big) \hspace{.1cm}\partial_x f(s,X_s).
\end{align*}
Using H\"older's inequality per term in the above three expansions, with the $\sup$ operator whenever needs be, and combine them we get
\begin{equation}\label{eq 30}
    \lim_{\epsilon\rightarrow 0^+} \Big\|\mathbb{E}_\bullet \delta\int_0^1 (F^\epsilon)''_\cdot(\lambda)\ud\lambda\Big\|_{\beta',2} = 0.
\end{equation}
From \eqref{eq 12}--
\eqref{eq 30} and the stochastic sewing estimates of Proposition \ref{prop:roughinteg} we have
\begin{equation}\label{eq 31}
    \lim_{\varepsilon\rightarrow0^+} \big\|\delta F^\epsilon\big\|_{\alpha;2} + \big\|\mathbb{E}_\bullet R^{F^{\epsilon}}\big\|_{\alpha+\beta;2}= 0.
\end{equation}
Thus, from \eqref{eq 28} and \eqref{eq 31} we get \eqref{eq 24}, and the proof is complete.
\end{proof}
From \eqref{eq. 053} and Lemma \ref{lem:firstoderleftover1} we conclude that for $k\in(4,\infty)$, under Assumptions \ref{A1}--
\ref{A3}, $\mathcal{J}$ is indeed Gateaux differentiable for every $v \in \mathcal{A}$ and
\begin{equation}\label{eq 32}
    \frac{\ud}{\ud\epsilon}\mathcal{J}( {u} + \epsilon {v})|_{\epsilon = 0}
    = \mathbb{E}\Bigg[\int_0^T \big(\partial_x h(s,X_s,u_s)V_s + \partial_u h(s,X_s,u_s) v_s\big) \ud s + \partial_x g(X_T) V_T\Bigg].
\end{equation}

\subsubsection{Extracting the SMP}\label{subsubsub 4.1.3}
First, we need to strengthen our assumptions with the following.

\begin{assumptions}\label{assu: lin PMP}
Let $\alpha, \beta', \beta'' \in (0,1), k, m, p \in (2,\infty)$ such that 
 $\frac{1}{3}< \frac{1}{p} \leq \beta''\leq \beta'\leq\alpha < \frac{1}{2}$.
 
    \begin{itemize}
    \item[(i)] It holds 
    $$
\frac{1}{k} + \frac{1}{m} \leq \frac{1}{2}.
    $$
    \item[(ii)] It holds 
 $$\alpha+\beta'+ \beta'' \wedge \frac{1}{m}>1.$$
    
    \item[(iii)] It holds
    $$
      m \leq p.
    $$
    \end{itemize}
\end{assumptions}
Note that by combining assumptions (i) and (iii) of Assumptions \ref{assu: lin PMP} we get that necessarily $k \in (6,\infty)$.

Next, we define the 
Hamiltonian $H:\Omega\times[0,T]\times \mathbb{R}^d \times \mathbb{R}^d \times \mathbb{R}^{d\times w} \times \mathbb{R}^\kappa \rightarrow \mathbb{R}$ as
\begin{equation}\label{eq Ham_lin}
H(\omega,t,x,y,z,u):= y^\top b(\omega,t,x,u) + \text{Tr}\big(z^\top\sigma(\omega,t,x,u)\big) + h(\omega,t,x,u).
\end{equation}
We consider the first order adjoint processes $((Y,Y'),Z)$ which satisfy the following equation
\begin{equation}
    Y_t= \partial_x g(X_T)^\top + \int_t^T \partial_x H(s,X_s, Y_s, Z_s,u_s)^\top \ud s+ \int_t^T \partial_x f(s,X_s)^\top Y_s \ud \boldsymbol{\eta}_s - \int_t^T Z_s \ud W_s.
\end{equation}
According to Assumption \ref{assu:bsde}, the above rough BSDE is well-posed by Theorem \ref{ex_uni_lin_bsde} with 
\begin{align*}
    \big((Y,Y'),Z\big) \in  \mathbf{D}^{(p,p)\text{-var}}_{\eta}\mathbb{L}^m([0,T];\mathbb{R}^{d})
 \times
 \mathbb H^m ([0,T]; \mathbb R^{d\times w}).
\end{align*}
Moreover, we also see that
\begin{align*}
    Y_t = Y_0  - \int_0^t \partial_x H(s,X_s, Y_s, Z_s,u_s)^\top \ud s - \int_0^t \partial_x f(s,X_s)^\top Y_s \ud \boldsymbol{\eta}_s + \int_0^t Z_s \ud W_s.
\end{align*}

At this point we could apply directly the rough product formula Theorem \ref{rough_product_formula}, and noting (ii) of Remark \ref{rmk 3.21}, to $V^\top Y$ or ($Y^\top V$) to derive equation \eqref{eq. 65}. But, just for demonstration purposes we also present the following method.

Let us introduce the solution of the linear RSDE
\begin{align}\label{eq:KkernelRDE}
 U^{\boldsymbol{\eta}}_{t\leftarrow s}&=\text{Id}_{d\times d} + \int_s^t \partial_x f(r, X_r) U^{\boldsymbol{\eta}}_{r\leftarrow s}\ud\boldsymbol{\eta}_\mathcal{r}.
\end{align}
The matrix $U^{\boldsymbol{\eta}}_{t\leftarrow s}$ is invertible for every $t\in [0,T]$, this follows by applying Theorem \ref{rough_product_formula}, while noting (ii) of Remark \ref{rmk 3.21}. Its inverse $(U^{\boldsymbol{\eta}}_{t\leftarrow s})^{-1} =:U^{\boldsymbol{\eta}}_{s\leftarrow t}$ 
is the solution of
\begin{equation*}
     U^{\boldsymbol{\eta}}_{s\leftarrow t}=\text{Id}_{d\times d} - \int_s^t U^{\boldsymbol{\eta}}_{s\leftarrow r} \partial_x f(r, X_r) \ud\boldsymbol{\eta}_r.
\end{equation*} 
\begin{rmk}\label{rmk 4.4}
Note that, for every $s,t\in [0,T]$ with $s\leq t$ we have 
\begin{align*}
 (U^{\boldsymbol{\eta}}_{t\leftarrow s})^\top&=\text{Id}_{d\times d} + \int_s^t (U^{\boldsymbol{\eta}}_{r\leftarrow s})^\top \partial_x f(r, X_r)^{\top}\ud\boldsymbol{\eta}_r,\\
    (U^{\boldsymbol{\eta}}_{s\leftarrow t})^\top&=\text{Id}_{d\times d} - \int_s^t \partial_x f(r, X_r)^{\top} (U^{\boldsymbol{\eta}}_{s\leftarrow r})^\top\ud\boldsymbol{\eta}_r.
\end{align*}
\end{rmk}
Applying the rough product formula Theorem \ref{rough_product_formula}, and noting (ii) of Remark \ref{rmk 3.21}, we have 
\begin{multline*}
    U^{\boldsymbol{\eta}}_{0\leftarrow t}  V_t
   = \int_0^t U^{\boldsymbol{\eta}}_{0\leftarrow s} \big(\partial_x b(s,X_s,u_s) V_s + \partial_u b(s,X_s,u_s) v_s\big)  \ud s \\+  \int_0^t U^{\boldsymbol{\eta}}_{0\leftarrow s} \big(\partial_x \sigma(s,X_s,u_s) V_s + \partial_u \sigma(s,X_s,u_s) v_s\big) \ud W_s
\end{multline*}
and
\begin{equation*}
   (U^{\boldsymbol{\eta}}_{t\leftarrow 0})^\top Y_t = Y_0 - \int_0^t (U^{\boldsymbol{\eta}}_{s\leftarrow 0})^\top \partial_x H(s,X_s, Y_s, Z_s,u_s)^\top \ud s + \int_0^t (U^{\boldsymbol{\eta}}_{s\leftarrow 0})^\top Z_s \ud W_s.
\end{equation*}
The integration-by-parts formula for semimartingales then gives us
\begin{align}
    Y_T^\top V_T &= \big((U^{\boldsymbol{\eta}}_{T\leftarrow 0})^\top Y_T\big)^\top  \big(U^{\boldsymbol{\eta}}_{0\leftarrow T}  V_T\big) \nonumber\\
    &= \int_0^T \big((U^{\boldsymbol{\eta}}_{s\leftarrow 0})^\top Y_s\big)^\top \ud\big(U^{\boldsymbol{\eta}}_{0\leftarrow s}  V_s\big) + \int_0^T \big(U^{\boldsymbol{\eta}}_{0\leftarrow s}  V_s\big)^\top \ud\big((U^{\boldsymbol{\eta}}_{s\leftarrow 0})^\top Y_s\big) + \Big[(U^{\boldsymbol{\eta}}_{\cdot\leftarrow 0})^\top Y_\cdot,U^{\boldsymbol{\eta}}_{0\leftarrow\cdot}  V_\cdot\Big]_T \nonumber\\
    &=\int_0^T Y_s^\top \big(\partial_x b(s,X_s,u_s) V_s + \partial_u b(s,X_s,u_s) v_s\big)\ud s \nonumber\\
    &\qquad + \int_0^T  Y_s^\top \big(\partial_x \sigma(s,X_s,u_s) V_s + \partial_u \sigma(s,X_s,u_s) v_s\big)\ud W_s \nonumber\\
    &\qquad- \int_0^T V_s^\top \partial_x H(s,X_s, Y_s, Z_s,u_s)^\top \ud s + \int_0^T V_s^\top Z_s\ud W_s \nonumber\\
    &\qquad + \int_0^T \text{Tr}\Big(\big(\partial_x \sigma(s,X_s,u_s) V_s + \partial_u \sigma(s,X_s,u_s) v_s\big)^\top Z_s\Big) \ud s.\label{eq. 65}
\end{align}

Note that $V\in \mathbb{S}^k$ by Kolmogorov's continuity Theorem, see for example \cite[Theorem 3.1]{friz2020course}, $Z \in \mathbb{H}^m$ and
$$
 \sup_{t\in [0,T]} \|Y_t\|_m + \sup_{t\in [0,T]}\|v_t\|_k < \infty.
 $$
So, by H\"older's inequality and Tonelli's Theorem we have\footnote{By (i) of Assumptions \ref{assu: lin PMP} we have $2\leq \frac{km}{k+m}$.}
\begin{align*}
    \mathbb{E}\bigg[\bigg(\int_0^T |Y_t|^2 |v_t|^2 \ud t\bigg)^{\frac{1}{2}}\bigg] &\leq \mathbb{E}\bigg[\int_0^T |Y_t|^2 |v_t|^2 \ud t\bigg]^{\frac{1}{2}} \leq \bigg(\int_0^T \| |Y_t| |v_t|\|_2^2\ud t \bigg)^{\frac{1}{2}}\\
    &\leq \bigg(\int_0^T \| |Y_t| |v_t|\|_\frac{k m}{k+m}^2\ud t \bigg)^{\frac{1}{2}} \leq \bigg(\int_0^T \|Y_t\|_m^2 \|v_t\|_k^2\ud t \bigg)^{\frac{1}{2}}\\
    &\leq T^{\frac{1}{2}} \sup_{t\in [0,T]} \|Y_t\|_m \sup_{t\in [0,T]}\|v_t\|_k < \infty.
\end{align*}
Thus, the above local martingale terms are actually martingales and by taking expectations we get
\begin{equation}\label{eq 35}
    \mathbb{E}\big[Y_T^\top V_T\big] = \mathbb{E}\bigg[\int_0^T \Big( Y_s^\top \partial_u b(s,X_s,u_s) v_s + \text{Tr}\big(Z_s^\top \partial_u \sigma(s,X_s,u_s) v_s \big) - \partial_x h(s,X_s,u_s) V_s\Big)\ud s\bigg].
\end{equation}

\begin{prop}\label{Der_formula_lin}
Let Assumptions \ref{A1}--
\ref{A3}, \ref{assu: lin PMP}  hold true, $H$ given by \eqref{eq Ham_lin}, and $(X,X'), (Y,Y'), Z$ defined as above. Then, for every $(v_t)_{t\in[0,T]} \in \mathcal{A}$ we have
    \begin{equation*}
         \frac{\ud}{\ud\epsilon}\mathcal{J}( {u} + \epsilon {v})\Big|_{\epsilon = 0} = \E\bigg[\int_0^T \partial_u H(t, X_t, Y_t, Z_t, u_t) \cdot v_t \ud t \bigg].
    \end{equation*}
\end{prop}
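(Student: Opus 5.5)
The plan is to combine the Gateaux derivative formula \eqref{eq 32} with the duality identity \eqref{eq 35}. By \eqref{eq 32}, which holds since Assumptions \ref{assu: lin PMP} force $k\in(6,\infty)\subset(4,\infty)$, the derivative equals
\[
\mathbb{E}\bigg[\int_0^T \big(\partial_x h(s,X_s,u_s)V_s + \partial_u h(s,X_s,u_s) v_s\big) \ud s + \partial_x g(X_T) V_T\bigg].
\]
The terminal condition of the adjoint equation is $Y_T=\partial_x g(X_T)^\top$, so $\partial_x g(X_T)V_T = Y_T^\top V_T$. I will then replace $\mathbb{E}[Y_T^\top V_T]$ using \eqref{eq 35}. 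In that substitution the two terms $\pm\,\mathbb{E}\int_0^T\partial_x h(s,X_s,u_s)V_s\,\ud s$ cancel. What remains is
\[
\mathbb{E}\int_0^T\Big(Y_s^\top\partial_u b(s,X_s,u_s)v_s+\operatorname{Tr}\big(Z_s^\top\partial_u\sigma(s,X_s,u_s)v_s\big)+\partial_u h(s,X_s,u_s)v_s\Big)\ud s .
\]
Differentiating \eqref{eq Ham_lin} in $u$ shows that this integrand is exactly $\partial_u H(s,X_s,Y_s,Z_s,u_s)\cdot v_s$. This gives the claim.

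The substance is justifying \eqref{eq 35}, and I will go through it in order. First, the adjoint rough BSDE must be well-posed by Theorem \ref{ex_uni_lin_bsde}, which requires checking Assumptions \ref{assu:bsde}:
\begin{itemize}
\item $\xi=\partial_x g(X_T)^\top\in\mathbb{L}^m$ follows from the Lipschitz property in \ref{A3} together with $X\in\mathbb{S}^k$ and $k\ge m$.
\item The driver $(y,z)\mapsto\partial_x b^\top y+\partial_x\sigma^\top z+\partial_x h^\top$ is Lipschitz with an $\mathbb{H}^m$ free term, by \ref{A1} and \ref{A3}.
\item $(g,g')=(\partial_x f(\cdot,X)^\top,\dots)$ is a bounded stochastic controlled linear vector field in $p$-variation. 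This follows from Lemma \ref{lemma 4.1} with $\lambda=0$ and the embedding of Hölder into $p$-variation seminorms, using $m\le p$ from Assumptions \ref{assu: lin PMP}(iii).
\end{itemize}

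Second, I will apply the product formula, Theorem \ref{rough_product_formula}, to $Y^\top V$. The factor $V$ is in $\mathbf{D}^{\alpha,\beta}_\eta\mathbb{L}^k$, and its drift and diffusion are bounded in $\mathbb{L}^k$ uniformly in time. By Remark \ref{rmk 3.21}(ii), that uniform bound lets us drop condition (iii) of Assumptions \ref{assu:rpf}. Conditions (i) and (ii) of Assumptions \ref{assu:rpf} are exactly (i) and (ii) of Assumptions \ref{assu: lin PMP}, with $k_1=k$ and $k_2=m$. Alternatively, one can pass through the flow $U^{\boldsymbol\eta}$ as in the text.

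The key algebraic point is that the rough integrands cancel. The rough coefficient of $V$ is $\partial_x f V$ and that of $Y$ is $-\partial_x f^\top Y$, so $Y^\top\partial_x fV-(\partial_x f^\top Y)^\top V=0$. The corresponding Gubinelli derivatives cancel as well, which uses the geometricity of $\boldsymbol{\eta}$ (the symmetric part of the bracket term). Consequently $Z'=Z''=0$, and \eqref{eq. 65} is a purely Itô identity.

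Finally, the stochastic integrals $\int Y^\top(\partial_x\sigma V+\partial_u\sigma v)\,\ud W$ and $\int V^\top Z\,\ud W$ must be true martingales. By BDG and Hölder, with $\frac1k+\frac1m\le\frac12$, the quadratic variations have finite expectation of their square roots:
\begin{itemize}
\item for the first integral, using $\sup_t\|Y_t\|_m<\infty$, $V\in\mathbb{S}^k$ and $\sup_t\|v_t\|_k<\infty$;
\item for the second, using $\mathbb{E}\big[\sup|V|\,(\int|Z|^2)^{1/2}\big]\le\|V\|_{\mathbb{S}^k}\|Z\|_{\mathbb{H}^m}$.
\end{itemize}
The Itô correction term is integrable by the same Hölder pairing. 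Taking expectations then yields \eqref{eq 35}.

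I expect the main obstacle to be verifying the hypotheses of the product rule. Specifically, one must confirm that the mixed Hölder/$p$-variation regularity and the $\mathbb{L}^m$-integrability of $(Y,Y')$, with unbounded rough remainders, suffice for the bracket cancellation. It also has to be checked that the drift of $Y$, which is only in $\mathbb{H}^m$ rather than bounded, is handled by Assumptions \ref{assu:rpf}(ii).
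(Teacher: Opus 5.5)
Your proposal is correct and follows the paper's proof. The paper obtains the proposition by combining the Gateaux derivative formula \eqref{eq 32} with the duality identity \eqref{eq 35} through the terminal condition $Y_T^\top=\partial_x g(X_T)$, and your justification of \eqref{eq 35} matches the argument the paper gives just before the statement: well-posedness via Theorem \ref{ex_uni_lin_bsde}, the product rule of Theorem \ref{rough_product_formula} with Remark \ref{rmk 3.21}(ii) to drop condition (iii), cancellation of the rough terms, and the true-martingale property via H\"older and BDG.
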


\begin{proof}
Immediate by combining \eqref{eq 32} and \eqref{eq 35} and noting that $Y_T^\top = \partial_x g(X_T)$.    
\end{proof}
Finally, we are now ready to give the maximum principle.

\begin{thm}[SMP for rough stochastic control]\label{thm:roughpmp_lin}
Let Assumptions \ref{A1}--
\ref{A3}, \ref{assu: lin PMP}  hold true, $H$ given by \eqref{eq Ham_lin}, and $(X,X'), (Y,Y'), Z$ defined as above. If the admissible control $(u_t)_{t\in[0,T]}\in \mathcal{A}$ is optimal for $\mathcal{J}$, then we have
\begin{enumerate}
    \item[(i)] 
  $
        \partial_u H(t, X_t, Y_t, Z_t, u_t) = 0,\hspace{.2cm} \mathbb{P}\otimes\text{Leb}(\ud t)-a.s.
$

    \item[(ii)] If additionally $H$ is convex in $u \in \mathbb{R}^\kappa$, then for every $u\in \mathbb{R}^\kappa$ we have
    \begin{equation}\label{ineq. 68}
    H(t,X_t,Y_t,Z_t,u_t) \leq H(t,X_t,Y_t,Z_t,u),\hspace{.2cm} \mathbb{P}\otimes\text{Leb}(\ud t)-a.s.
\end{equation}
\end{enumerate}
\end{thm}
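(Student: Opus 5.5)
The whole argument rests on Proposition~\ref{Der_formula_lin}, which identifies the Gateaux derivative of $\mathcal{J}$ at the optimal $u$ in every admissible direction $v\in\mathcal{A}$. Since $u$ is optimal and $u+\epsilon v\in\mathcal{A}$ for all $\epsilon\in\mathbb{R}$, the map $\epsilon\mapsto\mathcal{J}(u+\epsilon v)$ attains a minimum at $\epsilon=0$. For $\epsilon>0$ the difference quotient is nonnegative, so the one-sided derivative satisfies
\[
\mathbb{E}\bigg[\int_0^T \partial_u H(t,X_t,Y_t,Z_t,u_t)\cdot v_t\,\ud t\bigg]\ge 0 .
\]
Replacing $v$ by $-v$, which is again in $\mathcal{A}$, gives the reverse inequality. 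Hence this expectation vanishes for every $v\in\mathcal{A}$.

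\textbf{Step (i): pointwise stationarity.} To localize the identity, I test against
\[
v_t := \mathds{1}_{A}(t,\omega)\,\frac{\partial_u H(t,X_t,Y_t,Z_t,u_t)}{1+|\partial_u H(t,X_t,Y_t,Z_t,u_t)|},
\]
or more simply against truncations $\mathds{1}_{\{|\partial_u H|\le N\}}\partial_u H$. This $v$ is progressively measurable and bounded, so it lies in $\mathcal{A}$. Plugging it in yields $\mathbb{E}\int_0^T |\partial_u H|^2/(1+|\partial_u H|)\,\ud t=0$, which forces $\partial_u H=0$ $\mathbb{P}\otimes\text{Leb}$-a.s.

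Two checks are needed here. First, $\partial_u H$ must be progressively measurable; this holds because $X$, $Y$, $Z$ and $u$ are progressively measurable and the coefficients $b$, $\sigma$, $h$ are measurable in the sense of \ref{A1} and \ref{A3}. Second, the integrand in Proposition~\ref{Der_formula_lin} must be integrable for bounded $v$. This follows from the bounds $|\partial_u b|,|\partial_u\sigma|\le C$ in \ref{A1}, the linear growth of $\partial_u h$ in \ref{A3}, $Y\in\mathbb{S}$-type bounds in $\mathbb{L}^m$, $Z\in\mathbb{H}^m$, and $\sup_t\|u_t\|_k,\sup_t\|X_t\|_k<\infty$. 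This integrability check is the only genuine (mild) obstacle. Using the bounded truncated direction avoids any need for $\partial_u H$ itself to have $k$ moments, which it need not.

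\textbf{Step (ii): minimization under convexity.} If $H$ is convex in $u$, then for each fixed $(\omega,t)$ outside the null set from step (i), the map $u\mapsto H(t,X_t,Y_t,Z_t,u)$ is convex and differentiable on $\mathbb{R}^\kappa$. Its gradient vanishes at $u_t$, so $u_t$ is a global minimizer, since a convex differentiable function has $H(u)\ge H(u_t)+\partial_uH(u_t)\cdot(u-u_t)$.

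This yields \eqref{ineq. 68} for every $u\in\mathbb{R}^\kappa$ off a single null set. The null set does not depend on $u$, because it is the set where $\partial_u H(t,X_t,Y_t,Z_t,u_t)\neq 0$.
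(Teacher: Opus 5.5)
Your proof is correct and follows the paper's route. The paper combines optimality, the Gateaux derivative formula of Proposition~\ref{Der_formula_lin} in the directions $\pm v$, and a localization with a bounded, progressively measurable test direction built from $\partial_u H$, delegating this to \cite[Proposition 6.15]{carmona2018probabilistic}. For (ii), you deduce global minimization from the pointwise stationarity of (i) and convexity on all of $\mathbb{R}^\kappa$, off a $u$-independent null set; this is exactly the standard calculation the paper cites from \cite[Theorem 6.14]{carmona2018probabilistic}.
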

\begin{proof}
Because $ \partial_u H(t, X_t, Y_t, Z_t, u_t)$ is progressively measurable $(i)$ is immediate from Proposition \ref{Der_formula_lin}, see for example \cite[Proposition 6.15]{carmona2018probabilistic}.

For $(ii)$ one can follow again the standard calculations using Proposition \ref{Der_formula_lin} and the aforementioned convexity property of $H$, see for example \cite[Theorem 6.14]{carmona2018probabilistic}.   
\end{proof}

\subsubsection{Sufficiency conditions}

\begin{thm}\label{thm 4.8}
Let Assumptions \ref{A1}--
\ref{A3}, \ref{assu: lin PMP} hold true, $H$ given by \eqref{eq Ham_lin}, and $(X,X'), (Y,Y'), Z$ defined as above. If 
the function $g$ is convex in $x\in\mathbb{R}^d$, 
the function ${H}(t,\cdot,{Y}_t, {Z}_t,\cdot)$ is convex in $(x,u)\in \mathbb{R}^d\times \mathbb{R}^k$, and for the admissible control $(u_t)_{t\in[0,T]}\in \mathcal{A}$, for every 
$u\in \mathbb{R}^\kappa$ holds
\begin{equation*}
      H(t,X_t,Y_t,Z_t,u_t) \leq H(t,X_t,Y_t,Z_t,u),\hspace{.2cm}\mathbb{P}\otimes\text{Leb}(\ud t)-a.s.,
\end{equation*}
then for every admissible pair $((\widetilde{X},\widetilde{X}'),(v_t)_{t\in[0,T]})$ we have 
\begin{align*}
    \mathcal{J}(v) - \mathbb{E}\bigg[\int_0^T  \big({Y}_s^\top f(s,\widetilde{X}_s) - {Y}_s^\top f(s,{X}_s) - {Y}_s^\top \partial_xf(s,{X}_s) [\widetilde{X}_s - X_s]\big)
     \ud\boldsymbol{\eta}_s \bigg] \geq \mathcal{J}({u}).
\end{align*}
\end{thm}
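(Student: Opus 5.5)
Set $\Delta_t:=\widetilde X_t-X_t$, where $\widetilde X$ is driven by $v$ and $X$ by the candidate $u$. The proof follows the classical sufficiency argument (e.g.\ \cite[Theorem 6.16]{carmona2018probabilistic}), with duality supplied by the rough product formula Theorem \ref{rough_product_formula}.

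\emph{Step 1 (terminal cost).} By convexity of $g$,
\[
\mathbb{E}[g(\widetilde X_T)-g(X_T)]\geq \mathbb{E}[\partial_x g(X_T)\Delta_T]=\mathbb{E}[Y_T^\top\Delta_T].
\]

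\emph{Step 2 (dynamics of $\Delta$).} The process $\Delta$ solves
\[
\ud\Delta_s=\big(b(s,\widetilde X_s,v_s)-b(s,X_s,u_s)\big)\ud s+\big(\sigma(s,\widetilde X_s,v_s)-\sigma(s,X_s,u_s)\big)\ud W_s+\big(f(s,\widetilde X_s)-f(s,X_s)\big)\ud\boldsymbol{\eta}_s,
\]
with Gubinelli derivative obtained from the controlled expansions of both solutions. Here $(\Delta,\Delta')\in\mathbf D^{\alpha,\beta'}_\eta\mathbb L^{k}$ by \cite[Theorem 4.6]{FHL21}. The drift and diffusion are bounded in $\mathbb L^k$ by \ref{A1}.

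\emph{Step 3 (duality).} Apply Theorem \ref{rough_product_formula}, in the form of Remark \ref{rmk 3.21}(i),(ii), to $Y^\top\Delta$ with $k_1=k$, $k_2=m$; the exponent conditions are Assumptions \ref{assu: lin PMP}. Take expectations; the stochastic integrals are true martingales by the same H\"older/BDG bound used before \eqref{eq 35}. This gives
\[
\mathbb{E}[Y_T^\top\Delta_T]=\mathbb{E}\int_0^T\Big(Y_s^\top\delta b_s+\mathrm{Tr}(Z_s^\top\delta\sigma_s)-\partial_xH(s,X_s,Y_s,Z_s,u_s)\Delta_s\Big)\ud s+\mathbb{E}\int_0^T Y_s^\top\big(\delta f_s-\partial_xf(s,X_s)\Delta_s\big)\ud\boldsymbol{\eta}_s .
\]
Here $\delta b_s:=b(s,\widetilde X_s,v_s)-b(s,X_s,u_s)$, and $\delta\sigma_s,\delta f_s$ are defined analogously. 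The rough integrand is the first component of $(Z',Z'')$ in Theorem \ref{rough_product_formula}. It combines $Y^\top(f(\widetilde X)-f(X))$, coming from $\Delta'$, with $-Y^\top\partial_x f(X)\Delta$, coming from $Y'$.

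\emph{Step 4 (identifying the rough term).} Check that the expectation of this rough integral is well defined and equals the quantity subtracted in the statement. The rough integral has zero initial value and is $\mathbb L^{km/(k+m)}$-integrable by Lemma \ref{lem:roughstochint}, so its expectation is finite. I expect this step to be the main obstacle, because the rough integral does \emph{not} vanish in expectation. This is why it appears explicitly in the statement. The point to verify is that the derivative pairing produced by the product rule matches the controlled structure implicitly meant for $Y^\top(f(\widetilde X)-f(X)-\partial_xf(X)\Delta)$. This means matching Gubinelli derivatives: the cross term $2X'Y'$ in the product formula must combine correctly with the Gubinelli derivatives of $f(\widetilde X)$ and $\partial_x f(X)\Delta$.

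\emph{Step 5 (cost difference).} Write the running cost difference via $H$:
\[
h(s,\widetilde X_s,v_s)-h(s,X_s,u_s)=H(\widetilde X_s,v_s)-H(X_s,u_s)-Y_s^\top\delta b_s-\mathrm{Tr}(Z_s^\top\delta\sigma_s),
\]
where $H(x,w):=H(s,x,Y_s,Z_s,w)$. Summing the running cost with Steps 1 and 3, the $\delta b$ and $\delta\sigma$ terms cancel, and we are left with
\[
\mathcal J(v)-\mathcal J(u)\geq\mathbb{E}\int_0^T\big(H(\widetilde X_s,v_s)-H(X_s,u_s)-\partial_xH(X_s,u_s)\Delta_s\big)\ud s+\mathbb{E}\int_0^T Y_s^\top(\dots)\ud\boldsymbol{\eta}_s .
\]
By joint convexity of $H$ in $(x,u)$,
\[
H(\widetilde X_s,v_s)-H(X_s,u_s)-\partial_xH(X_s,u_s)\Delta_s\geq \partial_uH(X_s,u_s)\cdot(v_s-u_s).
\]
Finally, $\partial_uH(X_s,u_s)\cdot(v_s-u_s)\geq0$ because $u_s$ minimizes the convex function $H(X_s,\cdot)$ over $\mathbb R^\kappa$. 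Moving the rough term to the left-hand side gives the claim.
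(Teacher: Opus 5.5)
Your argument is correct and is essentially the paper's proof. It uses the same ingredients: convexity of $g$, the rough product formula applied to $Y^\top(\widetilde X-X)$ with cancellation of the $b$- and $\sigma$-difference terms against the $H$-rewriting of the running cost, and joint convexity of $H$ combined with the first-order condition in $u$. The paper uses $\partial_u H(t,X_t,Y_t,Z_t,u_t)=0$ directly, which is what your inequality reduces to on $\mathbb{R}^\kappa$.

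Your Step 4 is not a real obstacle. The product formula's second-level integrand is exactly the Leibniz-rule Gubinelli derivative of $Y^\top\bigl(f(\widetilde X)-f(X)-\partial_x f(X)(\widetilde X-X)\bigr)$; the extra cross term cancels because the adjoint's rough integrand is $-Y^\top\partial_x f(X)$. The paper simply takes this product-rule derivative, written via $\widetilde\gamma'$, as the meaning of the rough integral in the statement.
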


\begin{proof}
Let $(v_t)_{t\in[0,T]} \in \mathcal{A}$, and $\widetilde{X}$ its corresponding control state. By the definitions of $\mathcal{J}$ and $H$ we have
\begin{align}\label{eq. 069}
    \mathcal{J}(u) - \mathcal{J}(v) &= \mathbb{E}\bigg[g(X_T) - g(\widetilde{X}_T) + \int_0^T \big(H(s,X_s,Y_s,Z_s,u_s) - H(s,\widetilde{X}_s,Y_s,Z_s,v_s) \big) \ud s \bigg] \nonumber\\
    &\hspace{.2cm}- \mathbb{E}\bigg[\int_0^T \Big(Y_s^\top [b(s,X_s,u_s) - b(s,\widetilde{X}_s,v_s)] + \text{Tr}\big(Z^\top_s[\sigma(s,X_s,u_s) - \sigma(s,\widetilde{X}_s,v_s)]\big)\Big) \ud s \bigg]. 
\end{align}
From the convexity of $g$ with respect to $x\in \mathbb{R}^d$ we get
\begin{equation}\label{eq. 69}
    \mathbb{E}[g(X_T) - g(\widetilde{X}_T)] \leq \mathbb{E}[\partial_xg(X_T) (X_T-\widetilde{X}_T)] = \mathbb{E}[Y^\top_T(X_T-\widetilde{X}_T)].
\end{equation}
Additionally, from \eqref{ineq. 68} we immediately have
\begin{equation}\label{eq. 70}
  \partial_uH(t,X_t,Y_t,Z_t,u_t) = 0,\hspace{.2cm} \text{Leb}(\ud t)\otimes\mathbb{P}-a.s.  
\end{equation}
So, from the convexity of ${H}(\omega,t,\cdot,{Y}_t,{Z}_t,\cdot)$ with respect to $(x,u)\in \mathbb{R}^d\times \mathbb{R}^k$ and \eqref{eq. 70} we get
\begin{multline}\label{eq. 71}
    \mathbb{E}\bigg[\int_0^T \big(H(s,X_s,Y_s,Z_s,u_s) - H(s,\widetilde{X}_s,Y_s,Z_s,v_s) \big) \ud s \bigg] 
    \leq \mathbb{E}\bigg[\int_0^T \partial_xH(s,X_s,Y_s,Z_s,u_s) (X_s-\widetilde{X}_s) \ud s \bigg].
\end{multline}
Thus, from \eqref{eq. 069}, \eqref{eq. 69} and \eqref{eq. 71} we have
\begin{align}\label{eq. 73}
    \mathcal{J}(u) - \mathcal{J}(v) &\leq \mathbb{E}\bigg[Y^\top_T(X_T-\widetilde{X}_T) + \int_0^T \partial_xH(s,X_s,Y_s,Z_s,u_s) (X_s-\widetilde{X}_s) \ud s \bigg] \nonumber\\
    &\hspace{.2cm}- \mathbb{E}\bigg[\int_0^T \Big(Y_s^\top [b(s,X_s,u_s) - b(s,\widetilde{X}_s,v_s)] + \text{Tr}\big(Z^\top_s[\sigma(s,X_s,u_s) - \sigma(s,\widetilde{X}_s,v_s)]\big)\Big) \ud s \bigg]. 
\end{align}
Moreover, the difference $X-\widetilde{X}$ satisfies 
\begin{align*}
    X_t - \widetilde{X}_t &= \int_0^t [b(s,X_s,u_s) - b(s,\widetilde{X}_s,v_s)]\ud s + \int_0^t [\sigma(s,X_s,u_s) - \sigma(s,\widetilde{X}_s,v_s)] \ud W_s\\
    &\qquad+\int_0^t \big[\partial_xf(s,{X}_s) ({X}_s - \widetilde{X}_s) + \widetilde{\gamma}_s\big] \ud \boldsymbol{\eta}_s,
\end{align*}
where
\begin{align*}
      \widetilde{\gamma}_t &:= - \partial_xf(t,{X}_t) (X_t - \widetilde{X}_t) + f(t,{X}_t) - f(t,\widetilde{X}_t),\\
     (\widetilde{\gamma})'_t &:= - \partial_{xx}^2f(t,{X}_t) [f(t,{X}_t)\otimes (X_t - \widetilde{X}_t)] - \partial_{x}f'(t,{X}_t)(X_t - \widetilde{X}_t) - \partial_{x}f(t,{X}_t) \big(f(t,{X}_t) - f(t,\widetilde{X}_t)\big)\\
     &\qquad+ \partial_{x}f(t,{X}_t)f(t,{X}_t) +f'(t,{X}_t) - \partial_{x}f(t,\bar{X}_t)f(t,\widetilde{X}_t) - f'(t,\widetilde{X}_t).
\end{align*}
Applying the rough product formula Theorem \ref{rough_product_formula}, while noting (i) and (ii) of Remark \ref{rmk 3.21}, and then taking expectations we see as in \ref{subsubsub 4.1.3} that
\begin{align*}
    &\mathbb{E}[Y^\top_T(X_T-\widetilde{X}_T)] = \mathbb{E}\bigg[ \int_0^T \Big(- \partial_xH(s,X_s,Y_s,Z_s,u_s) (X_s-\widetilde{X}_s) \\ 
    &+ Y_s^\top [b(s,X_s,u_s) - b(s,\widetilde{X}_s,v_s)] + \text{Tr}\big(Z^\top_s[\sigma(s,X_s,u_s) - \sigma(s,\widetilde{X}_s,v_s)]\big) \Big) \ud s\bigg] + \mathbb{E}\bigg[\int_0^T\bar{Y}_s^\top \widetilde{\gamma}_s
     \ud\boldsymbol{\eta}_s \bigg].
\end{align*}
Hence, from \eqref{eq. 73} we get
\begin{equation*}
     \mathcal{J}(u) - \mathcal{J}(v) \leq \mathbb{E}\bigg[\int_0^T\bar{Y}_s^\top \widetilde{\gamma}_s
     \ud\boldsymbol{\eta}_s \bigg].
\end{equation*}
\end{proof}

\begin{coroll}\label{coroll 4.9}
Let the Assumptions of Theorem \ref{thm 4.8} hold true, and $(X,X'),(Y,Y')$ defined as above. If for every 
admissible pair $((\widetilde{X},\widetilde{X}'),(v_t)_{t\in[0,T]})$ holds
\begin{equation}\label{eq. 074}
    \mathbb{E}\bigg[\int_0^T \Big({Y}_s^\top f\big(s,\widetilde{X}(s)\big) - {Y}_s^\top f\big(s,{X}(s)\big) -{Y}_s^\top\partial_xf\big(s,{X}(s)\big) \big[\widetilde{X}(s) - X(s)\big]\Big)
     \ud\boldsymbol{\eta}_s \bigg] \geq 0,
\end{equation}
then we have that $(({X},X'),(u_t)_{t\in[0,T]})$ is an optimal pair.
\end{coroll}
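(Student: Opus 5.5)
The corollary follows at once from Theorem \ref{thm 4.8}, so the plan is short. Fix an arbitrary admissible pair $((\widetilde{X},\widetilde{X}'),(v_t)_{t\in[0,T]})$. The hypotheses of Theorem \ref{thm 4.8} are in force: convexity of $g$, joint convexity of $H(t,\cdot,Y_t,Z_t,\cdot)$, and pointwise minimality of $H$ at $u_t$. Hence that theorem gives
\begin{equation*}
\mathcal{J}(v) - \mathbb{E}\bigg[\int_0^T \big({Y}_s^\top f(s,\widetilde{X}_s) - {Y}_s^\top f(s,{X}_s) - {Y}_s^\top \partial_xf(s,{X}_s) [\widetilde{X}_s - X_s]\big)\ud\boldsymbol{\eta}_s \bigg] \geq \mathcal{J}(u).
\end{equation*}

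Next, invoke assumption \eqref{eq. 074}. It states that the expectation of the rough integral subtracted above is nonnegative. Dropping this term therefore only increases the left-hand side, and we obtain $\mathcal{J}(v)\geq \mathcal{J}(u)$.

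Since $v\in\mathcal{A}$ was arbitrary, $\mathcal{J}(u)=\inf_{v\in\mathcal{A}}\mathcal{J}(v)$. This is exactly the statement that $((X,X'),(u_t)_{t\in[0,T]})$ is an optimal pair.

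The only point needing care is that the rough integral in \eqref{eq. 074} is the same well-defined, integrable object appearing in Theorem \ref{thm 4.8}. It is understood with the controlled derivative induced by $(\widetilde{\gamma},\widetilde{\gamma}')$ and the product rule, Theorem \ref{rough_product_formula}. This is already ensured inside the proof of Theorem \ref{thm 4.8}, so I expect no real obstacle.
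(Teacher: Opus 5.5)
Your proposal is correct and takes the same approach as the paper, whose proof is simply ``immediate from Theorem~\ref{thm 4.8}''. Applying that theorem and then discarding the rough-integral term, which is nonnegative by \eqref{eq. 074}, gives $\mathcal{J}(v)\geq\mathcal{J}(u)$ for every admissible $v$, exactly as you argue.
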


\begin{proof}
Immediate from Theorem \ref{thm 4.8}.
\end{proof}

\begin{rmk}
If $f$ is independent of $x\in\mathbb{R}^d$ then \eqref{eq. 074} is trivially satisfied. More generally, our results clarify why, in the affine case, the standard sufficiency conditions should, in principle, guarantee optimality, as again \eqref{eq. 074} is automatically satisfied, this was indeed shown in \cite{horst2025pontryagin}.
\end{rmk}

\subsection{Variation via the spike perturbation and the rough SMP}\label{subsec: 4.2}
\subsubsection{General setting}
Let $\boldsymbol{\eta} \in \mathscr{C}^{0,\alpha}_g([0,T];\mathbb{R}^\ell), \ell\in\mathbb{N}$, with $\alpha\in(\frac{1}{3},\frac{1}{2})$. In this subsection the following assumptions hold true:

\begin{enumerate}[label=(B\arabic*),ref=(B\arabic*)]

\item\label{P-control}
The action space $(U,\rho)$ is a separable metric space.

\item\label{D-dynamics}
The functions $b: \Omega \times [0,T]\times \mathbb{R}^d\times U\rightarrow \mathbb{R}^d,\sigma: \Omega \times [0,T]\times \mathbb{R}^d\times U\rightarrow \mathbb{R}^{d\times w}$ are 
$ \text{Prog}(\Omega\times[0,T]) \otimes \mathcal{B}(\mathbb{R}^d)\otimes \mathcal{B}(U)/ \mathcal{B}(\mathbb{R}^{d}) \big(\text{resp.}\hspace{.1cm} \mathcal{B}(\mathbb{R}^{d\times w})\big)$-measurable, and two times differentiable in $x\in\mathbb{R}^d$. Also, for every $\phi \in \{b,\sigma, \partial_x b, \partial_x\sigma, \partial^2_{xx} b, \partial^2_{xx}\sigma\}$ we have
$$
\|\sup_{t\in[0,T]} \sup_{x\in\mathbb{R}^d} \sup_{u\in U} |\phi(t,x,u)|\|_\infty <\infty.
$$
Furthermore, there exist a constant $\mathscr{L} > 0$ and a modulus of continuity $\overline{\omega} :[0,\infty) \rightarrow [0,\infty)$ such that for $\phi \in \{b,\sigma\}$ we have for every $(\omega,t)\in \Omega\times [0,T], x_1,x_2 \in \mathbb{R}^d, u_1, u_2 \in U$ that
\begin{align*}
    |\phi(t,x_2,u_2) - \phi(t,x_1,u_1)| +  |\partial_x \phi(t,x_2,u_2) - \partial_x \phi(t,x_1,u_1)| &\leq \mathscr{L} |x_2 - x_1| + \overline{\omega}\big(\rho(u_2,u_1)\big),\\
    |\partial^2_{xx} \phi(t,x_2,u_2) - \partial^2_{xx} \phi(t,x_1,u_1)| &\leq \overline{\omega}\big(|x_2 - x_1| + \rho(u_2,u_1)\big).
\end{align*}

\item\label{P-rough}
Let $k \in [2,\infty)$. The functions $f:\Omega \times [0,T]\times \mathbb{R}^d\rightarrow \mathbb{R}^{d\times \ell}, f': \Omega \times [0,T]\times\mathbb{R}^d \rightarrow L(\mathbb{R}^p,\mathbb{R}^{d\times \ell})$ form a pair $(f,f')\in \mathbf{D}^{\alpha,\alpha}_{\eta}\mathbb{L}^{k,\infty}\mathcal{C}_b^{5}$. Also, $(\partial_x f,\partial_x f')\in \mathbf{D}^{\alpha, \alpha}_{\eta}\mathbb{L}^{k,\infty}\mathcal{C}_b^{4}$, $(\partial_{xx}^2 f,\partial_{xx}^2 f')\in \mathbf{D}^{\alpha, \alpha}_{\eta}\mathbb{L}^{k,\infty}\mathcal{C}_b^{3}$ while $(\partial^3_{xxx} f,\partial^3_{xxx} f')\in \mathbf{D}^{\alpha, \alpha}_{\eta}\mathbb{L}^{k,\infty}\mathcal{C}_b^{2}$ and
$$
\|\sup_{t\in[0,T]}|f(t,\cdot)|_{5}\|_\infty + \|\sup_{t\in[0,T]}|f'(t,\cdot)|_{4}\|_\infty <\infty.
$$

\item\label{D-cost}
Let $m\in[2,\infty)$. The function $h : \Omega\times [0,T] \times \mathbb{R}^d \times U \rightarrow \mathbb{R}$ is $\text{Prog}(\Omega\times[0,T]) \otimes \mathcal{B}(\mathbb{R}^d)\otimes \mathcal{B}(U)/ \mathcal{B}(\mathbb{R})$-measurable, and the function  $g: \Omega\times \mathbb{R}^d \rightarrow \mathbb{R}$ is $\mathcal{F}_T \otimes \mathcal{B}(\mathbb{R}^d)/\mathcal{B}(\mathbb{R})$-measurable. Also, $h,g$ are two times differentiable in $x\in\mathbb{R}^d$ and for every $\phi_1\in\{h, \partial_x h, \partial^2_{xx} h\}$, $\phi_2 \in \{g,\partial_xg,\partial^2_{xx} g\}$ we have
$$
\|\sup_{t\in[0,T]} \sup_{u\in U} |\phi_1(t,0,u)|\|_m + \|\phi_2(0)\|_m <\infty.
$$
Furthermore, there exist a constant $\mathscr{L} > 0$ and a modulus of continuity $\overline{\omega} :[0,\infty) \rightarrow [0,\infty)$ such that for $h,g$ we have for every $(\omega,t) \in \Omega\times [0,T], x_1,x_2 \in \mathbb{R}^d, u_1, u_2 \in U$ that
\begin{align*}
    |h(t,x_2,u_2) - h(t,x_1,u_1)| +  |\partial_x h(t,x_2,u_2) - \partial_x h(t,x_1,u_1)| &\leq \mathscr{L} |x_2 - x_1| + \overline{\omega}\big(\rho(u_2,u_1)\big),\\
    |\partial^2_{xx} h(t,x_2,u_2) - \partial^2_{xx} h(t,x_1,u_1)| &\leq \overline{\omega}\big(|x_2 - x_1| + \rho(u_2,u_1)\big)\\
    \text{and}\hspace{8.8cm}\\
    |g(x_2) - g(x_1)| +  |\partial_x g(x_2) - \partial_x g(x_1)| &\leq \mathscr{L} |x_2 - x_1| ,\\
    |\partial^2_{xx} g(x_2) - \partial^2_{xx} g(x_1)| &\leq \overline{\omega}\big(|x_2 - x_1|\big).
\end{align*}
\end{enumerate}

The space of admissible controls for this subsection is $\mathcal{A}$ such that
\begin{align*}
    &\mathcal{A} := \bigg\{ u:\Omega \times [0,T] \rightarrow U : (u_t)_{t\in [0,T]}\hspace{.1cm}\text{progressively measurable}\bigg\},
\end{align*}
which obviously is closed with respect to the pasting operation introduced below.

Here we are going to use a different method to perturb our controls since linear perturbation is not in general available. Let $((\bar{X},\bar{X}'),(\bar u_t)_{t\in[0,T]})$ be an optimal pair for our problem and $I\subseteq[0,T]$ an interval with length less or equal from $\epsilon$, for every $(u_t)_{t\in[0,T]} \in \mathcal{A}$ we define
\begin{equation}
    u^\epsilon_t:= \left\{
    \begin{array}{ll}
     u_t & t \in I, \\
     \bar{u}_t & {t\in [0,T]\setminus I}. 
    \end{array}
    \right.
\end{equation}
Moreover, for $\xi\in\mathbb{L}^k(\mathcal{F}_0;\mathbb{R}^d)$ we shall denote with $(X^{\boldsymbol{\eta},u^\epsilon},(X^{\boldsymbol{\eta},u^\epsilon})')$ the solution of
\begin{equation*}
\begin{cases}
\ud X^{\boldsymbol{\eta},u^\epsilon}_s
=
b\bigl(s,X^{\boldsymbol{\eta},u^\epsilon}_s,u^\epsilon_s\bigr) \ud s
+
\sigma\bigl(s,X^{\boldsymbol{\eta},u^\epsilon}_s,u^\epsilon_s\bigr)\ud W_s
+
(f,f')\bigl(s,X^{\boldsymbol{\eta},u^\epsilon}_s\bigr) \ud\boldsymbol{\eta}_s,
\qquad s \in [0,T], \\
X^{\boldsymbol{\eta},u^\epsilon}_0
= \xi.
\end{cases}
\end{equation*}

\subsubsection{Main approximation result}
In order to rigorously identify asymptotically the difference $(X^{\boldsymbol{\eta},u^\epsilon} - \bar{X})$ as $\epsilon\rightarrow 0^+$ we focus our attention to the following \emph{variational equations} along the optimal pair $((\bar{X},\bar{X}'),(\bar u_t)_{t\in[0,T]})$
\begin{equation}\label{eq:diffusion-first-variation}
\begin{cases}
 \ud V^\epsilon_s = (\partial_x b(s,\bar{X}_s, \bar{u}_s)V^\epsilon_s) \ud s + (\partial_x \sigma(s,\bar{X}_s, \bar{u}_s)V^\epsilon_s)\ud W_s \\\qquad\qquad + \big((\partial_x f(s, \bar{X}_s), \partial_{xx}^2f(s,\bar{X}_s) f(s,\bar{X}_s) + \partial_x f'(s,\bar{X}_s))V^\epsilon_s\big) \ud \boldsymbol{\eta}_s
  + (\mathds{1}_I(s) \delta\sigma_s) \ud W_s ,  \\ 
 V^\epsilon_0 = 0,    
\end{cases}
\end{equation}
and
\begin{equation}\label{eq:diffusion-second-variation}
\begin{cases}
 \ud \zeta^\epsilon_s = (\partial_x b(s,\bar{X}_s, \bar{u}_s) \zeta^\epsilon_s) \ud s + (\partial_x \sigma(s,\bar{X}_s, \bar{u}_s)\zeta^\epsilon_s)\ud W_s \\
 \qquad\quad+ \big((\partial_x f(s, \bar{X}_s), \partial_{xx}^2f(s,\bar{X}_s) f(s,\bar{X}_s) +\partial_x f'(s,\bar{X}_s))\zeta^\epsilon_s\big) \ud \boldsymbol{\eta}_s\\
 \qquad\quad+ (\frac{1}{2}\partial_{xx}^2 b(s,\bar{X}_s, \bar{u}_s) (V^\epsilon_s)^{\otimes 2} + \mathds{1}_I(s) \delta b_s) \ud s + (\frac{1}{2}\partial_{xx}^2\sigma(s,\bar{X}_s, \bar{u}_s) (V^\epsilon_s)^{\otimes 2} + \mathds{1}_I(s) \delta \sigma_{x,s} V^\epsilon_s) \ud W_s\\
 \qquad\quad+ (\frac{1}{2}\partial_{xx}^2f(s,\bar{X}_s) (V^\epsilon_s)^{\otimes 2},\frac{1}{2}[\partial_{xxx}^3f(s,\bar{X}_s) f(s,\bar{X}_s) + \partial_{xx}^2f'(s,\bar{X}_s)](V^\epsilon_s)^{\otimes 2} \\
 \hspace{8.5cm}+ \partial_{xx}^2f(s,\bar{X}_s)[V^\epsilon_s \otimes (\partial_xf(s,\bar{X}_s) V^\epsilon_s)]) \ud \boldsymbol{\eta}_s\\ 
 \zeta^\epsilon_0 = 0,    
\end{cases}
\end{equation}
where for every $t\in [0,T]$ 
\begin{align*}
    \delta b_t &:= b(t,\bar{X}_t,u_t) - b(t,\bar{X}_t,\bar{u}_t),\\
    \delta \sigma_t &:= \sigma(t,\bar{X}_t,u_t) - \sigma(t,\bar{X}_t,\bar{u}_t),\\
    \delta \sigma_{x,t} &:= \partial_x \sigma(t,\bar{X}_t,u_t) - \partial_x \sigma(t,\bar{X}_t,\bar{u}_t).
\end{align*}

\begin{rmk}
Let $(Y,Y') \in \mathbf{D}_{\eta}^{\alpha,\alpha} \mathbb{L}^{{k}}\big([0,T];\mathbb{R}^d\big)$, then one can easily see that $(Y^{\otimes 2},(Y^{\otimes 2})') \in \mathbf{D}_{\eta}^{\alpha,\alpha} \mathbb{L}^{\frac{k}{2}}\big([0,T];\mathbb{R}^{d\times d}\big)$, where $Y^{\otimes 2} := Y \otimes Y$ and $(Y^{\otimes 2})' := Y' \otimes Y + Y \otimes Y'$.
\end{rmk}

\begin{prop}\label{prop 4.8}
Let $k \in [4,\infty)$ and $(\bar{X},f(\cdot,\bar{X}))\in \mathbf{D}_{\eta}^{\alpha,\alpha} \mathbb{L}^{k,\infty}\big([0,T];\mathbb{R}^d\big)$ the optimal state process, \emph{i.e.} the unique solution under \ref{P-control}--
\ref{P-rough} of the RSDE\footnote{Provided by \cite[Theorem 4.6]{FHL21}, with of course $\bar{X}'=f(\cdot,\bar{X})$.} 
\begin{equation*}
    \ud \bar{X}_s = b(s,\bar{X}_s,\bar{u}_s) \ud s + \sigma(s,\bar{X}_s,\bar{u}_s)\ud W_s + \big(f(s,\bar{X}_s),\partial_xf(s,\bar{X}_s)f(s,\bar{X}_s)+ f'(s,\bar{X}_s)\big) \ud \boldsymbol{\eta}_s.
\end{equation*}
Then, we have that:
\begin{enumerate}
\item[(i)] 
$ \big(\partial^2_{xx}f(\cdot,\bar{X}),\partial^3_{xxx}f(\cdot,\bar{X}) f(\cdot,\bar{X}) + \partial^2_{xx}f'(\cdot,\bar{X})\big)
    \in \mathbf{D}_{\eta}^{\alpha,\alpha} \mathbb{L}^{k,\infty}_{lin}\big([0,T];L(\mathbb{R}^{d\times d},\mathbb{R}^{d\times \ell})\big). $

\item[(ii)] Furthermore, let $V^\epsilon$ be the unique solution of \eqref{eq:diffusion-first-variation}. 
Then, we have that
\begin{gather*}
    \big(\partial_{xx}^2f(\cdot,\bar{X}) (V^\epsilon)^{\otimes 2},[\partial_{xxx}^3f(\cdot,\bar{X}) f(\cdot,\bar{X}) + \partial_{xx}^2f'(\cdot,\bar{X})](V^\epsilon)^{\otimes 2} + 2\partial_{xx}^2f(\cdot,\bar{X})[V^\epsilon\otimes (\partial_xf(\cdot,\bar{X}) V^\epsilon)] \big)\\
    \in \mathbf{D}_{\eta}^{\alpha,\alpha} \mathbb{L}^{\frac{k}{2}}\big([0,T];\mathbb{R}^d\big).
\end{gather*}
\end{enumerate}
\end{prop}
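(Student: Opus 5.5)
For (i) we will follow the proof of Lemma \ref{lemma 4.1} closely, with $\partial_x f$ replaced by $\partial^2_{xx}f$ and $\widetilde{X}^{\{1,2\},\lambda}$ replaced by $\bar X$ (no $\lambda$-dependence). Set $\Psi_t:=\partial^2_{xx}f(t,\bar X_t)$ and $\Psi'_t:=\partial^3_{xxx}f(t,\bar X_t)f(t,\bar X_t)+\partial^2_{xx}f'(t,\bar X_t)$.

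\textbf{Boundedness and regularity of $\Psi,\Psi'$.} The uniform bounds $\sup_t\|\Psi_t\|_\infty+\sup_t\|\Psi'_t\|_\infty<\infty$ follow directly from \ref{P-rough}, since $f\in\mathcal C^5_b$ and $f'\in\mathcal C^4_b$ uniformly in $(\omega,t)$. For the increments we split
\[
\delta\Psi_{s,t}=\big[\partial^2_{xx}f(t,\bar X_t)-\partial^2_{xx}f(s,\bar X_t)\big]+\big[\partial^2_{xx}f(s,\bar X_t)-\partial^2_{xx}f(s,\bar X_s)\big].
\]
The first bracket is controlled by $\bk{\delta\partial^2_{xx}f}_{\alpha;k,\infty}$, using $(\partial^2_{xx}f,\partial^2_{xx}f')\in\mathbf D^{\alpha,\alpha}_\eta\mathbb L^{k,\infty}\mathcal C^3_b$. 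The second is bounded by $|\partial^3_{xxx}f|_\infty|\delta\bar X_{s,t}|$, whose conditional $k$-moment is $\lesssim\|\delta\bar X\|_{\alpha;k,\infty}|t-s|^\alpha$. The same splitting handles $\delta\Psi'$. There we additionally use the product rule for increments together with $(\partial^3_{xxx}f,\partial^3_{xxx}f')\in\mathbf D^{\alpha,\alpha}_\eta\mathbb L^{k,\infty}\mathcal C^2_b$ and the fact that $\delta f(\cdot,\bar X)$ is $\alpha$-H\"older in $\mathbb L^{k,\infty}$.

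\textbf{The remainder of $\Psi$.} We decompose $R^\Psi_{s,t}$ exactly as in Lemma \ref{lemma 4.1} into four pieces:
\begin{enumerate}
\item a second order Taylor term $\tfrac12\int_0^1(1-\vartheta)\partial^4_{xxxx}f(\dots)\,\ud\vartheta\,(\delta\bar X_{s,t})^{\otimes2}$;
\item the term $\partial^3_{xxx}f(s,\bar X_s)R^{\bar X}_{s,t}$;
\item the time remainder of the vector field, $R^{\partial^2_{xx}f}_{s,t}(\bar X_s)$;
\item the cross term $\delta(\partial^2_{xx}f)_{s,t}(\bar X_t)-\delta(\partial^2_{xx}f)_{s,t}(\bar X_s)$.
\end{enumerate}
After taking $\mathbb E_s$ these are bounded as follows. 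Piece 1 gives $|t-s|^{2\alpha}$ via $\|\delta\bar X\|^2_{\alpha;k,\infty}$. Piece 2 gives $|t-s|^{2\alpha}$ via $\|\mathbb E_\bullet R^{\bar X}\|$. Piece 3 gives $|t-s|^{2\alpha}$ via $\bk{R^{\partial^2_{xx}f}}_{2\alpha}$.

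\textbf{The main obstacle is piece 4.} Lemma \ref{lemma 4.1} used the time derivative $\partial_t$ from \ref{A2} here, but \ref{P-rough} provides no such assumption. Instead we would write piece 4 as
\[
\int_0^1\delta(\partial^3_{xxx}f)_{s,t}(\bar X_s+\vartheta\delta\bar X_{s,t})\,\ud\vartheta\;\delta\bar X_{s,t},
\]
and apply the conditional H\"older inequality. The factor $\bk{\delta\partial^3_{xxx}f}_{\alpha;k,\infty}$, which is available because $\partial^3_{xxx}f$ is itself a controlled vector field in \ref{P-rough}, then combines with $\|\delta\bar X\|_{\alpha;k,\infty}$ to yield order $|t-s|^{2\alpha}$. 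This is precisely why \ref{P-rough} imposes one derivative more than \ref{A2}. We need $k\ge4$ so that products of two $\mathbb L^k$-controlled factors stay in $\mathbb L^{k/2}\supseteq\mathbb L^2$ conditionally. The uniform-in-$x$ structure of $\bk{\cdot}$ lets us evaluate at the random point $\bar X_s+\vartheta\delta\bar X_{s,t}$.

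\textbf{Part (ii).} First, $(V^\epsilon,\partial_xf(\cdot,\bar X)V^\epsilon)\in\mathbf D^{\alpha,\alpha}_\eta\mathbb L^k$. This follows from well-posedness of \eqref{eq:diffusion-first-variation}, via \cite[Theorem 3.11]{bugini2024malliavin} and Lemma \ref{lemma 4.1} with $\lambda=0$ (so the coefficient is a linear vector field); the forcing term $\int\mathds 1_I\delta\sigma\,\ud W$ is a martingale with zero Gubinelli derivative, which does not affect this. By the preceding Remark, $((V^\epsilon)^{\otimes2},V^{\epsilon\prime}\otimes V^\epsilon+V^\epsilon\otimes V^{\epsilon\prime})\in\mathbf D^{\alpha,\alpha}_\eta\mathbb L^{k/2}$. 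The H\"older analogue of the product estimate Lemma \ref{lem:product} (equivalently Proposition \ref{Prop 3.2}) then shows that the linear vector field from (i), applied to this controlled path, lies in $\mathbf D^{\alpha,\alpha}_\eta\mathbb L^{k/2}$.

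The resulting Gubinelli derivative is
\[
\Psi'(V^\epsilon)^{\otimes2}+\Psi\big[V^{\epsilon\prime}\otimes V^\epsilon+V^\epsilon\otimes V^{\epsilon\prime}\big].
\]
By symmetry of $\partial^2_{xx}f$, the bracket term equals $2\partial^2_{xx}f(\cdot,\bar X)[V^\epsilon\otimes(\partial_xf(\cdot,\bar X)V^\epsilon)]$, which matches the stated derivative.

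In the product estimate, the cross term $\mathbb E_s[\delta\Psi_{s,t}\,\delta(V^\epsilon)^{\otimes2}_{s,t}]$ needs conditional $\mathbb L^{k,\infty}$ bounds on $\delta\Psi$ paired with $\mathbb L^{k/2}$ bounds on $\delta(V^\epsilon)^{\otimes2}$. This pairing is exactly what (i) supplies.
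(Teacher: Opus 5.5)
Your proposal is correct and follows the paper's route. Part (i) is the Lemma \ref{lemma 4.1} argument transplanted to $\partial^2_{xx}f(\cdot,\bar X)$, and part (ii) is the product estimate that the paper writes out term by term, including the conditional H\"older step for $\mathbb{E}_s[\delta\Psi_{s,t}\,\delta(V^\epsilon)^{\otimes 2}_{s,t}]$. Your handling of the fourth piece of $R^\Psi$, via $\bk{\delta\partial^3_{xxx}f}_{\alpha;k,\infty}$ and conditional H\"older rather than the $\partial_t$-bounds of \ref{A2} (which Lemma \ref{lemma 4.1} uses but \ref{P-rough} does not provide), is a valid adaptation that the paper's ``similar to Lemma 4.1'' leaves implicit.
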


\begin{proof}
For $(i)$ the proof is similar to that of Lemma \ref{lemma 4.1} and thus omitted.

Next for $(ii)$,
we remind that $\big((V^\epsilon_t)^{\otimes 2}\big)'= V^\epsilon_t\otimes(\partial_xf(t,\bar{X}_t) V^\epsilon_t) + (\partial_xf(t,\bar{X}_t) V^\epsilon_t)\otimes V^\epsilon_t$.  
We can immediately see that
\begin{align*}
     \sup_{t\in [0,T]}\big\| \partial_{xx}^2f(t,\bar{X}_t) (V^\epsilon_t)^2\big\|_{\frac{k}{2}} \leq \sup_{t\in[0,T]} \|\partial_{xx}^2f(t,\cdot)\|_{\infty} \sup_{t\in [0,T]} \|V^\epsilon_t\|^2_k 
 \end{align*}
 and 
 \begin{align*}
    \sup_{t\in [0,T]}\big\| \big(\partial_{xx}^2f(t,\bar{X}_t)\big)'(V^\epsilon_t)^{\otimes 2} + \partial_{xx}^2f(t,\bar{X}_t)\big((V^\epsilon_t)^{\otimes 2}\big)'\big\|_{\frac{k}{2}}   &\leq \sup_{t\in[0,T]} \|\partial_{xxx}^3f(t,\cdot) f(t,\cdot) + \partial_{xx}^2 f'(t,\cdot)\|_{\infty} \sup_{t\in [0,T]} \|V^\epsilon_t\|^2_k  \\
    &\hspace{.2cm}+ \sup_{t\in[0,T]} \|\partial_{xx}^2 f(t,\cdot)\|_{\infty} \sup_{t\in [0,T]}
     \|\partial_{x}f(t,\cdot)\|_{\infty} \sup_{t\in [0,T]} \|V^\epsilon_t\|^2_k. 
 \end{align*}   
 Next, for $ 0\leq s \leq t \leq T$ we have
\begin{align*}
    &\big\| \delta\big(\partial_{xx}^2f(\cdot,\bar{X}) (V^\epsilon)^{\otimes 2} \big)_{s,t} \big\|_{\frac{k}{2}} \leq \big\|  \delta\big(\partial_{xx}^2f(\cdot,\bar{X})\big)_{s,t} (V_s^\epsilon)^{\otimes 2} \big\|_{\frac{k}{2}} + \big\| \partial_{xx}^2f(t,\bar{X}_t) \delta (V^\epsilon)^{\otimes 2}_{s,t}\big\|_{\frac{k}{2}}\\
   &\leq \Big(\big\|\partial_{xx}^2f(\cdot,\bar{X})\big\|_{\alpha ; k,\infty} \sup_{r\in [0,T]}\|V^\epsilon_r\|^2_k  + 2\sup_{r\in[0,T]}\|\partial_{xx}^2f(r,\cdot)\|_{\infty} \sup_{r\in [0,T]}\|V^\epsilon_r\|_k \|\delta V^\epsilon \|_{\alpha;k} \Big) |t-s|^\alpha.
\end{align*}
Additionally, with similar calculations as above, we get
\begin{align*}
    &\Big\| \delta\Big(\big(\partial_{xx}^2f(\cdot,\bar{X})\big)'(V^\epsilon)^{\otimes 2} + \partial_{xx}^2f(\cdot,\bar{X})\big((V^\epsilon)^{\otimes 2}\big)' \Big)_{s,t}\Big\|_{\frac{k}{2}} \leq \Big\| \delta\Big(\big(\partial_{xx}^2f(\cdot,\bar{X})\big)'(V^\epsilon)^{\otimes 2}  \Big)_{s,t}\Big\|_{\frac{k}{2}} \\
   &\qquad+ \Big\| \delta\Big( \partial_{xx}^2f(\cdot,\bar{X})\big((V^\epsilon)^{\otimes 2}\big)' \Big)_{s,t}\Big\|_{\frac{k}{2}} \lesssim |t-s|^{\alpha}.
\end{align*}
Finally, because $1 <\frac{k}{k-2}\leq k$,
by the conditional H\"older inequality we get
\begin{align*}
    &\bigg\|\mathbb{E}_s\bigg[ \delta\big(\partial_{xx}^2 f(\cdot,\bar{X}) (V^\epsilon)^{\otimes 2} \big)_{s,t} - \Big(\big(\partial_{xx}^2f(s,\bar{X}_s)\big)'(V_s^\epsilon)^{\otimes 2} + \partial_{xx}^2f(s,\bar{X}_s)\big((V_s^\epsilon)^{\otimes 2}\big)' \Big) \delta \eta_{s,t} \bigg] \bigg\|_{\frac{k}{2}}
    \\
     &= \bigg\|\mathbb{E}_s\bigg[  \Big(\delta\big(\partial_{xx}^2f(\cdot,\bar{X})\big)_{s,t} (V^\epsilon_s)^{\otimes 2} + \partial_{xx}^2f(s,\bar{X}_s) \delta (V^\epsilon)^{\otimes 2}_{s,t} + \delta\big(\partial_{xx}^2f(\cdot,\bar{X})\big)_{s,t}  \delta (V^\epsilon)^{\otimes 2}_{s,t} \\
     &\qquad\qquad- \Big(\big(\partial_{xx}^2f(s,\bar{X}_s)\big)'(V_s^\epsilon)^{\otimes 2} + \partial_{xx}^2f(s,\bar{X}_s)\big((V_s^\epsilon)^{\otimes 2}\big)' \Big) \delta \eta_{s,t}\Big) \bigg] \bigg\|_\frac{k}{2}\\
     &\leq  \Big\| \mathbb{E}_s\Big[\delta\big(\partial_{xx}^2f(\cdot,\bar{X})\big)_{s,t} (V^\epsilon_s)^{\otimes 2} - \big(\partial_{xx}^2f(s,\bar{X}_s)\big)'(V_s^\epsilon)^{\otimes 2} \delta \eta_{s,t}\Big]\Big\|_\frac{k}{2} \\
     &\qquad+ \Big\| \mathbb{E}_s\Big[\partial_{xx}^2f(s,\bar{X}_s) \delta (V^\epsilon)^{\otimes 2}_{s,t} - \partial_{xx}^2f(s,\bar{X}_s)\big((V_s^\epsilon)^{\otimes 2}\big)' \delta \eta_{s,t}\Big]\Big\|_\frac{k}{2} \\
     &\qquad + \Big\|\mathbb{E}_s\Big[\delta\big(\partial_{xx}^2f(\cdot,\bar{X})\big)_{s,t}  \delta (V^\epsilon)^{\otimes 2}_{s,t}\Big]\Big\|_{\frac{k}{2}}\\
    &\leq \Big(\big\|\mathbb{E}_\bullet R^{\partial_{xx}^2 f(\cdot,\bar{X})}\big\|_{2\alpha;\infty} \sup_{r\in [0,T]}\|V^\epsilon_r\|^2_k + \sup_{r\in[0,T]} \|\partial_{xx}^2 f(r,\cdot)\|_{\infty} \sup_{r\in [0,T]}\|V^\epsilon_r\|_k \|\mathbb{E}_\bullet R^{V^\epsilon} \|_{2\alpha;k}\\
    &\qquad+ \sup_{r\in[0,T]} \|\partial_{xx}^2f(r,\cdot)\|_{\infty} \|\delta V^\epsilon \|^2_{\alpha;k}\Big) |t-s|^{2\alpha}\\
     &\qquad+ \bigg(\Big\|\big\|\delta\big(\partial_{xx}^2f(\cdot,\bar{X})\big)_{s,t} \big|\mathcal{F}_s \big\|_{\frac{k}{k-2}} \big\|\delta (V^\epsilon)^{\otimes 2}_{s,t}\big|\mathcal{F}_s\big\|_{\frac{k}{2}}\Big\|_{\frac{k}{2}}^{\frac{k}{2}}  \bigg)^{\frac{2}{k}}\\
     &\leq \Big(\big\|\mathbb{E}_\bullet R^{\partial_{xx}^2 f(\cdot,\bar{X})}\big\|_{2\alpha;\infty} \sup_{r\in [0,T]}\|V^\epsilon_r\|^2_k + \sup_{r\in[0,T]} \|\partial_{xx}^2 f(r,\cdot)\|_{\infty} \sup_{r\in [0,T]}\|V^\epsilon_r\|_k \|\mathbb{E}_\bullet R^{V^\epsilon} \|_{2\alpha;k}\\
   &\qquad+ \sup_{r\in[0,T]} \|\partial_{xx}^2f(r,\cdot)\|_{\infty} \|\delta V^\epsilon \|^2_{\alpha;k} + 2 \|\delta \big(\partial_{xx}^2f(\cdot,\bar{X})\big) \|_{\alpha;k,\infty} \sup_{r\in [0,T]}\|V^\epsilon_r\|_k \|\delta V^\epsilon \|_{\alpha;k}\Big) |t-s|^{2\alpha}.
\end{align*}
\end{proof}
So, by \cite[Theorem 3.11]{bugini2024malliavin}  
\eqref{eq:diffusion-second-variation} is well posed for $k \in [6,\infty)$ with a unique solution $(\zeta^\epsilon,\partial_xf(\cdot,\bar{X})\zeta^\epsilon + \frac{1}{2}\partial_{xx}^2f(\cdot,\bar{X}) (V^\epsilon)^{\otimes 2}) \in \mathbf{D}_{\eta}^{\alpha,\alpha}\mathbb{L}^{\frac{k}{2}}([0,T]; \mathbb{R}^{d})$.

Now we are ready to give our main approximation result for this section.

\begin{lemma}\label{prop:diffusion-spike-estimates}
Let $2 < \frac{1}{\alpha} \leq p <3$ and $k \in (3p,\infty)$. Then, under Assumptions \ref{P-control}--
\ref{D-cost} it holds that
\begin{align}
 &\|X^{u^\epsilon} - \bar{X}\|_{\mathbb S^k}
 +\|(X^{u^\epsilon} - \bar{X},(X^{u^\epsilon} - \bar{X})')\|_
 {\mathbf D_\eta^{(p,p)\text{-var}}\mathbb L^k}
 = O(\epsilon^{1/2}),
 \label{eq:state-diff}\\
 &\|V^\epsilon\|_{\mathbb S^k}
 + \|(V^\epsilon,(V^\epsilon)')\|_
 {\mathbf D_\eta^{(p,p)\text{-var}}\mathbb L^k}
 = O(\epsilon^{1/2}),
 \label{eq:diffusion-first-order-size}\\
 &\|\zeta^\epsilon\|_{\mathbb S^{\frac{k}{2}}}
 +\|(\zeta^\epsilon,(\zeta^\epsilon)')\|_
 {\mathbf D_\eta^{(p,p)\text{-var}}\mathbb L^{\frac{k}{2}}}
 = O(\epsilon),
 \label{eq:diffusion-second-order-size}\\
 &\|X^{\boldsymbol{\eta},u^\epsilon}-\bar{X} - V^\epsilon\|_{\mathbb S^{\frac{k}{2}}}
 +\|(X^{\boldsymbol{\eta},u^\epsilon}-\bar{X} - V^\epsilon,(X^{\boldsymbol{\eta},u^\epsilon}-\bar{X} - V^\epsilon)')\|_
 {\mathbf D_\eta^{(p,p)\text{-var}}\mathbb L^{\frac{k}{2}}}=O(\epsilon),
 \label{eq:diffusion-state-remainder_0}\\
 &\|X^{\boldsymbol{\eta},u^\epsilon}-\bar{X} - V^\epsilon - \zeta^\epsilon\|_{\mathbb S^{\frac{k}{3}}} +\|(X^{\boldsymbol{\eta},u^\epsilon}-\bar{X} - V^\epsilon - \zeta^\epsilon,(X^{\boldsymbol{\eta},u^\epsilon}-\bar{X} - V^\epsilon - \zeta^\epsilon)')\|_
 {\mathbf D_\eta^{(p,p)\text{-var}}\mathbb L^{\frac{k}{3}}}
 =o(\epsilon).
 \label{eq:diffusion-state-remainder}
\end{align}
Finally, for the cost functional we have the expansion
\begin{multline}\label{eq 76}
 \mathcal{J}(u^\epsilon) - \mathcal{J}(\bar u)
 = \mathbb{E}\bigg[
 \partial_x g(\bar X_T)(V^\epsilon_T + \zeta^\epsilon_T)
 +\frac{1}{2}\partial_{xx}^2 g(\bar X_T)
       (V^\epsilon_T)^{\otimes 2}\\
 +\int_0^T\Big(
 \partial_x h(s,\bar{X}_s,\bar{u}_s)(V^\epsilon_s + \zeta^\epsilon_s)
 +\frac{1}{2}\partial_{xx}^2 h(s,\bar{X}_s,\bar{u}_s)
      (V^\epsilon_s)^{\otimes 2}\\
 \qquad+ \mathds{1}_{I}(s)
 \bigl[h(s,\bar X_s, u_s)-h(s,\bar X_s,\bar u_s)\bigr]\Big)\ud s
 \bigg] + o(\epsilon).
\end{multline}
\end{lemma}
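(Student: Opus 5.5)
The plan is to reduce every estimate to the global $p$-variation bound for affine RSDEs, Theorem~\ref{thm 3.9}. We write each object (the state difference, $V^\epsilon$, $\zeta^\epsilon$, and the two remainders) as the solution of an affine RSDE of the form \eqref{eq ex_lin}. In each case the coefficients $(b,\sigma,\Phi,\Phi')$ are built from derivatives of $b,\sigma,f$ along $\bar X$ or along convex combinations of $\bar X$ and $X^{u^\epsilon}$. These coefficients are bounded uniformly in $\epsilon$ in the sense of \eqref{eq:BV-coefficient-bound}; for the $\lambda$-averaged ones this follows from Lemma~\ref{lemma 4.1} and Proposition~\ref{Prop 3.3}, which give H\"older control and hence $p$-variation control, since $1/p\le\alpha$. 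All the $\epsilon$-dependence is then pushed into the forcing $(F,F')$, and the task becomes estimating $\|(F,F')\|_{\mathbf D_\eta^{(p,p)\text{-var}}\mathbb L^{k'}}$ at the right power of $\epsilon$. The condition $k>3p$ ensures $k/3>p$, so Lemma~\ref{lem:uni-p-var} (and with it the $\mathbb S$-norm bound) applies at every integrability level $k,k/2,k/3$ used.

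\emph{Step 1: \eqref{eq:state-diff} and \eqref{eq:diffusion-first-order-size}.} For $D:=X^{u^\epsilon}-\bar X$ we linearize $b,\sigma$ in $x$ only, exactly as in the proof of Lemma~\ref{lem:firstoderleftover1}. This leaves the forcing $F_t=\int_0^t\mathds 1_I\delta b\,\ud s+\int_0^t\mathds 1_I\delta\sigma\,\ud W_s$ with $F'=0$, and $\delta b,\delta\sigma$ are bounded. The key computation is then as follows. The drift part has $p$-variation in $\mathbb L^k$ of order $\epsilon$. For the martingale part, BDG gives $\|\delta M_{u,v}\|_k\lesssim|I\cap[u,v]|^{1/2}$, and since $p>2$, $\sum_{[u,v]}|I\cap[u,v]|^{p/2}\le\epsilon^{p/2}$, so its $p$-variation norm is $O(\epsilon^{1/2})$. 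The conditional remainder $\mathbb E_\bullet R^F$ is just the conditional expectation of the drift part, which is $O(\epsilon)$ in $\tfrac p2$-variation. Theorem~\ref{thm 3.9} then yields $O(\epsilon^{1/2})$. The same argument applies verbatim to $V^\epsilon$, whose forcing is the $\mathds 1_I\delta\sigma\,\ud W$ term.

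\emph{Step 2: \eqref{eq:diffusion-second-order-size} and \eqref{eq:diffusion-state-remainder_0}.} For $\zeta^\epsilon$ the forcing has three kinds of terms.
\begin{itemize}
\item The quadratic terms $\tfrac12\partial^2_{xx}b(V^\epsilon)^{\otimes2}$ and $\tfrac12\partial^2_{xx}\sigma(V^\epsilon)^{\otimes2}$ are $O(\epsilon)$ in $\mathbb L^{k/2}$, by Step 1.
\item The indicator terms are $\mathds 1_I\delta b=O(\epsilon)$, and $\mathds 1_I\delta\sigma_x V^\epsilon$ is $O(\epsilon^{1/2}\cdot\epsilon^{1/2})$ by BDG.
\item The rough integral of $\tfrac12\partial^2_{xx}f(V^\epsilon)^{\otimes2}$ is handled through Proposition~\ref{prop 4.8}(ii) in its $p$-variation form (Lemma~\ref{lem:product}, \ref{lem:roughstochint}). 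Its controlled norm is bilinear in the $\mathbf D^{(p,p)}$-norm of $V^\epsilon$, hence $O(\epsilon)$.
\end{itemize}
For $D-V^\epsilon$ we write an affine equation with the averaged coefficients of Step 1. The forcing consists of second-order Taylor remainders, which are quadratic in $D$; of terms of the form $\mathds 1_I(\partial_x\sigma(\cdot,u)-\partial_x\sigma(\cdot,\bar u))D$; and of terms of the form (averaged coefficient minus the coefficient at $\bar X$)$\times V^\epsilon$. Each of these is $O(\epsilon^{1/2})\cdot O(\epsilon^{1/2})$.

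\emph{Step 3: \eqref{eq:diffusion-state-remainder}, the main obstacle.} The remainder $D-V^\epsilon-\zeta^\epsilon$ satisfies an affine RSDE whose forcing is made of third-order Taylor remainders. In the $b,\sigma$ parts these are of the form $\big(\int_0^1(1-\lambda)[\partial^2_{xx}\phi(\bar X+\lambda D)-\partial^2_{xx}\phi(\bar X)]\ud\lambda\big)D^{\otimes2}$, together with cross terms $\partial^2_{xx}\phi\,(D^{\otimes2}-(V^\epsilon)^{\otimes2})$ and indicator terms $\mathds 1_I\,\delta\sigma_x(D-V^\epsilon)$. The modulus $\overline\omega$ combined with $D=O(\epsilon^{1/2})$ makes these $o(\epsilon)$ via dominated convergence. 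The delicate part is the rough forcing: we need the controlled-path norm, including $\mathbb E_\bullet R$ in $\tfrac p2$-variation, of the analogous third-order expansion of $f(\cdot,\bar X+D)$. Here we exploit the extra smoothness \ref{P-rough} ($\mathcal C^5_b$, with $\partial^3_{xxx}f$ controlled) to expand $f$ to third order. The Gubinelli derivatives and conditional remainders are then checked term by term, as in the proof of Lemma~\ref{lem:firstoderleftover1}, with H\"older splitting $\mathbb L^{k}\cdot\mathbb L^k\cdot\mathbb L^k\to\mathbb L^{k/3}$. What remains is a term $O(\|D\|^3)=O(\epsilon^{3/2})$ plus terms $o(1)\cdot O(\epsilon)$. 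Theorem~\ref{thm 3.9} at level $k/3$ then gives $o(\epsilon)$.

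\emph{Step 4: the cost expansion \eqref{eq 76}.} We Taylor expand $g(X_T^{u^\epsilon})$ and $h(s,X^{u^\epsilon}_s,u^\epsilon_s)$ to second order in $x$ around $\bar X$, splitting $h(\cdot,u^\epsilon)-h(\cdot,\bar u)$ on $I$. The second-order remainders are $o(\epsilon)$ by \eqref{eq:state-diff} and the modulus $\overline\omega$. Replacing $D$ by $V^\epsilon+\zeta^\epsilon$ in the linear terms and by $V^\epsilon$ in the quadratic terms costs $o(\epsilon)$ by \eqref{eq:diffusion-state-remainder} and \eqref{eq:diffusion-state-remainder_0}; the moment assumptions on $h,g$ in \ref{D-cost} make the H\"older pairings finite. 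The cross term $\mathds 1_I[\partial_xh(\cdot,u)-\partial_xh(\cdot,\bar u)]D$ is $O(\epsilon\cdot\epsilon^{1/2})$, hence $o(\epsilon)$.
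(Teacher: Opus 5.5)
Your proposal is correct and follows essentially the same route as the paper. Each of $X^{u^\epsilon}-\bar X$, $V^\epsilon$, $\zeta^\epsilon$ and the two remainders is written as an affine RSDE with $\epsilon$-uniformly bounded averaged or frozen coefficients, and Theorem~\ref{thm 3.9} is applied at the levels $k,\frac{k}{2},\frac{k}{3}$, which is where $k>3p$ is used. The forcings are controlled by BDG with $p>2$ for the spike martingale, by bilinear product bounds for the rough quadratic terms, by the Lipschitz bound from $\partial^3_{xxx}f$ for the rough third-order remainder, and by the modulus $\overline\omega$ for the $b,\sigma$ remainders; the cost expansion then follows by second-order Taylor expansion as in Yong--Zhou. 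The forcing terms you did not list, such as $\mathds 1_I\delta b$ for $X^{u^\epsilon}-\bar X-V^\epsilon$ and the $\mathds 1_I$-terms with $\partial_x b$ and $\partial^2_{xx}\phi$ in Step~3, are of order $\epsilon$ and at most $\epsilon^{3/2}$ respectively, so the same estimates absorb them.
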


\begin{proof}
To ease the notation, we define $X^\epsilon_t:= X^{\boldsymbol{\eta},u^\epsilon}_t$. Furthermore, we define
\begin{align*}
    \widetilde{\Phi}^\epsilon_t &:= f(t,X^\epsilon_t) - f(t,\bar{X}_t),\\
    (\widetilde{\Phi}^\epsilon)'_t&:=  \Big[\partial_xf(t,X^\epsilon_t)f(t,X^\epsilon_t) +f'(t,X^\epsilon_t) \Big] - \Big[\partial_xf(t,\bar{X}_t)f(t,\bar{X}_t) +f'(t,\bar{X}_t) \Big]
\end{align*}
and we have that $\big(\widetilde{\Phi}^\epsilon,(\widetilde{\Phi}^\epsilon)'\big)\in \mathbf{D}^{\alpha,\alpha}_\eta\mathbb{L}^{k}\big([0,T];\mathbb{R}^{d\times \ell}\big)$.
So, for every $t\in [0,T]$ we have
\begin{align*}
  &X^\epsilon_t - \bar{X}_t = \int_0^t \mathds{1}_I(s) \big(b(s,\bar{X}_s,u_s) - b(s,\bar{X}_s,\bar{u}_s) \big) \ud s + \int_0^t \big(b(s,X^\epsilon_s,{u}^\epsilon_s) - b(s,\bar{X}_s,{u}^\epsilon_s) \big) \ud s \\
  &\qquad + \int_0^t \mathds{1}_I(s) \big(\sigma(s,\bar{X}_s,u_s) - \sigma(s,\bar{X}_s,\bar{u}_s) \big) \ud W_s + \int_0^t \big(\sigma(s,X^\epsilon_s,{u}^\epsilon_s) - \sigma(s,\bar{X}_s,{u}^\epsilon_s) \big) \ud W_s\\
  &\qquad+ \int_0^t \big( f(s,X^{\epsilon}_s) - f(s,\bar{X}_s) \big) \ud \boldsymbol{\eta}_s\\
  &= \int_0^t \mathds{1}_I(s) \big(b(s,\bar{X}_s,u_s) - b(s,\bar{X}_s,\bar{u}_s) \big) \ud s + \int_0^t \bigg[\int_0^1 \partial_x b(s,\bar{X}_s + \lambda (X^\epsilon_s - \bar{X}_s),{u}^\epsilon_s) \ud \lambda \bigg] (X^\epsilon_s - \bar{X}_s) \ud s\\
  &+  \int_0^t \mathds{1}_I(s) \big(\sigma(s,\bar{X}_s,u_s) - \sigma(s,\bar{X}_s,\bar{u}_s) \big) \ud W_s + \int_0^t \bigg[\int_0^1 \partial_x \sigma(s,\bar{X}_s + \lambda (X^\epsilon_s - \bar{X}_s),{u}^\epsilon_s) \ud \lambda \bigg] (X^\epsilon_s - \bar{X}_s) \ud W_s\\
  &\qquad+ \int_0^t \bigg[\int_0^1 \partial_x f\big(s,\bar{X}_s + \lambda (X^\epsilon_s - \bar{X}_s)\big) \ud \lambda \bigg] (X^\epsilon_s - \bar{X}_s) \ud \boldsymbol{\eta}_s.
\end{align*}
Now, we define for every $\lambda\in[0,1]$
\begin{align*}
 \Phi^\epsilon_t(\lambda)&:=  \partial_x f(t,\bar{X}_t + \lambda  (X^\epsilon_t - \bar{X}_t)),\\ 
 (\Phi^\epsilon)'_t(\lambda)&:= \partial^2_{xx} f(t,\bar{X}_t + \lambda  (X^\epsilon_t - \bar{X}_t)) \big(f_t(\bar{X}_t) + \lambda [f_t({X}^\epsilon_t) -f_t(\bar{X}_t)]\big) \\
 &\qquad+\partial_x f'(t,\bar{X}_t + \lambda  (X^\epsilon_t - \bar{X}_t)).
\end{align*}
From Lemma \ref{lemma 4.1} and its proof, along with the boundness properties of $f, \partial_x f, \partial^2_{xx} f, \partial_xf'$ and \cite[Proposition 4.5]{FHL21}, we see that $\big(\Phi^\epsilon(\lambda),(\Phi^\epsilon)'(\lambda)\big)\in \mathbf{D}_{\eta}^{\alpha,\alpha} \mathbb{L}^{k,\infty}_{\lambda-ex,lin}\big([0,T];L(\mathbb{R}^d,\mathbb{R}^{d\times p})\big)$ and
\begin{align}\label{eq. 59}
\sup_{\epsilon > 0}\sup_{\lambda\in[0,1]}\Big\{\sup_{t\in[0,T]}\|\Phi^\epsilon_t(\lambda)\|_\infty +  \sup_{t\in[0,T]}\|(\Phi^\epsilon)'_t(\lambda)\|_\infty
+\big\|\big(\Phi^\epsilon_\cdot(\lambda),(\Phi^\epsilon)'_\cdot(\lambda)\big)\big\|_{\mathbf{D}_{\eta}^{\alpha,\alpha} \mathbb{L}^{k,\infty}_{lin}} \Big\} < \infty.
\end{align}

Next, by the fundamental Theorem of calculus we have for every $t\in [0,T]$ that
\begin{multline}\label{eq 78}
   (\widetilde{\Phi}^\epsilon)'_t = \int_0^1\bigg( \Big[\partial^2_{xx} f(t,\bar{X}_t + \lambda  (X^\epsilon_t - \bar{X}_t)) \Big(f_t(\bar{X}_t) + \lambda \big(f_t({X}^\epsilon_t) -f_t(\bar{X}_t)\big)\Big) \\+ \partial_x f'(t,\bar{X}_t + \lambda  (X^\epsilon_t - \bar{X}_t)) \Big] (X^\epsilon_t - \bar{X}_t) + \partial_x f(t,\bar{X}_t + \lambda  (X^\epsilon_t - \bar{X}_t)) \big(f_t({X}^\epsilon_t) -f_t(\bar{X}_t)\big) \bigg) \ud \lambda\\
   = \int_0^1\Big((\Phi^\epsilon)'_t(\lambda) (X^\epsilon_t - \bar{X}_t) + \Phi^\epsilon_t(\lambda)  (X^\epsilon_t - \bar{X}_t)'
   \Big)\ud\lambda.
\end{multline}
Furthermore, we define
\begin{align*}
    F^\epsilon_t := \int_0^t \mathds{1}_I(s) \delta b_s \ud s + \int_0^t \mathds{1}_I(s) \delta\sigma_s \ud W_s.
\end{align*}
Hence, from Proposition \ref{Prop 3.3}, Lemma \ref{lemma 3.4} and Proposition \ref{Prop 3.5} we see that $X^\epsilon - \bar{X}$ satisfies the following equation
\begin{multline}\label{eq 77}
    X^\epsilon_t - \bar{X}_t = F^\epsilon_t + \int_0^t \bigg[\int_0^1 \partial_x b(s,\bar{X}_s + \lambda (X^\epsilon_s - \bar{X}_s),{u}^\epsilon_s) \ud \lambda \bigg] (X^\epsilon_s - \bar{X}_s) \ud s\\
  +  \int_0^t \bigg[\int_0^1 \partial_x \sigma(s,\bar{X}_s + \lambda (X^\epsilon_s - \bar{X}_s),{u}^\epsilon_s) \ud \lambda \bigg] (X^\epsilon_s - \bar{X}_s) \ud W_s\\
    + \int_0^t  
    \bigg(\int_0^1 \Phi^\epsilon_s(\lambda)\ud\lambda, \int_0^1(\Phi^\epsilon)'_s(\lambda) \ud\lambda \bigg) (X^\epsilon_s - \bar{X}_s)
    \ud \boldsymbol{\eta}_s.
\end{multline}
We can easily see with the methods used in the proof of Theorem \ref{thm 3.9} (see also below) that 
\begin{align*}
\big\|\big(F^\epsilon,0 \big)\big\|_
 {\mathbf D_\eta^{(p,p)\text{-var}}\mathbb L^k}&\leq \bigg\|\bigg(\int_0^\cdot \mathds{1}_I(s) \delta b_s \ud s,0 \bigg)\bigg\|_
 {\mathbf D_\eta^{(p,p)\text{-var}}\mathbb L^k} + \bigg\|\bigg(\int_0^\cdot \mathds{1}_I(s) \delta\sigma_s \ud W_s,0 \bigg)\bigg\|_
 {\mathbf D_\eta^{(p,p)\text{-var}}\mathbb L^k}\\ 
 &\lesssim \epsilon + \epsilon^{1/2}.
\end{align*}
Thus, via an application of \eqref{eq:BV-primary-estimate} from Theorem \ref{thm 3.9} to \eqref{eq 77} we obtain \eqref{eq:state-diff}.

Similarly, the martingale term in \eqref{eq:diffusion-first-variation} can be analyzed as above. By \eqref{eq:BV-primary-estimate} of Theorem \ref{thm 3.9} again, we have
\eqref{eq:diffusion-first-order-size}.

Next, note that every inhomogeneous term in
\eqref{eq:diffusion-second-variation} is then of order $\epsilon$. Indeed, for the drift spike we have from \ref{D-dynamics}, \eqref{eq. 012} (where $\frac{k}{2}$ is in place of $\{k,\infty\}$ and $\infty$ in place of $k$), the H\"older inequality \eqref{ineq 84} and \eqref{eq:diffusion-first-order-size} that
\begin{align*}
    &\bigg\|\bigg(\int_0^\cdot\Big[\frac{1}{2}\partial_{xx}^2 b(s,\bar{X}_s, \bar{u}_s) (V^\epsilon_s)^{\otimes 2} + \mathds{1}_I(s) \delta b_s\big] \ud s,0\bigg) \bigg\|_{\mathbf D_\eta^{(p,p)\text{-var}}\mathbb L^\frac{k}{2}} \\
    &\leq \bigg\|\bigg(\int_0^\cdot\frac{1}{2}\partial_{xx}^2 b(s,\bar{X}_s, \bar{u}_s) (V^\epsilon_s)^{\otimes 2} \ud s,0\bigg) \bigg\|_{\mathbf D_\eta^{(p,p)\text{-var}}\mathbb L^\frac{k}{2}} + \bigg\|\bigg(\int_0^\cdot \mathds{1}_I(s) \delta b_s \ud s,0\bigg) \bigg\|_{\mathbf D_\eta^{(p,p)\text{-var}}\mathbb L^\frac{k}{2}}\\
    &\lesssim \int_0^T \|V^\epsilon_s\|_k^2\ud s + \int_0^T\mathds{1}_I(s)\ud s \lesssim \epsilon.
\end{align*}
Similarly, for the diffusion spike we have from \ref{D-dynamics}, \eqref{eq. 013} (where $\frac{k}{2}$ is in place of $\{k,\infty\}$ and $\infty$ in place of $k$), the H\"older inequality \eqref{ineq 84} and \eqref{eq:diffusion-first-order-size} that
\begin{align*}
    &\bigg\|\bigg(\int_0^\cdot \Big[\frac{1}{2}\partial_{xx}^2\sigma(s,\bar{X}_s, \bar{u}_s) (V^\epsilon_s)^{\otimes 2} + \mathds{1}_I(s) \delta\sigma_{x,s} V^\epsilon_s \Big] \ud W_s,0 \bigg)\bigg\|_
 {\mathbf D_\eta^{(p,p)\text{-var}}\mathbb L^\frac{k}{2}}\\
 &\leq \bigg\|\bigg(\int_0^\cdot \frac{1}{2}\partial_{xx}^2\sigma(s,\bar{X}_s, \bar{u}_s) (V^\epsilon_s)^{\otimes 2}  \ud W_s,0 \bigg)\bigg\|_
 {\mathbf D_\eta^{(p,p)\text{-var}}\mathbb L^\frac{k}{2}} + \bigg\|\bigg(\int_0^\cdot  \mathds{1}_I(s) \delta\sigma_{x,s} V^\epsilon_s  \ud W_s,0 \bigg)\bigg\|_
 {\mathbf D_\eta^{(p,p)\text{-var}}\mathbb L^\frac{k}{2}}\\
 &\lesssim \bigg( \int_0^T\|V^\epsilon_s\|_k^4 \ud s\bigg)^{\frac{1}{2}} + \bigg(\int_0^T \mathds{1}_I(s) \|V^\epsilon_s\|_k^2 \ud s \bigg)^{\frac{1}{2}} \lesssim \epsilon.
\end{align*}
Lastly, for the rough integral term we have from \ref{P-rough}, \eqref{ineq 12}, \eqref{eq. 018} (where $(V^\epsilon)^{\otimes 2}$ is in place of $X$ and $(0,0)$ in place of $(F,F')$), the H\"older inequality \eqref{ineq 84} and \eqref{eq:diffusion-first-order-size}
that
\begin{align*}
    &\bigg\| \bigg(\int_0^\cdot \frac{1}{2}\partial_{xx}^2f(s,\bar{X}_s) (V^\epsilon_s)^{\otimes 2}\ud \boldsymbol{\eta}_s,\frac{1}{2}\partial_{xx}^2f(\cdot,\bar{X}) (V^\epsilon)^{\otimes 2}\bigg)\bigg\|_ {\mathbf D_\eta^{(p,p)\text{-var}}\mathbb L^\frac{k}{2}}\\
    &\lesssim \big\| \big((V^\epsilon)^{\otimes 2}, (V^\epsilon)' \otimes V^\epsilon+ V^\epsilon\otimes (V^\epsilon)'\big)\big\|_ {\mathbf D_\eta^{(p,p)\text{-var}}\mathbb L^\frac{k}{2}}\\
    &\lesssim \Big( \|V^\epsilon\|_{\mathbb{S}^k} + \|(V^\epsilon,(V^\epsilon)')\|_
 {\mathbf D_\eta^{(p,p)\text{-var}}\mathbb L^k}\Big)^2 \lesssim \epsilon.
\end{align*}
This proves \eqref{eq:diffusion-second-order-size}.

We move on to \eqref{eq:diffusion-state-remainder_0}. We subtract the equations and we get
\begin{align*}
    X^\epsilon_t - \bar{X}_t - V^\epsilon_t =  \widetilde{F}^\epsilon_t + \int_0^t \bigg[\int_0^1 \partial_x b(s,\bar{X}_s + \lambda (X^\epsilon_s - \bar{X}_s),{u}^\epsilon_s) \ud \lambda \bigg] (X^\epsilon_s - \bar{X}_s - V^\epsilon_s) \ud s\\
  +  \int_0^t \bigg[\int_0^1 \partial_x \sigma(s,\bar{X}_s + \lambda (X^\epsilon_s - \bar{X}_s),{u}^\epsilon_s) \ud \lambda \bigg] (X^\epsilon_s - \bar{X}_s - V^\epsilon_s) \ud W_s\\
   + \int_0^t  
    \bigg(\int_0^1\Phi^\epsilon_s(\lambda)\ud\lambda,\int_0^1(\Phi^\epsilon)'_s(\lambda)\ud\lambda\bigg) (X^\epsilon_s - \bar{X}_s - V^\epsilon_s) 
    \ud \boldsymbol{\eta}_s
\end{align*}
with
\begin{align*}
  \widetilde{F}^\epsilon_t :=  \int_0^t \bigg( \mathds{1}_I(s) \delta b_s +\bigg[\int_0^1 \partial_x b(s,\bar{X}_s + \lambda (X^\epsilon_s - \bar{X}_s),{u}^\epsilon_s) \ud \lambda - \partial_x b(s,\bar{X}_s, \bar{u}_s)\bigg] V^\epsilon_s  \bigg) \ud s\\
  + \int_0^t \bigg[\int_0^1 \partial_x \sigma(s,\bar{X}_s + \lambda (X^\epsilon_s - \bar{X}_s),{u}^\epsilon_s) \ud \lambda - \partial_x \sigma(s,\bar{X}_s, \bar{u}_s)\bigg] V^\epsilon_s  \ud W_s\\
  + \int_0^t \big( \widehat{H}^\epsilon_s,(\widehat{H}^\epsilon)'_s\big) V^\epsilon_s \ud \boldsymbol{\eta}_s,
\end{align*}
and
\begin{align*}
     \widehat{H}^\epsilon_t &:= \int_0^1 
     {\Phi}^\epsilon_t(\lambda)
     \ud \lambda - \partial_{x}f(t,\bar{X}_t),\\
    (\widehat{H}^\epsilon)'_t &: = \int_0^1 
    ({\Phi}^\epsilon)'_t(\lambda)
    \ud \lambda - \partial^2_{xx}f(t,\bar{X}_t) f(t,\bar{X}_t) - \partial_{x}f'(t,\bar{X}_t).
\end{align*}
So, by Proposition \ref{Prop 3.5} and applying \eqref{eq:BV-primary-estimate} of Theorem \ref{thm 3.9} with forcing $\widetilde{F}^\epsilon$ we get \eqref{eq:diffusion-state-remainder_0}. Indeed, it only remains to estimate the following
\begin{multline*}
\|(\widetilde{F}^\epsilon,(\widetilde{F}^\epsilon)')\|_{
    \mathbf D_\eta^{(p,p)\text{-var}}\mathbb L^{\frac{k}{2}}}\leq\\
    \bigg\|\bigg(\int_0^\cdot \bigg( \mathds{1}_I(s) \delta b_s +\bigg[\int_0^1 \partial_x b(s,\bar{X}_s + \lambda (X^\epsilon_s - \bar{X}_s),{u}^\epsilon_s) \ud \lambda - \partial_x b(s,\bar{X}_s, \bar{u}_s)\bigg] V^\epsilon_s  \bigg) \ud s,0 \bigg)\bigg\|_
 {\mathbf D_\eta^{(p,p)\text{-var}}\mathbb L^{\frac{k}{2}}}\\
 + \bigg\|\bigg(\int_0^\cdot \bigg[\int_0^1 \partial_x \sigma(s,\bar{X}_s + \lambda (X^\epsilon_s - \bar{X}_s),{u}^\epsilon_s) \ud \lambda - \partial_x \sigma(s,\bar{X}_s, \bar{u}_s)\bigg] V^\epsilon_s  \ud W_s,0 \bigg)\bigg\|_
 {\mathbf D_\eta^{(p,p)\text{-var}}\mathbb L^{\frac{k}{2}}}\\
 + \bigg\|\bigg(\int_0^\cdot \widehat{H}^\epsilon_s V^\epsilon_s \ud\boldsymbol{\eta}_s,\widehat{H}^\epsilon V^\epsilon \bigg)\bigg\|_
 {\mathbf D_\eta^{(p,p)\text{-var}}\mathbb L^{\frac{k}{2}}}.
\end{multline*} 
Now, for the second term, let $0\leq u_1 \leq u_2 \leq T$, by BDG and Minkowski's integral inequalities we get
\begin{align*}
    \bigg\|\int_{u_1}^{u_2} \bigg[\int_0^1 \partial_x \sigma(s,\bar{X}_s + \lambda (X^\epsilon_s - \bar{X}_s),{u}^\epsilon_s) \ud \lambda - \partial_x \sigma(s,\bar{X}_s, \bar{u}_s)\bigg] V^\epsilon_s  \ud W_s\bigg\|_{\frac{k}{2}}\\
    \lesssim  \bigg\|\bigg(\int_{u_1}^{u_2} \bigg|\int_0^1 \partial_x \sigma(s,\bar{X}_s + \lambda (X^\epsilon_s - \bar{X}_s),{u}^\epsilon_s) \ud \lambda - \partial_x \sigma(s,\bar{X}_s, \bar{u}_s)\bigg|^2 |V^\epsilon_s|^2  \ud s \bigg)^{\frac{1}{2}}\bigg\|_{\frac{k}{2}}\\
    \leq \bigg(\int_{u_1}^{u_2} \bigg\|\bigg|\int_0^1 \partial_x \sigma(s,\bar{X}_s + \lambda (X^\epsilon_s - \bar{X}_s),{u}^\epsilon_s) \ud \lambda - \partial_x \sigma(s,\bar{X}_s, \bar{u}_s)\bigg| |V^\epsilon_s|\bigg\|^2_{\frac{k}{2}}  \ud s \bigg)^{\frac{1}{2}}\\
    \leq \|V^\epsilon\|_{\mathbb S^k} \bigg(\int_{u_1}^{u_2} \bigg\|\int_0^1 \partial_x \sigma(s,\bar{X}_s + \lambda (X^\epsilon_s - \bar{X}_s),{u}^\epsilon_s) \ud \lambda - \partial_x \sigma(s,\bar{X}_s, \bar{u}_s) \bigg\|^2_{k}  \ud s \bigg)^{\frac{1}{2}},
\end{align*}
where we used the H\"older inequality 
\begin{equation}\label{ineq 84}
 \|\cdot \cdot\hspace{.1cm}\|_{\frac{k}{2}} \leq \|\cdot\|_{k} \|\cdot\|_{k}.
\end{equation}
Hence, 
    \begin{align*}
        &\bigg\|\bigg(\int_0^\cdot \bigg[\int_0^1 \partial_x \sigma(s,\bar{X}_s + \lambda (X^\epsilon_s - \bar{X}_s),{u}^\epsilon_s) \ud \lambda - \partial_x \sigma(s,\bar{X}_s, \bar{u}_s)\bigg] V^\epsilon_s  \ud W_s,0\bigg)\bigg\|_{
    \mathbf D_\eta^{(p,p)\text{-var}}\mathbb L^{\frac{k}{2}}}\\
    &\lesssim \|V^\epsilon\|_{\mathbb S^k} \bigg(\int_{0}^{T} \bigg\|\int_0^1 \partial_x \sigma(s,\bar{X}_s + \lambda (X^\epsilon_s - \bar{X}_s),{u}^\epsilon_s) \ud \lambda - \partial_x \sigma(s,\bar{X}_s, \bar{u}_s) \bigg\|^2_{k}  \ud s \bigg)^{\frac{1}{2}}\\
    &\leq  \|V^\epsilon\|_{\mathbb S^k} \bigg(\int_{0}^{T} \mathds{1}_{I}(s) \bigg\|\int_0^1 \partial_x \sigma(s,\bar{X}_s + \lambda (X^\epsilon_s - \bar{X}_s),{u}^\epsilon_s) \ud \lambda - \partial_x \sigma(s,\bar{X}_s, \bar{u}_s) \bigg\|^2_{k}  \ud s\\
    &\qquad+ \int_{0}^{T} \mathds{1}_{I^c}(s) \bigg\|\int_0^1 \partial_x \sigma(s,\bar{X}_s + \lambda (X^\epsilon_s - \bar{X}_s),{u}^\epsilon_s) \ud \lambda - \partial_x \sigma(s,\bar{X}_s, \bar{u}_s) \bigg\|^2_{k}  \ud s\bigg)^{\frac{1}{2}}\\
    &\lesssim \|V^\epsilon\|_{\mathbb S^k} \bigg[\epsilon^{\frac{1}{2}} +  \bigg(\int_{0}^{T} \mathds{1}_{I^c}(s)\bigg\|\int_0^1 \partial_x \sigma(s,\bar{X}_s + \lambda (X^\epsilon_s - \bar{X}_s),\bar{u}_s) \ud \lambda - \partial_x \sigma(s,\bar{X}_s, \bar{u}_s) \bigg\|^2_{k}  \ud s \bigg)^{\frac{1}{2}}\bigg]\\
    &\lesssim \|V^\epsilon\|_{\mathbb S^k} \big[\epsilon^{\frac{1}{2}} + T^{\frac{1}{2}} \|X^\epsilon - \bar{X}\|_{\mathbb S^k}\big] \lesssim \epsilon.
    \end{align*}
 The rough variation seminorm of the drift term is estimated in essentially the same way as above while also using the contraction property of conditional expectation. 
 For the rough integral term, its rough variation seminorm is estimated by noticing the uniform bound over $\epsilon$ from \eqref{eq. 59} and applying \eqref{ineq 12}, together with the structure of \eqref{ineq 13}–\eqref{ineq 15} (where $V^\epsilon$ is in place of $X$ and (0,0) in place of $(F,F')$) the H\"older inequality \eqref{ineq 84} and \eqref{eq:state-diff}, \eqref{eq:diffusion-first-order-size} such that
\begin{align*}
 &\bigg\|\bigg(\int_0^\cdot \widehat{H}^\epsilon_s V^\epsilon_s \ud\boldsymbol{\eta}_s,\widehat{H}^\epsilon V^\epsilon \bigg)\bigg\|_
 {\mathbf D_\eta^{(p,p)\text{-var}}\mathbb L^{\frac{k}{2}}}\\
 &\lesssim  \Big[\big\|\big(\widehat{H}^\epsilon,(\widehat{H}^\epsilon)'\big)\big\|_{\mathbf{D}_{\eta}^{(p,p)\text{-var}} \mathbb{L}^{k}_{lin}} + \sup_{t\in[0,T]}\|\widehat{H}^\epsilon_t\|_k + \sup_{t\in[0,T]}\|(\widehat{H}^\epsilon)'_t \|_k\Big] \\
 &\qquad\cdot \Big[\|V^\epsilon\|_{\mathbb{S}^k} + \|(V^\epsilon,(V^\epsilon)')\|_
 {\mathbf D_\eta^{(p,p)\text{-var}}\mathbb L^{k}}\Big]\\
 &\lesssim \Big[\|X^{u^\epsilon} - \bar{X}\|_{\mathbb S^k}
 +\|(X^{u^\epsilon} - \bar{X},(X^{u^\epsilon} - \bar{X})')\|_
 {\mathbf D_\eta^{(p,p)\text{-var}}\mathbb L^k}\Big] \Big[\|V^\epsilon\|_{\mathbb{S}^k} + \|(V^\epsilon,(V^\epsilon)')\|_
 {\mathbf D_\eta^{(p,p)\text{-var}}\mathbb L^{k}}\Big]\\
 &\lesssim \epsilon.
\end{align*}
Above we assumed the inequality
\begin{multline}\label{eq. 091}
     \big\|\big(\widehat{H}^\epsilon,(\widehat{H}^\epsilon)'\big)\big\|_{\mathbf{D}_{\eta}^{(p,p)\text{-var}} \mathbb{L}^{k}_{lin}} + \sup_{t\in[0,T]}\|\widehat{H}^\epsilon_t\|_k + \sup_{t\in[0,T]}\|(\widehat{H}^\epsilon)'_t \|_k\ \\
     \lesssim \|X^{u^\epsilon} - \bar{X}\|_{\mathbb S^k}
 +\|(X^{u^\epsilon} - \bar{X},(X^{u^\epsilon} - \bar{X})')\|_
 {\mathbf D_\eta^{(p,p)\text{-var}}\mathbb L^k}.
\end{multline}
From the fundamental Theorem of calculus we have
\begin{align*}
    \widehat{H}^\epsilon_t &= \bigg[\int_0^1 \int_0^1 \lambda\partial_{xx}^2 f(t,\bar{X}_t + \vartheta \lambda  (X^\epsilon_t - \bar{X}_t))\ud \vartheta \ud\lambda \bigg] (X^\epsilon_t - \bar{X}_t) = \int_0^1 \widehat{{H}}^\epsilon_t(\lambda)\ud\lambda (X^\epsilon_t - \bar{X}_t),\\
    (\widehat{H}^\epsilon)'_t&= \int_0^1\int_0^1\lambda \partial_{xxx}^3 f(t,\bar{X}_t + \vartheta\lambda   (X^\epsilon_t - \bar{X}_t)) \big(f(t,\bar{X}_t) + \vartheta\lambda \big[f(t,X^\epsilon_t ) -f(t,\bar{X}_t)\big] \big)\ud \vartheta \ud \lambda (X^\epsilon_t - \bar{X}_t) \\
    &\qquad+\int_0^1\int_0^1 \lambda\partial_{xx}^2 f(t,\bar{X}_t + \vartheta \lambda  (X^\epsilon_t - \bar{X}_t)) \big[f(t,X^\epsilon_t ) -f(t,\bar{X}_t)\big] \ud \vartheta \ud\lambda\\
    &\qquad+ \int_0^1 \int_0^1 \lambda\partial_{xx}^2 f'(t,\bar{X}_t + \vartheta\lambda  (X^\epsilon_t - \bar{X}_t))\ud \vartheta \ud \lambda (X^\epsilon_t - \bar{X}_t)\\
    &= \int_0^1 (\widehat{{H}}^\epsilon)'_t(\lambda)\ud\lambda (X^\epsilon_t - \bar{X}_t) + \int_0^1 \widehat{{H}}^\epsilon_t(\lambda) \ud\lambda (X^\epsilon_t - \bar{X}_t)',
\end{align*}
where 
\begin{align*}
\widehat{{H}}^\epsilon_t(\vartheta,\lambda)&:= \lambda\partial_{xx}^2 f(t,\bar{X}_t + \vartheta \lambda  (X^\epsilon_t - \bar{X}_t)),\\
(\widehat{{H}}^\epsilon)'_t(\vartheta,\lambda)&:= \lambda \partial_{xxx}^3 f(t,\bar{X}_t + \vartheta\lambda (X^\epsilon_t - \bar{X}_t)) \big(f(t,\bar{X}_t) + \vartheta\lambda \big[f(t,X^\epsilon_t ) -f(t,\bar{X}_t)\big] \big) \\
&\qquad+  \lambda\partial_{xx}^2 f'(t,\bar{X}_t + \vartheta \lambda  (X^\epsilon_t - \bar{X}_t)) ,\\
\widehat{{H}}^\epsilon_t(\lambda) &:= \int_0^1 \widehat{{H}}^\epsilon_t(\vartheta,\lambda)\ud \vartheta,\\
(\widehat{{H}}^\epsilon)'_t(\lambda)&:= \int_0^1 (\widehat{{H}}^\epsilon)'_t(\vartheta,\lambda)\ud \vartheta.
\end{align*}
So, for every $\lambda\in[0,1]$, from Lemma \ref{lemma 4.1} and its proof, along with the boundness properties of $f, \partial_{xx}^2 f, \partial^3_{xxx} f, \partial_{xx}^2f'$ and \cite[Proposition 4.5]{FHL21}, we see that
$\big(\widehat{{H}}^\epsilon(\vartheta,\lambda),(\widehat{{H}}^\epsilon)'(\vartheta,\lambda)\big)\in \mathbf{D}_{\eta}^{\alpha,\alpha} \mathbb{L}^{k,\infty}_{\vartheta-ex,lin}$ $\big([0,T];L(\mathbb{R}^d,L(\mathbb{R}^d,\mathbb{R}^{d\times \ell}))\big)$ and
\begin{multline*}
\sup_{\epsilon > 0}\sup_{\lambda\in[0,1]}\sup_{\vartheta\in[0,1]}\Big\{\sup_{t\in[0,T]}\|\widehat{{H}}_t^\epsilon(\vartheta,\lambda)\|_\infty +  \sup_{t\in[0,T]}\|(\widehat{{H}}^\epsilon)_t'(\vartheta,\lambda)\|_\infty
+\big\|\big(\widehat{{H}}_\cdot^\epsilon(\vartheta,\lambda),(\widehat{{H}}^\epsilon)_\cdot'(\vartheta,\lambda)\big)\big\|_{\mathbf{D}_{\eta}^{\alpha,\alpha} \mathbb{L}^{k,\infty}_{lin}} \Big\} < \infty.
\end{multline*}
Consequently, we get that $\big(\widehat{{H}}^\epsilon(\lambda),(\widehat{{H}}^\epsilon)'(\lambda)\big)\in \mathbf{D}_{\eta}^{\alpha,\alpha} \mathbb{L}^{k,\infty}_{\lambda-ex,lin}\big([0,T];L(\mathbb{R}^d,L(\mathbb{R}^d,\mathbb{R}^{d\times \ell}))\big)$ and
\begin{align*}
\sup_{\epsilon > 0}\sup_{\lambda\in[0,1]}\Big\{\sup_{t\in[0,T]}\|\widehat{{H}}_t^\epsilon(\lambda)\|_\infty +  \sup_{t\in[0,T]}\|(\widehat{{H}}^\epsilon)_t'(\lambda)\|_\infty
+\big\|\big(\widehat{{H}}_\cdot^\epsilon(\lambda),(\widehat{{H}}^\epsilon)_\cdot'(\lambda)\big)\big\|_{\mathbf{D}_{\eta}^{\alpha,\alpha} \mathbb{L}^{k,\infty}_{lin}} \Big\} < \infty.
\end{align*}
Thus, \eqref{eq. 091} follows from \eqref{eq. 018} (where $X^\epsilon - \bar{X}$ is in place of $X$ and $(0,0)$ in place of $(F,F')$).

Then, for \eqref{eq:diffusion-state-remainder}, by the Taylor expansion
of the state equation up to the second order (see e.g. \cite[Lemma 4.3]{yong1999stochastic} for the classical case), we have
\begin{align*}
    &X^\epsilon_t - \bar X_t = \int_0^t \mathds{1}_I(s) \delta b_s \ud s  +\int_0^t \mathds{1}_I(s) \delta \sigma_s \ud W_s\\ &+ \int_0^t \bigg( \partial_{x} b(s,\bar{X}_s,u^\epsilon_s) (X^\epsilon_s - \bar{X}_s) + \bigg[\int_0^1 \lambda \partial^2_{xx} b(s,{X}^\epsilon_s - \lambda (X^\epsilon_s - \bar{X}_s),{u}^\epsilon_s) \ud \lambda \bigg] (X^\epsilon_s - \bar{X}_s)^{\otimes 2} \bigg)\ud s\\ 
    &+ \int_0^t \bigg(\partial_{x} \sigma(s,\bar{X}_s,u^\epsilon_s) (X^\epsilon_s - \bar{X}_s) + \bigg[\int_0^1 \lambda \partial^2_{xx} \sigma(s,{X}^\epsilon_s - \lambda (X^\epsilon_s - \bar{X}_s),{u}^\epsilon_s) \ud \lambda \bigg] (X^\epsilon_s - \bar{X}_s)^{\otimes 2} \bigg)\ud W_s\\
    &+ \int_0^t\partial_{x} f(s,\bar{X}_s) (X^\epsilon_s - \bar{X}_s) \ud\boldsymbol{\eta}_s 
    +\int_0^t \bigg[\int_0^1 \lambda \partial^2_{xx} f(s,{X}^\epsilon_s - \lambda (X^\epsilon_s - \bar{X}_s)) \ud \lambda \bigg] (X^\epsilon_s - \bar{X}_s)^{\otimes 2}\ud \boldsymbol{\eta}_s,
\end{align*}
where for the difference of the rough integral, the corresponding Gubinelli derivative is 
\begin{align}
    &\Big[\partial_xf(t,X^\epsilon_t)f(t,X^\epsilon_t) +f'(t,X^\epsilon_t) \Big] - \Big[\partial_xf(t,\bar{X}_t)f(t,\bar{X}_t) +f'(t,\bar{X}_t) \Big] = \nonumber\\
    &\Big[\partial^2_{xx}f(t,\bar{X}_t) f(t,\bar{X}_t) + \partial_x f'(t,\bar{X}_t)\Big] (X^\epsilon_t - \bar{X}_t) + \partial_{x}f(t,\bar{X}_t) \Big[f(t,{X}^\epsilon_t) - f(t,\bar{X}_t)\Big] \nonumber\\
    &\qquad+ \int_0^1 \Big( \Big[\lambda\partial^3_{xxx} f(t,{X}^\epsilon_t - \lambda  (X^\epsilon_t - \bar{X}_t)) \big(f_t({X}^\epsilon_t) - \lambda [f_t({X}^\epsilon_t) -f_t(\bar{X}_t)]\big) \nonumber\\
    &\qquad\qquad+ \lambda\partial^2_{xx} f'(t,{X}^\epsilon_t - \lambda  (X^\epsilon_t - \bar{X}_t))\Big](X^\epsilon_t - \bar{X}_t)^{\otimes 2} \nonumber\\
    &\qquad+ 2 \lambda \partial^2_{xx} f(t,{X}^\epsilon_t - \lambda (X^\epsilon_t - \bar{X}_t))(X^\epsilon_t - \bar{X}_t) \otimes \Big[f(t,{X}^\epsilon_t) - f(t,\bar{X}_t)\Big] \Big)\ud\lambda.\label{eq 80}
\end{align}
Indeed, by \eqref{eq 78} and the fundamental Theorem of calculus, the left hand side of the above identity can be written as  
\begin{align*}
    &\int_0^1\Big((\Phi^\epsilon)'_t(\lambda) (X^\epsilon_t - \bar{X}_t) + \Phi^\epsilon_t(\lambda)(X^\epsilon_t -\bar{X}_t)' \Big)\ud\lambda\\
    &= \Big[\partial^2_{xx}f(t,\bar{X}_t) f(t,\bar{X}_t) + \partial_x f'(t,\bar{X}_t)\Big] (X^\epsilon_t - \bar{X}_t) + \partial_{x}f(t,\bar{X}_t) \Big[f(t,{X}^\epsilon_t) - f(t,\bar{X}_t)\Big]\\
    &\qquad+ \int_0^1\int_0^\lambda \partial_\vartheta(\Phi^\epsilon)'_t(\vartheta) (X^\epsilon_t - \bar{X}_t) \ud \vartheta \ud\lambda + \int_0^1\int_0^\lambda \partial_\vartheta\Phi^\epsilon_t(\vartheta) (X^\epsilon_t - \bar{X}_t)' \ud \vartheta \ud\lambda.
\end{align*}
For the third term on the right hand side of the above identity, by Fubini's Theorem, we get
\begin{align*}
     &\int_0^1\int_0^\lambda \partial_\vartheta(\Phi^\epsilon)'_t(\vartheta)  (X^\epsilon_t - \bar{X}_t) \ud \vartheta \ud \lambda = \int_0^1\int_0^1 \mathds{1}_{[0,\lambda]}(\vartheta) \Big( \Big[\partial^3_{xxx} f(t,\bar{X}_t + \vartheta(X^\epsilon_t - \bar{X}_t)) \big(f_t(\bar{X}_t) \\ 
     &\qquad+ \vartheta [f_t({X}^\epsilon_t) -f_t(\bar{X}_t)]\big) + \partial^2_{xx} f'(t,{X}^\epsilon_t + \vartheta(X^\epsilon_t - \bar{X}_t)) \Big](X^\epsilon_t - \bar{X}_t)^{\otimes 2} \\
     &\qquad+ \partial^2_{xx} f(t,\bar{X}_t + \vartheta(X^\epsilon_t - \bar{X}_t)) \big(f_t({X}^\epsilon_t) - f_t(\bar{X}_t)\big) \otimes (X^\epsilon_t - \bar{X}_t) \bigg) \ud \vartheta \ud \lambda\\
      &=\int_0^1 (1-\vartheta) \Big( \Big[\partial^3_{xxx} f(t,\bar{X}_t + \vartheta(X^\epsilon_t - \bar{X}_t)) \big(f_t(\bar{X}_t) \\ 
     &\qquad+ \vartheta [f_t({X}^\epsilon_t) -f_t(\bar{X}_t)]\big) + \partial^2_{xx} f'(t,{X}^\epsilon_t + \vartheta (X^\epsilon_t - \bar{X}_t)) \Big](X^\epsilon_t - \bar{X}_t)^{\otimes 2} \\
     &\qquad+ \partial^2_{xx} f(t,\bar{X}_t + \vartheta(X^\epsilon_t - \bar{X}_t)) \big(f_t({X}^\epsilon_t) - f_t(\bar{X}_t)\big) \otimes (X^\epsilon_t - \bar{X}_t) \Big) \ud \vartheta, 
\end{align*}
and for the fourth term
\begin{align*}
     &\int_0^1\int_0^\lambda \partial_\vartheta\Phi^\epsilon_t(\vartheta)  (X^\epsilon_t - \bar{X}_t)' \ud \vartheta \ud \lambda \\
     &= \int_0^1\int_0^1 \mathds{1}_{[0,\lambda]}(\vartheta) \partial^2_{xx} f(t,\bar{X}_t + \vartheta(X^\epsilon_t - \bar{X}_t)) (X^\epsilon_t - \bar{X}_t) \otimes (X^\epsilon_t - \bar{X}_t)' \ud \vartheta \ud \lambda\\
     &= \int_0^1 (1-\vartheta)\partial^2_{xx} f(t,\bar{X}_t + \vartheta(X^\epsilon_t - \bar{X}_t)) (X^\epsilon_t - \bar{X}_t)\otimes (X^\epsilon_t - \bar{X}_t)'  \ud \vartheta.
\end{align*}
Substituting $\vartheta$ by $1-\lambda$, we prove \eqref{eq 80}. Thus, we define
\begin{align*}
    \overline{\Phi}^\epsilon_t(\lambda) &:= \lambda \partial^2_{xx} f(t,{X}^\epsilon_t - \lambda (X^\epsilon_t - \bar{X}_t)),\\
    (\overline{\Phi}^\epsilon)'_t(\lambda) &:=  \lambda\partial^3_{xxx} f(t,{X}^\epsilon_t - \lambda  (X^\epsilon_t - \bar{X}_t)) \big(f_t({X}^\epsilon_t) - \lambda [f_t({X}^\epsilon_t) -f_t(\bar{X}_t)]\big) \nonumber\\
    &\qquad+ \lambda\partial^2_{xx} f'(t,{X}^\epsilon_t - \lambda  (X^\epsilon_t - \bar{X}_t)).
\end{align*}
From Lemma \ref{lemma 4.1} and its proof, along with the boundness properties of $f, \partial_{xx}^2 f, \partial^3_{xxx} f, \partial_{xx}^2f'$ and \cite[Proposition 4.5]{FHL21}, we see that $\big(\overline{\Phi}^\epsilon(\lambda),(\overline{\Phi}^\epsilon)'(\lambda)\big)\in \mathbf{D}_{\eta}^{\alpha,\alpha} \mathbb{L}^{{k},\infty}_{\lambda-ex,lin}\big([0,T];L(\mathbb{R}^{d\times d},\mathbb{R}^{d\times \ell})\big)$ and
\begin{align}\label{eq. 86}
\sup_{\epsilon > 0}\sup_{\lambda\in[0,1]}\Big\{\sup_{t\in[0,T]}\|\overline{\Phi}^\epsilon_t(\lambda)\|_\infty +  \sup_{t\in[0,T]}\|(\overline{\Phi}^\epsilon)'_t(\lambda)\|_\infty
+\big\|\big(\overline{\Phi}^\epsilon_\cdot(\lambda),(\overline{\Phi}^\epsilon)'_\cdot(\lambda)\big)\big\|_{\mathbf{D}_{\eta}^{\alpha,\alpha} \mathbb{L}^{{k},\infty}_{lin}} \Big\} < \infty.
\end{align}
Hence, from Proposition \ref{Prop 3.3}, Proposition \ref{Prop 3.5} and Proposition \ref{prop 4.8}, by subtracting $V^\epsilon + \zeta^\epsilon$ we see that
\begin{multline}
     X^\epsilon_t - \bar{X}_t - V^\epsilon_t - \zeta^\epsilon_t = \overline{F}^\epsilon_t + 
     \int_0^t \partial_{x} b(s,\bar{X}_s,\bar{u}_s) (X^\epsilon_s - \bar{X}_s - V^\epsilon_s - \zeta^\epsilon_s) \ud s\\
     +  \int_0^t \partial_{x} \sigma(s,\bar{X}_s,\bar{u}_s) (X^\epsilon_s - \bar{X}_s - V^\epsilon_s - \zeta^\epsilon_s) \ud W_s\\
     + \int_0^t \big(\partial_x f(s, \bar{X}_s), \partial_{xx}f^2(s,\bar{X}_s) f(s,\bar{X}_s) + \partial_x f'(s,\bar{X}_s)\big) (X^\epsilon_s - \bar{X}_s - V^\epsilon_s - \zeta^\epsilon_s) \ud\boldsymbol{\eta}_s,
\end{multline}
where
\begin{align*}
    &\overline{F}^\epsilon_t := \int_0^t B^\epsilon_s \ud s + \int_0^t \Sigma^\epsilon_s \ud W_s \\
    &\qquad+ \int_0^t \Big(\frac{1}{2}\partial^2_{xx}f(s,\bar{X}_s),\frac{1}{2}\partial^3_{xxx}f(s,\bar{X}_s) f(s,\bar{X}_s) + \frac{1}{2}\partial^2_{xx}f'(s,\bar{X}_s)\Big) \big[(X^\epsilon_s - \bar{X}_s)^{\otimes 2} - (V^\epsilon_s)^{\otimes 2}\big] \ud\boldsymbol{\eta}_s\\
    &\qquad+ \int_0^t \big(H^\epsilon_s,(H^\epsilon)'_s\big) (X^\epsilon_s - \bar{X}_s)^{\otimes 2} \ud \boldsymbol{\eta}_s,
\end{align*}
with
\begin{align*}
     H^\epsilon_t &:= \int_0^1 
     \overline{\Phi}^\epsilon_t(\lambda)
     \ud \lambda - \frac{1}{2} \partial^2_{xx}f(t,\bar{X}_t),\\
    (H^\epsilon)'_t &: = \int_0^1 
    (\overline{\Phi}^\epsilon)'_t(\lambda)
    \ud \lambda - \frac{1}{2}\partial^3_{xxx}f(t,\bar{X}_t) f(t,\bar{X}_t) - \frac{1}{2}\partial^2_{xx}f'(t,\bar{X}_t),
\end{align*}
and
\begin{align*}
B^\epsilon_t &:= 
 \big[\partial_x b(t,\bar{X}_t,u^\epsilon_t) - \partial_x b(t,\bar{X}_t,\bar{u}_t) \big] (X^\epsilon_t - \bar{X}_t) \\
 &\qquad+  \bigg[\int_0^1 \lambda \partial^2_{xx} b(t,{X}^\epsilon_t - \lambda (X^\epsilon_t - \bar{X}_t),{u}^\epsilon_t) \ud \lambda - \frac{1}{2} \partial^2_{xx} b(t,\bar{X}_t,{u}^\epsilon_t)\bigg] (X^\epsilon_t - \bar{X}_t)^{\otimes 2}\\
 &\qquad + \frac{1}{2} \big[\partial^2_{xx} b(t,\bar{X}_t,{u}^\epsilon_t) - \partial^2_{xx} b(t,\bar{X}_t,\bar{u}_t) \big] (X^\epsilon_t - \bar{X}_t)^{\otimes 2}\\
 &\qquad+\frac{1}{2} \partial_{xx}^2 b(t,\bar{X}_t,\bar{u}_t) \big[(X^\epsilon_t - \bar{X}_t)^{\otimes 2} - (V^\epsilon_t)^{\otimes 2}\big],\\
 \vspace{.5cm}\\
 \Sigma^\epsilon_t &:= 
 \big[\partial_x \sigma(t,\bar{X}_t,u^\epsilon_t) - \partial_x \sigma(t,\bar{X}_t,\bar{u}_t) \big] (X^\epsilon_t - \bar{X}_t -V^\epsilon_t) \\
 &\qquad+  \bigg[\int_0^1 \lambda \partial^2_{xx} \sigma(t,{X}^\epsilon_t - \lambda (X^\epsilon_t - \bar{X}_t),{u}^\epsilon_t) \ud \lambda - \frac{1}{2} \partial^2_{xx} \sigma(t,\bar{X}_t,{u}^\epsilon_t)\bigg] (X^\epsilon_t - \bar{X}_t)^{\otimes 2}\\
 &\qquad + \frac{1}{2} \big[\partial^2_{xx} \sigma(t,\bar{X}_t,{u}^\epsilon_t) - \partial^2_{xx} \sigma(t,\bar{X}_t,\bar{u}_t) \big] (X^\epsilon_t - \bar{X}_t)^{\otimes 2}\\
 &\qquad+\frac{1}{2} \partial_{xx}^2\sigma(t,\bar{X}_t,\bar{u}_t) \big[(X^\epsilon_t - \bar{X}_t)^{\otimes 2} - (V^\epsilon_t)^{\otimes 2}\big].
\end{align*}
So, by Proposition \ref{Prop 3.5} and applying \eqref{eq:BV-primary-estimate} of Theorem \ref{thm 3.9} with forcing $\overline{F}^\epsilon$ we get \eqref{eq:diffusion-state-remainder}. Indeed, it only remains to estimate the following
\begin{multline*}
\|(\overline{F}^\epsilon,(\overline{F}^\epsilon)')\|_{
    \mathbf D_\eta^{(p,p)\text{-var}}\mathbb L^{\frac{k}{3}}} \leq
     \bigg\|\bigg(\int_0^\cdot B^\epsilon_s \ud s, 0\bigg) \bigg\|_{
    \mathbf D_\eta^{(p,p)\text{-var}}\mathbb L^{\frac{k}{3}}} +  \bigg\|\bigg(\int_0^\cdot \Sigma^\epsilon_s \ud W_s, 0\bigg) \bigg\|_{
    \mathbf D_\eta^{(p,p)\text{-var}}\mathbb L^{\frac{k}{3}}} \\
    + \bigg\|\bigg(\int_0^\cdot\frac{1}{2}\partial^2_{xx}f(s,\bar{X}_s) \big[(X^\epsilon_s - \bar{X}_s)^{\otimes 2} - (V^\epsilon_s)^{\otimes 2}\big]\ud \boldsymbol{\eta}_s, \frac{1}{2}\partial^2_{xx}f(\cdot,\bar{X})\big[(X^\epsilon - \bar{X})^{\otimes 2} - (V^\epsilon)^{\otimes 2}\big]\bigg) \bigg\|_{
    \mathbf D_\eta^{(p,p)\text{-var}}\mathbb L^{\frac{k}{3}}}\\
    + \bigg\|\bigg(\int_0^\cdot H^\epsilon_s (X^\epsilon_s - \bar{X}_s)^{\otimes 2} \ud \boldsymbol{\eta}_s, H^\epsilon (X^\epsilon - \bar{X})^{\otimes 2}\bigg) \bigg\|_{
    \mathbf D_\eta^{(p,p)\text{-var}}\mathbb L^{\frac{k}{3}}}.
\end{multline*}
Now, for the second term, let $0\leq u_1 \leq u_2 \leq T$, by BDG and Minkowski's integral inequalities we get
\begin{align*}
    \bigg\|\int_{u_1}^{u_2} \Sigma^\epsilon_s \ud W_s \bigg\|_{\frac{k}{3}} &\lesssim  \bigg\|\bigg(\int_{u_1}^{u_2} |\Sigma^\epsilon_s|^2 \ud s\bigg)^{\frac{1}{2}} \bigg\|_{\frac{k}{3}} \leq \bigg(\int_{u_1}^{u_2} \|\Sigma^\epsilon_s\|_{\frac{k}{3}}^2 \ud s \bigg)^{\frac{1}{2}}.
\end{align*}
Hence,
\begin{align*}
    \bigg\|\bigg( \int_0^\cdot \Sigma^\epsilon_s \ud W_s, 0\bigg) \bigg\|_{
    \mathbf D_\eta^{(p,p)\text{-var}}\mathbb L^{\frac{k}{3}}} &\lesssim \bigg(\int_{0}^{T} \|\Sigma^\epsilon_s\|_{\frac{k}{3}}^2 \ud s \bigg)^{\frac{1}{2}}
\end{align*}
and we conclude as in \cite[eq. 4.52, p. 133]{yong1999stochastic}, note only that by H\"older's inequality it holds 
\begin{equation}\label{ineq 86}
\|\cdot \cdot\hspace{.1cm}\|_{\frac{k}{3}} \leq \|\cdot\|_{\frac{k}{2}} \|\cdot\|_{k}.
\end{equation}
The rough variation seminorm of the drift term is estimated in essentially the same way as above while also using the contraction property of conditional expectation. For the first rough integral term, its rough variation seminorm is estimated from applying \eqref{ineq 12}, together with \eqref{eq. 018} (where $(X^\epsilon - \bar{X})^{\otimes 2} - (V^\epsilon)^{\otimes 2}$ is in place of $X$ and (0,0) in place of $(F,F')$), the H\"older inequality \eqref{ineq 86} and \eqref{eq:state-diff}, \eqref{eq:diffusion-first-order-size}, \eqref{eq:diffusion-state-remainder_0} such that
\begin{align*}
    &\bigg\|\bigg(\int_0^\cdot\frac{1}{2}\partial^2_{xx}f(s,\bar{X}_s) \big[(X^\epsilon_s - \bar{X}_s)^{\otimes 2} - (V^\epsilon_s)^{\otimes 2}\big]\ud \boldsymbol{\eta}_s, \frac{1}{2}\partial^2_{xx}f(\cdot,\bar{X})\big[(X^\epsilon - \bar{X})^{\otimes 2} - (V^\epsilon)^{\otimes 2}\big]\bigg) \bigg\|_{
    \mathbf D_\eta^{(p,p)\text{-var}}\mathbb L^{\frac{k}{3}}}\\
    &\lesssim \big\|\big((X^\epsilon - \bar{X})^{\otimes 2} - (V^\epsilon)^{\otimes 2},\big[(X^\epsilon - \bar{X})^{\otimes 2} - (V^\epsilon)^{\otimes 2}\big]'\big) \big\|_{
    \mathbf D_\eta^{(p,p)\text{-var}}\mathbb L^{\frac{k}{3}}}\\
    &\lesssim \big\|\big((X^\epsilon - \bar{X} - V^\epsilon)\otimes (X^\epsilon - \bar{X}),\big[(X^\epsilon - \bar{X} - V^\epsilon)\otimes (X^\epsilon - \bar{X})\big]'\big) \big\|_{
    \mathbf D_\eta^{(p,p)\text{-var}}\mathbb L^{\frac{k}{3}}}\\
    &\qquad+ \big\|\big( V^\epsilon\otimes (X^\epsilon - \bar{X} - V^\epsilon), [V^\epsilon \otimes (X^\epsilon - \bar{X} - V^\epsilon)\big]'\big) \big\|_{
    \mathbf D_\eta^{(p,p)\text{-var}}\mathbb L^{\frac{k}{3}}}\\
    &\lesssim \Big[\|X^{\boldsymbol{\eta},u^\epsilon}-\bar{X} - V^\epsilon\|_{\mathbb S^{\frac{k}{2}}}
 +\|(X^{\boldsymbol{\eta},u^\epsilon}-\bar{X} - V^\epsilon,(X^{\boldsymbol{\eta},u^\epsilon}-\bar{X} - V^\epsilon)')\|_
 {\mathbf D_\eta^{(p,p)\text{-var}}\mathbb L^{\frac{k}{2}}} \Big]\\
 &\qquad \cdot \Big[\|X^{u^\epsilon} - \bar{X}\|_{\mathbb S^k}
 +\|(X^{u^\epsilon} - \bar{X},(X^{u^\epsilon} - \bar{X})')\|_
 {\mathbf D_\eta^{(p,p)\text{-var}}\mathbb L^k} \Big]\\
 &\qquad+ \Big[\|V^\epsilon\|_{\mathbb S^k}
 + \|(V^\epsilon,(V^\epsilon)')\|_
 {\mathbf D_\eta^{(p,p)\text{-var}}\mathbb L^k}\Big]\\
 &\qquad\cdot \Big[\|X^{\boldsymbol{\eta},u^\epsilon}-\bar{X} - V^\epsilon\|_{\mathbb S^{\frac{k}{2}}}
 +\|(X^{\boldsymbol{\eta},u^\epsilon}-\bar{X} - V^\epsilon,(X^{\boldsymbol{\eta},u^\epsilon}-\bar{X} - V^\epsilon)')\|_
 {\mathbf D_\eta^{(p,p)\text{-var}}\mathbb L^{\frac{k}{2}}} \Big]\\
&\lesssim \epsilon^{\frac{3}{2}}.
\end{align*}
For the second rough integral term, its rough variation seminorm is estimated by noticing the uniform bound over $\epsilon$ from \eqref{eq. 86} and applying \eqref{ineq 12}, together with the structure of \eqref{ineq 13}–\eqref{ineq 15} (where $(X^\epsilon - \bar{X})^{\otimes 2}$ is in place of $X$ and (0,0) in place of $(F,F')$), the H\"older inequality \eqref{ineq 86} and \eqref{eq:state-diff} such that
\begin{align*}
    &\bigg\|\bigg(\int_0^\cdot H^\epsilon_s (X^\epsilon_s - \bar{X}_s)^{\otimes 2} \ud \boldsymbol{\eta}_s, H^\epsilon (X^\epsilon - \bar{X})^{\otimes 2}\bigg) \bigg\|_{
    \mathbf D_\eta^{(p,p)\text{-var}}\mathbb L^{\frac{k}{3}}}\\
    &\lesssim \Big[\big\|\big({H}^\epsilon,({H}^\epsilon)'\big)\big\|_{\mathbf{D}_{\eta}^{(p,p)\text{-var}} \mathbb{L}^{k}_{lin}} + \sup_{t\in[0,T]}\|{H}^\epsilon_t\|_k + \sup_{t\in[0,T]}\|({H}^\epsilon)'_t \|_k\Big]\\
    &\cdot \Big[\|(X^\epsilon - \bar{X})^{\otimes 2}\|_{\mathbb{S}^{\frac{k}{2}}}+\big\|\big((X^\epsilon - \bar{X})^{\otimes 2},(X^\epsilon - \bar{X})'\otimes(X^\epsilon - \bar{X})+(X^\epsilon - \bar{X})\otimes(X^\epsilon - \bar{X})' \big) \big\|_{
    \mathbf D_\eta^{(p,p)\text{-var}}\mathbb L^{\frac{k}{2}}}\Big]\\
    &\lesssim  \Big[\|X^{u^\epsilon} - \bar{X}\|_{\mathbb S^k}
 +\|(X^{u^\epsilon} - \bar{X},(X^{u^\epsilon} - \bar{X})')\|_
 {\mathbf D_\eta^{(p,p)\text{-var}}\mathbb L^k}\Big] ^3 \lesssim \epsilon^{\frac{3}{2}}.
\end{align*}
The assumed inequality
\begin{multline*}
     \big\|\big({H}^\epsilon,({H}^\epsilon)'\big)\big\|_{\mathbf{D}_{\eta}^{(p,p)\text{-var}} \mathbb{L}^{k}_{lin}} + \sup_{t\in[0,T]}\|{H}^\epsilon_t\|_k + \sup_{t\in[0,T]}\|({H}^\epsilon)'_t \|_k\ \\
     \lesssim \|X^{u^\epsilon} - \bar{X}\|_{\mathbb S^k}
 +\|(X^{u^\epsilon} - \bar{X},(X^{u^\epsilon} - \bar{X})')\|_
 {\mathbf D_\eta^{(p,p)\text{-var}}\mathbb L^k}
\end{multline*}
is proved with the same method as \eqref{eq. 091} above.

Lastly, for \eqref{eq 76}, with the help of \eqref{eq:state-diff}
--\eqref{eq:diffusion-state-remainder}, it follows with exactly the same calculations as in the proof of \cite[eq. 4.31, p. 128]{yong1999stochastic}.
\end{proof}

\subsubsection{Extracting the SMP}
First, we need to strengthen our assumptions with the following.

\begin{assumptions}\label{assu: per PMP}
Let $\alpha \in (0,1), k, m, p \in (2,\infty)$ such that 
 $\frac{1}{3}< \frac{1}{p} \leq \alpha < \frac{1}{2}$.
 
    \begin{itemize}
    \item[(i)] It holds 
    $$
\frac{2}{k} + \frac{1}{m} \leq \frac{1}{2}.
    $$
    \item[(ii)] It holds 
 $$2\alpha + \alpha\wedge \frac{1}{m}>1.$$
    
    \item[(iii)] It holds
    $$
      m \leq p.
    $$
    \end{itemize}
\end{assumptions}
Note that by combining assumptions (i) and (iii) of Assumptions \ref{assu: per PMP} we get that necessarily $k \in (12,\infty)$.

Next, we define the Hamiltonian $\bar{H}:\Omega \times [0,T]\times \mathbb{R}^d \times \mathbb{R}^d \times \mathbb{R}^{d\times w} \times U\rightarrow \mathbb{R}$ as
\begin{equation}\label{eq bar_Ham_lin}
\bar{H}(\omega,t,x,y,z,u):= y^\top b(\omega,t,x,u) + \text{Tr}\big(z^\top\sigma(\omega,t,x,u)\big) + h(\omega,t,x,u).
\end{equation}
We consider the first order adjoint processes $((\bar{Y},\bar{Y}'),\bar{Z})$ which satisfy the following equation
\begin{equation}
    \bar{Y}_t= \partial_x g(\bar{X}_T)^\top + \int_t^T \partial_x \bar{H}(s,\bar{X}_s,\bar{Y}_s,\bar{Z}_s,\bar{u}_s)^\top \ud s+ \int_t^T \partial_x f(s,\bar{X}_s)^\top \bar{Y}_s \ud \boldsymbol{\eta}_s - \int_t^T \bar{Z}_s \ud W_s.
\end{equation} 
Under the assumption of Lemma \ref{prop:diffusion-spike-estimates}, the above rough BSDE is well-posed by Theorem \ref{ex_uni_lin_bsde}, with 
\begin{align*}
    \big((\bar{Y},\bar{Y}'),\bar{Z}\big) \in  \mathbf{D}^{(p,p)\text{-var}}_{\eta}\mathbb{L}^m([0,T];\mathbb{R}^{d})
 \times
 \mathbb H^m ([0,T];\mathbb R^{d\times w}).
\end{align*}
Moreover, we also see that      
\begin{align*}
    \bar{Y}_t = \bar{Y}_0  - \int_0^t \partial_x \bar{H}(s,\bar{X}_s, \bar{Y}_s, \bar{Z}_s,\bar{u}_s)^\top \ud s - \int_0^t \partial_x f(s,\bar{X}_s)^\top \bar{Y}_s \ud \boldsymbol{\eta}_s + \int_0^t \bar{Z}_s \ud W_s.
\end{align*}
Applying the rough product formula Theorem \ref{rough_product_formula} directly, while noting (i) and (ii) of Remark \ref{rmk 3.21},
we immediately see as in 
\ref{subsubsub 4.1.3} that 
\begin{equation}\label{eq 84}
    \mathbb{E}\big[\bar{Y}_T^\top V^\epsilon_T\big] = \mathbb{E}\Bigg[\int_0^T \Big( \mathds{1}_I(s) \text{Tr}\big(\bar{Z}_s^\top  \delta\sigma_s \big) - \partial_x h(s,\bar{X}_s,\bar{u}_s) V^\epsilon_s\Big)\ud s\Bigg].
\end{equation} 
Applying again the rough product formula, we get 
\begin{multline}\label{eq 85}
    \mathbb{E}\big[\bar{Y}_T^\top \zeta^\epsilon_T\big] = \mathbb{E}\Bigg[\int_0^T \Big( \frac{1}{2}\bar{Y}_T^\top\partial_{xx}^2 b(s,\bar{X}_s, \bar{u}_s) (V^\epsilon_s)^{\otimes 2} + \mathds{1}_I(s) \bar{Y}_T^\top \delta b_s  + \frac{1}{2} \text{Tr}\big(\bar{Z}_s^\top  \partial_{xx}^2\sigma(s,\bar{X}_s, \bar{u}_s) (V^\epsilon_s)^{\otimes 2}\big)\\
     + \mathds{1}_I(s) \text{Tr}(\bar{Z}_s^\top \delta \sigma_{x,s} V^\epsilon_s ) - \partial_x h(s,\bar{X}_s,\bar{u}_s) \zeta^\epsilon_s\Big)\ud s\Bigg] 
     + \mathbb{E}\Bigg[\int_0^T \frac{1}{2} \bar{Y}_s^\top \big(\partial_{xx}^2f(s,\bar{X}_s) (V^\epsilon_s)^{\otimes 2} 
     \ud\boldsymbol{\eta}_s \Bigg].
\end{multline}
Then, by adding together \eqref{eq 84} and \eqref{eq 85}, with the help of \eqref{eq:diffusion-first-order-size} we get
\begin{multline*}
    \mathbb{E}\Big[
 \partial_x g(\bar X_T)(V^\epsilon_T + \zeta^\epsilon_T) \Big] \\
 = \mathbb{E}\Bigg[\int_0^T \Big( \frac{1}{2}\bar{Y}_T^\top\partial_{xx}^2 b(s,\bar{X}_s, \bar{u}_s) (V^\epsilon_s)^{\otimes 2} + \mathds{1}_I(s) \bar{Y}_T^\top \delta b_s  + \frac{1}{2} \text{Tr}\big(\bar{Z}_s^\top  \partial_{xx}^2\sigma(s,\bar{X}_s, \bar{u}_s) (V^\epsilon_s)^{\otimes 2}\big) \\+ \mathds{1}_I(s) \text{Tr}\big(\bar{Z}_s^\top  \delta\sigma_s \big) - \partial_x h(s,\bar{X}_s,\bar{u}_s) (V^\epsilon_s+\zeta^\epsilon_s)\Big)\ud s\Bigg] 
 + \mathbb{E}\Bigg[\int_0^T \frac{1}{2} \bar{Y}_s^\top \big(\partial_{xx}^2f(s,\bar{X}_s) (V^\epsilon_s)^{\otimes 2} 
     \ud\boldsymbol{\eta}_s \Bigg] + o(\epsilon).
\end{multline*}
So, by \eqref{eq 76} and the optimality of $(\bar u_t)_{t\in[0,T]}$ we have
\begin{multline}\label{eq 87}
    0 \leq \mathcal{J}(u^\epsilon) - \mathcal{J}(\bar u) = \mathbb{E}\Bigg[ \frac{1}{2}\partial_{xx}^2 g(\bar X_T)
       (V^\epsilon_T)^{\otimes 2} \\+ \frac{1}{2} \int_0^T \Big(
 \partial_{xx}^2 h(s,\bar{X}_s,\bar{u}_s) (V^\epsilon_s)^{\otimes 2} +\bar{Y}_s^\top\partial_{xx}^2 b(s,\bar{X}_s, \bar{u}_s) (V^\epsilon_s)^{\otimes 2} +\text{Tr}\big(\bar{Z}_s^\top  \partial_{xx}^2\sigma(s,\bar{X}_s, \bar{u}_s) (V^\epsilon_s)^{\otimes 2}\big) \Big)\ud s\\
 + \int_0^T \mathds{1}_{I}(s)
 \Big(\bar{Y}_T^\top \delta b_s + \text{Tr}\big(\bar{Z}_s^\top  \delta\sigma_s \big) + \bigl[h(s,\bar X_s, u^\epsilon_s)-h(s,\bar X_s,\bar u_s)\bigr] \Big) \ud s \Bigg]\\
+  \mathbb{E}\Bigg[\int_0^T \frac{1}{2} \bar{Y}_s^\top \big(\partial_{xx}^2f(s,\bar{X}_s) (V^\epsilon_s)^{\otimes 2} \big)
     \ud\boldsymbol{\eta}_s \Bigg] + o(\epsilon)\\
     = \mathbb{E}\Big[\frac{1}{2}\partial_{xx}^2 g(\bar X_T)
       (V^\epsilon_T)^{\otimes 2}\Big]+ \mathbb{E}\Bigg[\int_0^T\Big( \frac12 \partial_{xx}^2 \bar{H}(s,\bar{X}_s, \bar{Y}_s, \bar{Z}_s,\bar{u}_s) (V^\epsilon_s)^{\otimes 2} \\
    + \mathds{1}_I(s) \big[\bar{H}(s,\bar{X}_s, \bar{Y}_s, \bar{Z}_s,{u}^\epsilon_s) - \bar{H}(s,\bar{X}_s, \bar{Y}_s, \bar{Z}_s,\bar{u}_s) \big] \Big) \ud s \Bigg] \\
    + \mathbb{E}\Bigg[\int_0^T \frac{1}{2} \bar{Y}_s^\top \big(\partial_{xx}^2f(s,\bar{X}_s) (V^\epsilon_s)^{\otimes 2} \big) 
     \ud\boldsymbol{\eta}_s \Bigg] + o(\epsilon).
\end{multline}
At this point we introduce the second order adjoint processes $((\bar{P},\bar{P}'),\bar{Q})$ which satisfy the following equation
\begin{multline}\label{eq 88}
    \bar{P}_t = \partial_{xx}^2 g(\bar X_T) + \int_t^T \Big(\partial_{xx}^2 \bar{H}(s,\bar{X}_s, \bar{Y}_s, \bar{Z}_s,\bar{u}_s) + \partial_xb(s,\bar{X}_s,\bar{u}_s)^\top \bar{P}_s + \bar{P}_s \partial_xb(s,\bar{X}_s,\bar{u}_s) \\
    + \sum_{j=1}^q \big[\partial_x\sigma^j(s,\bar{X}_s,\bar{u}_s)^\top \bar{P}_s \partial_x\sigma^j(s,\bar{X}_s,\bar{u}_s)\big] + \sum_{j=1}^q \big[\partial_x\sigma^j(s,\bar{X}_s,\bar{u}_s)^\top \bar{Q}^j_s + \bar{Q}^j_s \partial_x\sigma^j(s,\bar{X}_s,\bar{u}_s)\big]\Big) \ud s\\
    + \int_t^T\Big(\partial_xf(s,\bar{X}_s)^\top \bar{P}_s + \bar{P}_s \partial_xf(s,\bar{X}_s) +  \bar{Y}_s^\top \partial_{xx}^2f(s,\bar{X}_s)\Big) \ud \boldsymbol{\eta}_s - \int_t^T \bar{Q}_s \ud W_s,
\end{multline}
where $\sigma^j$ is the $j$-th column of $\sigma$, $\bar{Q}:[0,T]\rightarrow L(\mathbb{R}^w,\mathbb{R}^{d\times d})$ and $\bar{Q}^j$ the $j$-th column\footnote{Which actually is an element of $\mathbb{R}^{d\times d}$.} of $\bar{Q}$. Also, note that $\partial_{xx}^2f(\cdot,\bar{X})$ can be viewed as an element of $L(\mathbb{R}^{d\times d},\mathbb{R}^{d\times \ell})\cong B(\mathbb{R}^{d\times d}\times\mathbb{R}^\ell,\mathbb{R}^d)$, and consequently $Y^\top \partial_{xx}^2f(\cdot,\bar{X})$ as an element of $B(\mathbb{R}^{d\times d}\times\mathbb{R}^\ell,\mathbb{R})
\cong L(\mathbb{R}^\ell,L(\mathbb{R}^{d\times d},\mathbb{R}))
\cong L(\mathbb{R}^\ell,\mathbb{R}^{d\times d})$. Hence, because $m\leq k$, by standard calculations, as for example in \cite[Proposition 3.3]{bugini2024malliavin}, we have that $(\bar{Y}^\top \partial_{xx}^2f(\cdot,\bar{X}),(\bar{Y}')^\top \partial_{xx}^2f(\cdot,\bar{X}) + \bar{Y}^\top [\partial_{xxx}^3f(\cdot,\bar{X}) f(\cdot,\bar{X}) + \partial_{xx}^2f'(\cdot,\bar{X})]) \in \mathbf{D}^{(p,p)\text{-var}}_{\eta}\mathbb{L}^m([0,T];L(\mathbb{R}^\ell,\mathbb{R}^{d\times d}))$. From Theorem \ref{ex_uni_lin_bsde} we get that \eqref{eq 88} is well posed with
\begin{equation*}
 \big((\bar{P},\bar{P}'),\bar{Q}\big)\in \mathbf{D}^{(p,p)\text{-var}}_{\eta}\mathbb{L}^m\big([0,T];\mathbb{R}^{d\times d}\big)
 \times
 \mathbb H^m \big([0,T]; L(\mathbb{R}^w,\mathbb R^{d\times d})\big).
 \end{equation*}
Then, by applying the rough product formula Theorem \ref{rough_product_formula} two consecutive times, while noting (i) and (ii) of Remark \ref{rmk 3.21}, we calculate $\text{Tr}\big([V^\epsilon_t (V^\epsilon_t)^\top] \bar{P}_t\big)$\footnote{We remind that from the cyclic property of trace we have $\text{Tr}\big([V^\epsilon_t (V^\epsilon_t)^\top] \bar{P}_t\big) = (V^\epsilon_t)^\top [\bar{P}_t V^\epsilon_t] = \bar{P}_t(V^\epsilon_t)^{\otimes2}$.}
to get from \eqref{eq 87} and Lemma \ref{prop:diffusion-spike-estimates} that
\begin{multline}\label{eq 89}
    0 \leq \mathcal{J}(u^\epsilon) - \mathcal{J}(\bar u)\\
    = \mathbb{E}\Bigg[\int_0^T\mathds{1}_I(s) \Big( \big(\bar{H}(s,\bar{X}_s, \bar{Y}_s, \bar{Z}_s,{u}^\epsilon_s) - \bar{H}(s,\bar{X}_s, \bar{Y}_s, \bar{Z}_s,\bar{u}_s) + \frac{1}{2} \text{Tr} \big((\delta\sigma_s)^\top \bar{P}_s\delta\sigma_s\big)\Big) \ud s \Bigg] + o(\epsilon).
\end{multline}
Finally, we have our maximum principle.

\begin{thm}[SMP for rough stochastic control]\label{thm:roughpmp}
Let Assumptions \ref{P-control}--
\ref{D-cost}, \ref{assu: per PMP} hold true, $\bar{H}$ given by \eqref{eq bar_Ham_lin}, $((\bar{X},\bar{X}'),(\bar u_t)_{t\in[0,T]})$ an optimal pair for $\mathcal{J}$, and $(\bar{Y},\bar{Y}'), \bar{Z}, (\bar{P},\bar{P}'), \bar{Q}$ defined as above. Then, for every $u \in U$ we have
\begin{multline*}
    \bar{H}(t,\bar{X}_t, \bar{Y}_t, \bar{Z}_t,\bar{u}_t) - \bar{H}(t,\bar{X}_t, \bar{Y}_t, \bar{Z}_t,u)\\
    - \frac{1}{2}
\text{Tr} \big([\sigma(t,\bar{X}_t,u)- \sigma(t,\bar{X}_t,\bar{u}_t)]^\top \bar{P}_t[\sigma(t,\bar{X}_t,u) - \sigma(t,\bar{X}_t,\bar{u}_t)]\big) \leq 0, \hspace{.2cm}\mathbb{P}\otimes\text{Leb}(\ud t)-a.s.
\end{multline*}		 
\end{thm}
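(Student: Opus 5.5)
The plan is to start from the second-order expansion \eqref{eq 89}, which already holds for every admissible $u\in\mathcal{A}$ and every interval $I$ of length at most $\epsilon$. It only remains to localize it in time and then in $(\omega,u)$.

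\textbf{Localization in time.} Fix $u\in\mathcal{A}$ and write
\[
\Gamma_s(u):=\bar H(s,\bar X_s,\bar Y_s,\bar Z_s,u_s)-\bar H(s,\bar X_s,\bar Y_s,\bar Z_s,\bar u_s)+\tfrac12\operatorname{Tr}\big(\delta\sigma_s^\top\bar P_s\delta\sigma_s\big).
\]
Take $I=[t,t+\epsilon]$. Dividing \eqref{eq 89} by $\epsilon$ gives
\[
0\le \frac1\epsilon\,\mathbb E\int_t^{t+\epsilon}\Gamma_s(u)\,\ud s+o(1).
\]
The $o(\epsilon)$ remainder must be uniform in $t$. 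This should follow from the proof of Lemma \ref{prop:diffusion-spike-estimates}, because every bound there depends on $I$ only through its length.

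Next we check that $s\mapsto\mathbb E[\Gamma_s(u)]$ is integrable. For this we bound $\bar Y\in\mathbb S^m$ and $\bar Z,\bar Q\in\mathbb H^m$, and use $\sup_t\|\bar P_t\|_m<\infty$. The functions $b$ and $\sigma$ are bounded, and $h$ has $\mathbb L^m$ growth by \ref{D-cost}. Lebesgue's differentiation theorem then gives $\mathbb E[\Gamma_t(u)]\ge 0$ for a.e.\ $t$.

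To pass from expectations to a pathwise statement, apply the same argument to the control $u\mathds 1_A+\bar u\mathds 1_{A^c}$ on $I$, for $A\in\mathcal F_t$. This control is still progressively measurable on $I$. It yields $\mathbb E[\mathds 1_A\Gamma_t(u)]\ge0$ for all $A\in\mathcal F_t$, hence $\Gamma_t(u)\ge 0$ a.s.

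There is one technical point. The exceptional null set of times in the Lebesgue-point argument depends on $A$. Since $\mathcal F$ is generated by $W$ and $\mathcal F^0_0$, there is a countable generating family of sets, and we handle this by restricting to it. The standard argument of \cite[Lemma 4.6, Ch.~3]{yong1999stochastic} applies verbatim, giving $\Gamma(u)\ge0$ $\mathbb P\otimes\mathrm{Leb}$-a.e.

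\textbf{Constant controls.} Now take $u\equiv v\in U$ constant; this is admissible. The inequality $\Gamma_t(v)\ge0$ is exactly the claimed inequality with $\delta\sigma_t=\sigma(t,\bar X_t,v)-\sigma(t,\bar X_t,\bar u_t)$. Separability of $U$ \ref{P-control} gives a countable dense set $\{v_n\}$. Continuity of $b,\sigma,h$ in $u$, through the modulus $\overline\omega$, gives continuity of $\bar H$ and $\delta\sigma$ in $u$. So a single null set works for all $u\in U$.

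\textbf{Main obstacle.} The hard step is justifying \eqref{eq 89} itself. This means the second duality: applying Theorem \ref{rough_product_formula} twice to $\operatorname{Tr}(V^\epsilon (V^\epsilon)^\top\bar P)$ and checking that several terms cancel. First, the rough driver $\bar Y^\top\partial^2_{xx}f(\cdot,\bar X)$ of $\bar P$ must exactly cancel the rough integral $\tfrac12\int\bar Y^\top\partial^2_{xx}f(V^\epsilon)^{\otimes2}\ud\boldsymbol\eta$ in \eqref{eq 87}. Second, the terms $\partial_x f^\top\bar P+\bar P\partial_x f$ must cancel against the rough part of $\ud(V^\epsilon\otimes V^\epsilon)$; this uses geometricity of $\boldsymbol\eta$. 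Third, the Itô correction must leave only $\mathds 1_I\tfrac12\operatorname{Tr}(\delta\sigma^\top\bar P\delta\sigma)$ plus terms of order $O(\epsilon^{3/2})$.

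Integrability is also needed. Assumption \ref{assu: per PMP}(i)–(iii) gives $2/k+1/m\le1/2$. With this, the products of $(V^\epsilon)^{\otimes2}\in\mathbb L^{k/2}$ against $\bar P,\bar Q\in\mathbb L^m$ are genuine martingales. This is the condition needed to invoke Theorem \ref{rough_product_formula}, and it also controls the remainder $\mathbb E\int\mathds 1_I\operatorname{Tr}(\bar Q\,\delta\sigma\, V^\epsilon)=o(\epsilon)$ via Hölder and \eqref{eq:diffusion-first-order-size}.
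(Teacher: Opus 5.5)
Your argument follows the paper's route: the paper's entire proof is that the inequality ``follows from \eqref{eq 89} by a standard calculation'', namely Lebesgue-point localization in time, localization in $\omega$, then constant controls and a countable dense subset of $U$. Since \eqref{eq 89} is established before the theorem, you may treat the duality computations in your last paragraph as given.

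One justification does not hold in this generality. $\mathcal{F}^0_0$ is an arbitrary $\sigma$-algebra independent of $W$, so $\mathcal{F}_t$ need not be countably generated, even modulo null sets. Your control of the $A$-dependent exceptional set of times therefore breaks down.

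You can avoid sets $A\in\mathcal{F}_t$ altogether. Your time localization gives $\mathbb{E}[\Gamma_t(v)]\ge 0$ for a.e.\ $t$ and every $v\in\mathcal{A}$. Apply it to $v:=u\mathds{1}_B+\bar u\mathds{1}_{B^c}$ with $B:=\{\Gamma(u)<0\}$. This $B$ is a progressive set, so $v\in\mathcal{A}$. Since $\Gamma(v)=-\Gamma(u)^-$, you get $\Gamma(u)\ge 0$ $\mathbb{P}\otimes\mathrm{Leb}$-a.e.\ directly.

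Two smaller points:
\begin{itemize}
\item The $o(\epsilon)$ does not need to be uniform in the position of $I$. At a fixed Lebesgue point $t$, only the limit $\epsilon\to 0$ enters.
\item $\bar Y\in\mathbb{S}^m$ does not follow from Lemma~\ref{lem:uni-p-var}, which needs $p<m$, whereas here $m\le p$. However, $\sup_t\|\bar Y_t\|_m<\infty$ is all the integrability of $s\mapsto\mathbb{E}[\Gamma_s(u)]$ requires.
\end{itemize}
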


\begin{proof}
The desired result follows from \eqref{eq 89} by a standard calculation.
\end{proof}

\subsubsection{Sufficiency conditions}
In this part we shall additionally need the following assumption.
\begin{enumerate}[label=(B5), ref=B5]
    \item\label{(B5)} Let $U$ be a convex subset of $\mathbb{R}^\kappa$ with nonempty interior, and the maps $b, \sigma, h$ locally Lipschitz in $u\in U$.  
\end{enumerate}

Let now $((\bar{X},\bar{X}'),(\bar u_t)_{t\in[0,T]})$ be just an admissible pair, and not a priori optimal. Nevertheless, the corresponding $(\bar{Y},\bar{Y}'), \bar{Z}, (\bar{P},\bar{P}'), \bar{Q}$ can be defined exactly as before. Hence, we can define $\bar{\mathcal{H}} : \Omega \times [0,T] \times \mathbb{R}^d \times U$ as
\begin{align}\label{eq. 104}
    \bar{\mathcal{H}}(\omega,t,x,u) &:= \bar{H}(\omega,t,\bar{X}_t, \bar{Y}_t, \bar{Z}_t,u)  - \frac{1}{2}
\text{Tr} \big(\sigma(\omega,t,\bar{X}_t,\bar{u}_t)^\top \bar{P}_t  \sigma(\omega,t,\bar{X}_t,\bar{u}_t)\big) \nonumber\\
    &\qquad+ \frac{1}{2}
\text{Tr} \big([\sigma(\omega,t,x,u)- \sigma(\omega,t,\bar{X}_t,\bar{u}_t)]^\top \bar{P}_t [\sigma(\omega,t,x,u) - \sigma(\omega,t,\bar{X}_t,\bar{u}_t)]\big).
\end{align}
Observe that the condition of Theorem \ref{thm:roughpmp} can be equivalently written as 
\begin{equation}\label{eq. 92}
     \bar{\mathcal{H}}(t,\bar{X}_t,\bar{u}_t) = \min_{u\in U}  \bar{\mathcal{H}}(t,\bar{X}_t,u),\hspace{.2cm} \mathbb{P}\otimes\text{Leb}(\ud t)-a.s.
\end{equation}

\begin{thm}\label{thm 4.14}
Let Assumptions \ref{P-control}--
\ref{(B5)}, \ref{assu: per PMP} hold true, $\bar{\mathcal{H}}$ given by \eqref{eq. 104}, $((\bar{X},\bar{X}'),(\bar u_t)_{t\in[0,T]})$ an admissible pair, and $(\bar{Y},\bar{Y}'), \bar{Z}, (\bar{P},\bar{P}'), \bar{Q}$ defined as above. If the function $g$ is convex in $x\in\mathbb{R}^d$, 
the function $\bar{H}(t,\cdot,\bar{Y}_t,\bar{Z}_t,\cdot)$ is convex in $(x,u)\in \mathbb{R}^d\times U$, and 
\begin{equation*}
     \bar{\mathcal{H}}(t,\bar{X}_t,\bar{u}_t) = \min_{u\in U}  \bar{\mathcal{H}}(t,\bar{X}_t,u),\hspace{.2cm} \mathbb{P}\otimes\text{Leb}(\ud t)-a.s.,
\end{equation*}
then for every admissible pair $(({X},X'),(v_t)_{t\in[0,T]})$ we have 
\begin{align*}
    \mathcal{J}(v) - \mathbb{E}\bigg[\int_0^T \big(\bar{Y}_s^\top f(s,{X}_s) - \bar{Y}_s^\top f(s,\bar{X}_s) -\bar{Y}_s^\top \partial_xf(s,\bar{X}_s) [X_s - \bar{X}_s] \big)
     \ud\boldsymbol{\eta}_s \bigg]\geq \mathcal{J}(\bar{u}).
\end{align*}
\end{thm}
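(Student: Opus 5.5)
The argument follows the proof of Theorem \ref{thm 4.8}. The one new ingredient is that the minimum condition \eqref{eq. 92} on $\bar{\mathcal{H}}$ has to be turned into a first-order (stationarity/variational) inequality for $\bar H$ in $u$. This is the standard route in the classical second-order setting (cf. \cite[Ch.~3, Thm.~5.2]{yong1999stochastic}).

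\emph{Step 1: reduce to a superdifferential condition for $\bar H$.} Fix $(\omega,t)$ in the full-measure set where \eqref{eq. 92} holds. Write $\Delta(u):=\sigma(t,\bar X_t,u)-\sigma(t,\bar X_t,\bar u_t)$. The map $u\mapsto \bar{\mathcal H}(t,\bar X_t,u)$ is $\bar H(t,\bar X_t,\bar Y_t,\bar Z_t,u)$ plus a constant plus $\tfrac12\mathrm{Tr}(\Delta(u)^\top\bar P_t\Delta(u))$. By \ref{(B5)}, $\sigma$ is locally Lipschitz in $u$, so this quadratic term is $O(|u-\bar u_t|^2)$ near $\bar u_t$. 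Minimality of $\bar u_t$ then says that the correction is negligible at first order. Using convexity of $U$ and of $\bar H$ in $u$, I would derive
\[
\bar H(t,\bar X_t,\bar Y_t,\bar Z_t,\bar u_t)\le \bar H(t,\bar X_t,\bar Y_t,\bar Z_t,u)\quad\forall u\in U.
\]
The argument: take $u_\lambda=\bar u_t+\lambda(u-\bar u_t)\in U$. Minimality gives $\bar H(u_\lambda)-\bar H(\bar u_t)\ge -C\lambda^2$. Convexity gives $\bar H(u_\lambda)\le(1-\lambda)\bar H(\bar u_t)+\lambda\bar H(u)$. Combining, $\lambda(\bar H(u)-\bar H(\bar u_t))\ge -C\lambda^2$; divide by $\lambda$ and let $\lambda\downarrow0$.

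From joint convexity in $(x,u)$ and the minimality in $u$, the Clarke/convex subgradient argument yields
\[
\bar H(s,X_s,\bar Y_s,\bar Z_s,v_s)-\bar H(s,\bar X_s,\bar Y_s,\bar Z_s,\bar u_s)\ge \partial_x\bar H(s,\bar X_s,\bar Y_s,\bar Z_s,\bar u_s)(X_s-\bar X_s).
\]
For this I would use the lemma that if a convex function of $(x,u)$ is differentiable in $x$ and minimized in $u$ at $\bar u$, then $(\partial_x\bar H,0)$ is a subgradient. This is where the nonempty-interior assumption in \ref{(B5)} enters.

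\emph{Step 2: duality.} Write $\mathcal J(\bar u)-\mathcal J(v)$ exactly as in \eqref{eq. 069}, with $g$, $\bar H$, $b$, $\sigma$ and the pair $(X,v)$. Bound the terminal term by convexity of $g$:
\[
\mathbb E[g(\bar X_T)-g(X_T)]\le \mathbb E[\bar Y_T^\top(\bar X_T-X_T)].
\]
Bound the running term by the subgradient inequality of Step 1. Then write the equation for $\bar X-X$ with rough part $\partial_x f(\cdot,\bar X)(\bar X-X)+\widetilde\gamma$, where $\widetilde\gamma$ is the Taylor remainder of $f$ with its Gubinelli derivative as in the proof of Theorem \ref{thm 4.8}. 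Apply Theorem \ref{rough_product_formula}, with Remark \ref{rmk 3.21}(i),(ii), to $\bar Y^\top(\bar X-X)$. This is admissible since $\bar Y\in\mathbf D^{(p,p)\text{-var}}_\eta\mathbb L^m$, $\bar X-X\in\mathbf D^{\alpha,\alpha}_\eta\mathbb L^k$, and Assumptions \ref{assu: per PMP} imply Assumptions \ref{assu:rpf} with $\beta'=\beta''=\alpha$.

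\emph{Step 3: cancellation.} After taking expectations, the It\^o integrals are true martingales by the H\"older/BDG computation preceding \eqref{eq 35}, using $\tfrac1k+\tfrac1m\le\tfrac12$. The rough terms $\partial_x f^\top\bar Y$ cancel, as in Subsection \ref{subsubsub 4.1.3}, and the drift and Tr$(\bar Z^\top\cdot)$ terms cancel against \eqref{eq. 069}. What remains is
\[
\mathcal J(\bar u)-\mathcal J(v)\le \mathbb E\Big[\int_0^T\bar Y_s^\top\widetilde\gamma_s\,\ud\boldsymbol\eta_s\Big],
\]
and this is the claim.

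The main obstacle is Step 1, namely controlling the quadratic $\bar P$-correction so that it drops out at first order. It needs only the local Lipschitz bound on $\sigma$ and the finiteness of $\bar P_t$ a.s.; I expect this part to go through cleanly. Note that $\bar P,\bar Q$ play no role in Step 2.
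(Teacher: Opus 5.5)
Your proposal is correct and follows essentially the same route as the paper. The paper obtains your Step~1 inequality $\int_0^T\bigl(\bar H(t,X_t,\bar Y_t,\bar Z_t,v_t)-\bar H(t,\bar X_t,\bar Y_t,\bar Z_t,\bar u_t)\bigr)\,\ud t\ge\int_0^T\partial_x\bar H(t,\bar X_t,\bar Y_t,\bar Z_t,\bar u_t)(X_t-\bar X_t)\,\ud t$ by citing the first part of the proof of \cite[Theorem 5.2]{yong1999stochastic}, and then runs exactly your duality argument with the rough product formula, which leaves the remainder of $f$ tested against $\bar Y$. Your explicit $O(\lambda^2)$ argument for discarding the $\bar P$-correction is a self-contained substitute for the Clarke-gradient step in that reference.
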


\begin{proof}
Duplicating the first part of the proof of \cite[Theorem 5.2]{yong1999stochastic} we get from the convexity of $\bar{H}(\omega,t,\cdot,\bar{Y}_t,\bar{Z}_t,\cdot)$ with respect to $(x,u)\in \mathbb{R}^d\times U$ and \eqref{eq. 92} that
\begin{align}\label{eq 99}
    \int_0^T \big( \bar{H}(t,{X}_t, \bar{Y}_t, \bar{Z}_t,v_t) -  \bar{H}(t,\bar{X}_t, \bar{Y}_t, \bar{Z}_t,\bar{u}_t)\big) \ud t \geq \int_0^T  \partial_x\bar{H}(t,\bar{X}_t, \bar{Y}_t, \bar{Z}_t,\bar{u}_t) (X_t - \bar{X}_t) \ud t, \hspace{.2cm}\mathbb{P}-a.s.,
\end{align}
for every admissible pair $\big({X},(v_t)_{t\in[0,T]}\big)$.
Next, the difference $X-\bar{X}$ satisfies
\begin{align*}
    X_t - \bar{X}_t &= \int_0^t \big[\partial_xb(s,\bar{X}_s,\bar{u}_s) (X_s - \bar{X}_s) + \alpha_s \big] \ud s + \int_0^t \big[\partial_x \sigma(s,\bar{X}_s,\bar{u}_s) (X_s - \bar{X}_s) + \beta_s \big] \ud W_s\\
    &\qquad+\int_0^t \big[\partial_xf(s,\bar{X}_s) (X_s - \bar{X}_s) + \gamma_s\big] \ud \boldsymbol{\eta}_s,
\end{align*}
where
\begin{align*}
    \alpha_t &:= - \partial_xb(t,\bar{X}_t,\bar{u}_t) (X_t - \bar{X}_t) + b(t,{X}_t,v_t) - b(t,\bar{X}_t,\bar{u}_t),\\
     \beta_t &:= - \partial_x\sigma(t,\bar{X}_t,\bar{u}_t) (X_t - \bar{X}_t) +\sigma(t,{X}_t,v_t) - \sigma(t,\bar{X}_t,\bar{u}_t),\\
     \gamma_t &:= - \partial_xf(t,\bar{X}_t) (X_t - \bar{X}_t) + f(t,{X}_t) - f(t,\bar{X}_t),\\
     \gamma'_t &:= - \partial_{xx}^2f(t,\bar{X}_t) [f(t,\bar{X}_t)\otimes (X_t - \bar{X}_t)] - \partial_{x}f'(t,\bar{X}_t)(X_t - \bar{X}_t) - \partial_{x}f(t,\bar{X}_t) \big(f(t,{X}_t) - f(t,\bar{X}_t)\big)\\
     &\qquad+ \partial_{x}f(t,{X}_t)f(t,{X}_t) +f'(t,{X}_t) - \partial_{x}f(t,\bar{X}_t)f(t,\bar{X}_t) - f'(t,\bar{X}_t).
\end{align*}
Applying the rough product formula Theorem \ref{rough_product_formula}, while noting (i) and (ii) of Remark \ref{rmk 3.21}, and then taking expectations we see as in \ref{subsubsub 4.1.3} that 
\begin{align*}
    &\mathbb{E}\big[\partial_xg(\bar{X}_T) (X_T - \bar{X}_T) \big] =  \mathbb{E}\big[\bar{Y}^\top_T (X_T - \bar{X}_T) \big] - \mathbb{E}\big[\bar{Y}^\top_0 (X_0 - \bar{X}_0) \big] \\
    &=  \mathbb{E}\bigg[\int_0^T \Big( Y_s^\top \alpha_s + \text{Tr}(Z_s^\top \beta_s) - \partial_x h(s,\bar{X}_s,\bar{u}_s) (X_s - \bar{X}_s) \Big)\ud s\bigg] + \mathbb{E}\bigg[\int_0^T\bar{Y}_s^\top \gamma_s
     \ud\boldsymbol{\eta}_s \bigg]\\
     &= \mathbb{E}\bigg[\int_0^T \Big( - \partial_x\bar{H}(s,\bar{X}_s, \bar{Y}_s, \bar{Z}_s,\bar{u}_s) (X_s - \bar{X}_s) + Y^\top_s [b(s,{X}_s,v_s) - b(s,\bar{X}_s,\bar{u}_s)] \\
     &\qquad+  \text{Tr}\big(Z_s^\top [\sigma(s,{X}_s,v_s) - \sigma(s,\bar{X}_s,\bar{u}_s)]\big) \Big)\ud s\bigg] + \mathbb{E}\bigg[\int_0^T\bar{Y}_s^\top \gamma_s
     \ud\boldsymbol{\eta}_s \bigg].
\end{align*}
So, by \eqref{eq 99} and \eqref{eq. 069} (with $\bar{H}$ in place of $H$) we have
\begin{align*}
     &\mathbb{E}\big[\partial_xg(\bar{X}_T) (X_T - \bar{X}_T) \big] \geq \mathbb{E}\bigg[\int_0^T \Big( - \big[\bar{H}(s,{X}_s, \bar{Y}_s, \bar{Z}_s,v_s) - \bar{H}(s,\bar{X}_s, \bar{Y}_s, \bar{Z}_s,\bar{u}_s) \big] \\
     &\qquad+ Y^\top_s [b(s,{X}_s,v_s) - b(s,\bar{X}_s,\bar{u}_s)] +  \text{Tr}\big(Z_s^\top [\sigma(s,{X}_s,v_s) - \sigma(s,\bar{X}_s,\bar{u}_s)]\big) \Big)\ud s\bigg] \\
     &\qquad+ \mathbb{E}\bigg[\int_0^T\bar{Y}_s^\top \gamma_s
     \ud\boldsymbol{\eta}_s \bigg]\\
     &=  \mathcal{J}(\bar{u}) - \mathcal{J}(u) + \mathbb{E}[g({X}_T) - g(\bar{X}_T)] + \mathbb{E}\bigg[\int_0^T\bar{Y}_s^\top \gamma_s
     \ud\boldsymbol{\eta}_s \bigg].
\end{align*}
Finally, by the convexity of $g$ with respect to $x\in \mathbb{R}^d$ we get that
\begin{align*}
    0 \geq \mathcal{J}(\bar{u}) -  \mathcal{J}(v) + \mathbb{E}\bigg[\int_0^T\bar{Y}_s^\top \gamma_s
     \ud\boldsymbol{\eta}_s \bigg].
\end{align*}
\end{proof}

\begin{coroll}\label{coroll 4.16}
Let the Assumptions of Theorem \ref{thm 4.14} hold true, and $(\bar{X},\bar{X}'),(\bar{Y},\bar{Y}')$ defined as above. If for every 
admissible pair $(({X},X'),(v_t)_{t\in[0,T]})$ holds
\begin{equation*}
    \mathbb{E}\bigg[\int_0^T  \big(\bar{Y}_s^\top f\big(s,{X}(s)\big) - \bar{Y}_s^\top f\big(s,\bar{X}(s)\big) - \bar{Y}_s^\top\partial_xf\big(s,\bar{X}(s)\big) \big[{X}(s) - \bar X(s)\big]\big)
     \ud\boldsymbol{\eta}_s \bigg] \geq 0,
\end{equation*}
then we have that $((\bar{X},\bar{X}'),(\bar{u}_t)_{t\in[0,T]})$ is an optimal pair.
\end{coroll}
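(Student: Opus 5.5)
The corollary should follow directly from Theorem~\ref{thm 4.14}, in the same way that Corollary~\ref{coroll 4.9} followed from Theorem~\ref{thm 4.8}. The plan is to fix an arbitrary admissible pair $((X,X'),(v_t)_{t\in[0,T]})$ and show that $\mathcal{J}(v)\geq\mathcal{J}(\bar u)$.

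First, I check that all hypotheses of Theorem~\ref{thm 4.14} are in force. The corollary assumes them verbatim: Assumptions \ref{P-control}--\ref{(B5)} and \ref{assu: per PMP}, convexity of $g$, joint convexity of $\bar H(t,\cdot,\bar Y_t,\bar Z_t,\cdot)$, and the minimum condition \eqref{eq. 92} on $\bar{\mathcal H}$. Theorem~\ref{thm 4.14} therefore gives
\begin{equation*}
\mathcal{J}(v)\;\geq\;\mathcal{J}(\bar u)+\mathbb{E}\bigg[\int_0^T \big(\bar{Y}_s^\top f(s,X_s) - \bar{Y}_s^\top f(s,\bar{X}_s) -\bar{Y}_s^\top \partial_xf(s,\bar{X}_s) [X_s - \bar{X}_s] \big)\ud\boldsymbol{\eta}_s \bigg].
\end{equation*}

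The extra hypothesis of the corollary says the rough expectation term on the right-hand side is nonnegative for this pair. Dropping it yields $\mathcal{J}(v)\geq \mathcal{J}(\bar u)$.

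Since the admissible pair was arbitrary, $\bar u$ attains $\inf_{v\in\mathcal A}\mathcal J(v)$. Hence $((\bar X,\bar X'),\bar u)$ is an optimal pair.

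The only point needing care is that the rough integral in the hypothesis is the same well-defined object as in Theorem~\ref{thm 4.14}. This means the integrand $\bar Y^\top\gamma$ and its Gubinelli derivative are controlled, which the proof of Theorem~\ref{thm 4.14} already established through Theorem~\ref{rough_product_formula}. I expect no genuine obstacle beyond this consistency check.
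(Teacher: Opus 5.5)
Your proposal is correct and is essentially the paper's argument: the paper also deduces the corollary immediately from Theorem~\ref{thm 4.14}. The added hypothesis makes the rough-integral correction term nonnegative, so $\mathcal{J}(v)\geq\mathcal{J}(\bar u)$ for every admissible pair, and $\bar u$ is optimal.
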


\begin{proof}
Immediate from Theorem \ref{thm 4.14}.
\end{proof}

\section{Pathwise stochastic control with Brownian rough path noise}\label{sec:application}
In this section, let $(\Omega', \mathcal F', \mathbb P')$ and $(\Omega'', \mathcal F'', \mathbb P'')$ be probability spaces supporting Brownian motion $W$ and $W^\perp$, respectively. Let $\Omega:=\Omega'\times\Omega'', \mathbb{P}=\overline{\mathbb{P}'\otimes\mathbb{P}''}$ with  $W(\omega)=W(\omega'), {W}^\perp(\omega)= {W}^\perp (\omega'')$. We denote by $(\mathcal{F}_t)_{t\in\mathbb{R_+}}, (\mathcal{F}^{\perp}_t)_{t\in\mathbb{R_+}}$ the augmented filtrations generated by $W$ and $W^\perp$, respectively on $(\Omega', \mathcal F', \mathbb P')$ and $(\Omega'', \mathcal F'', \mathbb P'')$. It is well known that these filtrations satisfy the usual conditions, see for example \cite[Section 2.7]{karatzas2014brownian}. By abusing the notation we are going to denote the filtrations $(\mathcal{F}_t\times\Omega'')_{t\in\mathbb{R_+}}, (\Omega'\times \mathcal{F}^{\perp}_t)_{t\in\mathbb{R_+}}$ on $(\Omega, \mathbb P)$ by $(\mathcal{F}_t)_{t\in\mathbb{R_+}}, (\mathcal{F}^{\perp}_t)_{t\in\mathbb{R_+}}$. Similarly, we also use the same notations for random variables or processes on $(\Omega', \mathcal F', \mathbb P')$ (or $(\Omega'', \mathcal F'', \mathbb P'')$) and their natural extensions on $(\Omega, \mathbb P)$. Again, note that the filtrations $(\mathcal{F}_t\vee\mathcal{F}^{\perp}_t)_{t\in\mathbb{R}_+}, (\mathcal{F}_t\vee\mathcal{F}^{\perp}_T)_{t\in\mathbb{R}_+}$ also satisfy the usual conditions. We write $\mathbb{E}, \mathbb{E}', \mathbb{E}''$ as expectations with respect to $\mathbb{P}, \mathbb{P}', \mathbb{P}''$. Moreover, we denote by $\mathcal{O}'$ the optional $\sigma$-algebra on $\Omega'\times[0,T]$ with respect to filtration  $(\mathcal{F}_t)_{t\in\mathbb{R_+}}$.

Now we are going to lift $W^\perp$ to a geometric rough path.  To 
do that we define for each $(s,t)\in \Delta$, the Stratonovich integral 
$$
    (\mathbb{W}^{\perp}_{s,t})^{\mathrm{Strato}}:=\int_s^t \delta W^\perp_{s,r}\circ \ud W^{\perp}_r. 
$$ 
It is well known (see \cite[Chapter 3]{friz2020course}) that for all $\omega''$ outside a $\mathbb{P}''$-null set $\mathcal{N}''$ of $\Omega''$, the {\sl Brownian rough path}
$$
    \textbf{W}^{\perp} (\omega''):=\big(W^\perp(\omega''),(\mathbb{W}^\perp)^{\mathrm{Strato}}(\omega'')\big) 
$$    
belongs to the space $\mathscr{C}^{0,\alpha}_g$ for all $\alpha \in (1/3,1/2)$ and that the following {\it lifting map} is $(\mathcal{F}^\perp_t)_{t\in\mathbb{R}_+}-$prog. measurable:
\begin{equation*}\label{ito-lift}
\begin{array}{llll}
\textbf{W}^{\perp}: & [0,T]\times \Omega'' & \rightarrow & (\mathscr{C}^{0,\alpha}_g, \mathcal{B}(\mathscr{C}^{0,\alpha}_g))	\\
 & (t,\omega'') &\mapsto & \textbf{W}^{\perp}(\omega'')_{.\wedge t}.
\end{array}
\end{equation*}
For $\omega''\in \mathcal{N}''$ we set $\textbf{W}^{\perp}(\omega''):=(0,0)$, and the same convention will hold for every new process defined below whenever we randomize with respect to $\textbf{W}^{\perp}$. We consider the following spaces of admissible controls.
 

\begin{defn}\label{dfn 5.2}  Let $d\in\mathbb{N}, \alpha\in(\frac{1}{3},\frac{1}{2})$ and $0<\beta'\leq \beta\leq\alpha$ such that $\alpha+\beta+\beta'>1$.
    \begin{align*}
    \mathcal{A}&:= \big\{u:\Omega'\times[0,T]\rightarrow\mathbb{R}^\kappa:\text{prog. measurable with respect to}\hspace{.1cm}\nonumber\\&\hspace{2.3cm}(\mathcal{F}_s)_{s\in[0,T]},\hspace{.1cm}\text{and}\hspace{.1cm} \sup_{s\in[0,T]}\mathbb{E}'[|u_s|^k]<\infty \big\},\\
       \mathcal{A}^{1}&:= \big\{u:\Omega'\times\mathscr{C}^{0,\alpha}_g\times[0,T]\rightarrow\mathbb{R}^\kappa:\text{prog. measurable with respect to}\hspace{.1cm}\nonumber\\&\hspace{2.3cm}\big(\mathcal{F}_s\otimes\{\emptyset, \mathscr{C}^{0,\alpha}_g\}\big)_{s\in[0,T]},\footnotemark\hspace{.1cm}\text{and}\hspace{.1cm} u(\boldsymbol{\eta})\in \mathcal{A}\hspace{.1cm}\text{for every}\hspace{.1cm}\boldsymbol{\eta}\in \mathscr{C}^{0,\alpha}_g\big\},\\
        \mathcal{A}^{2}&:= \big\{u:\Omega'\times\mathscr{C}^{0,\alpha}_g\times[0,T]\rightarrow\mathbb{R}^\kappa:\text{prog. measurable with respect to}\hspace{.1cm}\nonumber\\&\hspace{2.3cm}\big(\mathcal{F}_s\otimes\mathcal{B}\big(\mathscr{C}^{0,\alpha}_g\big)\big)_{s\in[0,T]},\hspace{.1cm}\text{and}\hspace{.1cm} u(\boldsymbol{\eta})\in \mathcal{A}\hspace{.1cm}\text{for every}\hspace{.1cm}\boldsymbol{\eta}\in \mathscr{C}^{0,\alpha}_g\big\},\\ 
        \mathcal{A}^{\text{Causal}}&:= \big\{u:\Omega'\times\mathscr{C}^{0,\alpha}_g\times[0,T]\rightarrow\mathbb{R}^\kappa:\text{prog. measurable with respect to}\hspace{.1cm}\nonumber\\&\hspace{0.7cm}\big(\mathcal{F}_s\otimes \mathcal{B}(\mathscr{C}^{0,\alpha}_g)\big)_{s\in[0,T]}, u(\boldsymbol{\eta})\in \mathcal{A}\hspace{.1cm}\text{for every}\hspace{.1cm}\boldsymbol{\eta}\in \mathscr{C}^{0,\alpha}_g,\hspace{.1cm}\text{and}\hspace{.1cm}u_s(\omega',\boldsymbol{\eta})=u_s(\omega',\boldsymbol{\eta}_{\cdot\wedge s})\big\}.
    \end{align*}
    \end{defn}
    \footnotetext{$\mathcal{F}_t\otimes\{\emptyset,\mathscr{C}^{0,\alpha}_g\}=\{S\times \mathscr{C}^{0,\alpha}_g:S\in\mathcal{F}_t\}$.}

We also need the randomization of any $u \in \mathcal A^2$ to connect the rough stochastic control and pathwise stochastic control problems,
$$
\overline u_s(\omega',\omega'')
=
u_s\bigl(\omega',\mathbf W^\perp(\omega'')\bigr).
$$
Note that $\overline{u}$ is $(\mathcal{F}_s\vee\mathcal{F}^{\perp}_T)_{s\in[t,T]}$-progressively measurable.
\begin{rmk}\label{rmk 5.3}
Furthermore, note that $\mathcal{A}^1\subseteq\mathcal{A}^{\text{Causal}}\subseteq \mathcal{A}^2$, and for any $u\in\mathcal{A}^{\text{Causal}}$, $\overline{u}$ defined above are $(\mathcal{F}_t\vee\mathcal{F}^{\perp}_t)_{t\in\mathbb{R}_+}$ prog. measurable processes.
\end{rmk}

In general, the Gubinelli derivative depends on $\boldsymbol{\eta} \in \mathscr{C}^{0,\alpha}_g$, therefore, among other technicalities, for simplicity, we update our assumptions as seen below. Note, however, that in \ref{(A3-R)} the boundedness condition can be removed by employing \cite[Lemma 4.8]{horst2025pontryagin} and stronger (continuity) conditions on the control spaces, for more details on this route we refer to \cite[Section 4.3]{horst2025pontryagin}.

\begin{enumerate}[label=(A\arabic*-R), ref=(A\arabic*-R)]
    \item\label{(A1-R)}
    The functions $b, \sigma$ satisfy \ref{A1} while being independent from $\Omega$, \emph{i.e.} deterministic.

    \item\label{(A2-R)} The function $f$ is independent of $\Omega$, \emph{i.e.} deterministic, and the pair $(f,0)$ satisfies \ref{A2}.
    
    \item\label{(A3-R)}
    The functions $h, g$ satisfy \ref{A3} while being independent from $\Omega$, \emph{i.e.} deterministic. Moreover, we assume that they are uniformly bounded.
\end{enumerate}

Let $x\in\mathbb{R}^d$, from \cite[Theorem 4.5]{10.1214/26-AOP1852} we know that for $u\in \mathcal{A}^2$ under \ref{(A1-R)}, \ref{(A2-R)}, there exists an $\mathcal{O}'\otimes\mathcal{B}(\mathscr{C}^{0,\alpha}_g)/\mathcal{B}(\mathbb{R}^d)$-measurable version of the solution to 
\begin{equation*}
\begin{cases}
\ud X^{\boldsymbol{\eta},u}_s
=
b(s,X^{\boldsymbol{\eta},u}_s,u(\boldsymbol{\eta})_s)\,\ud s
+
\sigma(s,X^{\boldsymbol{\eta},u}_s,u(\boldsymbol{\eta})_s)\,\ud W_s
+
(f,0)(s,X^{\boldsymbol{\eta},u}_s)\ud\boldsymbol{\eta}_s,
\qquad s \in [t,T], \\
X^{\boldsymbol{\eta},u}_t
= x
\end{cases}
\end{equation*}
which is also denoted by $X^{\boldsymbol{\eta},u}.$ Thus, we have 
\begin{equation*}
    \overline{X}^{\overline{u}}(\omega,s):= X^{u(\boldsymbol{\eta})}_s(\omega',\boldsymbol{\eta})\big|_{\boldsymbol{\eta}=\textbf{W}^{\perp} (\omega'')} = X^{\boldsymbol{\eta},u(\eta)}_s(\omega')\big|_{\boldsymbol{\eta}=\textbf{W}^{\perp} (\omega'')}
\end{equation*}
defines an $(\mathcal{F}_s\vee\mathcal{F}^{\perp}_T)_{s\in[t,T]}$-progressively measurable process.
In the same spirit, we define
\begin{equation*}
\begin{aligned}
   \overline{\mathcal{A}}:= \big\{\overline{u}:\Omega'\times\Omega''\times[0,T]\rightarrow\mathbb{R}^\kappa: \text{there exists}\hspace{.1cm}u\in \mathcal{A}^{2}  \text{ such that}\hspace{.1cm} \overline{u}_s(\omega',\omega'')=u_s(\omega',\textbf{W}^{\perp} (\omega''))\big\}.
    \end{aligned}
\end{equation*}
Then, for every anticipating control $\overline u\in \overline{\mathcal{A}}$, 
$\overline{X}^{\overline{u}}$ formally defines a solution to the anticipating SDE  
\begin{equation}\label{eq. 99}
\begin{cases}
\ud \overline{X}^{\overline{u}}_s
=
b(s,\overline{X}^{\overline{u}}_s,\overline{u}_s)\,\ud s
+
\sigma(s,\overline{X}^{\overline{u}}_s,\overline{u}_s)\,\ud W_s(\omega')
+
f(s,\overline{X}^{\overline{u}}_s)\ud W^{\perp}_s (\omega''),
\quad s \in [t,T], \\
\overline{X}^{\overline{u}}_t
= x.
\end{cases}
\end{equation}

The pathwise stochastic control problem considers the cost functional
\begin{equation}\label{eq:def-bj}
    \overline{\mathcal{J}}(\overline u;t,x;\omega):= \mathbb E \bigg[  \int_t^T h(s, \overline X^{\overline u}_s, \overline u_s) \ud s + g(\overline X^{\overline u}_T) \bigg| \mathcal{F}^{\perp}_T \bigg]
\end{equation}
and the corresponding value function 
\begin{equation}\label{eq:stoch-cost}
\overline{\mathbb V}(t,x;\omega):=  \text{essinf}_{\overline u \in \overline{\mathcal A}} \overline{\mathcal{J}}(\overline u;t,x;\omega).
\end{equation}
Hence, the pathwise stochastic control problem is constituted by \eqref{eq. 99}--\eqref{eq:def-bj}, \emph{i.e.} we are looking for a control $\overline{u}\in \overline{\mathcal{A}}$ such that 
\begin{align*}
    \mathbb E \bigg[  \int_t^T h(s, \overline X^{\overline u}_s, \overline u_s) \ud s + g(\overline X^{\overline u}_T) \bigg| \mathcal{F}^{\perp}_T \bigg] = \overline{\mathbb V}(t,x;\omega).
\end{align*}

Note that different admissible controls correspond to different value functions 
\begin{align*}\label{eq:ValueFunctionsRandomisation}
\mathbb{V}^{1}(t,x;\omega)&:= \mathrm{essinf}_{u\in \mathcal{A}^1}\E\!\bigg[\int_t^T h\big(s,\overline{X}^{\overline{u}}_s,\overline{u}_s\big)\ud s + g\big(\overline{X}^{\overline{u}}_T\big)\bigg|\mathcal{F}^{\perp}_T\bigg],\\
    \mathbb{V}^{2}(t,x;\omega)&:= \mathrm{essinf}_{u\in \mathcal{A}^2}\E\!\bigg[\int_t^T h\big(s,\overline{X}^{\overline{u}}_s,\overline{u}_s\big)\ud s + g\big(\overline{X}^{\overline{u}}_T\big)\bigg|\mathcal{F}^{\perp}_T\bigg],\\
    \mathbb{V}^{\text{Causal}}(t,x;\omega)&:= \mathrm{essinf}_{u\in \mathcal{A}^{\text{Causal}}}\E\!\bigg[\int_t^T h\big(s,\overline{X}^{\overline{u}}_s,\overline{u}_s\big)\ud s + g\big(\overline{X}^{\overline{u}}_T\big)\bigg|\mathcal{F}^{\perp}_T\bigg].
\end{align*}
Then, from \cite[Remark 5.1, Theorem 7.4]{10.1214/26-AOP1852} and Remark \ref{rmk 5.3} we get under \ref{(A1-R)}--\ref{(A3-R)} that for every $(t,x) \in [0,T]\times\mathbb{R}^d$ we have
\begin{equation}\label{eq. 100}
    \overline{\mathbb{V}}(t,x;\omega) =  \mathbb{V}^{2}(t,x;\omega) =  \mathbb{V}^{1}(t,x;\omega) = \mathbb{V}^{\text{Causal}}(t,x;\omega),\hspace{.2cm} \mathbb{P}-a.s.
\end{equation}
Moreover, according to \cite[Proposition 7.2]{10.1214/26-AOP1852}, we have the following representation of $\overline{ \mathbb V}$
$$
 \overline{\mathbb{V}}(t,x;\omega)=  \textrm{inf}_{u\in\mathcal{A}}\E' \bigg[\int_t^T h\big(s,{X}^{\boldsymbol{\eta},u}_s,u_s\big)\ud s + g\big({X}^{\boldsymbol{\eta},u}_T\big) \bigg] \bigg|_{\boldsymbol{\eta}=\textbf{W}^{\perp} (\omega'')}= \mathbb{V}^{\boldsymbol{\eta}}(t,x)|_{\boldsymbol{\eta}=\textbf{W}^{\perp} (\omega'')}, \  \mathbb P-a.s.,
$$
where $\mathbb{V}^{\boldsymbol{\eta}}(t,x)$ is the value function defined in \eqref{eq:rough-vf} of the rough stochastic control problem.

\begin{thm}\label{thm-PMP-for-psc}
Let Assumptions \ref{(A1-R)}--\ref{(A3-R)}, \ref{assu: lin PMP} hold true, $({u}_s)_{s\in[0,T]}\in \mathcal{A}^2$, $H$ given by \eqref{eq Ham_lin}, and for every $\omega'' \in \Omega''$ let $({X}^{\boldsymbol{\eta},u},({X}^{\boldsymbol{\eta}, u})'),$ $(Y,Y'), Z$ be defined as in Section \ref{subsubsub 4.1.3}, with $\boldsymbol{\eta} = \textbf{W}^{\perp} (\omega'')$ and $u_s = u(\textbf{W}^{\perp} (\omega''))_s$. If the admissible control $(\overline{u}_s)_{s\in[0,T]}\in \overline{\mathcal{A}}$ is optimal for $\overline{\mathcal{J}}$, then we have $\mathbb{P}''-\text{a.s.}$ that
\begin{enumerate}
    \item[(i)] 
 $
        \partial_u H(s,\overline{X}_s^{\overline{u}}, Y_s, Z_s, \overline{u}_s) = 0,\hspace{.2cm} \mathbb{P}'\otimes\text{Leb}(\ud s)-a.s.
 $

    \item[(ii)] If additionally $H$ is convex in $u \in \mathbb{R}^\kappa$, then for every $u\in \mathbb{R}^\kappa$ we have
        \begin{equation*}
        H(s,\overline{X}_s^{\overline{u}},Y_s,Z_s,\overline{u}_s) \leq H(s,\overline{X}_s^{\overline{u}},Y_s,Z_s,u),\hspace{.2cm} \mathbb{P}'\otimes\text{Leb}(\ud s)-a.s.
        \end{equation*}
\end{enumerate}
\end{thm}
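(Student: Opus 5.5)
The plan is to reduce the pathwise statement to Theorem~\ref{thm:roughpmp_lin}, applied for $\mathbb{P}''$-almost every frozen realization $\boldsymbol{\eta}=\textbf{W}^{\perp}(\omega'')$.

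\emph{Step 1: optimality of the frozen control.} Let $\overline u$ be optimal with representative $u\in\mathcal{A}^2$. By the representation of $\overline{\mathbb V}$ quoted from \cite[Proposition 7.2]{10.1214/26-AOP1852} together with \eqref{eq. 100}, we have $\overline{\mathbb V}(t,x)=\mathbb V^{\boldsymbol{\eta}}(t,x)|_{\boldsymbol{\eta}=\textbf{W}^\perp(\omega'')}$. Since $\mathbb P=\overline{\mathbb P'\otimes\mathbb P''}$ and $W$ is independent of $\mathcal{F}^\perp_T$, Fubini gives
\[
\overline{\mathcal J}(\overline u;t,x)=\mathcal J^{\boldsymbol{\eta}}(u(\boldsymbol{\eta});t,x)\big|_{\boldsymbol{\eta}=\textbf{W}^\perp(\omega'')},\qquad \mathbb P\text{-a.s.}
\]
This uses the joint $\mathcal O'\otimes\mathcal B(\mathscr C^{0,\alpha}_g)$-measurable version of $X^{\boldsymbol{\eta},u}$, and the fact that $u(\boldsymbol{\eta})\in\mathcal A$ is admissible for every $\boldsymbol{\eta}$. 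Optimality of $\overline u$ then says
\[
\mathcal J^{\boldsymbol{\eta}}(u(\boldsymbol{\eta}))=\mathbb V^{\boldsymbol{\eta}}
\]
for $\boldsymbol{\eta}=\textbf{W}^\perp(\omega'')$, for $\omega''$ outside a $\mathbb P''$-null set. In other words, $u(\textbf{W}^\perp(\omega''))$ is optimal for the rough problem with that fixed rough path. Boundedness of $h,g$ in \ref{(A3-R)} keeps all conditional expectations finite and justifies the Fubini interchange.

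\emph{Step 2: check the hypotheses of Section \ref{subsec: 4.1} pathwise.} For each such $\omega''$, $\textbf{W}^\perp(\omega'')\in\mathscr C^{0,\alpha}_g$. The deterministic coefficients satisfy \ref{A1} and \ref{A3} by \ref{(A1-R)} and \ref{(A3-R)}. For \ref{A2}, $(f,0)$ is a deterministic controlled vector field that is time-regular; its remainders are trivially controlled because $f'=0$ and the time increments of $f$ are Lipschitz. Assumptions \ref{assu: lin PMP} are unchanged. Hence the state, the adjoint $((Y,Y'),Z)$ from Theorem \ref{ex_uni_lin_bsde}, and Theorem \ref{thm:roughpmp_lin} all apply on $(\Omega',\mathbb P')$ with the filtration generated by $W$.

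\emph{Step 3: conclude.} Theorem \ref{thm:roughpmp_lin}(i) and (ii) give the stationarity condition and, under convexity, the Hamiltonian minimization condition. Both hold $\mathbb P'\otimes\mathrm{Leb}$-a.s. for $X^{\boldsymbol{\eta},u(\boldsymbol{\eta})}$ and $u(\boldsymbol{\eta})$. We then substitute $\overline X^{\overline u}_s(\omega',\omega'')=X^{\boldsymbol{\eta},u(\boldsymbol{\eta})}_s(\omega')$ and $\overline u_s=u(\boldsymbol{\eta})_s$ at $\boldsymbol{\eta}=\textbf{W}^\perp(\omega'')$, which yields (i) and (ii) $\mathbb P''$-a.s.

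\emph{Main obstacle.} The delicate point is Step 1: an optimum in the essinf sense must transfer to pointwise optimality for $\mathbb P''$-a.e. $\omega''$. To handle it, I would argue by contradiction. Suppose the set of $\omega''$ where $\mathcal J^{\boldsymbol{\eta}}(u(\boldsymbol{\eta}))>\mathbb V^{\boldsymbol{\eta}}+\delta$ has positive measure. Using a measurable selection of $\delta/2$-optimal controls $\boldsymbol{\eta}\mapsto\tilde u(\boldsymbol{\eta})\in\mathcal A$, I would build an element of $\mathcal A^2$ that strictly improves $\overline{\mathcal J}$ on that set, contradicting optimality. Alternatively, the equality $\overline{\mathbb V}=\mathbb V^{\boldsymbol{\eta}}|_{\textbf{W}^\perp}$ can be combined with $\overline{\mathcal J}(\overline u)=\overline{\mathbb V}$ directly, if optimality is understood as attaining the essinf. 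Measurability of the selection would rely on the Polish structure of $\mathscr C^{0,\alpha}_g$ and the measurable dependence results of \cite{10.1214/26-AOP1852}.
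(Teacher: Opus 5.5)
Your proposal is correct and follows the paper's proof. The paper also identifies $\overline{\mathcal J}(\overline u)$ with the frozen cost $\mathcal J^{\boldsymbol\eta}(u(\boldsymbol\eta))|_{\boldsymbol\eta=\textbf{W}^\perp(\omega'')}$, citing \cite[Remark 7.3]{10.1214/26-AOP1852} instead of redoing your Fubini/freezing argument; it then uses optimality together with $\overline{\mathbb V}=\mathbb V^{\boldsymbol\eta}|_{\boldsymbol\eta=\textbf{W}^\perp(\omega'')}$ to show that the set of $\omega''$ where the frozen control is not optimal is $\mathbb P''$-null, and applies Theorem~\ref{thm:roughpmp_lin} on its complement. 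Because the paper defines optimality as attaining the essinf, the direct alternative in your last paragraph is exactly the paper's route, and the measurable-selection contradiction argument is not needed.
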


\begin{proof}
By \cite[Remark 7.3]{10.1214/26-AOP1852} we have $\mathbb{P}-\text{a.s.}$ that
\begin{align*}
    \E\!\bigg[\int_t^T h\big(s,\overline{X}^{\overline{u}}_s,\overline{u}_s\big)\ud s + g\big(\overline{X}^{\overline{u}}_T\big)\bigg|\mathcal{F}^{\perp}_T\bigg] = \E'\!\bigg[\int_t^T h\big(s,{X}^{\boldsymbol{\eta},{u}}_s,u(\boldsymbol{\eta})_s\big)\ud s + g\big({X}^{\boldsymbol{\eta},{u}}_T\big)\bigg]\bigg|_{\boldsymbol{\eta}=\textbf{W}^{\perp} (\omega'')}.
\end{align*}
Since also $\overline{\mathbb{V}}(t,x;\omega) = \overline{\mathbb{V}}(t,x;\omega'')$ we get
\begin{align*}
    S:= \bigg\{\E'\!\bigg[\int_t^T h\big(s,{X}^{\boldsymbol{\eta},{u}}_s,u(\boldsymbol{\eta})_s\big)\ud s + g\big({X}^{\boldsymbol{\eta},{u}}_T\big)\bigg]\bigg|_{\boldsymbol{\eta}=\textbf{W}^{\perp} (\omega'')} \neq  \overline{\mathbb{V}}(t,x;\omega'')\bigg\} \subseteq \Omega''.
\end{align*}
Then, by the optimality of $(u_s)_{s\in[0,T]}$  we have
$$
\mathbb{P}[S]= \mathbb{P}''[S] = 0.
$$
Thus, for every $\omega''\in S^c$ the result is immediate from Theorem \ref{thm:roughpmp_lin}.  
\end{proof}

\begin{coroll}\label{crl 5.4}
Let the Assumptions of Theorem \ref{thm-PMP-for-psc} hold true. If for every $(\omega',s) \in \Omega'\times [t,T]$ the function ${H}(s,\cdot,{Y}_s,{Z}_s,\cdot)$ is convex in $(x,u)\in \mathbb{R}^d\times \mathbb{R}^k$, for every $x\in\mathbb{R}^d$ 
\begin{align*}
g(x)-g(\overline{X}^{\overline{u}}_T)-\partial_xg(\overline{X}^{\overline{u}}_T)\cdot (x-\overline{X}^{\overline{u}}_T)\geq 0,\hspace{.2cm} \mathbb{P}-a.s.,
\end{align*}
for every $(\widetilde{X},\widetilde{X}') \in \mathbf{D}_{{W}^{\perp} (\omega'')}^{\alpha,\beta} \mathbb{L}^{k,\infty}\big(\Omega';[0,T];\mathbb{R}^d\big)$ we have $\mathbb{P}''-\text{a.s.}$ that
\begin{equation*}
    \mathbb{E}'\bigg[\int_t^T \Big({Y}_s^\top f\big(s,\widetilde{X}(s)\big) - {Y}_s^\top f\big(s,\overline{X}^{\overline{u}}(s)\big) -{Y}_s^\top\partial_xf\big(s,\overline{X}^{\overline{u}}(s)\big) \big[\widetilde{X}(s) - \overline{X}^{\overline{u}}(s)\big]\Big)
     \ud\textbf{W}^{\perp} (\omega'')_s \bigg] \geq 0,
\end{equation*}
and for the admissible control $(\overline{u}_s)_{s\in[0,T]}\in \overline{\mathcal{A}}$, for every $u\in \mathbb{R}^\kappa$ we have $\mathbb{P}''-\text{a.s.}$ that
\begin{equation*}
    H(s,\overline{X}_s^{\overline{u}},Y_s,Z_s,\overline{u}_s) \leq H(s,\overline{X}_s^{\overline{u}},Y_s,Z_s,u),\hspace{.2cm} \mathbb{P}'\otimes\text{Leb}(\ud s)-a.s.,
\end{equation*}
then $(\overline X^{\overline u}, \overline u)$ is an optimal pair for $\overline{\mathcal{J}}$.
\end{coroll}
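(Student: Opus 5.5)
The plan is to reduce Corollary \ref{crl 5.4} to the deterministic-rough-path sufficiency result, Corollary \ref{coroll 4.9}, applied $\omega''$ by $\omega''$, and then lift optimality back to the conditional problem through the identification of the value functions.

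First, I fix $\omega''$ outside a $\mathbb{P}''$-null set $\mathcal{N}$. This set collects the null sets from the lifting $\textbf{W}^\perp$, from each hypothesis stated $\mathbb{P}''$-a.s., and from the disintegration identity of \cite[Remark 7.3]{10.1214/26-AOP1852}. For such $\omega''$, I set $\boldsymbol{\eta}:=\textbf{W}^\perp(\omega'')\in\mathscr{C}^{0,\alpha}_g$ and work on $(\Omega',\mathcal{F}',\mathbb{P}')$ with the Brownian filtration. Under \ref{(A1-R)}--\ref{(A3-R)} the coefficients satisfy \ref{A1}--\ref{A3}, and $u(\boldsymbol{\eta})\in\mathcal{A}$, so the whole setting of Subsection \ref{subsec: 4.1} applies. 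By the measurable-version construction, $\overline{X}^{\overline u}(\omega',\omega'')=X^{\boldsymbol{\eta},u(\boldsymbol{\eta})}(\omega')$, and $(Y,Y'),Z$ are exactly the adjoint processes of Section \ref{subsubsub 4.1.3} for this $\boldsymbol{\eta}$.

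Second, I check the hypotheses of Corollary \ref{coroll 4.9} (equivalently, of Theorem \ref{thm 4.8}) for this fixed $\boldsymbol{\eta}$.
\begin{itemize}
\item The convexity of $H(s,\cdot,Y_s,Z_s,\cdot)$ is assumed.
\item The pointwise minimality of the Hamiltonian holds $\mathbb{P}'\otimes\mathrm{Leb}$-a.s.
\item The inequality for $g$ is only assumed along $\overline{X}^{\overline u}_T$, not as full convexity. However, the proof of Theorem \ref{thm 4.8} only uses convexity through \eqref{eq. 69}, namely $g(x)-g(X_T)\ge\partial_xg(X_T)(x-X_T)$ at $x=\widetilde X_T$. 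The assumed supporting-hyperplane inequality at $\overline X^{\overline u}_T$ gives exactly this (after a Fubini argument to pass from $\mathbb{P}$-a.s. to $\mathbb{P}'$-a.s. for a.e. $\omega''$).
\item The rough remainder condition \eqref{eq. 074} is assumed for every controlled $\widetilde X$. It covers every state $X^{\boldsymbol\eta,v}$, $v\in\mathcal{A}$, since by \cite[Theorem 4.6]{FHL21} these lie in $\mathbf D^{\alpha,\beta}_{\eta}\mathbb L^{k,\infty}$.
\end{itemize}
Hence the rerun of Theorem \ref{thm 4.8} yields $\mathcal{J}^{\boldsymbol\eta}(v)\ge \mathcal{J}^{\boldsymbol\eta}(u(\boldsymbol\eta))$ for all $v\in\mathcal{A}$. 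Consequently $\mathcal{J}^{\boldsymbol\eta}(u(\boldsymbol\eta);t,x)=\mathbb V^{\boldsymbol\eta}(t,x)$ for every $\omega''\notin\mathcal N$.

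Third, I transfer this to the pathwise problem. By \cite[Remark 7.3]{10.1214/26-AOP1852}, $\overline{\mathcal J}(\overline u;t,x;\omega)=\mathcal J^{\boldsymbol\eta}(u(\boldsymbol\eta);t,x)|_{\boldsymbol\eta=\textbf W^\perp(\omega'')}$ holds $\mathbb P$-a.s. By \cite[Proposition 7.2]{10.1214/26-AOP1852} together with \eqref{eq. 100}, $\overline{\mathbb V}(t,x;\omega)=\mathbb V^{\boldsymbol\eta}(t,x)|_{\boldsymbol\eta=\textbf W^\perp(\omega'')}$ holds $\mathbb P$-a.s. Combining these with the second step gives $\overline{\mathcal J}(\overline u)=\overline{\mathbb V}$ $\mathbb P$-a.s., so $(\overline X^{\overline u},\overline u)$ is optimal.

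The main obstacle is the measurability and null-set bookkeeping. The hypotheses hold only $\mathbb{P}''$-a.s., or $\mathbb P$-a.s. for the $g$-inequality, while Corollary \ref{coroll 4.9} must be applied to a single fixed $\boldsymbol\eta$. I would first use Fubini on the product space to extract one $\mathbb{P}''$-full set on which every hypothesis holds $\mathbb{P}'$-a.s. The subtle point is that the remainder condition must hold for all $\widetilde X$ simultaneously on a common full set. The statement's phrasing, with "for every $\widetilde X$" placed before "$\mathbb{P}''$-a.s.", should be read so that this is granted. If it is not, I would restrict to the states $X^{\boldsymbol\eta,v}$ and approximate $v$ using the separability of $\mathcal{A}$ in $\sup_s\|\cdot\|_k$-type topologies, invoking the stability estimate \cite[Theorem 4.9]{FHL21}.
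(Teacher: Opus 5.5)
Your proposal is correct and is essentially the paper's argument, which applies Corollary \ref{coroll 4.9} for each fixed $\boldsymbol{\eta}=\textbf{W}^{\perp}(\omega'')$ and transfers the conclusion through the disintegration identity of \cite[Remark 7.3]{10.1214/26-AOP1852}; your observation that the proof of Theorem \ref{thm 4.8} uses convexity of $g$ only through the supporting-hyperplane inequality \eqref{eq. 69}, together with your null-set bookkeeping, makes explicit what the paper's one-line proof leaves implicit. The separability fallback is not needed, and it would be delicate because $\mathcal{A}$ is not separable under $\sup_s\|\cdot\|_k$; instead, apply Theorem \ref{thm 4.8} to each competitor $\overline v=v(\textbf{W}^{\perp})$ separately and compare $\overline{\mathcal J}(\overline v)$ with $\overline{\mathcal J}(\overline u)$ through the same identity, which only needs null sets depending on $\overline v$ and is all the essential infimum requires.
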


\begin{proof}
Immediate from the equality   
\begin{align*}
    \E\!\bigg[\int_t^T h\big(s,\overline{X}^{\overline{u}}_s,\overline{u}_s\big)\ud s + g\big(\overline{X}^{\overline{u}}_T\big)\bigg|\mathcal{F}^{\perp}_T\bigg] = \E'\!\bigg[\int_t^T h\big(s,{X}^{\boldsymbol{\eta},{u}}_s,u(\boldsymbol{\eta})_s\big)\ud s + g\big({X}^{\boldsymbol{\eta},{u}}_T\big)\bigg]\bigg|_{\boldsymbol{\eta}=\textbf{W}^{\perp} (\omega'')}
\end{align*}
and Corollary \ref{coroll 4.9}.
\end{proof}

\begin{rmk}\label{rmk 5.5}
Similarly with above, one can write down the analog results corresponding to Theorem \ref{thm:roughpmp} and Corollary \ref{coroll 4.16}, given the same deterministic alterations to \ref{D-dynamics}--\ref{D-cost}.
\end{rmk}


\section*{Acknowledgments}
\smallskip
\noindent \textbf{Funding}  
PKF, UH, ST and HZ acknowledge support from 
Deutsche Forschungsgemeinschaft (DFG, German Research Foundation) – CRC/TRR 388 
"Rough Analysis, Stochastic Dynamics and Related Fields" – Project ID 516748464 (B04, B05). HZ is supported by the Fundamental Research Funds for the Central Universities and NSF of Shandong (ZR2023MA026). HZ gratefully acknowledges the support of Humboldt Universit\"at zu Berlin, where this project was commenced.

\printbibliography[heading=bibintoc]

\end{document}